\let\cite\parencite
\definecolor{darkgreen}{rgb}{0,0.45,0} 
\let\setof\Set
\let\ea\expandafter
\def\mdef#1#2{\ea\ea\ea\gdef\ea\ea\noexpand#1\ea{\ea\ensuremath\ea{#2}\xspace}}
\def\alwaysmath#1{\ea\ea\ea\global\ea\ea\ea\let\ea\ea\csname your@#1\endcsname\csname #1\endcsname
  \ea\def\csname #1\endcsname{\ensuremath{\csname your@#1\endcsname}\xspace}}
\def\foreachletter#1#2#3{\foreachcount=#1
  \ea\loop\ea\ea\ea#3\@alph\foreachcount
  \advance\foreachcount by 1
  \ifnum\foreachcount<#2\repeat}
\def\foreachLetter#1#2#3{\foreachcount=#1
  \ea\loop\ea\ea\ea#3\@Alph\foreachcount
  \advance\foreachcount by 1
  \ifnum\foreachcount<#2\repeat}
\def\definescr#1{\ea\gdef\csname s#1\endcsname{\ensuremath{\mathscr{#1}}\xspace}}
\def\definecal#1{\ea\gdef\csname c#1\endcsname{\ensuremath{\mathcal{#1}}\xspace}}
\def\definetil#1{\ea\gdef\csname #1til\endcsname{\ensuremath{\widetilde{#1}}\xspace}}
\def\definehat#1{\ea\gdef\csname #1hat\endcsname{\ensuremath{\widehat{#1}}\xspace}}
\DeclareSymbolFont{bbold}{U}{bbold}{m}{n}
\DeclareSymbolFontAlphabet{\mathbbb}{bbold}
\newcommand{\bbtwo}{\ensuremath{\mathbbb{2}}\xspace}
\newcommand{\op}{^{\mathrm{op}}}
\let\dn\downarrow
\newcommand{\pullbackcorner}[1][dr]{\save*!/#1-1.2pc/#1:(-1,1)@^{|-}\restore}
\mdef\idfunc{\mathrm{id}}
\newcommand{\too}[1][]{\ensuremath{\overset{#1}{\longrightarrow}}}
\newcommand{\ot}{\ensuremath{\leftarrow}}
\let\toto\rightrightarrows
\let\into\hookrightarrow
\mdef\we{\overset{\sim}{\longrightarrow}}
\mdef\leftwe{\overset{\sim}{\longleftarrow}}
\let\fib\twoheadrightarrow
\def\acof{\mathrel{\mathrlap{\hspace{3pt}\raisebox{4pt}{$\scriptscriptstyle\sim$}}\mathord{\rightarrowtail}}}
\let\xto\xrightarrow
\def\rightarrowtailfill@{\arrowfill@{\Yright\joinrel\relbar}\relbar\rightarrow}
\newcommand\xrightarrowtail[2][]{\ext@arrow 0055{\rightarrowtailfill@}{#1}{#2}}
\def\twoheadrightarrowfill@{\arrowfill@{\relbar\joinrel\relbar}\relbar\twoheadrightarrow}
\newcommand\xtwoheadrightarrow[2][]{\ext@arrow 0055{\twoheadrightarrowfill@}{#1}{#2}}
\let\xfib\xtwoheadrightarrow
\def\toiso{\xto{\smash{\raisebox{-.5mm}{$\scriptstyle\sim$}}}}
\def\defthm#1#2{%
  \newtheorem{#1}{#2}[section]%
  \expandafter\def\csname #1autorefname\endcsname{#2}%
  \expandafter\let\csname c@#1\endcsname\c@thm}
\newtheorem{thm}{Theorem}[section]
\theoremstyle{definition}
\theoremstyle{remark}
\let\c@equation\c@thm
\numberwithin{equation}{section}
\mdef\ep{\varepsilon}
\mdef\ph{\varphi}
\let\al\alpha
\let\Gm\Gamma
\let\De\Delta
\let\Si\Sigma
\let\om\omega
\let\ka\kappa
\let\la\lambda
\let\Th\Theta
\mdef\iscontr{\mathsf{isContr}}
\mdef\isprop{\mathsf{isProp}}
\newcommand{\istrunc}[1]{\mathsf{is}\text-#1\text-\mathsf{Trunc}}
\mdef\isequiv{\mathsf{isEquiv}}
\mdef\ishiso{\mathsf{ishIso}}
\let\eq\equiv
\mdef\equiv{\mathsf{Equiv}}
\mdef\idpath{\mathsf{idpath}}
\mdef\hequiv{\mathsf{hEquiv}}
\mdef\hfiber{\mathsf{hFiber}}
\mdef\type{\mathsf{Type}}
\mdef\etoi{\mathsf{etoi}}
\mdef\itoe{\mathsf{itoe}}
\mdef\fibeqv{\mathsf{fibeqv}}
\mdef\map{\mathsf{ap}}
\let\ap\map
\mdef\happly{\mathsf{happly}}
\mdef\funext{\mathsf{funext}}
\mdef\idequiv{\mathsf{idequiv}}
\mdef\ptoe{\mathsf{pathToEquiv}}
\mdef\fst{\mathsf{fst}}
\mdef\snd{\mathsf{snd}}
\mdef\io{(\infty,1)}
\def\sprod#1{{\textstyle\prod_{#1}}}
\def\ssum#1{{\textstyle\sum_{#1}}}
\def\sSet{\ensuremath{\mathrm{sSet}}\xspace}
\def\nSet{\ensuremath{\mathrm{Set}}\xspace}
\def\nTTFC{\ensuremath{\mathrm{TTFC}}\xspace}
\def\nCAT{\ensuremath{\mathrm{CAT}}\xspace}
\def\nGpd{\ensuremath{\mathrm{Gpd}}\xspace}
\def\pr{\;\vdash\;}
\def\f{_{\mathbf{f}}}
\def\m#1{\llbracket#1\rrbracket}
\mdef\ty{\;\mathsf{type}}
\mdef\r{\mathsf{r}}
\mdef\el{\mathsf{El}}
\mdef\up{\mathsf{up}}
\mdef\Id{\mathsf{Id}}
\newcommand{\oplim}[2]{\llbracket #1,#2 \rrbracket}
\newcommand{\shnno}{\textsc{shnno}\xspace}
\mdef\lntil{\widetilde{\lN}}
\mdef\ctf{(\sC^\bbtwo)\f}
\def\id{\leadsto}
\begin{document}

\title[Univalence for inverse diagrams]{Univalence for inverse diagrams and homotopy canonicity}
\author[Michael Shulman]{Michael Shulman}
\thanks{This material is based upon work supported by the National Science Foundation under a postdoctoral fellowship and agreement No. DMS-1128155.  Any opinions, findings, and conclusions or recommendations expressed in this material are those of the author and do not necessarily reflect the views of the National Science Foundation.}

\maketitle
\begin{abstract}
  We describe a homotopical version of the relational and gluing models of type theory, and generalize it to inverse diagrams and oplax limits.
  Our method uses the Reedy homotopy theory on inverse diagrams, and relies on the fact that Reedy fibrant diagrams correspond to contexts of a certain shape in type theory.
  This has two main applications.
  First, by considering inverse diagrams in Voevodsky's univalent model in simplicial sets, we obtain new models of univalence in a number of \io-toposes; this answers a question raised at the Oberwolfach workshop on homotopical type theory.
  Second, by gluing the syntactic category of univalent type theory along its global sections functor to groupoids, we obtain a partial answer to Voevodsky's homotopy-canonicity conjecture: in 1-truncated type theory with one univalent universe of sets, any closed term of natural number type is homotopic to a numeral.
\end{abstract}

\setcounter{tocdepth}{1}
\tableofcontents

\section{Introduction}
\label{sec:introduction}

Recently it has become apparent that Martin-L\"of's \emph{intensional type theory} admits semantics in \emph{homotopy theory}~\parencite{hs:gpd-typethy,warren:thesis,aw:htpy-idtype,gb:topsimpid,voevodsky:typesystems,klv:ssetmodel,lw:localuniv}.
The basic idea is that intensional \emph{identity types} are interpreted by \emph{path spaces}.
Since there can be nontrivial paths even from a point to itself, these models make a virtue out of the failure of ``uniqueness of identity proofs''. One may conclude that intensional type theory is naturally a theory of ``homotopy types'', and many of its traditionally uncomfortable attributes come from trying to force it to be a theory only of sets.
This raises the possibility of using intensional type theory as a ``natively homotopical'' foundation for mathematics.

One of the innovations of homotopical type theory, due to Voevodsky, is the identification of the correct identity types for universes.
It is natural to consider two types ``equal'', as terms belonging to a universe, if there is an isomorphism between them.
However, this is hard to square with uniqueness of identity proofs, since two types can be isomorphic in more than one way, and if the equality between them doesn't remember which isomorphism it came from, how can we meaningfully substitute along that equality?
But homotopically, taking isomorphisms (or, more precisely, equivalences) to form the identity type of the universe makes perfect sense; the resulting rule is called the \emph{univalence axiom}.

Since its introduction, much research has centered around this axiom, and it has proven quite valuable for formalizing mathematics and homotopy theory in type theory.
However, important meta-theoretical questions remain, such as:
\begin{enumerate}[label=(\alph*)]
\item What are its categorical semantics?\label{item:catsem}
\item What are its logical consequences?\label{item:logcsq}
\item How does it impact the computational behavior of type theory?\label{item:cmpbvr}
\end{enumerate}
Until now, essentially the only known model of univalence (aside from syntactic ones) has been the one constructed by Voevodsky~\parencite{klv:ssetmodel} in simplicial sets, and the question was raised at the Oberwolfach mini-workshop~\parencite{oberwolfach:hott} of whether such models exist.
In this paper we will describe a general class of constructions on models of type theory, and show that they preserve univalence.
Besides answering this question, these models have further important implications for all three questions above.

The simplest example of the constructions we will describe is that if \sC is a categorical model of univalence, then so is the category $\sC^\bbtwo$ of arrows in \sC.
This already has nontrivial consequences.
For instance, Voevodsky's model in the category \sSet of simplicial sets takes place in a classical metatheory, and hence satisfies the law of excluded middle (appropriately formulated), while our model in $\sSet^\bbtwo$ does not.
Thus, univalence does not imply excluded middle, which seems not to have been known previously.

We can say more than this, however: if $I$ is any \emph{inverse category}, then the functor category $\sC^I$ inherits a model of type theory with univalence from \sC.
An inverse category is one containing no infinite composable strings
\[ \to\;\to\;\to\;\to\;\cdots \]
of nonidentity morphisms.
For instance, a finite category is inverse just when it is skeletal and has no nonidentity endomorphisms.
This property enables us to construct diagrams by well-founded induction, which we exploit to build a univalent universe.
Collectively, the internal logics of the categories $\sSet^I$ suffice to violate any propositional statement that is not an intuitionistic tautology; thus univalence ``has no non-constructive implications for propositional logic''.

From a higher categorical point of view, the model category $\sSet^I$ is a presentation of the \io-topos $\infty \mathrm{Gpd}^I$.
Thus, we may say that this \io-topos admits univalent type theory as an ``internal language'', analogously to how ordinary 1-toposes admit extensional type theory as an internal language.
Since the univalence axiom is closely analogous to Lurie-Rezk \emph{object classifiers} (see~\textcite[\S6.1.6]{lurie:higher-topoi} and~\textcite{gk:univlcc}), it is natural to conjecture that \emph{all} \io-toposes admit univalent type theory as an internal language.
This is morally true, but coherence questions remain to be resolved, since the type theories in common use are stricter than \io-category theory.
From this perspective, the contribution of this paper is to resolve the coherence problem in this special case.
(An alternative would be to weaken type theory so as to match \io-category theory better.)

Our construction has an additional advantage, however: it generalizes further to the case of \emph{oplax limits} of diagrams of models indexed by an inverse category.
The simplest case of this which goes beyond functor categories is the \emph{gluing construction} along a functor between two models.
The gluing construction of the ``global sections'' functor is called the \emph{scone} (Sierpinski cone).
It is well-known that scones can be used to prove canonicity and parametricity results about type theories, by an argument of Peter Freyd; the same is true here.

Specifically, by gluing along a groupoid-valued global sections functor of a syntactic category, we can give a partial answer to the \emph{homotopy canonicity} conjecture of Voevodsky.
(Essentially the same gluing construction was considered by~\textcite{hw:crmtt}, but without univalence.)
We show that in type theory with a 1-truncation axiom (so every type is homotopically at most a 1-type) and one univalent universe of sets (0-truncated types), every closed term of natural number type is provably homotopic to a numeral.
Thus, although the univalence axiom (like any axiom) destroys the direct computational content of type theory, it preserves it ``up to homotopy''.%
\footnote{This is not the case for most axioms that might be added to type theory.
  For instance, the axiom of excluded middle yields terms like ``$0$ if the G\"odel sentence is true and $1$ if it is false'' that are not provably equal to any specific numeral.
  This is not so important when using type theory as a basis for classical mathematics, but canonicity is an essential property when using type theory as a programming language.}

Our partial answer to this conjecture is very similar to that of~\textcite{lh:canonicity}, who also study a 1-truncated type theory with one univalent universe of sets.
They describe instead a modified version of this type theory with stricter equality rules, under which univalence is true by definition rather than being an axiom, and show that in this theory every closed term of natural number type is judgmentally (i.e.\ strictly) \emph{equal} to a numeral.
Thus, their answer gives a stronger result, but only in a stronger theory.
Both methods should in principle extend to multiple univalent universes with no truncation hypotheses; the problem in both cases relates to constructing a sufficiently computational ``higher groupoid model'' of type theory.

Scones and more general gluing constructions can also be used to prove \emph{parametricity} theorems, which say that any definable term having a given type must automatically satisfy some theorem derived from that type.
(This is a category-theoretic formulation of the method of ``logical relations''.)
For instance, any term with the type $\prod_{X:\type} (X\to X)$ must be indistinguishable from the polymorphic identity function.
We will not pursue this here, however.

Finally, our constructions can also be interpreted as a ``stability'' result for categories that model univalence.
Probably they can even be performed internally inside of type theory.
This has implications for a hypothetical definition of ``elementary \io-topos''.

\subsection*{Organization}

We begin in \S\ref{sec:ttfc} by defining the basic categorical structures which corresponds to the type-theoretic operations we will consider: dependent sums, dependent products, identity types, and (sometimes) the natural numbers.
We call categories with all of this structure \emph{type-theoretic fibration categories}, since they are a special sort of the ``fibration categories'' and ``categories of fibrant objects'' that are used in homotopy theory.
They include both syntactic categories of type theory and an important class of Quillen model categories which we call \emph{type-theoretic model categories} (closely related to those of~\textcite{ak:htmtt,gk:univlcc}).

In \S\ref{sec:hothy-fibcat} we do some basic categorical homotopy theory in type-theoretic fibration categories.
In particular, we prove that type-theoretic fibration categories are automatically ``categories of fibrant objects''~\parencite{brown:ahtgsc}, and have some of the same good properties as model categories.
(This has recently also been proven using internal type-theoretic arguments by~\textcite{akl:faht}.)

In \S\ref{sec:catsem}, we recall how a type-theoretic fibration category interprets intensional type theory.
This implies that we can prove things about type-theoretic fibration categories using their internal type theory.
In \S\ref{sec:homotopy-type-theory} we explore this further, giving some basic definitions and results of homotopical type theory, and explaining their meaning in the categorical semantics.
Then in \S\ref{sec:tyuniverses} we recall how type-theoretic universes arise from categorical ones, and in \S\ref{sec:univalence-axiom} we state Voevodsky's univalence axiom and interpret it categorically.

The heart of the paper is in \S\S\ref{sec:sierpinski}--\ref{sec:univalence}, although inverse categories in general do not appear until \S\ref{sec:invcat}.
Sections~\ref{sec:sierpinski}--\ref{sec:univalence} treat in detail the first nontrivial example of an inverse category, which was already mentioned above: the arrow category $\bbtwo = (1\to 0)$.
Assuming \sC to be a type-theoretic fibration category with one or more universe objects, in \S\ref{sec:sierpinski}--\ref{sec:universes} we build the same structure in $\sC^\bbtwo$, and then in \S\ref{sec:univalence} we show that the universes in $\sC^\bbtwo$ inherit univalence from those in \sC.

Then in \S\ref{sec:invcat} we consider general inverse categories.
It turns out that once the arguments of \S\S\ref{sec:sierpinski}--\ref{sec:univalence} are understood, little work is required to generalize to the case of arbitrary inverse categories.
The work of \S\S\ref{sec:sierpinski}--\ref{sec:univalence} is almost exactly the same as the induction step in the corresponding proof for a general inverse category.
The main new ingredient is that certain limits need to exist and be well-behaved in \sC in order for the Reedy homotopy theory on $\sC^I$ to define a type-theoretic fibration category when $I$ is a general inverse category.
If \sC is a type-theoretic model category, then this is automatic.
For general \sC, it is true as long as all the co-slice categories $x/I$ are finite; the proof follows~\textcite{rb:cofibrations} and involves proving that acyclic cofibrations are stable under homotopy pullbacks.
With this in place, it suffices to merely sketch the necessary modifications to the proofs of \S\S\ref{sec:sierpinski}--\ref{sec:univalence}.

Finally, in \S\ref{sec:oll} we extend the arguments further to the general case of oplax limits (which is again completely straightforward); and in \S\ref{sec:scones} we consider applications to gluing constructions and canonicity. 

\subsection*{Acknowledgments}

I would like to thank Steve Awodey and Peter LeFanu Lumsdaine for the many things they have taught me about homotopical type theory, and for providing helpful feedback on drafts of this paper.
I am especially grateful to Peter for pointing out some holes in the treatment of nested universes, and also for reminding me repeatedly of the advantages of the definition~\eqref{eq:hiso} (which was proposed in this context by Andr\'e Joyal) until the point finally sunk in.
I would also like to thank Karol Szumi\l{}o for telling me about fibration categories, Bob Harper for teaching me about logical relations (and many other things), and Thierry Coquand for making the connection for me between logical relations and inverse diagrams.
I am also grateful to Richard Garner for helping to point out an error in an overly optimistic draft, and Dan Licata for many useful conversations about canonicity.
The referees also made many very helpful corrections and suggestions.
Finally, I would like to thank Vladimir Voevodsky, Steve Awodey, and Thierry Coquand for organizing the special year at the IAS on homotopy type theory, where this paper was finished.

\section{Type-theoretic fibration categories}
\label{sec:ttfc}

The following definition, written in the style of homotopy theory, nevertheless also encapsulates the category-theoretic structure necessary for modeling dependent type theory with dependent sums, dependent products, and identity types.

\begin{defn}\label{def:ttfc}
  A \textbf{type-theoretic fibration category} is a category \sC with the following structure.
  \begin{enumerate}[leftmargin=*,label=(\arabic*)]
  \item A terminal object $1$.\label{item:cat1}
  \item A subcategory $\cF\subset\sC$ containing all the objects, all the isomorphisms, and all the morphisms with codomain $1$.\label{item:cat2a}
    \begin{itemize}[leftmargin=*]
    \item A morphism in \cF is called a \textbf{fibration}; we write fibrations as $A\fib B$.
    \item A morphism $i$ is called an \textbf{acyclic cofibration} if it has the left lifting property with respect to all fibrations.
      This means that if $p$ is a fibration and $p f = g i$, then there is an $h$ (not generally unique) with $f = h i$ and $g = p h$.
      We write acyclic cofibrations as $A \acof B$.
    \end{itemize}
  \item All pullbacks of fibrations 
    exist and are fibrations.\label{item:cat3}
  \item For every fibration $g: A\fib B$, 
    the pullback functor $g^*: \sC/B \to \sC/A$ has a partial right adjoint $\Pi_g$, defined at all fibrations over $A$, and whose values are fibrations over $B$.
    This implies that acyclic cofibrations are stable under pullback along $g$.\label{item:cat4}
  \item Every morphism factors as an acyclic cofibration followed by a fibration.\label{item:cat7}
  \item In the following commutative diagram:
    \[\vcenter{\xymatrix@R=0.7pc{
        X\ar[r]\ar[dd] \pullbackcorner &
        Y\ar[r]\ar[dd] \pullbackcorner &
        Z\ar[dd]\\\\
        A\ar@{>->}[r]^{\sim} \ar@{->>}@(dr,dl)[rr] &
        B\ar@{->>}[r] &
        C\\
        & &
      }}\]
    if $B\fib C$ and $A\fib C$ are fibrations, $A\acof B$ is an acyclic cofibration, and both squares are pullbacks (hence $Y\to Z$ and $X\to Z$ are fibrations by~\ref{item:cat3}), then $X\to Y$ is also an acyclic cofibration.\label{item:cat8}
  \end{enumerate}
\end{defn}

Typically one says that an object $A$ is \textbf{fibrant} if the map $A\to 1$ is a fibration; thus we are assuming all objects to be fibrant.
Frequently, we obtain this by restricting to the subcategory of fibrant objects in some larger category; we generally denote this by $\sC\f$.
For instance, if \sC is a type-theoretic fibration category, then $(\sC/A)\f$ denotes the full subcategory of $\sC/A$ consisting of the fibrations $B\fib A$.
It is easy to verify that $(\sC/A)\f$ is again a type-theoretic fibration category.

\begin{rmk}
  In type theory the terms \emph{display map} and \emph{dependent projection} are usually used instead of \emph{fibration}.
  Under this translation, conditions~\ref{item:cat1}, \ref{item:cat2a}, \ref{item:cat3}, and~\ref{item:cat4} make \sC into a \emph{display map category} (see e.g.~\textcite[\S10.4]{jacobs:cltt}) or a \emph{\sD-category} (see e.g.~\textcite{streicher:semtt}) with the well-known additional structure required for interpreting a unit type, strong dependent sums, and dependent products.
  As we will now explain, conditions~\ref{item:cat7} and~\ref{item:cat8} are a rephrasing of the analogous structure required for identity types.
\end{rmk}

Condition~\ref{item:cat7} implies, in particular, that we have the following structure from homotopy theory.

\begin{defn}
  A \textbf{weak factorization system} $(\cL,\cR)$ on a category consists of two classes of maps \cL and \cR such that
  \begin{itemize}
  \item \cL is precisely the class of maps having the left lifting property with respect to \cR, and dually.
  \item Every morphism factors as $p \circ i$ for some $i\in \cL$ and $p\in \cR$.
  \end{itemize}
\end{defn}

In a type-theoretic fibration category, the acyclic cofibrations and fibrations satisfy this definition, except that a map having the right lifting property with respect to the acyclic cofibrations need not be a fibration.
Thus we must take \cR instead to be the class of all such maps, which includes the fibrations but may be strictly larger.
The ``retract argument'' from homotopy theory (e.g.~\textcite[{}1.1.9]{hovey:modelcats}) then implies that \cR is precisely the class of \emph{retracts} of fibrations (in the arrow category).

We have chosen~\ref{item:cat7} and~\ref{item:cat8} as better-motivated axioms from a category-theoretic perspective.
However, they are equivalent to a pair of axioms which are more directly related to type theory.

\begin{lem}\label{thm:path-fact}
  Suppose \sC satisfies~\ref{item:cat1}--\ref{item:cat4} of \autoref{def:ttfc}.
  Then it satisfies~\ref{item:cat7} and~\ref{item:cat8} (hence is a type-theoretic fibration category) if and only if it satisfies the following.
  \begin{enumerate}[start=5,label=(\arabic*\/$'$)]
  \item For any fibration $A\fib B$, the diagonal morphism $A\to A\times_B A$ factors as $A\acof P_B A \fib A\times_B A$, where $P_BA \fib A\times A$ is a fibration and $A\acof P_B A$ is an acyclic cofibration.\label{item:cat7p}
  \item There exists a factorization as in~\ref{item:cat7p} such that~\ref{item:cat8} holds whenever the bottom row is $A\acof P_B A \fib B$.\label{item:cat8p}
  \end{enumerate}
\end{lem}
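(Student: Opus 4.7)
The forward direction is immediate: \ref{item:cat7p} follows from \ref{item:cat7} applied to the diagonal morphism $A \to A \times_B A$ of a fibration $A \fib B$, and \ref{item:cat8p} is the instance of \ref{item:cat8} whose bottom row has the prescribed form.

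For the converse, assume \ref{item:cat7p} and \ref{item:cat8p}; I would derive \ref{item:cat7} and \ref{item:cat8} via the mapping path space construction. Given $f \colon A \to B$, apply \ref{item:cat7p} to the terminal fibration $B \fib 1$ to obtain a path object $B \acof PB \fib B \times B$ with reflexivity $r \colon B \acof PB$ and projections $\sigma, \tau \colon PB \to B$ that are both fibrations (as composites of the fibration $(\sigma,\tau)$ with the product projections $B \times B \fib B$). Set $P_f := A \times_B PB$, the pullback along $\sigma$. Equivalently, $P_f$ is the pullback of the fibration $(\sigma,\tau) \colon PB \fib B \times B$ along $f \times \idfunc_B \colon A \times B \to B \times B$; hence $P_f \to A \times B$ is a fibration by \ref{item:cat3}, and postcomposition with $\pi_B \colon A \times B \fib B$ yields a fibration $P_f \fib B$. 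The universal property of the pullback produces a section $s \colon A \to P_f$ sending $a \mapsto (a, rf(a))$, and the composite $A \xrightarrow{s} P_f \fib B$ equals $f$, providing the factorization required by \ref{item:cat7}.

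The crux is showing that $s$ is an acyclic cofibration. The key observation is that $s$ arises as the pullback of $r$, viewed as a morphism over $B \times B$ (with $B$ lying over $B \times B$ via $\Delta_B$ and $PB$ via $(\sigma,\tau)$), along $f \times \idfunc_B$. To invoke \ref{item:cat8p}, one would apply it to an appropriate fibration whose path factorization realizes $r$ in a bottom row of the prescribed form; for instance, applying \ref{item:cat8p} to $\sigma \colon PB \fib B$ and pulling back along $f \colon A \to B$ yields an acyclic cofibration $P_f \acof P_B(PB) \times_B A$, and combining this with the canonical section of an iterated path factorization realizes $s$ as a retract of a pullback to which \ref{item:cat8p} applies directly. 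This step relies on coordinating the choices of path factorization permitted by \ref{item:cat7p} and \ref{item:cat8p}.

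For \ref{item:cat8}, given an arbitrary bottom row $A \acof B \fib C$ with $A \fib C$ also a fibration, use the lifting property of $A \acof B$ against the fibration $\sigma \colon P_C B \fib B$ (from the path object of $B \fib C$) to produce a map $B \to P_C B$ exhibiting $A \acof B$ as a retract in the arrow category of the reflexivity $B \acof P_C B$. Since \ref{item:cat8p} provides pullback stability of $B \acof P_C B$ for every $Z \to C$, and acyclic cofibrations are closed under retracts (being characterized by a left lifting property), \ref{item:cat8} follows. The main obstacle throughout is the acyclic cofibration step for $s$ in deriving \ref{item:cat7}: reconciling the specific form of \ref{item:cat8p} with the pullback $s = (f \times \idfunc_B)^* r$ requires combining several path factorizations and passing through retracts.
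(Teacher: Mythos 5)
Your setup is fine and matches the paper's strategy: the forward direction is indeed trivial, and the converse does proceed via the mapping path space $P_f = A\times_B PB$, with $P_f\fib B$ a fibration and the section $s\colon A\to P_f$ giving the factorization. But the crucial step---that $s$ is an acyclic cofibration---is not proved in your proposal, and the route you gesture at does not work. Your ``key observation'' that $s$ is the pullback of $r\colon B\acof PB$ along $f\times 1_B$ over $B\times B$ cannot be fed into \ref{item:cat8} or \ref{item:cat8p}: the hypotheses of \ref{item:cat8} require both legs of the bottom row over the base to be fibrations, and the diagonal $B\to B\times B$ is not one; and \ref{item:cat8p} only licenses pulling back the row $B\acof PB\fib 1$ along maps into the base of the fibration whose path object was taken (here the terminal object), not along maps into $B\times B$. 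The sentence ``realizes $s$ as a retract of a pullback to which \ref{item:cat8p} applies directly'' is precisely the claim that needs an argument, and no retract is exhibited. Indeed no purely formal pullback/retract manipulation can suffice: as the paper remarks, even in the Hurewicz model structure on spaces (all objects fibrant) the inclusion into the mapping path space need not be an acyclic cofibration. The actual proof, following \textcite{gg:idtypewfs}, constructs a concatenation operation $c$ with $c(1,r)=1$, a homotopy $\psi$ from $c(r,1)$ to the identity, and then verifies the lifting property of $s$ against fibrations over $P_f$ directly, first producing a lift ``up to the homotopy $c(r,1)\sim 1$'' and then correcting it by transporting along $\psi$; none of these ingredients appear in your sketch.

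Your argument for \ref{item:cat8} has a parallel gap. A lift of $A\acof B$ against the fibration $P_C B\fib B$ gives a map $h\colon B\to P_C B$ with $h i = r i$, but this does not exhibit $i\colon A\acof B$ as a retract, in the arrow category over $C$, of the reflexivity $B\acof P_C B$: that would require a second strict retraction $P_C B\to B$ satisfying two simultaneous strict equations (identity after $h$, and $i v_0$ after $r$) which you have no means to produce, and I see no reason such a retract exists in general. The retract argument that does work exhibits $i$ as a retract of the first half $A\to P_C i$ of the mapping-path factorization of $i$ \emph{itself}; that is exactly why the paper proves the \ref{item:cat7}-step in the relative form over a base $C$, showing that $A\to P_C i$ is an acyclic cofibration by an argument which is stable under pullback along maps into $C$ (this is where \ref{item:cat8p} for $B\acof P_C B\fib B\times_C B$ enters), so that pullbacks of $i$ are retracts of pullbacks of $A\to P_C i$. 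Your plan contains no substitute for this relative, pullback-stable version of the hard step.
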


In homotopy theory, a factorization as in~\ref{item:cat7p} is called a \emph{path object} for $A$ over $B$.
(Sometimes these are said to be ``very good'' path objects, but they will be the only path objects we consider.)
We will usually denote the acyclic cofibration $A\acof P_B A$ by $r$ (for reflexivity).
Conditions~\ref{item:cat7p} and~\ref{item:cat8p} are similar to the \emph{stable path objects} of~\textcite{warren:thesis,aw:htpy-idtype}, but weaker because we don't (yet) require a functorial global \emph{choice} of path objects.
We will return to this question in \S\ref{sec:catsem}.

\begin{proof}[Proof of \autoref{thm:path-fact}]
  Clearly~\ref{item:cat7p} and~\ref{item:cat8p} are special cases of~\ref{item:cat7} and~\ref{item:cat8}, respectively.
  Conversely, assuming~\ref{item:cat7p} and~\ref{item:cat8p}, suppose given morphisms $A \xto{f} B \fib C$ such that $B\fib C$ and the composite $A\fib C$ are fibrations.
  Define $P_C f$ as the pullback in the following diagram:
  \[\vcenter{\xymatrix{
      A \ar[r]^f \ar@{.>}[dr] \ar[ddr] \ar@(d,ul)[dddr]_{f} & B \ar[dr]\\
      &P_C f\ar[r]^q\ar@{->>}[d] \pullbackcorner &
      P_C B\ar@{->>}[d]\\
      &A\times_C B\ar[r]_{f\times 1_B} \ar@{->>}[d] &
      B\times_C B.\\
      &B
    }}\]
  For~\ref{item:cat7}, it suffices to show that the induced map $i: A\to P_C f$ is an acyclic cofibration.
  This is a simple translation of the proof of~\textcite[{}4.2.1]{gg:idtypewfs} from type theory into category theory, which we now sketch.

  First, we need some basic operations on paths.
  Let $B \xto{r} P_C B \fib B\times_C B$ be a factorization satisfying~\ref{item:cat7p} and~\ref{item:cat8p}.
  Consider the following square of solid arrows:
  \begin{equation}
  \vcenter{\xymatrix@C=3pc{
      P_C B\ar@{=}[r]\ar[d]_{(1,r)} &
      P_C B\ar@{->>}[d]\\
      P_C B \times_B P_C B\ar[r]_-{\pi_1 \times\pi_3} \ar@{.>}[ur]^c &
      B\times_C B
    }}\label{eq:cdef}
  \end{equation}
  where the pullback $P_C B \times_B P_C B$ is over the ``middle'' copies of $B$.
  Then the left-hand map is the pullback of $r$ along the fibration $P_C B\fib B$, hence is an acyclic cofibration.
  Thus there exists a lift which we have called $c$; we think of it as a ``concatenation'' operation on paths.
  The commutativity of the upper triangle in~\eqref{eq:cdef} means that $c(1,r) = 1_{P_C B}$, i.e.\ post-concatenating with a constant path is the identity.
  Pulling~\eqref{eq:cdef} back along $f\times 1: A\times_C B \to B\times_C B$, we obtain
  \[\vcenter{\xymatrix@C=3pc{
      P_C f\ar@{=}[r]\ar[d]_{(1,r)} &
      P_C f\ar@{->>}[d]\\
      P_C f \times_B P_C B\ar[r]_-{\pi_1 \times\pi_3} \ar@{.>}[ur]^c &
      A\times_C B.
    }}\]
  Now the following square of solid arrows commutes:
  \begin{equation}
  \vcenter{\xymatrix{
      B \ar[r]^-r \ar[d]_r &
      P_C B\ar[r] &
      P_{(B\times_C B)} (P_C B)\ar@{->>}[d]\\
      P_C B\ar[rr]_-{(c(r,1),1)} \ar@{.>}[urr]^(.4)\psi &&
      P_C B \times_{(B\times_C B)} P_C B
    }}\label{eq:psidef}
  \end{equation}
  (where we have chosen a particular path object for $P_C B$ over $B\times_C B$).
  Of course, $r$ is an acyclic cofibration, so there exists a lift which we have called $\psi$.
  Commutativity of the lower triangle in~\eqref{eq:psidef} means that $\psi$ is a path from $c(r,1)$ to $1_{P_C B}$, i.e.\ pre-concatenating with a constant path is homotopic to the identity.
  Pulling~\eqref{eq:psidef} back along $f\times 1$, we obtain
  \begin{equation}
  \vcenter{\xymatrix{
      A \ar[r]^-i \ar[d]_i &
      P_C f\ar[r] &
      P_{(A\times_C B)} (P_C f)\ar@{->>}[d]\\
      P_C f\ar[rr]_-{(c(r,1),1)} \ar@{.>}[urr]^(.4)\psi &&
      P_C f \times_{(A\times_C B)} P_C f.
    }}\label{eq:psipb}
  \end{equation}
  Note that by~\ref{item:cat8p}, the pullback of $P_{(B\times_C B)} (P_C B)$ along $f\times 1$ is a valid path object for $P_C f$, so we have denoted it $P_{(A\times_C B)} (P_C f)$.

  To show that $i$ is an acyclic cofibration, we must show that it has the left lifting property with respect to fibrations.
  However, since fibrations are stable under pullback, in fact it suffices to find a lift in any commutative square
  \begin{equation}
  \vcenter{\xymatrix{
      A\ar[r]^d\ar[d]_i &
      D\ar@{->>}[d]\\
      P_C f\ar@{=}[r] &
      P_C f
    }}\label{eq:desired}
  \end{equation}
  with an identity arrow on the bottom.
  In this case, the composite $D \fib P_C f \fib B$ is a fibration, so in the following commutative square of solid arrows:
  \[\vcenter{\xymatrix{
      D\ar@{=}[rr]\ar[d]_{(1,r)} &&
      D\ar@{->>}[d]\\
      D\times_B P_C B\ar[r] \ar@{.>}[urr]^m &
      P_C f \times_B P_C B \ar[r]_-c &
      P_C f
    }}\]
  the left-hand map $(1,r)$ is an acyclic cofibration; thus there exists a lift which we have called $m$.
  Therefore, the following diagram commutes:
  \[\vcenter{\xymatrix@C=3pc{
      A \ar[r]^d\ar[d]_i &
      D\ar@{->>}[d]\\
      P_C f\ar[r]_-{c(r,1)} \ar[ur]|{m(d,q)} &
      P_C f
    }}\]
  (recall that $q: P_C f \to P_C B$ is the map induced by $f$).
  Finally, since $D\fib P_C f$ is a fibration, the left-hand map in the following square is an acyclic cofibration:
  \[\vcenter{\xymatrix@C=3pc{
      D\ar@{=}[r]\ar[d] &
      D\ar@{->>}[d]\\
      D\times_{P_C f} P_{(A\times_C B)}(P_C f)\ar[r]_-{\pi_2} \ar@{.>}[ur]^\tau &
      P_C f
    }}\]
  so we have a lift $\tau$.
  Now the composite
  \[ P_C f \xto{(m(d,q),\psi)} D\times_{P_C f} P_{(A\times_C B)}(P_C f) \too[\tau] D \]
  is a lift in~\eqref{eq:desired}.
  This proves~\ref{item:cat7}.

  Of course, to prove~\ref{item:cat7} it would have sufficed to take $C=1$, but the extra generality is convenient for proving~\ref{item:cat8}.
  Namely, if $f$ is the acyclic cofibration $A\acof B$ in the situation of~\ref{item:cat8}, we construct $P_C f$ as above.
  The entire construction is then preserved by pullback along any map into $C$, using~\ref{item:cat8p} for the factorization $B \acof P_C B \fib B\times_C B$.
  However, since $f$ is an acyclic cofibration, by the ``retract argument'', it is a retract of $A\to P_C f$ in $\sC/C$.
  It follows that any pullback of $f$ along a map $Z\to C$ will also be a retract of the corresponding pullback of $A\acof P_C f$, and hence also an acyclic cofibration.
  This gives~\ref{item:cat8}.
\end{proof}

\begin{rmk}
  This construction of factorizations from path objects is, of course, motivated by the classical \emph{mapping path space} construction in homotopy theory.
  However, in classical homotopy theory this construction does \emph{not} always produce the desired factorizations.
  In fact, even for the classical ``Hurewicz'' model structure on topological spaces~\parencite{strom:the-htpy-cat}, the inclusion of a space $A$ into the mapping path space of $f: A\to B$ need not be a Hurewicz cofibration.

  That particular example can be remedied by using Moore paths~\parencite{br:ffact}, but the point is that even in a model category where all objects are fibrant, the general construction may fail.
  It only works for type-theoretic fibration categories because acyclic cofibrations are stable under pullback along fibrations.
\end{rmk}

We can now describe the two main classes of examples we have in mind.

\begin{eg}\label{thm:syncat}
  Consider a dependent type theory with a unit type, dependent sums, dependent products, and intensional identity types.
  We require the unit type, sums, and products to satisfy judgmental $\eta$-conversion rules, e.g.\ we have $f \eq (\lambda x. f(x))$ for $f:\sprod{x:A} B(x)$, and $w \eq (\fst(w),\snd(w))$ for $w:\ssum{x:A} B(x)$.
  (The symbol $\eq$ denotes judgmental equality.)
  These $\eta$-conversion rules are not really necessary, but they simplify our definitions and proofs.

  Let \sC be the category of contexts (or ``syntactic category'') of such a type theory.
  We define the \emph{fibrations} in \sC to be the closure under isomorphisms of the ``dependent projections'' from any context to an initial segment thereof.
  The $\eta$-conversions imply that every context is isomorphic to one consisting of a single type (namely, the iterated dependent sum of the context, if it is nonempty, and the unit type otherwise), and similarly that every fibration is isomorphic to the projection from a single dependent sum $\sum_{x: A} B(x)$ to the base type $A$.
  The right adjoints $\Pi_g$ come from dependent product types; $\eta$-conversion for dependent products makes them actual adjoints (rather than weak adjoints).

  We obtain~\ref{item:cat7p} and~\ref{item:cat8p} from dependent identity types, following~\textcite{gg:idtypewfs}.
  We can avoid the more complicated ``identity contexts'' of \emph{ibid.}\ by using dependent sums with $\eta$.

  A word about notation: when working internally to type theory, it is natural to write the identity type as simply $(x=y)$.
  However, that can be confusing when also discussing categorical semantics, since we also need to consider ordinary set-theoretic equality of morphisms in general type-theoretic fibration categories, which in the syntactic category is \emph{judgmental} equality.
  Thus, motivated by the path-object interpretation, we write the identity type of a dependent type $(x: A) \pr B(x) \ty$ as
  \begin{equation}
    (x: A),\, (b_1: B(x)),\, (b_2: B(x)) \pr (b_1\id b_2)  \ty.\label{eq:idtype}
  \end{equation}
  (Note that we write ``$A\ty$'' for the judgment that $A$ is a type, which we distinguish from a judgment $A:\type$ that $A$ is a term belonging to some universe type; see \S\ref{sec:tyuniverses}.)
  In the syntactic category,~\eqref{eq:idtype} represents the path object fibration $P_A B \fib B\times_A B$ of a fibration $B\fib A$.
  The map $B\to P_A B$ is the reflexivity term
  \[(x:A),(b:B(x)) \pr (\r_b : b\id b),\]
  which is an acyclic cofibration: this is essentially the content of the elimination rule for the identity type.
  Finally, the whole construction is stable under pullback, because dependent identity types are preserved by substitution.
\end{eg}

The second class of examples comes from homotopy theory.
In these examples, the fibrations are generally closed under retracts, and so~\ref{item:cat7} simply asserts that (acyclic cofibrations, fibrations) is a factorization system.
In this case, the remaining axioms simplify:
\begin{itemize}
\item The right class of a weak factorization system is automatically preserved by pullback, so~\ref{item:cat3} need only assert that such pullbacks exist.
\item If $\Pi_g$ is defined at fibrations, then by adjunction, it takes fibrations as values if and only if $g^*$ preserves acyclic cofibrations.
\end{itemize}
Most examples from homotopy theory have the following additional structure~\parencite{quillen:htpical-alg,hovey:modelcats,hirschhorn:modelcats}.

\begin{defn}
  A \textbf{model category} is a complete and cocomplete category with three classes of maps \cC (cofibrations), \cF (fibrations), and \cW (weak equivalences) such that
  \begin{itemize}
  \item $(\cC\cap\cW, \cF)$ and $(\cC, \cF\cap\cW)$ are weak factorization systems.
  \item If two of $f$, $g$, and $g f$ are in \cW, so is the third.
  \end{itemize}
\end{defn}

In a model category, the maps in $\cC\cap\cW$ are called \emph{acyclic cofibrations}, and similarly the maps in $\cF\cap\cW$ are \emph{acyclic fibrations} (some authors say \emph{trivial} instead of \emph{acyclic}).
We will mostly work only with one weak factorization system, as we have in a type-theoretic fibration category.
But since that weak factorization system behaves like $(\cC\cap\cW, \cF)$ in a model category, we use the names ``acyclic cofibration'' and ``fibration'' for it.

Now we can define our second main class of examples.

\begin{defn}\label{def:ttmc}
  A \textbf{type-theoretic model category} is a model category \sM with the following additional properties.
  \begin{enumerate}
  \item Limits preserve cofibrations.\label{item:m1}
  \item \sM is \emph{right proper}, i.e.\ weak equivalences are preserved by pullback along fibrations.\label{item:m2}
  \item Pullback $g^*$ along any fibration 
    has a right adjoint $\Pi_g$.\label{item:m3}
  \end{enumerate}
\end{defn}

\noindent
Some comments on the various parts of this definition are in order.
\begin{enumerate}
\item Limits preserving cofibrations means that any natural transformation that is a levelwise cofibration induces a cofibration between the limits.
  This is automatic if the cofibrations are exactly the monomorphisms.
  It implies easily that cofibrations are stable under pullback.
\item Right properness is automatic if all objects of \sM are fibrant.
  Moreover, since cofibrations are stable under pullback, if \sM is right proper, then acyclic cofibrations are stable under pullback along fibrations.
  On the other hand, if this latter condition holds, then since any weak equivalence in a model category factors as an acyclic cofibration followed by an acyclic fibration, and acyclic fibrations are always stable under pullback, it follows that \sM is right proper.
\item Of course, if \sM is locally cartesian closed, then all pullback functors have right adjoints.
\end{enumerate}

A \emph{Cisinski model category}~\parencite{cisinski:topos,cisinski:presheaves} is a model structure on a Grothendieck topos whose cofibrations are the monomorphisms.
Therefore, any right proper Cisinski model category is a type-theoretic model category.

\begin{prop}
  If \sM is a type-theoretic model category, then its full subcategory $\sM\f$ of fibrant objects is a type-theoretic fibration category.
\end{prop}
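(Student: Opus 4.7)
The plan is to verify conditions \ref{item:cat1}--\ref{item:cat8} of \autoref{def:ttfc} for $\sM\f$, taking the fibrations of $\sM\f$ to be the fibrations of $\sM$ that happen to lie in $\sM\f$. The key preliminary step, which I would establish first, is to identify the acyclic cofibrations of $\sM\f$ (defined by LLP against these fibrations) with the acyclic cofibrations of $\sM$ whose endpoints are fibrant. The inclusion $\supseteq$ is immediate. For $\subseteq$, given $i:A\to B$ in $\sM\f$ with the lifting property against fibrations between fibrant objects, factor $i = p\circ j$ in $\sM$ with $j$ an acyclic cofibration and $p$ a fibration; then the midpoint is fibrant (as a domain of a fibration into a fibrant object), so $p$ is a fibration in $\sM\f$, and lifting against $p$ exhibits $i$ as a retract of $j$ in the arrow category. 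Since the left class of a WFS is closed under retracts, $i$ is an acyclic cofibration in $\sM$.

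With this in hand, \ref{item:cat1} and \ref{item:cat2a} are immediate, since $1$ is fibrant and fibrations in $\sM$ contain the isomorphisms and the maps to $1$. For \ref{item:cat3}, all pullbacks of fibrations exist in $\sM$, the pullback is a fibration by the WFS, and if the cospan lies in $\sM\f$ then the apex is fibrant (its map to a fibrant object is a fibration, hence composes with the terminal map to give a fibration). For \ref{item:cat4}, $\Pi_g$ exists as a right adjoint on all of $\sM/A$ by~\ref{item:m3}; by right properness~\ref{item:m2} together with stability of cofibrations under pullback from~\ref{item:m1}, the functor $g^*$ preserves acyclic cofibrations, so by adjunction $\Pi_g$ sends fibrations to fibrations, and then values land in $\sM\f$ because the base is fibrant.

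For \ref{item:cat7}, factor $f:A\to B$ in $\sM$ using the WFS $(\cC\cap\cW,\cF)$ as $f = p\circ j$; the midpoint is fibrant because $B$ is, and $j$ is an acyclic cofibration in $\sM\f$ by the characterization above. Finally, \ref{item:cat8} is exactly the statement that acyclic cofibrations of $\sM$ are stable under pullback along fibrations, which again follows from~\ref{item:m1} and~\ref{item:m2}: in the displayed square, $A\acof B$ is an acyclic cofibration in $\sM$ (by the characterization), so in particular a cofibration and a weak equivalence; its pullback $X\to Y$ is a cofibration by~\ref{item:m1} and a weak equivalence by right properness~\ref{item:m2}, hence an acyclic cofibration in $\sM$ between fibrant objects, and so an acyclic cofibration in $\sM\f$.

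The only non-routine point is the preliminary identification of acyclic cofibrations, since the defining lifting condition is taken against a restricted class of fibrations in $\sM\f$. Everything else consists of transporting the relevant closure properties of acyclic cofibrations and fibrations from $\sM$ to $\sM\f$, using fibrancy of the base in each case to verify that intermediate and resulting objects stay fibrant.
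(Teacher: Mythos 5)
Most of your verification matches the paper's proof, and your preliminary retract argument identifying the acyclic cofibrations of $\sM\f$ with the model-categorical acyclic cofibrations between fibrant objects is a welcome explicit version of what the paper leaves implicit. But your treatment of axiom~\ref{item:cat8} contains a genuine gap. That axiom is \emph{not} ``exactly the statement that acyclic cofibrations are stable under pullback along fibrations'': in the displayed diagram the map $X\to Y$ is the pullback of $A\acof B$ along $Y\to B$, and $Y\to B$ is itself the pullback of the \emph{arbitrary} map $Z\to C$ along $B\fib C$, so it need not be a fibration (only $Y\to Z$ and $X\to Z$ are fibrations). Consequently your appeal to right properness~\ref{item:m2} to conclude that $X\to Y$ is a weak equivalence does not apply: right properness only controls pullbacks of weak equivalences along fibrations, and here the relevant pullback is along $Y\to B$ (equivalently, the whole configuration is the pullback along $Z\to C$ of the weak equivalence $A\to B$ between fibrations over $C$).

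The statement is still true, but the correct justification is the one the paper uses: weak equivalences between fibrations over a common base are stable under pullback along \emph{arbitrary} maps, in any model category. Concretely, for any $Z\to C$ the pullback functor $\sM/C \to \sM/Z$ is right Quillen (its left adjoint, postcomposition with $Z\to C$, preserves cofibrations and acyclic cofibrations), so by Ken Brown's lemma it preserves weak equivalences between fibrant objects of $\sM/C$, i.e.\ between fibrations over $C$; applied to $A\to B$ this shows $X\to Y$ is a weak equivalence. Combined with stability of cofibrations under pullback from~\ref{item:m1}, the map $X\to Y$ is an acyclic cofibration of $\sM$ between fibrant objects, and then your preliminary characterization does finish the argument. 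With that substitution your proof is complete and essentially the paper's.
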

\begin{proof}
  By the remarks above, conditions~\ref{item:cat1}--\ref{item:cat3} and~\ref{item:cat7} hold for any model category.
  For~\ref{item:cat4}, it remains to show that $\Pi_g$ preserves fibrations.
  As remarked above, for this it suffices for $g^*$ to preserve acyclic cofibrations, but we have seen that this follows from \autoref{def:ttmc}\ref{item:m1} and~\ref{item:m2}.
  Finally,~\ref{item:cat8} follows since cofibrations are stable under pullback, while weak equivalences between fibrations are always stable under pullback.
\end{proof}

\begin{rmk}\label{thm:ttmc-piquillen}
  In a type-theoretic model category, any fibration $g$ 
  yields a Quillen adjunction $g^* \dashv \Pi_g$.
\end{rmk}

\begin{rmk}
  In~\textcite{ak:htmtt} a \emph{logical model category} was defined to be one where pullback along fibrations preserves acyclic cofibrations and has a right adjoint.
  This suffices to interpret type theory with dependent sums and products, but for identity types we need at least pullback-stability of cofibrations to ensure~\ref{item:cat8}.
  The additional assumption that all limits preserve cofibrations will be useful in \S\ref{sec:invcat}.
\end{rmk}

\begin{egs}
  Here are our basic examples of type-theoretic model categories.
  \begin{itemize}
  \item Any locally cartesian closed category, equipped with the trivial model structure in which the weak equivalences are the isomorphisms and every morphism is a cofibration and a fibration.
    Of course, this sort of category will only interpret \emph{extensional} type theory.
  \item The category of groupoids, with its canonical model structure in which the weak equivalences are the equivalences of categories, the fibrations are the functors with isomorphism-lifting (``isofibrations''), and the cofibrations are the injective-on-objects functors.
    All objects are fibrant, cofibrations are clearly preserved by limits, and isofibrations are exponentiable (although the category of groupoids is not locally cartesian closed).
    A closely related construction gave the first non-extensional set-theoretic model of type theory~\parencite{hs:gpd-typethy}.
    The desire to include this example is the main reason not to assume in the definition of type-theoretic model category that the cofibrations are the monomorphisms (as was done by~\textcite{gk:univlcc}).
  \item The category \sSet of simplicial sets, with its traditional (Quillen) model structure.
    This is a right proper Cisinski model category.
  \item The ``injective model structure'' on any category of simplicial presheaves is also a right proper Cisinski model category.
    In fact, \textcite{cisinski:lccc-rpcmc} has shown that any locally cartesian closed, locally presentable \io-category admits a presentation by a right proper Cisinski model category; an alternative proof can be found in \textcite{gk:univlcc}.
  \end{itemize}
\end{egs}

Of course, we can add additional structure to a type-theoretic fibration category that corresponds to additional type-forming operations.
In this paper we will mostly restrict ourselves to the above structure, which is the minimum necessary to state the univalence axiom and prove that it lifts to inverse diagrams.
However, for applications to canonicity, we will also need a natural numbers type.

\begin{defn}\label{def:shnno}
  A \textbf{strong homotopy natural numbers object} (\shnno) in a type-theoretic fibration category \sC is an object $N$ together with morphisms $o:1\to N$ and $s:N\to N$ such that:
  \begin{itemize}
  \item for any fibration $p:B\fib N$ and morphisms $o':1\to B$ and $s':B\to B$ such that $po'=o$ and $ps'=sp$, there exists a section $f:N\to B$ (meaning $p f = 1_N$) such that $f o = o'$ and $f s = s' f$.
  \end{itemize}
\end{defn}

The adjective ``strong homotopy'' indicates that this is a weakening of the usual category-theoretic notion of natural numbers object, but only up to \emph{coherent} homotopy.

\begin{eg}
  If a type theory contains a natural numbers type, then its syntactic category contains a \shnno.
  The universal property is exactly the dependent eliminator (proof by induction).
\end{eg}

\begin{eg}
  Suppose \sC is a type-theoretic model category in which the countable coproduct $\sum_{n\in\mathbb{N}} 1$ of copies of the terminal object is fibrant (such as groupoids or simplicial sets).
  Then we can define $N$ to be this coproduct, with $o$ the inclusion of the $0^{\mathrm{th}}$ summand and $s$ taking the $n^{\mathrm{th}}$ summand to the $(n+1)^{\mathrm{st}}$.
  And given $p:B\fib N$ with $o'$ and $s'$, we can simply define $f:\sum_{n\in\mathbb{N}} 1 \to B$ to act on the $n^{\mathrm{th}}$ summand by $(s')^n\circ o'$.
  Thus, $\sC\f$ contains a \shnno.

  If $\sum_{n\in\mathbb{N}} 1$ is not fibrant, then we need to fibrantly replace it in a controlled way.
  We will explain how to do this for more general inductive types and higher inductive types in \textcite{ls:hits}.
\end{eg}

\section{Homotopy theory in type-theoretic fibration categories}
\label{sec:hothy-fibcat}

In this section we show that type-theoretic fibration categories enjoy many of the same nice properties as type-theoretic model categories.
It is well-known that path objects suffice to define many notions of homotopy theory, but they are not always well-behaved without cofibrancy assumptions, which are unavailable in a fibration category.
However, the stability properties of acyclic cofibrations in a type-theoretic fibration category can frequently serve as a substitute.

We define a \emph{(right) homotopy} between two maps $f,g: A\toto B$ to be a lifting of $A\to B\times B$ to a path object $P B$ for $B$.
We denote a homotopy by $H: f\sim g$.
Strictly speaking, this depends on a choice of path object for $B$.
However, since $B\to P B$ is always an acyclic cofibration, every path object factors through every other, so the homotopy \emph{relation} is independent of this choice.

The morphism $c$ defined in the proof of \autoref{thm:path-fact} ``concatenates'' homotopies, so that if $H: f\sim g$ and $K: g\sim h$, then $c(H,K): f\sim h$.
Similarly, for any $f$ we have $r f : f \sim f$, while by lifting in the square
\begin{equation}
\vcenter{\xymatrix@C=3pc{
    B\ar[r]^r\ar[d]_r &
    P B\ar@{->>}[d]^{(\pi_1,\pi_2)}\\
    P B\ar[r]_-{(\pi_2,\pi_1)} \ar@{.>}[ur]^v&
    B\times B
  }}\label{eq:inversion}
\end{equation}
we obtain an inversion morphism on paths, so that if $H: f\sim g$ then $v H: g\sim f$.
Moreover, $v H$ is actually a homotopy inverse of $H$ for concatenation, in the sense that $c(v(H),H) \sim r f$.
We can see this by lifting in the following square:
\begin{equation}
  \label{eq:oppconcat}
  \vcenter{\xymatrix{
      B\ar[r]\ar@{>->}[d]_r &
      P B\ar[r] &
      P_{B\times B}(P B)\ar@{->>}[d]\\
      P B\ar[rr]_-{(c(v,1),r)} \ar@{.>}[urr] & &
      P B\times_{B\times B} P B.
      }}
\end{equation}
These operations are the lowest levels of an ``algebraic'' weak $\infty$-groupoidal structure on any object in a type-theoretic fibration category~\parencite{bg:type-wkom,pll:wkom-type}.

Now if $f,g: A\toto B$ are two morphisms and $H: f\sim g$, then for any $k: C\to A$ the composite $H k$ is a homotopy $f k\sim g k$.
On the other side, for any morphism $k: B\to C$ we can lift in the square
\begin{equation}
\vcenter{\xymatrix{
    B\ar[r]^k \ar[d]_r &
    C\ar[r] &
    P C\ar@{->>}[d]\\
    P B\ar[r] \ar@{.>}[urr]^(.4){\map_k} &
    B\times B\ar[r]_{k\times k} &
    C\times C.
  }}\label{eq:mapdef}
\end{equation}
Then any homotopy $H: f\sim g$ yields a homotopy $\map_k H : k f \sim k g$.%
\footnote{The notation $\ap_k$ can be read either as the \underline{a}ction on \underline{p}aths of $k$ or as the \underline{ap}plication of $k$ to a path.}
The ``operation'' \map respects concatenation up to homotopy, in the sense that
\begin{equation}
  \map_k(c(H,K)) \sim c(\map_k(H),\map_k(K)).\label{eq:mapconcat}
\end{equation}
We can see this by lifting in the square
\begin{equation}
  \label{eq:mapconcatsq}
  \vcenter{\xymatrix@C=3pc{
      P B\ar[r]^-{\map_k}\ar[d]_-{(1,r)} &
      P C\ar[r]^-r &
      P_{C\times C} (P C)\ar@{->>}[d]\\
      P B \times_{B} P B \ar[rr]_-{(\map_k c, c (\map_k,\map_k))} \ar@{.>}[urr] & &
      P C \times_C P C.
    }}
\end{equation}
This is the first level of another hierarchy of coherences here making $k$ a weak $\infty$-groupoid functor.
Finally, we note that \map is functorial with respect to composition of morphisms as well, in that
\begin{equation}
  \label{eq:mapcompose}
  \map_{k_2} \map_{k_1} (H) \sim \map_{k_2 k_1}(H).
\end{equation}
We can see this by lifting in the square
\begin{equation}
  \label{eq:mapcomposesq}
  \vcenter{\xymatrix@C=2pc{
      B\ar[r]^-{k_2 k_1}\ar[d]_r &
      D\ar[r] & P D \ar[r] &
      P_{D\times D}(P D)\ar@{->>}[d]\\
      P B \ar[rrr]_-{(\map_{k_2} \map_{k_1}, \map_{k_2 k_1})} \ar@{.>}[urrr] & &&
      P D \times_{D\times D} P D.
      }}
\end{equation}

We define a map $f: A\to B$ to be a \emph{homotopy equivalence} if there is a map $g: B\to A$ and homotopies $g f\sim 1_A$ and $f g \sim 1_B$.
As observed by \textcite{gg:idtypewfs}, the factorizations constructed in \autoref{thm:path-fact} yield a characterization of acyclic cofibrations as certain special homotopy equivalences.

\begin{lem}\label{thm:acof}
  A morphism $f: A\to B$ in a type-theoretic fibration category is an acyclic cofibration if and only if there exists a morphism $g: B\to A$ such that $g f = 1_A$, and a homotopy $H: f g \sim 1_B$ such that $H f$ is a constant homotopy (i.e.\ factors through $B\acof P B$).
\end{lem}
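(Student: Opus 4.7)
The plan is to handle the two directions separately, using in each case the lifting properties that a type-theoretic fibration category supplies.

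For the ($\Rightarrow$) direction, assume $f$ is an acyclic cofibration. Since all objects are fibrant, $A\fib 1$ is a fibration, so lifting $1_A$ along $f$ against $A\fib 1$ produces $g\colon B\to A$ with $gf=1_A$. The path-object projection $(\pi_1,\pi_2)\colon P B\fib B\times B$ is also a fibration, and the square with top arrow $rf\colon A\to P B$, left arrow $f$, and bottom arrow $(fg,1_B)\colon B\to B\times B$ commutes because $(\pi_1,\pi_2)\,rf=(f,f)=(fg,1_B)\,f$ (using $gf=1_A$). Any lift yields $H\colon B\to P B$ with $\pi_1 H=fg$, $\pi_2 H=1_B$, and $Hf=rf$: the first two make $H$ a homotopy $fg\sim 1_B$, and the last is exactly the required constancy.

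For the ($\Leftarrow$) direction, assume the data $(g,H)$. The plan is to exhibit $f$ as a retract (in the arrow category) of the acyclic cofibration $i\colon A\acof P_1 f$ supplied by \autoref{thm:path-fact} applied with $C=1$, from which the conclusion follows because a class of maps defined by a left lifting property is automatically closed under retracts. Recall that $P_1 f$ is the pullback of $(f\times 1_B)\colon A\times B\to B\times B$ along $(\pi_1,\pi_2)\colon P B\to B\times B$, that $i$ is the map induced by the pair $((1_A,f),\,rf)$, and that the fibration $\pi\colon P_1 f\fib B$ is the composite $P_1 f\to A\times B\to B$.

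I will define a section $\sigma\colon B\to P_1 f$ via the pullback property using the pair $((g,1_B),\,H)$, which is compatible because $(f\times 1_B)(g,1_B)=(fg,1_B)=(\pi_1 H,\pi_2 H)$. Then $\pi\sigma=1_B$ is immediate, and the crucial identity $\sigma f=i$ is checked componentwise: the $A\times B$-component is $(g,1_B)f=(gf,f)=(1_A,f)$, and the $P B$-component is $Hf=rf$, where the constancy hypothesis is used \emph{exactly} here. This produces the retract diagram exhibiting $f$ as a retract of $i$, so $f$ is an acyclic cofibration. The construction is forced by the shape of the pullback $P_1 f$; the only subtle point, and really the whole content of the lemma, is that the constancy condition $Hf=rf$ is precisely what is needed to upgrade a mere deformation-retraction datum into a retract of the canonical acyclic cofibration $i$.
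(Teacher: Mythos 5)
Your proof is correct and takes essentially the same route as the paper's: the data $(g,H)$ with $gf=1_A$ and $Hf=rf$ is exactly a lift of the square with top $i\colon A\acof P f$ and bottom $1_B$ against the mapping-path-space fibration $Pf\fib B$, so one direction follows from the lifting property of the acyclic cofibration $f$ (you obtain $g$ and $H$ by two elementary lifts instead of one lift against $Pf\fib B$, a harmless variant), and the converse is the retract argument exhibiting $f$ as a retract of $i$, which you simply spell out explicitly.
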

\begin{proof}
  Such a $g$ and $H$ together precisely form a lift in the following square:
  \[\vcenter{\xymatrix{
      A\ar[r]\ar[d]_f &
      P f\ar@{->>}[d] \\
      B\ar@{=}[r] \ar@{.>}[ur]&
      B.
    }}\]
  Since $P f \fib B$ is a fibration, if $f$ is an acyclic cofibration then such a lift certainly exists.
  Conversely, if such a lift exists, then by the retract argument, $f$ is a retract of the acyclic cofibration $A \acof P f$, hence is also an acyclic cofibration.
\end{proof}

Note that the statement of \autoref{thm:acof} is true in all model categories, but only under cofibrancy assumptions on $A$ and $B$.

We would now like to show that type-theoretic fibration categories, while not model categories, do fit into a well-known abstract framework for homotopy theory: the \emph{categories of fibrant objects} of \textcite{brown:ahtgsc}.
By definition, this is a category satisfying \autoref{def:ttfc}\ref{item:cat1}--\ref{item:cat3} and equipped with a further subcategory \cW of ``weak equivalences'' such that:
\begin{itemize}
\item \cW contains all isomorphisms.
\item \cW satisfies ``2-out-of-3'': if two of $f$, $g$, and $g f$ are in \cW, so is the third.
\item Any diagonal $B\to B\times B$ factors as a map in \cW followed by a fibration.
\item Any pullback of a fibration in \cW (an ``acyclic fibration'') is also in \cW.
\end{itemize}
In the absence of any other data in a type-theoretic fibration category, it is natural to choose \cW to be the homotopy equivalences.
It is easy to verify that these contain all isomorphisms and satisfy 2-out-of-3.
The factorization axiom follows from~\ref{item:cat7p} and the observation that acyclic cofibrations are homotopy equivalences.

However, the final axiom is somewhat more difficult to prove.
We begin with the following ``cancellation'' property of acyclic cofibrations.

\begin{lem}\label{thm:acof-cancel}
  If $g f$ and $g$ are acyclic cofibrations in a type-theoretic fibration category, then so is $f$.
\end{lem}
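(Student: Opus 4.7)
My plan is to show that $f$ has the left lifting property against all fibrations, directly from the defining lifting properties of $g$ and $gf$, exploiting the fact that in a type-theoretic fibration category \emph{all} objects are fibrant.

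Suppose we are given a commutative square with $f:A\to B$ on the left and a fibration $p:X\fib Y$ on the right, say with top arrow $u:A\to X$ and bottom arrow $v:B\to Y$. First I would extend $v$ along $g$: since $Y$ is fibrant (all objects are, by the convention following \autoref{def:ttfc}), the unique map $Y\to 1$ is a fibration, so the acyclic cofibration $g:B\acof C$ lifts against it, producing $w:C\to Y$ with $wg=v$. Next, I would form the square with top $u:A\to X$ and bottom $w:C\to Y$, over the acyclic cofibration $gf:A\acof C$; it commutes because $w(gf)=(wg)f=vf=pu$. Since $gf$ is an acyclic cofibration and $p$ is a fibration, there is a lift $\ell':C\to X$ with $\ell'(gf)=u$ and $p\ell'=w$. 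Finally, I would set $\ell:=\ell'g:B\to X$. Then $\ell f=\ell'gf=u$ and $p\ell=p\ell'g=wg=v$, so $\ell$ is the required lift, showing $f$ is an acyclic cofibration.

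There is no serious obstacle here: the only point to check is that the global fibrancy assumption baked into \autoref{def:ttfc} allows us to extend the bottom map $v$ to $C$ using $g$'s lifting property, which is exactly what converts the $gf$-square into a $f$-square after postcomposition with $g$. In other words, the proof is just the standard two-out-of-three-style cancellation for left lifting classes, made available in this setting precisely because the target of every fibration is itself fibrant.
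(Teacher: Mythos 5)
Your proof is correct, but it takes a genuinely different route from the paper's. The paper deduces cancellation from \autoref{thm:acof}: it extracts retractions $h$ and $k$ of $g$ and of $gf$ together with homotopies $H:gh\sim 1_C$ and $K:gfk\sim 1_C$ whose restrictions $Hg$ and $Kgf$ are constant, sets $\ell=kg$, and then computes with $\map_h$ to check that the homotopy $f\ell\sim 1_B$ is constant on $f$, so that the converse direction of \autoref{thm:acof} applies. Your argument bypasses that homotopy calculus entirely: it is the bare lifting-property argument made available by the standing assumption that every object is fibrant (\autoref{def:ttfc}\ref{item:cat2a}) — extend the bottom map of the square along $g$ using the fibration $Y\fib 1$, solve the resulting problem for the acyclic cofibration $gf$, and restrict the solution along $g$; equivalently, fibrancy of $B$ yields $r:C\to B$ with $rg=1_B$, exhibiting $f$ as a retract of $gf$ in the arrow category, and left lifting classes are closed under retracts. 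Every step checks out, and the triangle identities $\ell f=\ell'gf=u$ and $p\ell=wg=v$ are verified correctly, so the proof stands. What your route buys: it uses only the terminal object, the definition of acyclic cofibration, and the fact that all maps to $1$ are fibrations, so it proves the lemma in a far weaker setting than a full type-theoretic fibration category, with no appeal to path objects, factorizations, or the $\ap$-operations. Notably, read against the remark that follows this lemma in the paper (which derives axiom \ref{item:cat8} of \autoref{def:ttfc} from the cancellation statement plus axioms \ref{item:cat1}--\ref{item:cat7}), your argument would make axiom \ref{item:cat8} derivable from the remaining axioms — a stronger conclusion than the paper records, and worth stating and double-checking explicitly, though I see no gap in your reasoning. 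What the paper's longer route buys instead is a demonstration of the characterization in \autoref{thm:acof} and of the concatenation/inversion/$\ap$ toolkit, which are the techniques reused throughout the rest of \S\ref{sec:hothy-fibcat}.
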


Note that this holds in any model category whose cofibrations are the monomorphisms, since monomorphisms have this cancellation property and weak equivalences have the 2-out-of-3 property.

\begin{proof}
  Suppose $f: A\to B$ and $g: B\to C$ with $g f$ and $g$ acyclic cofibrations.
  By \autoref{thm:acof}, we have $h: C \to B$ and $k: C\to A$ such that $h g = 1_B$ and $k g f = 1_A$, and homotopies $H: g h \sim 1_C$ and $K: g f k \sim 1_C$ such that $H g$ and $K g f$ are constant.

  Define $\ell = k g : B \to A$; then $\ell f = k g f = 1_A$.
  Let
  \[L = \map_h K g \;:\; f \ell = f k g = h g f k g \sim h g = 1_B.\]
  Then $L f = \map_h K g f$, but $K g f$ is constant and $\map_h$ preserves constancy of homotopies.
  Thus $L f$ is constant, so by the other direction of \autoref{thm:acof}, $f$ is an acyclic cofibration.
\end{proof}

\begin{rmk}
  One of the referees pointed out that in the presence of axioms~\ref{item:cat1}--\ref{item:cat7} of a type-theoretic fibration category, the statement of \autoref{thm:acof-cancel} is equivalent to axiom~\ref{item:cat8}.
  Recall that axiom~\ref{item:cat8} says that given the following diagram
  \[\vcenter{\xymatrix@R=0.7pc{
      X\ar[r]\ar[dd] \pullbackcorner &
      Y\ar[r]\ar[dd] \pullbackcorner &
      Z\ar[dd]\\\\
      A\ar@{>->}[r]^{\sim} \ar@{->>}@(dr,dl)[rr] &
      B\ar@{->>}[r] &
      C,\\
      & &
    }}\]
  the map $X\to Y$ is also an acyclic cofibration.
  To deduce this from \autoref{thm:acof-cancel}, by factorization we may consider separately the cases when $Z\to C$ is a fibration and when it is an acyclic cofibration.
  When $Z\to C$ is a fibration, then so is $Y\to B$, and hence by~\ref{item:cat4} $X\to Y$ is an acyclic cofibration.
  When $Z\to C$ is an acyclic cofibration, then by~\ref{item:cat4} so are $Y\to B$ and $X\to A$, hence also $X\to B$.
  Thus, the statement of \autoref{thm:acof-cancel} implies that $X\to Y$ is also an acyclic cofibration.
\end{rmk}

The following lemma says that the two possible meanings of ``fiberwise homotopy'' are the same.

\begin{lem}\label{thm:fibhtpy}
  Suppose $p: A\fib C$ and $q: B\fib C$ are fibrations, and $f,g: A\toto B$ are morphisms in $(\sC/C)\f$.
  Then $f\sim g$ in $(\sC/C)\f$ if and only if there is a homotopy $H: f\sim g$ in \sC such that $\map_q H = r p$.
\end{lem}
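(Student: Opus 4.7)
The plan is to choose a specific path object $PB$ for $B$ in $\sC$ built from the given slice path object $P_CB$, together with a carefully chosen $\map_q$, and then argue both directions by lifting properties. Using axiom~\ref{item:cat7}, I factor the composite $P_CB \fib B \times_C B \hookrightarrow B \times B$ as $\iota\colon P_CB \acof PB \fib B \times B$; then $PB$ is a path object for $B$ in $\sC$ (since $B \acof P_CB \acof PB$) and $\iota$ is an acyclic cofibration. I choose $\map_q\colon PB \to PC$ by lifting in the square whose left edge is $\iota$, right edge is $PC \fib C \times C$, top is $r\circ (P_CB \to C)$, and bottom is $(q\times q)\circ (PB \to B\times B)$. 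This square commutes because both composites into $C\times C$ land in the diagonal (the bottom one because $P_CB \to B \times B$ factors through $B\times_C B$). The key feature of this choice is the identity $\map_q \iota = r\circ (P_CB \to C)$.

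The forward direction is now immediate: given a slice homotopy $H'\colon A \to P_CB$ (in particular $(P_CB \to C)H' = p$), set $H := \iota H'$; then $\map_q H = \map_q\iota H' = r\circ (P_CB\to C)\circ H' = r p$.

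For the reverse direction, suppose $H\colon A \to PB$ satisfies $\map_q H = r p$. The key observation is that both $P_CB$ and $PB$ carry natural structure maps to $B\times B \times C$ --- to $B\times B$ via the endpoint projections, and to $C$ via $(P_CB \to C)$ on $P_CB$ and $\pi_1\circ\map_q$ on $PB$ --- and $\iota$ respects these: compatibility with $B\times B$ holds by construction of the factorization, and compatibility with $C$ is exactly the identity $\map_q\iota = r(P_CB \to C)$, since $\pi_1 r = \idfunc_C$. Because lifts in $\sC$ of squares whose objects have specified maps to a common base are automatically compatible with those maps, $\iota$ remains an acyclic cofibration in the slice $\sC/(B\times B\times C)$. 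Running the argument of \autoref{thm:acof} inside this slice --- the factorization of $\iota$ as an acyclic cofibration followed by a fibration exists over $B\times B\times C$ since both $P_CB$ and $PB$ already carry compatible maps there, and the requisite lifting property holds in the slice --- produces a retraction $\tau\colon PB \to P_CB$ that preserves both the map to $B\times B$ and the map to $C$. Set $H' := \tau H$. Then $(P_CB \to B\times B)H' = (PB \to B\times B)H = (f,g)$, and because $B\times_C B \hookrightarrow B \times B$ is a monomorphism (being the pullback of the diagonal $C \to C\times C$), $H'$ uniquely lifts $(f,g)$ through $P_CB \fib B\times_C B$. Moreover $(P_CB \to C)H' = \pi_1\map_q H = \pi_1 r p = p$, which is exactly where the hypothesis $\map_q H = r p$ is used. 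Hence $H'$ is a slice homotopy $f \sim g$.

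The main technical subtlety is the application of \autoref{thm:acof} in $\sC/(B\times B\times C)$, since $P_CB$ is not in general fibrant over $B\times B\times C$. This is handled by noting that the proof of \autoref{thm:acof} only constructs factorizations and lifts relative to the objects at hand; all these constructions remain inside $\sC/(B\times B\times C)$ as soon as $\iota$ itself is, so the retraction produced respects the structure maps even though the slice is not a full type-theoretic fibration category. Without this observation --- which is what ties the two apparently unrelated compatibility conditions (endpoints and $C$-projection) into a single application of \autoref{thm:acof} --- one would have to argue the endpoint compatibility and the $C$-compatibility of $\tau H$ separately, which seems considerably more awkward.
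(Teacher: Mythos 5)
Your ``only if'' direction is correct: factoring $P_C B\to B\times B$ as $\iota\colon P_C B\acof PB\fib B\times B$ gives a genuine path object for $B$, your lifted $\map_q$ is a legitimate choice (both triangles of the defining square hold), and $H=\iota H'$ does satisfy $\map_q H=rp$. The reverse direction, however, has a fatal gap: the retraction $\tau\colon PB\to P_C B$ you ask for cannot exist in general. Compatibility with the endpoint maps means $(P_C B\to B\times B)\circ\tau=(PB\to B\times B)$; but $P_C B\to B\times B$ factors through the monomorphism $B\times_C B\into B\times B$, so this would force $PB\to B\times B$ to factor through $B\times_C B$ as well, i.e.\ $q\pi_1=q\pi_2$ on $PB$. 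That fails already when $q=1_C$, where it would say a genuine path object of $C$ has equal endpoint projections, contradicting the existence of distinct homotopic maps into $C$ (e.g.\ in Kan complexes or in the syntactic category). Consequently your appeal to the argument of \autoref{thm:acof} in $\sC/(B\times B\times C)$ cannot be repaired by the remark about lifts being automatically compatible with maps to the base: the mapping-path-space construction there needs $PB$ to be fibrant over $B\times B\times C$ so that the relevant pullbacks exist and the map one lifts against is a fibration, and since the conclusion is impossible, some such step must indeed fail.

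A telling symptom is that your hypothesis $\map_q H=rp$ is never genuinely used: since $\map_q$ lies over $C\times C$, one has $\pi_1\map_q H=q\,\pi_1 H=qf=p$ for \emph{any} homotopy $H\colon f\sim g$, so your final verification would go through for an arbitrary homotopy in $\sC$, and the argument would prove that every homotopy between maps over $C$ is a fiberwise homotopy --- which is false and is exactly what the side condition is there to exclude. The paper's proof uses the hypothesis essentially: from $\map_q H=rp$ it factors $H$ through the pullback $Q$ of the fibration $P'B\fib PC$ along $C\acof PC$ (the part of the path object lying over constant paths in $C$), shows that $B\to Q$ is an acyclic cofibration using the cancellation property (\autoref{thm:acof-cancel}), and then lifts against the fibration $P_C B\fib B\times_C B$ to get a map $Q\to P_C B$ inducing the slice homotopy. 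Some restriction to this ``constant in $C$'' part of the path object is unavoidable; a global retraction $PB\to P_C B$ over the endpoints is not available. (A lesser point: the converse should be proved for an arbitrary given $PB$, $PC$ and $\map_q$, as in the paper, since later applications such as \autoref{thm:afib} feed in specific path objects different from yours; your argument as written only addresses your own choice.)
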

\begin{proof}
  Suppose first that $f\sim g$ in $(\sC/C)\f$, via some homotopy $H: A\to P_C B$ for some path object $P_C B$.
  Note that $P_C B$ is not a path object for $B$ in $\sC$, since the composite $P_C B \fib B\times_C B \to B\times B$ will not generally be a fibration.
  However, we can still make this homotopy into a homotopy in \sC as follows.
  Choose some path object $P C$ for $C$.
  Using~\ref{item:cat7}, factor the induced map $B\to (P C) \times_{(C\times C)} (B\times B)$ as an acyclic cofibration followed by a fibration, and call the middle object $P' B$.
  Then the composite
  \[P' B \fib  P C \times_{C\times C} (B\times B) \fib B\times B\]
  is a fibration, so $P' B$ is a path object for $B$.
  Moreover, the composite fibration $P' B \to P C$ is a lift in the square~\eqref{eq:mapdef}, so we can call it $\map_q$.

  Now the composites $P_C B \fib B\times_C B \to B\times B$ and $P_C B \to C \to P C$ agree in $C\times C$, and hence induce the bottom map in the following commutative square:
  \begin{equation}
  \vcenter{\xymatrix{
      B\ar[r]\ar@{>->}[d]_r &
      P' B\ar@{->>}[d]\\
      P_C B\ar[r] \ar@{.>}[ur] &
      P C \times_{C\times C} (B\times B).
    }}\label{eq:fhtobase}
  \end{equation}
  Since $B\to P_C B$ is an acyclic fibration, we have a lift as shown.
  Composing with this lift makes a homotopy $f\sim g$ in $(\sC/C)\f$ into a homotopy $H: f\sim g$ in \sC, and as the bottom map in~\eqref{eq:fhtobase} factors through $C$, we have $\map_q H = r p$.

  Conversely, suppose given $H: f\sim g$ in \sC with $\map_q H = r p$, for particular chosen path objects $P B$ and $P C$ and a morphism $\map_q$ as in~\eqref{eq:mapdef}.
  Starting from this path object $P C$, define $P'B$ as above.
  Now by lifting in the square
  \[\vcenter{\xymatrix{
      B\ar[r]\ar[d] &
      P' B\ar[d]\\
      P B\ar[r]_{\map_q} \ar@{.>}[ur] &
      P C
    }}\]
  we see that $P B$ factors through $P' B$ by a map over $P C$.
  Thus we may assume $H$ to be a homotopy $A \to P' B$ which becomes constant in $P C$.

  Now define $Q$ to be the pullback
  \[\vcenter{\xymatrix{
      Q\ar@{>->}[r]^\sim\ar[d] \pullbackcorner &
      P' B\ar@{->>}[d]\\
      C\ar@{>->}[r]_\sim &
      P C
    }}\]
  so that $H$ induces a map $H': A \to Q$.
  Then $Q \acof P' B$ is an acyclic cofibration, as it is the pullback of $C\acof P C$ along the fibration $P' B \fib P C$.
  Since $B\acof P' B$ is also an acyclic cofibration, by \autoref{thm:acof-cancel} the induced map $B\to Q$ is also an acyclic cofibration.

  Let $P_C B$ be a path object for $B$ in $(\sC/C)\f$.
  Then by lifting in the square
  \[\vcenter{\xymatrix{
      B\ar[r]\ar@{>->}[d]_\sim &
      P_C B\ar@{->>}[d]\\
      Q \ar[r] \ar@{.>}[ur] &
      B\times_C B
    }}\]
  we obtain a map $Q \to P_C B$ over $B\times_C B$.
  Therefore, $H'$ induces a homotopy $f\sim g$ in $(\sC/C)\f$, using the path object $P_C B$.
\end{proof}

Finally, we can characterize the acyclic fibrations, dually to \autoref{thm:acof}.

\begin{lem}[The Acyclic Fibration Lemma]\label{thm:afib}
  A fibration $f: B\fib A$ in a type-theoretic fibration category is a homotopy equivalence if and only if there is a morphism $g: A\to B$ such that $f g = 1_A$ and $g f \sim 1_B$ in $(\sC/A)\f$.
\end{lem}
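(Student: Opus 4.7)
The $\Leftarrow$ direction is immediate from \autoref{thm:fibhtpy}: a fiberwise homotopy $gf\sim_A 1_B$ gives $gf\sim 1_B$ in $\sC$, and $fg=1_A$ trivially implies $fg\sim 1_A$, so $f$ is a homotopy equivalence. For the $\Rightarrow$ direction, fix a homotopy inverse $g':A\to B$ with homotopies $H:fg'\sim 1_A$ and $K:g'f\sim 1_B$; the plan is to strictify $g'$ to an honest section and then massage $K$ into a fiberwise homotopy.

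For the strictification, observe that $\pi_1:PA\to A$ is a fibration, being the composite of the fibration $(\pi_1,\pi_2):PA\fib A\times A$ with the projection $A\times A\fib A$. Form the pullback $B\times_A PA$ over $f$ and $\pi_1$; the map $\iota:B\to B\times_A PA$, $b\mapsto(b,r(fb))$, is an acyclic cofibration, being the pullback of $r:A\acof PA$ along the fibration $B\times_A PA\fib PA$. Choose a path object $P'B$ with $\map_f:P'B\fib PA$ a fibration (constructed as in the proof of \autoref{thm:fibhtpy}); the same construction shows that $(\pi_1,\map_f):P'B\fib B\times_A PA$ is a fibration. Lifting along this fibration with $r:B\acof P'B$ on the left and the identity of $B\times_A PA$ on the bottom (composed with $\iota$) yields $N:B\times_A PA\to P'B$ satisfying $\pi_1 N=\pi_1$, $\map_f N=\pi_2$, and $N\iota=r$. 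Setting $g:=\pi_2\,N(g',H)$ gives $fg=\pi_2 H=1_A$, while $\Phi:=N(g',H):A\to P'B$ is a homotopy from $g'$ to $g$ lying over $H$.

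For the fiberwise homotopy I construct a homotopy $gf\sim 1_B$ whose projection to $A$ is the constant $rf$, so that \autoref{thm:fibhtpy} applies. Concatenating $v(\Phi f)$ with $K$ produces a candidate $L:gf\sim 1_B$ in $P'B$ whose projection is the loop $\omega:=c(v(Hf),\map_f K)$ at $f$. Applying $N$ to $(1_B,v\omega)$ gives $\Psi:B\to P'B$, a homotopy $1_B\sim\sigma$ projecting to $v\omega$; then $c(L,\Psi):gf\sim\sigma$ projects to $c(\omega,v\omega)$, which by an analog of \eqref{eq:oppconcat} (together with $v(v\omega)\sim\omega$) is homotopic in $PA$ to $rf$. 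The main obstacle is the final step: upgrading this ``projection homotopic to $rf$'' into strict equality with $rf$. This is handled by one more path-lifting argument along the fibration $\map_f:P'B\fib PA$, combined with a further path-object replacement analogous to the construction of $P'B$ from $PB$, to obtain a final homotopy $L':gf\sim 1_B$ with $\map_f L'=rf$. Then \autoref{thm:fibhtpy} delivers the desired fiberwise homotopy $gf\sim_A 1_B$.
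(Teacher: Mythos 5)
Your $\Leftarrow$ direction and the strictification of $g'$ to an honest section $g$ (via the lifted-path operator $N$, a refinement of the paper's transport lift $t$) are fine, and your overall skeleton---strictify, correct the homotopy so that its $\map_f$-image becomes constant, then conclude by \autoref{thm:fibhtpy}---is the paper's. The gap is in the correction step. Your candidate is $c(L,\Psi):gf\sim\sigma$ with $\Psi=N(1_B,v\omega)$, so $\sigma=\pi_2 N(1_B,v\omega)$ is the transport of $1_B$ along $v\omega$. Post-concatenating with $\Psi$ does cancel the projection, but it moves the endpoint: the resulting homotopy ends at $\sigma$, not at $1_B$. The two-dimensional path-lifting you invoke at the end (lifting against the fibration $P'B\fib PA\times_{A\times A}(B\times B)$, as in the paper's last square) keeps the endpoints in $B\times B$ fixed while changing the $\map_f$-image, so at best it yields $L':gf\sim\sigma$ with $\map_f L'=rf$, i.e.\ a homotopy $gf\sim\sigma$ in $(\sC/A)\f$---not the claimed $L':gf\sim 1_B$. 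Nothing in your argument identifies $\sigma$ with $1_B$ over $A$: that would follow if the obstruction loop $\omega=c(v(Hf),\map_f K)$ were null-homotopic rel endpoints, but for arbitrary choices of $H$ and $K$ it is not, and knowing $\sigma\sim 1_B$ in $(\sC/A)\f$ is essentially the content of the lemma itself (it is immediate once one knows $f$ is an acyclic fibration, but you may not assume that).

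The paper avoids this by correcting with a genuine loop rather than a transport path: writing $M:gf\sim 1_B$ for the homotopy obtained after strictification, it pre-concatenates with $\map_g\map_f v(M)$, whose endpoints are both $gf$ strictly, because $\map_f$, $\map_g$, and $v$ preserve (or swap) endpoints on the nose and $fg=1_A$ holds strictly, so $gfgf=gf$. The corrected homotopy $c(\map_g\map_f v(M),M)$ therefore still runs from $gf$ to $1_B$, its $\map_f$-image is homotopic over $A\times A$ to $rf$ by \eqref{eq:mapconcat}, \eqref{eq:mapcompose}, and \eqref{eq:oppconcat}, and the endpoint-preserving two-dimensional lift then finishes via \autoref{thm:fibhtpy}. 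To salvage your variant you would need to replace $\Psi$ by a loop at $1_B$ or at $gf$ whose $\map_f$-image is $v\omega$ up to homotopy over $A\times A$; applying $\map_g$ as the paper does is exactly how to manufacture one. (A smaller point: your assertions that $\map_f$ of a concatenation or inversion equals the concatenation or inversion of the images hold only up to homotopy---via \eqref{eq:mapconcat} and an unproved analogue for $v$---so the projection of $L$ is only $\omega$ up to homotopy; that is repairable, unlike the endpoint problem.)
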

\begin{proof}
  The ``if'' direction follows from the ``only if'' direction of \autoref{thm:fibhtpy}.
  Conversely, suppose $f: B\fib A$ is a fibration and a homotopy equivalence, with a map $h: A\to B$ and homotopies $f h \sim 1_A$ and $h f \sim 1_B$.
  Choose a path object $P A$ for $A$.
  As $f$ is a fibration, the left-hand map in the square
  \[\vcenter{\xymatrix{
      B \ar@{=}[rrr]\ar@{>->}[d]_\sim &&&
      B\ar@{->>}[d]^f\\
      B\times_A P A\ar[r]_-{\pi_2} \ar@{.>}[urrr]^t &
      P A \ar@{->>}[r] &
      A \times A \ar[r]_-{\pi_2} &
      A}}
  \]
  is an acyclic cofibration, so we have a lift $t$.
  The homotopy $f h \sim 1_A$ gives a map $A\to B\times_A P A$, and composing this with $t$ we obtain a map $g: A\to B$ such that $f g = 1_A$.
  We then have a concatenated homotopy $g f \sim h f g f = h f \sim 1_B$, so it remains only to modify this homotopy to live in $(\sC/A)\f$.
  
  Let $P B$ be a path object for $B$ such that $P B \fib (P A) \times_{(A\times A)} (B\times B)$ is a fibration, constructed as in the proof of \autoref{thm:fibhtpy}.
  Then in particular, we have a fibration $\map_f : P B \fib P A$.
  We may assume our homotopy $H : g f \sim 1_B$ to be defined using this path object, and by \autoref{thm:fibhtpy}, it suffices to modify it to a homotopy which becomes constant after applying $\map_f$.

  Let $K$ denote the concatenated homotopy $g f \sim g f g f = g f \sim 1_B$, where the first homotopy is $\map_g \map_f v(H)$ and the second is $H$.
  Here $v$ is the inversion morphism defined in~\eqref{eq:inversion}, $\map_f$ is the above fibration, and $\map_g$ is defined as in~\eqref{eq:mapdef}.
  Upon applying $\map_f$ to $K$, we have a sequence of secondary homotopies (that is, homotopies of maps into $P A$ over $A\times A$):
  \begin{align*}
    \map_f ( c (\map_g \map_f v(H), H) )
    &\sim c(\map_f \map_g \map_f v(H), \map_f H)\\
    &\sim c(\map_f v(H), \map_f H)\\
    &\sim \map_f c(v(H),H)\\
    &\sim \map_f r g f\\
    &= r f.
  \end{align*}
  The first and third of these homotopies are instances of~\eqref{eq:mapconcat}.
  The second is an instance of~\eqref{eq:mapcompose} (using the fact that we may take $\map_{1_A} = \map_{f g}$ to be the identity), while the fourth is $\map_{\map_f}$ applied to an instance of~\eqref{eq:oppconcat}.
  Putting these together, we have a homotopy $\map_f K \sim r f$.

  Finally, as $\map_f : P B \fib P A$ is a fibration, the left-hand morphism in the following square is an acyclic cofibration, while the right-hand morphism was defined to be a fibration:
  \[\vcenter{\xymatrix{
      P B\ar@{=}[r]\ar[d] &
      P B\ar@{->>}[d]\\
      P B \times_{P A} P_{A\times A} (P A)\ar[r]_-{\pi_2} \ar@{.>}[ur]^s &
      P A \times_{A\times A} (B\times B).
    }}\]
  Composing the lifting $s$ with the map
  \[\big(K,\,\map_f K \sim r f\big) \;:\; B \to P B \times_{P A} P_{A\times A} (P A),\]
  we obtain a homotopy $g f \sim 1_B$ which becomes constant in $A$, as desired.
\end{proof}

Like \autoref{thm:acof}, the statement of \autoref{thm:afib} is true in any model category, but only when $B$ and $C$ are cofibrant; the usual proof (e.g.~\textcite[{}7.8.2]{hirschhorn:modelcats}) uses also \emph{left} homotopies (i.e.\ homotopies defined using cylinders rather than path objects).

\begin{cor}\label{thm:afib-stable}
  Acyclic fibrations are stable under pullback.
\end{cor}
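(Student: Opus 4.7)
The plan is to apply \autoref{thm:afib} twice, once to unpack the hypothesis and once to repackage the conclusion. Given an acyclic fibration $f : B \fib A$ and any morphism $h : A' \to A$, form the pullback
\[
B' := A' \times_A B,
\]
with projections $f' : B' \fib A'$ (a fibration by \autoref{def:ttfc}\ref{item:cat3}) and $\tilde h : B' \to B$. By \autoref{thm:afib}, it suffices to produce a section $g'$ of $f'$ together with a fiberwise homotopy $g' f' \sim 1_{B'}$ in $(\sC/A')\f$.

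Applying \autoref{thm:afib} to $f$ yields $g : A \to B$ with $fg = 1_A$ and a homotopy $H : B \to P_A B$ from $gf$ to $1_B$ in $(\sC/A)\f$, where $B \acof P_A B \fib B \times_A B$ is a path object for $B$ over $A$. Since $f(gh) = h$, the universal property of the pullback forces a unique $g' : A' \to B'$ with $f' g' = 1_{A'}$ and $\tilde h g' = gh$, so the section is in hand. To construct the homotopy, I transport the whole path-object apparatus along $h$: set $P_{A'} B' := A' \times_A P_A B$. Since pullbacks compose, the induced map $P_{A'} B' \fib B' \times_{A'} B'$ is automatically a fibration by \autoref{def:ttfc}\ref{item:cat3}.

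The key step is that the induced map $B' \to P_{A'} B'$ is an acyclic cofibration; this is exactly the conclusion of \autoref{def:ttfc}\ref{item:cat8} applied with bottom row $B \acof P_A B \fib A$ (both terms of which are fibrations over $A$) pulled back along $h : A' \to A$. Once this is granted, $B' \acof P_{A'} B' \fib B' \times_{A'} B'$ is a bona fide path object for $B'$ in $(\sC/A')\f$. Pulling back $H$ along $h$ then produces a map $H' : B' \to P_{A'} B'$, whose endpoints are $g' f'$ and $1_{B'}$ by functoriality of pullback applied to the defining equations of $g'$ and $f'$. Feeding $(g', H')$ into the converse direction of \autoref{thm:afib} concludes that $f'$ is a homotopy equivalence, hence an acyclic fibration. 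The sole nonformal ingredient is the invocation of \ref{item:cat8}; everything else is pullback bookkeeping.
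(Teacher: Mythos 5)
Your proof is correct and is essentially the paper's own argument: the paper's proof simply observes that the characterization of acyclic fibrations in \autoref{thm:afib} (a section plus a fiberwise homotopy) uses only structure in $(\sC/A)\f$ preserved by the pullback functor $(\sC/A)\f \to (\sC/A')\f$, which is exactly what you verify in detail. Your explicit invocation of \autoref{def:ttfc}\ref{item:cat8} to see that the pulled-back path object is again a path object is precisely the point the paper leaves implicit.
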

\begin{proof}
  The characterization in \autoref{thm:afib} uses only structure in $(\sC/A)\f$ that is preserved by pullback $(\sC/A)\f \to (\sC/A')\f$ along any $g:A'\to A$.
\end{proof}

This completes the proof of the following theorem.

\begin{thm}\label{thm:fibcat}
  Any type-theoretic fibration category is a category of fibrant objects where the weak equivalences are the homotopy equivalences.\qed
\end{thm}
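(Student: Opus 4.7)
The plan is to assemble the pieces already established in this section and show that each axiom of a category of fibrant objects is verified, with \cW taken to be the class of homotopy equivalences. Axioms~\ref{item:cat1}--\ref{item:cat3} of \autoref{def:ttfc} are part of the definition of a type-theoretic fibration category, so those carry over immediately.

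For the weak equivalence axioms, I would first observe that homotopy equivalences trivially contain all isomorphisms. The 2-out-of-3 property follows by a direct calculation using the concatenation morphism $c$ and inversion morphism $v$ from the beginning of the section together with \map and its functoriality~\eqref{eq:mapcompose}; given two of $f$, $g$, $gf$ as homotopy equivalences, one assembles the homotopy inverse of the third from their inverses by composition, and then uses $c$, $v$, \map to build the required homotopies from those provided by the hypotheses. Third, the factorization of any diagonal $B\to B\times B$ through a path object $B\acof PB\fib B\times B$ is exactly axiom~\ref{item:cat7p} supplied by \autoref{thm:path-fact}; and the acyclic cofibration $B\acof PB$ is a homotopy equivalence by \autoref{thm:acof} (its inverse is either projection $\pi_i: PB\to B$, with homotopies provided by $c$ and $r$).

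The last and most substantial axiom to check is that pullbacks of acyclic fibrations are acyclic fibrations; this is precisely the content of \autoref{thm:afib-stable}, which in turn follows from the characterization of fibration-and-homotopy-equivalences provided by the Acyclic Fibration Lemma (\autoref{thm:afib}). Since pullback along any $A'\to A$ preserves both the fibration $g:B\fib A$ and the existence of a section with a fiberwise homotopy $gf\sim 1_B$ in $(\sC/A)\f$, the pulled-back fibration again satisfies the criterion of \autoref{thm:afib} and is therefore a homotopy equivalence.

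The main obstacle, already addressed in the earlier lemmas of the section, was proving the Acyclic Fibration Lemma itself: this is the step that really requires the extra cofibrancy-like stability encoded in axioms~\ref{item:cat4} and~\ref{item:cat8}, since in a general model category one needs cofibrant domains for this characterization. With \autoref{thm:afib} and \autoref{thm:afib-stable} in hand, the theorem reduces to citing the verifications above, so the proof becomes essentially a one-line assembly.
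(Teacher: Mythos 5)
Your proposal is correct and follows essentially the same route as the paper: the easy axioms (isomorphisms, 2-out-of-3, factorization of diagonals via \autoref{thm:path-fact} and the fact that acyclic cofibrations are homotopy equivalences) are handled exactly as in the text, and the one nontrivial axiom---pullback-stability of acyclic fibrations---is reduced, just as in the paper, to the Acyclic Fibration Lemma (\autoref{thm:afib}) and its pullback-stable reformulation (\autoref{thm:afib-stable}). The only quibble is a harmless notational slip where you write $gf\sim 1_B$ for what should be the composite section-then-fibration homotopy $fg\sim 1_B$ (with $g$ the fibration and $f$ its section).
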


We also have:

\begin{cor}\label{thm:fibhe}
  A morphism $f: A\to B$ in $(\sC/C)\f$ between fibrations $A\fib C$ and $B\fib C$ is a homotopy equivalence in $(\sC/C)\f$ if and only if it is a homotopy equivalence in \sC.
\end{cor}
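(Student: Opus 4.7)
The plan is to handle the two directions separately. The forward direction is immediate from the ``only if'' half of Lemma \ref{thm:fibhtpy}: any homotopy in $(\sC/C)\f$ witnessing $gf\sim 1_A$ or $fg\sim 1_B$ extracts to a homotopy in \sC, so a homotopy equivalence in $(\sC/C)\f$ is one in \sC.

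For the converse, assume $f$ is a homotopy equivalence in \sC and factor $f = p\circ i$ via \ref{item:cat7} as $A \acof P \fib B$. Since $P\fib B\fib C$ composes to a fibration, the factorization lies in $(\sC/C)\f$. The acyclic cofibration $i$ in \sC lifts against every fibration in \sC, in particular those in $(\sC/C)\f$, so $i$ is also an acyclic cofibration in $(\sC/C)\f$; applying Lemma \ref{thm:acof} inside the type-theoretic fibration category $(\sC/C)\f$, $i$ is a homotopy equivalence in $(\sC/C)\f$. By 2-out-of-3 in \sC (from \autoref{thm:fibcat}), $p$ is a homotopy equivalence in \sC, and being a fibration, Lemma \ref{thm:afib} yields a section $s:B\to P$ with $ps=1_B$ and a homotopy $sp\sim 1_P$ in $(\sC/B)\f$. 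Once this homotopy is shown to lie in $(\sC/C)\f$ as well, $p$ is a homotopy equivalence in $(\sC/C)\f$, and hence so is $f = p\circ i$ by 2-out-of-3 in $(\sC/C)\f$.

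The only nontrivial step is the final transfer of the homotopy $sp\sim 1_P$ from $(\sC/B)\f$ to $(\sC/C)\f$, since a path object $P_B P \fib P\times_B P$ is generally not a path object over $C$. To remedy this, factor the composite $P_B P \fib P\times_B P \to P\times_C P$ via \ref{item:cat7} as $P_B P \acof Q \fib P\times_C P$. Then $P \acof P_B P \acof Q$ is a composite of acyclic cofibrations, so $Q$ is a path object for $P$ in $(\sC/C)\f$; postcomposing the original witness $H:P\to P_B P$ with $P_B P \to Q$ produces the desired homotopy in $(\sC/C)\f$. This is the main obstacle; all remaining steps are direct invocations of the preceding lemmas.
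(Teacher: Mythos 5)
Your proof is correct, and it shares the paper's skeleton---factor $f$ as an acyclic cofibration followed by a fibration, use that acyclic cofibrations of \sC lying over $C$ are acyclic cofibrations of $(\sC/C)\f$, and invoke \autoref{thm:afib}---but it finishes by a genuinely different move. The paper's proof is two lines: after reducing to the case of a fibration by 2-out-of-3, it observes that the characterization in \autoref{thm:afib} (a section plus a homotopy in $(\sC/B)\f$) refers to nothing outside the slice over $B$, and since $((\sC/C)\f/B)\f \cong (\sC/B)\f$, applying \autoref{thm:afib} once in \sC and once in $(\sC/C)\f$ settles both directions simultaneously, with no homotopy ever having to be moved between bases. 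You instead apply \autoref{thm:afib} only in \sC, and then transfer the homotopy $sp\sim 1_P$ from $(\sC/B)\f$ to $(\sC/C)\f$ by hand, factoring $P_B P \to P\times_C P$ as $P_B P \acof Q \fib P\times_C P$ and checking that $Q$ is a path object for $P$ over $C$; you also dispatch the easy direction separately via \autoref{thm:fibhtpy}. Your transfer step is in effect a relativized (and simpler) version of the ``only if'' half of the proof of \autoref{thm:fibhtpy}, and it is sound; the one point you pass over quickly is that the reflexivity map $P\acof P_B P$, a priori an acyclic cofibration only in $(\sC/B)\f$, is one in \sC (hence in $(\sC/C)\f$), which follows from the same retract/factorization argument showing that slice and ambient acyclic cofibrations agree---a fact the paper also uses without proof in this very corollary. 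In short, the paper's slice-identification trick makes the argument shorter and symmetric in the two directions, while your route is more hands-on and makes the transfer of fiberwise homotopies explicit.
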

\begin{proof}
  Since the acyclic cofibrations in \sC and $(\sC/C)\f$ are the same, by 2-out-of-3 it suffices to assume that $f$ is a fibration.
  But in that case, the characterization of \autoref{thm:afib} refers only to $(\sC/B)\f$, and we have $((\sC/C)\f/B)\f \cong (\sC/B)\f$.
\end{proof}

\begin{cor}\label{thm:he-stable}
  Homotopy equivalences are stable under pullback along fibrations.
\end{cor}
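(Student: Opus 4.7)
The plan is to factor the homotopy equivalence and observe that each factor is pullback-stable along fibrations. Let $f: A\to C$ be a homotopy equivalence and $g: B\fib C$ a fibration; the goal is to show that the pullback map $g^*f: A\times_C B \to B$ is a homotopy equivalence.

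First I would apply axiom~\ref{item:cat7} to factor $f$ as an acyclic cofibration $i: A\acof A'$ followed by a fibration $p: A'\fib C$. Acyclic cofibrations are homotopy equivalences (immediate from the path-object construction and already noted in the paragraph preceding \autoref{thm:acof-cancel}), so since homotopy equivalences satisfy 2-out-of-3 by \autoref{thm:fibcat}, the fibration $p$ is itself a homotopy equivalence---that is, an acyclic fibration in the sense of \autoref{thm:afib}.

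Now pull back along $g$. Viewing $i$ as a morphism in $\sC/C$ (with $A$ carrying $f$ and $A'$ carrying $p$), axiom~\ref{item:cat4} guarantees that the pullback functor $g^*\colon \sC/C\to\sC/B$ preserves acyclic cofibrations, so $A\times_C B \acof A'\times_C B$ is an acyclic cofibration. \autoref{thm:afib-stable} supplies the other half: the pullback $A'\times_C B \fib B$ of the acyclic fibration $p$ is again an acyclic fibration. Their composite is $g^*f$, which is therefore an acyclic cofibration followed by an acyclic fibration, and 2-out-of-3 finishes the argument. There is essentially no obstacle left at this point; the real work has already been done in establishing the Acyclic Fibration Lemma (\autoref{thm:afib}) and its corollary \autoref{thm:afib-stable}, which provide the only nontrivial ingredient---without them, pullback-stability of acyclic fibrations would not be available absent a cofibrancy hypothesis.
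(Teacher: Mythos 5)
Your proof is correct and is essentially the paper's own argument: the paper's proof of \autoref{thm:he-stable} likewise factors a homotopy equivalence (via 2-out-of-3) as an acyclic cofibration followed by an acyclic fibration, and then invokes the pullback-stability of acyclic cofibrations along fibrations from \autoref{def:ttfc}\ref{item:cat4} together with \autoref{thm:afib-stable}. No substantive differences to note.
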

\begin{proof}
  This follows from \autoref{thm:fibcat}, but is also a direct consequence of \autoref{thm:afib-stable} and \autoref{def:ttfc}\ref{item:cat4}, since (using the 2-out-of-3 property) a morphism is a homotopy equivalence if and only if it factors as an acyclic cofibration followed by an acyclic fibration.
\end{proof}

\begin{rmk}
  Nowhere in this section did we use \autoref{def:ttfc}\ref{item:cat4} itself, only its consequence that pullback along fibrations preserves acyclic cofibrations.
  Replacing \autoref{def:ttfc}\ref{item:cat4} by this weaker statement would yield a notion of type-theoretic fibration category that seems appropriate to a type theory with dependent sums and identity types, but without dependent products.
\end{rmk}

\begin{rmk}\label{rmk:ttmc-whe-he}
  If \sC is a type-theoretic model category, then homotopies in the type-theoretic fibration category $\sC\f$ are right homotopies in the model-categorical sense.
  Thus, any homotopy equivalence in $\sC\f$, in the sense considered here, is also a weak equivalence in \sC, and similarly for acyclic fibrations.
  \emph{A priori}, there is no reason for the converse to hold: a weak equivalence between fibrant objects in a model category need not be a homotopy equivalence unless its domain and codomain are also cofibrant.
  However, in all the examples of type-theoretic model categories that I know, all objects are cofibrant; in which case the notions defined above do agree with the model-categorical ones.
\end{rmk}

\section{Categorical semantics of type theory}
\label{sec:catsem}

We would like to say that any type-theoretic fibration category has an ``internal language'' which is an intensional dependent type theory.
As is well-known, however, there is a coherence issue, because substitution in type theory is strictly functorial and preserves all operations strictly, while in categorical semantics it corresponds to taking pullbacks, which only has these properties up to isomorphism.
Fortunately, general coherence theorems have recently been found which essentially solve this problem~\parencite{klv:ssetmodel,lw:localuniv}.

Since the goal of this paper is to construct new models of type theory from old ones, we could deal with this in two ways.
We could assume that the models we start with are already strictly coherent (perhaps by the application of a coherence theorem), and show that our constructions preserve strict coherence.
Alternatively, we could simply apply coherence theorems \emph{after} the construction is finished.
The latter choice is easier, but the former gives more precise information.

For the special case considered in \S\S\ref{sec:sierpinski}--\ref{sec:univalence}, we will first perform the constructions without regard to coherence, and then verify that coherence is preserved; this additional information will be important in \S\ref{sec:scones}.
However, for the generalizations considered in \S\S\ref{sec:invcat}--\ref{sec:oll}, we will fall back to invoking coherence theorems, which is sufficient if all we want is to use type theory as an ``internal language'' for homotopy theory.
There seems no obstacle in principle to carrying through coherence in the general case as well, but it would be more tedious.

\subsection{Cloven and split fibration categories}
\label{sec:cloven-split}

Since we will need to treat coherence carefully in some places, at least, we begin by recalling the definitions.
The reader uninterested in the details can skip ahead to \S\ref{sec:ittfc} on page~\pageref{sec:ittfc}.

\begin{defn}\label{def:cttfc}
  A type-theoretic fibration category \sC is \textbf{cloven} if it is equipped with the following additional structure.
  \begin{enumerate}
  \item For each fibration $p:B\fib A$, a set of \emph{fibration structures} on $p$.\label{item:cloven1}
  \item For each morphism $f:C\to A$ and each structured fibration $p:B\fib A$ (that is, each fibration $p$ equipped with a specified fibration structure), a specified pullback square
    \begin{equation}
      \vcenter{\xymatrix{
          f^*B\ar[r]^{f_p} \ar@{->>}[d]_{f^*p} \pullbackcorner &
          B\ar@{->>}[d]^p\\
          C\ar[r]_f &
          A
        }}
    \end{equation}
    together with a specified fibration structure on $f^*p$.\label{item:cloven2}
  \item A specified object $\mathsf{u}$ such that the unique map $\mathsf{u}\to 1$ is an isomorphism and is given a specified fibration structure.\label{item:cloven6}
  \item For every composable pair of structured fibrations $C \xfib{p} B \xfib{q} A$, a specified structured fibration $\Si_q p:\Si_q C \fib A$ and an isomorphism $C\toiso \Si_q C$ over $A$.\label{item:cloven3}
  \item For every structured pair $C \xfib{p} B \xfib{q} A$ as above, a value of the dependent product $\Pi_q p: \Pi_q C \fib A$ equipped with a specified fibration structure.\label{item:cloven4}
  \item For every structured fibration $A\fib B$, a factorization of its diagonal as an acyclic cofibration $A\acof P_B A$ followed by a structured fibration $P_B A \fib A\times_B A$.\label{item:cloven5}
  \item In the situation of~\ref{item:cloven5}, given a morphism $f:D\to B$, let $f^*(P_B A)$ be determined by the following diagram, in which each square is a specified pullback as in~\ref{item:cloven2}:
    \begin{equation}
      \vcenter{\xymatrix{
          f^*(P_B A)\ar[r]\ar@{->>}[d] \pullbackcorner &
          P_B A\ar@{->>}[d]\\
          f^*A \times_D f^*A \ar[r]\ar@{->>}[d]\pullbackcorner &
          A\times_B A\ar@{->>}[d] \ar[r]\pullbackcorner & A \ar@{->>}[d]\\
          f^*A\ar[r]\ar@{->>}[d]\pullbackcorner &
          A\ar@{->>}[d] \ar[r] & B\\
          D\ar[r]_f &
          B.
        }}
    \end{equation}
    Then for every structured fibration $C\fib f^*(P_B A)$, we require a specified diagonal filler in every commutative square\label{item:cloven5a}
    \begin{equation}
      \vcenter{\xymatrix{
          f^* A\ar[r]\ar[d] &
          C\ar@{->>}[d]\\
          f^*(P_B A)\ar@{=}[r] \ar@{.>}[ur] &
          f^*(P_B A).
        }}
    \end{equation}
  \item If \sC has a \shnno $N$, we require a specified fibration structure on $N\fib 1$, and for every structured fibration $B\fib N$ equipped with morphisms $s'$ and $o'$ as in \autoref{def:shnno}, a specified section $f$.\label{item:cloven7}
  \end{enumerate}
\end{defn}

In the usual terminology of type-theoretic semantics, conditions~\ref{item:cloven1} and~\ref{item:cloven2} make \sC into a \emph{full comprehension category}.

\begin{eg}\label{eg:cloven-term}
  Every type-theoretic fibration category can be cloven by giving each fibration a unique fibration structure, taking $\mathsf{u}=1$, $\Si_q C = C$, and choosing particular values of the dependent products, path objects, and liftings.
\end{eg}

\begin{eg}
  If \sC is cloven, then $(\sC/A)\f$ is canonically cloven.
  For most of the structure, this is obvious, while for~\ref{item:cloven6} and~\ref{item:cloven7} we pull back (using~\ref{item:cloven2} in \sC) the corresponding fibration structures of \sC to $\sC/A$.
\end{eg}

\begin{eg}
  In the syntactic category of a type theory as in \autoref{thm:syncat}, whose objects are contexts $\Gm$, we may take a fibration structure on a map $\De\to\Gm$ to be the assertion that $\De$ is the extension of $\Gm$ by a single additional variable declaration $x:A$, where $A$ is a type in context $\Gm$.
  Thus, a map admits at most one fibration structure.
  Then~\ref{item:cloven2} comes from substitution into types in context.
  The singleton context consisting of the unit type is isomorphic to the empty context, giving~\ref{item:cloven6}, while a double context extension $\Gm,(x:A),(y:B)$ is isomorphic to $\Gm,(z:\sum_{x:A} B)$, giving~\ref{item:cloven3}.
  Similarly, dependent product types give~\ref{item:cloven4}, identity types give~\ref{item:cloven5} and~\ref{item:cloven5a}, and a natural numbers type gives~\ref{item:cloven7}.
\end{eg}

\begin{eg}\label{eg:set-split}
  In the category of sets, where all morphisms are fibrations, we can take a fibration structure on a map $p:B\to A$ to be an $A$-indexed family of sets $\{B_a\}_{a\in A}$ such that $B = \coprod_{a:A} B(a)$ with $p$ the canonical projection.
  For~\ref{item:cloven2}, we assign to $\{B_a\}_{a\in A}$ and $f:C\to A$ the family $\{B_{f(c)}\}_{c\in C}$, with the resulting pullback square.
  The rest of the structure is similarly easy to define.
\end{eg}

The last two examples satisfy the following stronger definition.

\begin{defn}\label{def:split}
  A cloven type-theoretic fibration category is \textbf{split} if it satisfies the following.
  \begin{enumerate}
  \item For a structured fibration $p:B\fib A$ and any $f,g$, the specified pullback squares from \autoref{def:cttfc}\ref{item:cloven2}
    \label{item:split1}
    \begin{equation}
      \vcenter{\xymatrix{
          (1_A)^* B\ar[r]\ar@{->>}[d]_{(1_A)^*p} &
          B\ar@{->>}[d]^p\\
          A\ar[r]_{1_A} &
          A
        }}\qquad\text{and}\qquad
      \vcenter{\xymatrix{
          (fg)^* B\ar[r]\ar@{->>}[d]_{(fg)^*p} &
          B\ar@{->>}[d]^p\\
          C\ar[r]_{fg} &
          A
        }}
    \end{equation}
    are equal, respectively, to the pullback squares
    \begin{equation}
      \vcenter{\xymatrix{
          B\ar[r]^{1_B}\ar@{->>}[d]_{p} &
          B\ar@{->>}[d]^p\\
          A\ar[r]_{1_A} &
          A
        }}\qquad\text{and}\qquad
      \vcenter{\xymatrix{
          g^* f^* B\ar[r]\ar@{->>}[d]_{g^*f^*p} &
          f^* B\ar[r]\ar@{->>}[d]^{f^*p} &
          B\ar@{->>}[d]^p\\
          C\ar[r]_{g} &
          D\ar[r]_f &
          A.
        }}
    \end{equation}
  \item For structured fibrations $C \xfib{p} B \xfib{q} A$ and a morphism $f:D\to A$, the canonical isomorphism $\Si_{f^*q} (f_q)^*C \toiso f^* \Si_q C$ is an identity, and the two induced fibration structures on $\Si_{f^* q} (f_q)^* p = f^* \Si_q p$ are equal.\label{item:split2}
  \item In the same situation, the canonical isomorphism $f^* \Pi_q C \toiso\Pi_{f^*q} (f_q)^*C$ is an identity, and the two induced fibration structures on $f^* \Pi_q p = \Pi_{f^* q} (f_q)^* p$ are equal.\label{item:split3}
  \item Similarly, the structure in \autoref{def:cttfc}\ref{item:cloven5} and~\ref{item:cloven5a} is preserved strictly by the specified pullbacks along any morphism into $B$ or $D$.\label{item:split4}
  \end{enumerate}
\end{defn}

Split full comprehension categories can be presented in many equivalent ways; two other commonly used ones are called \emph{categories with families} and \emph{categories with attributes}.

\begin{eg}\label{thm:gpd-split}
  An additional useful example is the model category of groupoids.
  Recall that for any groupoid $A$, there is an equivalence of 2-categories between pseudofunctors $A \to \nGpd$ and fibrations over $A$, with the ``Grothendieck construction'' producing a fibration from a pseudofunctor.
  In this case, we can take a fibration structure on a fibration $p:B\fib A$ to be a pseudofunctor $A\to \nGpd$ whose Grothendieck construction is (literally) $p$ --- this is a categorified version of \autoref{eg:set-split}.

  Pullback of fibration structures along $f:C\to A$ is defined by composing $f$ with pseudofunctors $A\to\nGpd$, and all the rest of the structure can be given explicitly.
  With care, the resulting cloven structure is split; see~\textcite{hs:gpd-typethy,hw:crmtt} for details.
  (The description in \textcite{hs:gpd-typethy} refers only to \emph{strict} functors $A\to\nGpd$, corresponding to \emph{split} fibrations, but the same constructions work in the more general case, as described by \textcite{hw:crmtt}.)
\end{eg}

Note that a split type-theoretic fibration category is an essentially algebraic structure: it consists of some sets (objects, morphisms, fibration structures) and partially defined operations (composition, specified pullbacks, specified factorizations, etc.)\ satisfying some axioms.
Thus, we have a category of split type-theoretic fibration categories, whose morphisms are \textbf{strict functors}, which preserve all the cloven structure on the nose.
More generally, we have such a category for any additional axioms or type-forming operations we might add.

Now the standard way to obtain the categorical semantics of type theory is by way of the following theorem.

\begin{thm}\label{thm:syn-init}
  The syntactic category of any type theory is the initial object of the corresponding category of split type-theoretic fibration categories.
\end{thm}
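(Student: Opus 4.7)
The plan is to construct, for every split type-theoretic fibration category \sS, a strict functor $\m{-} : \sC_T \to \sS$ from the syntactic category $\sC_T$ of the type theory $T$, and then show it is unique. The construction proceeds by mutual induction over the derivation trees of $T$, simultaneously defining: contexts as objects of \sS, a type judgment $\Gm \pr A \ty$ as a structured fibration over $\m{\Gm}$, a term judgment $\Gm \pr a : A$ as a specified section of $\m{A}$, and substitution as the specified pullback from \autoref{def:cttfc}\ref{item:cloven2}. The data assembled in \autoref{def:cttfc} gives exactly the slot needed to interpret each type former: \ref{item:cloven6} for the unit type, \ref{item:cloven3} for dependent sums, \ref{item:cloven4} for dependent products, \ref{item:cloven5} together with \ref{item:cloven5a} for identity types and the based path induction (Id-elimination), and \ref{item:cloven7} for the natural numbers type when $T$ has one.

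The substance of the proof is that this interpretation descends from raw derivations to the syntactic quotient: judgmentally equal types and terms must have \emph{literally equal} images in \sS, and syntactic substitution (which is strictly functorial and commutes strictly with every type former) must match pullback on the nose. This is where I expect the main obstacle to lie, and it is precisely what the splitness axioms of \autoref{def:split} are engineered to provide. Conditions \ref{item:split1}--\ref{item:split4} ensure that $\m{-}$ respects identity and composition of substitutions strictly, and that $\Sigma$-, $\Pi$-, and path-object structure commutes with substitution strictly, so each congruence rule of the type theory is realized as a genuine equation in \sS. Without splitness one would at best obtain a pseudo-functor, and initiality would fail; this is why the coherence theorems of~\textcite{klv:ssetmodel,lw:localuniv} are needed to apply the result to cloven-but-not-split examples such as those arising from model categories.

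Verifying well-definedness is a long but essentially mechanical induction, treating in parallel the judgments $\Gm \ctx$, $\Gm \pr A \ty$, $\Gm \pr a:A$ and their equality versions, with one case per inference rule. With well-definedness in hand, it is immediate from the way each clause was chosen that $\m{-}$ preserves all the specified structure on the nose, so it is a morphism in the category of split type-theoretic fibration categories. Uniqueness is then forced: any strict morphism $F : \sC_T \to \sS$ is determined on the generating contexts, types, and term constructors of $T$ by the requirement that it preserves the specified unit, sums, products, identity types, and (if present) natural numbers; since every object, fibration, and morphism of $\sC_T$ is built from these by operations $F$ must strictly preserve, the same induction as above shows $F = \m{-}$.
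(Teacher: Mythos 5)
The paper does not actually prove this theorem: it is stated as type-theoretic folklore, with a pointer to \textcite[Chapter~3]{streicher:semtt} for a full proof in the case of the Calculus of Constructions (``the general case is essentially no different'') and the remark that contextual categories in the sense of \textcite{cartmell:gatcc} form a coreflective subcategory and hence have the same initial object. So there is no in-paper argument to compare against; the relevant comparison is with the standard proof the paper defers to, and your outline is essentially that standard argument: interpret contexts, types, terms and substitutions using the cloven data of \autoref{def:cttfc}, use the splitness conditions of \autoref{def:split} to turn judgmental equalities and the strict functoriality of syntactic substitution into literal equations in the target, and get uniqueness by induction from strict preservation of the specified structure.

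The one place where your sketch, as written, has a real gap is the phrase ``by mutual induction over the derivation trees.'' A single judgment of the type theory generally admits many derivations, so an interpretation defined by recursion on derivations is not obviously well defined: you must either prove that the value is independent of the chosen derivation, or (as Streicher does) define a \emph{partial} interpretation on raw expressions by structural recursion and then show, by induction on derivations, that it is defined and sound on all derivable judgments and sends judgmentally equal expressions to equal interpretations. This derivation-independence issue is distinct from the descent to the quotient by judgmental equality that you do address, and it is exactly where naive inductions on derivations break down; splitness does not help with it. With the interpretation set up on raw syntax in this way, the rest of your argument (each congruence and substitution rule holding on the nose by \ref{item:split1}--\ref{item:split4}, strictness of the resulting functor, and uniqueness because any strict functor is forced on the generating contexts, types and terms) is the standard route to the theorem.
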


This theorem is type-theoretic folklore, but precise references can be hard to find.
(It is sometimes stated in terms of \emph{contextual categories}~\parencite{cartmell:gatcc} instead, but these form a coreflective subcategory of split type-theoretic fibration categories, and hence have the same initial object.)
In \textcite[Chapter 3]{streicher:semtt}, a proof is written out in full in the case of the Calculus of Constructions, which contains only dependent products; the general case is essentially no different.

For purposes of categorical semantics, \autoref{thm:syn-init} means that any split type-theoretic fibration category \sC admits a strict functor from the syntactic category of an appropriate type theory.
This functor supplies the \emph{semantics} in \sC of any type-theoretic construction.

The coherence problem can now be precisely stated: how can we replace a general type-theoretic fibration category, such as that arising from a type-theoretic model category, by an equivalent split one?
Here we can appeal to general theorems.
The first such theorem was due to \textcite{hofmann:ttinlccc}, but only worked for extensional identity types.
More recently, general theorems have been found~\parencite{klv:ssetmodel,lw:localuniv} which apply to the intensional case as well.
The basic idea of these theorems is that a fibration structure on $B\fib A$ is given by a pullback square
\begin{equation}
  \vcenter{\xymatrix@-.5pc{
      B\ar[r]\ar@{->>}[d] \pullbackcorner &
      \Vtil\ar@{->>}[d]\\
      A\ar[r]&
      V
      }}
\end{equation}
for some ``universe'' fibration $\Vtil\fib V$; see the cited papers for details.

\subsection{The internal type theory of a fibration category}
\label{sec:ittfc}

The upshot is that if we are given a non-split type-theoretic fibration category \sC, we can interpret type theory in it by replacing it with an equivalent split one and then applying the universal morphism from the syntactic category, which we denote $\m-$.
This yields a collection of inductive rules for interpreting contexts, types, and terms as objects, fibrations, and morphisms of \sC, respectively, which we summarize briefly as follows.
\begin{itemize}
\item Each context $\Gamma$ is interpreted by an object $\m\Gm$.
\item Each substitution between contexts $\Gm \pr (\vec d :\Delta)$ is interpreted by a morphism $\m {\vec d}: \m\Gm \to \m\De$.
\item The empty context is interpreted by the terminal object, $\m{\cdot} = 1$.
\item Each dependent type $\Gamma \pr A\ty$ is interpreted by a fibration $p_A:\m{\Gm,A} \fib \m\Gm$.
  The object $\m{\Gm,A}$ interprets the context extension of $\Gamma$ by a variable of type $A$.
\item The substitution of $\Gm \pr (\vec d :\Delta)$ into a dependent type $\De \pr A\ty$, yielding a dependent type $\Gm \pr (\vec d^* A)\ty$, is interpreted by the pullback of $\m{\De,A} \fib \m\De$ along $\m {\vec d}: \m\Gm \to \m\De$.
\item Each term $\Gamma \pr (a:A)$ is interpreted by a section of $p_A$.
  Note that if $A$ does not depend on \Gm, then $\m{\Gm,A} = \m\Gm \times \m A$, so that such sections correspond bijectively with morphisms $\m\Gm \to \m A$.
\item The unit type (in the empty context) is interpreted by a terminal object.
\item For a dependent type $\Gm, (x:A) \pr B\ty$, the dependent sum $\Gm \pr \sum_{x:A} B \ty$ is interpreted by the composite fibration $\m{\Gm,A,B}\fib \m{\Gm,A} \fib \m\Gm$.
\item In the same situation, the dependent product $\Gm \pr \prod_{x:A} B \ty$ is interpreted by the fibration $\Pi_{p_A} \m{\Gm,A,B} \fib\m\Gm$, where $\Pi_{p_A}$ denotes the right adjoint to pullback along $p_A:\m{\Gm,A}\fib \m\Gm$.
\item For $\Gm\vdash A\ty$, the identity type
  \[ \Gm,(x:A),(y:A) \pr (x\id y) \ty\]
  is interpreted by a path object $P_{\m\Gm}\m A \fib \m A \times_{\m\Gm} \m A$, with the reflexivity constructor $\Gm,(x:A) \pr (\r_x:x\id x)$ being interpreted by the acyclic cofibration $\m A \acof P_{\m\Gm}\m A$.
\item If \sC has a \shnno, then it interprets the natural numbers type.
\end{itemize}
In particular, we have the elimination rule for identity types:
\begin{equation}\label{eq:idelim}
  \inferrule{\Gm,(x:A),(y:A),(p:x\id y),\Theta \pr B \ty \\
  \Gm,(x:A),\Theta[x/y,\r_x/p] \pr (d:B[x/y,\r_x/p])}{\Gm,(x:A),(y:A),(p:x\id y),\Theta \vdash (J_d(x,y,p) : B)}
\end{equation}
The interpretation of this rule (together with its computation rule, $J_d(x,x,\r_x) \eq d$) must be a lift in the following square:
\begin{equation}\label{eq:idelimlift}
  \vcenter{\xymatrix{
      \m{\Gm,A,\Th[x/y,\r_x/p]}\ar[r]^-d \ar@{>->}[d]_{\m\r} &
      \m{\Gm,A,A,P_\Gm A,\Th,B}\ar@{->>}[d]^{p_B}\\
      \m{\Gm,A,A,P_\Gm A,\Th}\ar@{=}[r] \ar@{.>}[ur]|{\m {J_d}} &
      \m{\Gm,A,A,P_\Gm A,\Th}.
    }}
\end{equation}
Here $\Gm,A,A,P_\Gm A$ is shorthand for the context
\[ \Gm, (x:A),(y:A),(p:x\id y). \]
The left-hand map in~\eqref{eq:idelimlift} is the pullback of the acyclic cofibration
\[ \xymatrix{\m r : \m{\Gm,A} \ar@{>->}[r]^-{\sim} & \m{\Gm,A,A,P_\Gm A} = P_{\m\Gm} \m{\Gm,A}} \]
along the fibration $p_{\Th} : \m{\Gm,A,A,P_\Gm A,\Th} \fib \m{\Gm,A,A,P_\Gm A}$, and hence is an acyclic cofibration.
Since $p_B$ is a fibration, some lift in~\eqref{eq:idelimlift} exists; splitness gives a specified lift which is stable under pullback.

\begin{rmk}
  Since our type theory has dependent products, the additional context $\Th$ in~\eqref{eq:idelim} is unnecessary: it can be shifted into the type $B$.
  However, making it explicit shows why, even in the absence of dependent products, we need acyclic cofibrations to be stable under pullback along fibrations, as observed by \textcite{gg:idtypewfs}.
\end{rmk}


From now on we will use freely the internal type theory of a type-theoretic fibration category.
If it is split, then this can be obtained directly from \autoref{thm:syn-init}; otherwise it involves a coherence theorem.
From the category-theoretic point of view, all that matters is that the semantics satisfies the bullet points listed above, which are independent of how the splitting is performed.

We will generally abuse notation by omitting the brackets $\m-$, identifying an object of $\sC$ with the type that represents it and a type (or context) with the object that interprets it.
Moreover, since the presence of an unchanged context of parameters $\Gm$ in type theory corresponds to working in the slice category $(\sC/\Gm)\f$, which is itself a perfectly good type-theoretic fibration category, we will also generally leave ambient contexts implicit.

\section{Homotopy type theory}
\label{sec:homotopy-type-theory}

We will now give some definitions and results for doing homotopy theory inside of type theory, many originally due to \textcite{voevodsky:github} but developed further by the author and others~\parencite{hott,hottbook}.
As we give each definition, we will explain its categorical meaning under the above semantics.

First of all, we can use the eliminator $J$ to define operations of concatenation, inversion, and so on for paths in type theory, which categorically interpret to the morphisms $c$, $v$, and so on considered in \S\S\ref{sec:ttfc}--\ref{sec:hothy-fibcat}.
For instance, the concatenation operation, which we denote
\[ (x:B),(y:B),(z:B),(p:x\id y),(q:y\id z) \vdash (p \cdot q:x\id z), \]
can be defined by $p\cdot q \eq J_p(y,z,q)$.
Comparing with~\eqref{eq:cdef}, we see that this produces exactly the concatenation morphism $c$ defined there, since we have
\begin{align*}
  \m{(x:B),(y:B),(p:x\id y)} &= P \m B\\
  \m{(x:B),(y:B),(z:B),(p:x\id y),(q:y\id z)} &= P \m B \times_{\m B} P \m B
\end{align*}
and so on.
The computation rule of identity types implies that $p\cdot \r_y \eq p$, which is the commutativity of the upper-left triangle in~\eqref{eq:cdef}.
Similarly, the path $\psi:(\r_x \cdot p \id  p)$ from the proof of \autoref{thm:path-fact} can be defined type-theoretically as $J_{\r_{\r_x}}(x,y,p)$.

Another important operation which we will need later is \emph{transport}: given any dependent type $(x:A)\vdash B(x)\ty$, we have a term
\[ (x:A),(y:A),(p:x\id y),(b:B(x)) \vdash (p_*b : B(y)) \]
defined by $p_* b \eq J_b (x,y,p)$.
The morphism $t$ in the proof of the Acyclic Fibration Lemma (\ref{thm:afib}) is an instance of transport.

We can then rephrase many of the proofs in \S\S\ref{sec:ttfc}--\ref{sec:hothy-fibcat} in terms of the internal type theory.
In particular, as remarked there, the proof of \autoref{thm:path-fact} is a direct translation of the corresponding type-theoretic proof by \textcite{gg:idtypewfs}.
Thus, when the latter is interpreted in the internal language of \sC, it becomes precisely the proof given in \S\ref{sec:ttfc}.
Working through the correspondence between the two is a good exercise in understanding how the internal type theory translates into category theory.

Now for any type $A$, consider the type
\[ \iscontr(A) \eq \sum_{x:A}\; \prod_{y:A} (x\id y). \]
Categorically, ${\iscontr(A)}$ is the dependent product of the path object $P  A$ along one projection $A \times A \fib A$.
By adjunction, to give a global element of ${\iscontr(A)}$ (that is, a morphism $1 \to {\iscontr(A)}$) is to give a global element $1\to A$ together with a homotopy relating the composite $A\to 1\to A$ to the identity.
In other words, it is a witness exhibiting $A$ as homotopy equivalent to the terminal object; we say that such an $A$ is \emph{contractible}.

We also consider the type
\[ \isprop(A) \eq \prod_{x:A}\; \prod_{y:A} (x\id y). \]
Categorically, ${\isprop(A)}$ is the dependent product of $P A$ along the projection $A \times A \fib 1$.
By adjunction, to give a global element $1\to \isprop(A)$ is to give a section of the fibration $P A \fib A\times A$.
This implies that any two maps $f,g:X\toto A$ are homotopic, and is also implied by it (take $f$ and $g$ to be the two projections $A\times A \toto A$).
We call such an $A$ an \emph{h-proposition}, since to construct a term in such an $A$ gives no more information than that a certain property is true.

\begin{lem}\label{thm:isprop-inhabited-contr}
  We have
  \begin{align*}
    \isprop(A) &\to (A\to \iscontr(A))
    \qquad\text{and}\\
    (A\to \iscontr(A))&\to \isprop(A).
  \end{align*}
\end{lem}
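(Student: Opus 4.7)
The plan is to exhibit short explicit terms in the internal type theory for both implications, using only pair formation, lambda abstraction, and the elementary path operations (concatenation and inversion) introduced at the start of \S\ref{sec:homotopy-type-theory}. Each direction amounts to unfolding the definitions of $\isprop$ and $\iscontr$ and packaging the available data.

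For the forward implication $\isprop(A) \to (A \to \iscontr(A))$, given $h : \isprop(A)$ and $a : A$, the specialization $h(a) : \sprod{y:A}(a \id y)$ already expresses that every element of $A$ is connected to $a$ by a path. Hence the pair $(a,\, h(a))$ directly inhabits $\ssum{x:A}\sprod{y:A}(x \id y) \eq \iscontr(A)$, and lambda-abstracting over $h$ and $a$ produces the required map.

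For the converse $(A \to \iscontr(A)) \to \isprop(A)$, given $f : A \to \iscontr(A)$ and $x, y : A$, apply $f$ to $x$ to obtain a pair whose components we denote $c$ and $p$, with $c : A$ and $p : \sprod{w:A}(c \id w)$. Instantiating $p$ at $x$ and $y$ yields paths $p(x) : c \id x$ and $p(y) : c \id y$; inverting the first and concatenating with the second produces a path $x \id y$. Triple lambda abstraction over $f$, $x$, and $y$ then gives a term of the required type $\isprop(A)$.

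No substantive obstacle is expected: both directions are essentially definitional unfoldings, and neither requires any machinery beyond the path groupoid operations already available. The same proofs admit a transparent categorical reading via the adjunction descriptions of $\isprop(A)$ and $\iscontr(A)$ recorded just before the lemma statement, namely as a dependent product of $P A$ along $A \times A \fib A$ respectively $A \times A \fib 1$.
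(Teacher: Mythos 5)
Your proof is correct and is essentially the paper's own argument: the forward direction pairs $a$ with $h(a)$, and the converse builds the path $x\id y$ by routing through the center $\fst(f(x))$ using one inverted component of $\snd(f(x))$ (you invert the path at $x$ where the paper inverts the one at $y$, an immaterial difference). No changes needed.
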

\begin{proof}
  Given $p:\isprop(A)$ and $a:A$, we have $(a, y\mapsto p(a,y)) : \iscontr(A)$.
  Conversely, given $f:A\to \iscontr(A)$, we have $p:\isprop(A)$ where $p(x,y):x\id y$ is defined to be the composite of
  ${\snd(f(x))(x)} : x \id  \fst(f(x))$ with the inverse of ${\snd(f(x))(y)} : y\id \fst(f(x))$.
\end{proof}

We now interpret contractibility in the category $(\sC/B)\f$, which corresponds to working in the context $(b:B)$ in type theory.
Thus, for any fibration $A\fib B$, the fibration represented by the dependent type
\[ (b: B) \pr \iscontr(A(b)) \ty \]
has a section (which is equivalent to $\prod_{b: B} \iscontr(A)$ having a global element) precisely when $A\fib B$ is a homotopy equivalence in $(\sC/B)\f$.
By the Acyclic Fibration Lemma (\ref{thm:afib}), this is equivalent to saying that $A\fib B$ is an acyclic fibration, i.e.\ a fibration and a homotopy equivalence in \sC.

In fact, one can also prove \autoref{thm:afib} directly in the type theory, although we will not do so.
(A complete proof of \autoref{thm:fibcat} using type-theoretic methods, rather than the purely categorical ones of \S\ref{sec:hothy-fibcat}, has been given by \textcite{akl:faht}.)
This proof results in two terms with types
\begin{equation}\label{eq:ttafib}
  \begin{array}{rclr}
    \hequiv(\fst) &\too& \Big(\sprod{x:A} \iscontr (P(x))\Big) & \quad\text{and}\\
    \Big(\sprod{x:A} \iscontr (P(x))\Big) &\too& \hequiv(\fst).
  \end{array}
\end{equation}
Here $\fst: \sum_{x:A} P(x) \to A$ is the first projection, and for any $f:A\to B$,
\begin{equation}
  \hequiv(f) \eq \sum_{g:B\to A} \left(\left( \prod_{y:B} (f(g(y)) \id  y)\right) \times \left(\prod_{x:A} (g(f(x)) \id  x) \right)\right)
\end{equation}
is the type of ``homotopy equivalence data'' for $f$.
Of course, $\hequiv(f)$ has a global element precisely when $f$ is a homotopy equivalence.
Thus, since the existence of the morphisms~\eqref{eq:ttafib} imply that $\hequiv(\fst)$ has a global element if and only if $\sprod{x:A} \iscontr (P(x))$ does, \autoref{thm:afib} follows---but the type-theoretic proof actually says rather more than this.

However, $\hequiv(f)$ is not especially well-behaved as a type.
Specifically, because a given map can admit multiple inequivalent choices of ``homotopy equivalence data'', it is problematic to regard $\hequiv (f)$ as the mere assertion ``$f$ is a homotopy equivalence''.
One possible replacement is obtained by noting that by the 2-out-of-3 property, $f:A\to B$ is a homotopy equivalence just when the fibration half of its ``mapping path space'' constructed in \autoref{thm:path-fact} is an acyclic fibration.
In type theory, this fibration is $\m{\ssum{a:A} (f(a)\id b)} \to \m{B}$.
Thus, by the type-theoretic proof of \autoref{thm:afib}, $f$ is an equivalence just when the type
\begin{equation}
  \prod_{b:B} \iscontr\left( \sum_{a:A} (f(a)\id b) \right)\label{eq:contrmap}
\end{equation}
is inhabited.
The type~\eqref{eq:contrmap} is better-behaved than $\hequiv (f)$, but we will use instead the following definition, which is also well-behaved and easier to work with.
(It was first suggested in this context by Andr\'e Joyal.)
\begin{equation}
  \isequiv(f) \eq \left(\sum_{s: B\to A} \,\prod_{b: B} (f(s(b)) \id  b)\right) \,\times\,
  \left(\sum_{r: B\to A}\, \prod_{a: A} (r(f(a)) \id  a) \right).\label{eq:hiso}
\end{equation}
To give a global element of $\isequiv(f)$ is to give a homotopy section and a homotopy retraction of $f$.
It is easy to define a term of type $\hequiv (f) \to \isequiv (f)$ by taking $s$ and $r$ to be the same.
In the other direction, given $((s,p),(r,q)):\isequiv (f)$, we can first construct a term $u:\sprod{b} (s(b)\id r(b))$, by concatenating $\ap_r(p_b)$ with the inverse of $q_{s(b)}$:
\[ s(b) \id  r(f(s(b))) \id  r(b). \]
From this we obtain a term $v:\sprod{a} (s(f(a))\id a)$ by concatenating $u_{f(a)}$ with $q_a$, so that $(s,u,v):\hequiv (f)$.
Thus we have $\isequiv(f) \to \hequiv(f)$ also; in particular, one has a global element if and only if the other does.

The most important advantage of~\eqref{eq:contrmap} and~\eqref{eq:hiso} is that, at least under an additional natural assumption, they are h-propositions.
The necessary assumption is called \emph{function extensionality}: it specifies the path objects of function spaces, including dependent products (up to equivalence), which plain type theory leaves undetermined.
Function extensionality has several forms, which we now explain.
Note first that there is always a canonical term
\begin{equation*}
  \big(f,g : \sprod{a:A} B(a)\big)
  \pr \big( \happly : (f\id g) \too \sprod{a:A} (f(a) \id  g(a))\big)
\end{equation*}
defined by $\happly(p) \eq J_{\lambda a.\r_{f(a)}}(f,g,p)$.
The traditional meaning of ``function extensionality'' is simply the existence of a function in the opposite direction to \happly.
However, in homotopy type theory, where the types $(f\id g)$ and $\sprod{a:A} (f(a) \id  g(a))$ may contain higher information, we need to know furthermore that such a function is actually an \emph{inverse} to \happly.

Voevodsky has shown that this strong form of function extensionality is in fact equivalent to the following even weaker-looking form:
\begin{equation}
  \funext \,:\, \sprod{a:A}\iscontr(B(a)) \to \iscontr\big(\sprod{a:A} B(a)\big).\label{eq:funext}
\end{equation}
It is easy to construct~\eqref{eq:funext} under the ``naive'' assumption that there exists a function in the opposite direction to \happly, while conversely we have:

\begin{thm}[Voevodsky]\label{thm:strong-funext}
  Assuming~\eqref{eq:funext}, the function \happly is an equivalence.
\end{thm}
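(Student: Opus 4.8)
The plan is to show that under~\eqref{eq:funext}, the map $\happly : (f\id g) \to \sprod{a:A}(f(a)\id g(a))$ has contractible homotopy fibers, which by~\eqref{eq:contrmap} (equivalently, by the type-theoretic Acyclic Fibration Lemma) makes it an equivalence. The standard trick is to deduce this from contractibility of a total space. First I would observe that~\eqref{eq:funext} applied to a suitable family yields that a certain ``based path space'' type is contractible. Concretely, fix $f:\sprod{a:A}B(a)$ and consider the type $\sum_{g}\, \sprod{a:A}(f(a)\id g(a))$, i.e.\ the total space of the codomain of $\happly$ ``based at $f$''. For each $a:A$, the type $\sum_{b:B(a)}(f(a)\id b)$ is contractible (it is a based path space, contractible by the usual $J$-argument, centered at $(f(a),\r_{f(a)})$). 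Hence $\sprod{a:A}\iscontr\big(\sum_{b:B(a)}(f(a)\id b)\big)$ is inhabited, so by~\eqref{eq:funext} the dependent product $\sprod{a:A}\sum_{b:B(a)}(f(a)\id b)$ is contractible. A routine reshuffling (using dependent-function comprehension / the "type-theoretic axiom of choice" equivalence, which holds definitionally with our $\eta$-rules) identifies $\sprod{a:A}\sum_{b:B(a)}(f(a)\id b)$ with $\sum_{g:\sprod{a:A}B(a)}\sprod{a:A}(f(a)\id g(a))$. Therefore this latter type is contractible.

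Next I would compare this with the total space $\sum_{g}(f\id g)$, which is contractible by the based-path-space argument again (centered at $(f,\r_f)$). Both total spaces being contractible, the fiberwise map over $\sprod{a:A}B(a)$ induced by $\happly$ — namely $(g,p)\mapsto (g,\happly(p))$, which restricted to the fiber over a given $g$ is exactly $\happly$ — is a map between contractible types, hence an equivalence on total spaces. The final step is the ``fiberwise equivalence'' principle: a map over a base that is an equivalence on total spaces is a fiberwise equivalence, i.e.\ an equivalence on each fiber. Applying this over $\sprod{a:A}B(a)$ at the point $g$ gives that $\happly : (f\id g)\to\sprod{a:A}(f(a)\id g(a))$ is an equivalence, as desired. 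Since $f$ and $g$ were arbitrary, this proves the theorem.

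I expect the main obstacle to be organizing the ``shuffling'' steps cleanly — in particular establishing, internally, that total spaces of fiberwise-contractible families have the expected contractibility, and the equivalence $\sprod{a}\sum_{b}(\cdots)\simeq\sum_{g}\sprod{a}(\cdots)$. With the judgmental $\eta$-rules for $\Sigma$ and $\Pi$ assumed in \autoref{thm:syncat}, the latter equivalence is essentially definitional, so the real content is packaging the two elementary lemmas: (a) a based path space $\sum_{b:B}(b_0\id b)$ is contractible, and (b) a fiberwise map that is a total-space equivalence is a fiberwise equivalence. Both are standard consequences of $J$ and of~\eqref{eq:contrmap}/\autoref{thm:afib}; alternatively, (b) follows from the long-exact-sequence-free argument using that contractibility transfers along equivalences. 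Everything else is bookkeeping, and no further hypothesis beyond~\eqref{eq:funext} is needed.
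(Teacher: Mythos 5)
Your proposal is correct, and its first half coincides exactly with the paper's argument: both begin by using~\eqref{eq:funext} together with the judgmental ``type-theoretic axiom of choice'' isomorphism~\eqref{eq:fppf-eqv} to show that $\sum_{g}\sprod{a:A}(f(a)\id g(a))$ is contractible, since each based path space $\sum_{y:B(a)}(f(a)\id y)$ is. Where you diverge is in how you extract the conclusion from this contractibility. The paper runs the equivalence~\eqref{eq:fppf-eqv} backwards to turn contractibility of the total space into an \emph{induction principle} for pointwise paths, and then concludes that $(f\id g)$ and $\sprod{a:A}(f(a)\id g(a))$ are equivalent because they satisfy the same induction principle. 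You instead observe that $\sum_{g}(f\id g)$ is also contractible, so the total-space map $(g,p)\mapsto(g,\happly(p))$ is a map between contractible types and hence an equivalence, and then invoke the fiberwise-equivalence principle (a fiberwise map inducing an equivalence on total spaces is an equivalence on each fiber). Both routes are standard and valid; yours has the advantage of resting on a cleanly stated, reusable lemma whose proof is a routine $\Sigma$-reshuffling of homotopy fibers via~\eqref{eq:contrmap}, whereas the paper's ``same induction principle, hence equivalent'' step is left informal and would require a comparable amount of work (essentially the same fiberwise argument, or a retraction argument) to make fully precise. The one point you should not gloss over is the proof of your lemma (b): the identification $\hfiber(\mathrm{total}(\phi),(g,q))\simeq\hfiber(\phi_g,q)$ is the real content there, and it is exactly the kind of bookkeeping the judgmental $\eta$-rules of \autoref{thm:syncat} make painless.
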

\begin{proof}
  We sketch the proof informally; a Coq formalization can be found in \textcite{voevodsky:github,hott} (see also \textcite{pll:funext-blog}).
  First, for any $f:\sprod{x:A} B(x)$, we have an equivalence
  \begin{equation}\label{eq:fppf-eqv}
    \sum_{g:\prod_{x:A} B(x)} \; \prod_{x:A} \, (f(x) \id  g(x))
    \quad\simeq\quad
    \prod_{x:A}\, \sum_{y:B(x)} (f(x) \id  y).
  \end{equation}
  From left to right, we send $(g,h)$ to $\lambda x. (g(x),h(x))$, while from right to left we send $k$ to $(\lambda x.\fst(k(x)), \lambda x. \snd(k(x)))$.
  With definitional $\eta$-conversion for both dependent sums and products, these functions are in fact a judgmental isomorphism (i.e.\ the composites in either direction are judgmentally equal to identities).\footnote{Without definitional $\eta$-conversion for dependent sums, and without knowing already the conclusion of this theorem, we only have that the left-hand side is a homotopy retract of the right-hand side, but this is sufficient for the argument.
    However, we do need at least propositional $\eta$-conversion for dependent products.}

  Second, the type $\sum_{y:B(x)} (f(x) \id  y)$ on the right-hand side of~\eqref{eq:fppf-eqv} is always contractible; this is essentially an expression of the induction principle for identity types.
  Thus, by~\eqref{eq:funext}, so is the entire right-hand side $\prod_{x:A}\, \sum_{y:B(x)} (f(x) \id  y)$, and hence so must be the left-hand side.
  However, by the same argument in reverse, this implies an ``induction principle'' for pointwise paths: given a dependent type
  \[ \big(f,g : \sprod{a:A} B(a)\big),\, \big(h : \sprod{a:A}(f(x)\id g(x))\big)
  \pr Q(f,g,h) \ty
  \]
  along with a term $d:Q(f,f, \lambda x.\r_{f(x)})$, we have a ``$J$-term'' inhabiting $Q(f,g,h)$, which computes (at least modulo a path) to $d$ when applied to $(f,f, \lambda x.\r_{f(x)})$.
  But now the types $(f\id g)$ and $\sprod{a:A}(f(x)\id g(x))$ have the same induction principle, hence must be equivalent.
\end{proof}

In the internal language of a type-theoretic fibration category \sC, function extensionality~\eqref{eq:funext} means that for fibrations $P\xfib{f} X\xfib{g} A$, there is a map
\begin{equation}
  \Pi_g (\iscontr_X(P)) \to \iscontr_A(\Pi_g P).\label{eq:catfunext}
\end{equation}
By the Yoneda lemma and the definition of $\Pi_g$, this means that for any $h: B\to A$, if there exists a map from $h^* X$ to $\iscontr_X(P)$ over $X$, then there exists a map from $B$ to $\iscontr_A(\Pi_g P)$ over $A$.
And by the above characterization of $\iscontr$, slicing, and preservation of all structure by pullback, this means that if the pullback $h^* P \to h^* X$ is an acyclic fibration, then so is $h^* (\Pi_g P) \to B$.
In particular, this means that whenever $f: P\to X$ is an acyclic fibration, then so is $\Pi_g(f)$.
However, this special case implies the general one, by the Beck-Chevalley condition for dependent products.
Thus we have:

\begin{lem}\label{thm:cat-funext}
  Function extensionality holds in the internal type theory of a type-theoretic fibration category if and only if dependent products along fibrations preserve acyclicity of fibrations.
\end{lem}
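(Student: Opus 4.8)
The statement has in effect already been unwound in the paragraph preceding the lemma, and the plan is to turn that discussion into a precise two-way argument. Throughout I would lean on three facts established earlier: (i) the characterization coming from the Acyclic Fibration Lemma (\autoref{thm:afib}) that, for a fibration $Q\fib B$, the fibration interpreting $(b:B)\pr \iscontr(Q(b))\ty$ — write it $\iscontr_B(Q)\fib B$ — admits a section over $B$ iff $Q\fib B$ is an acyclic fibration; (ii) that all the type-forming structure, in particular the formation of $\iscontr_B(Q)$, is stable under pullback, so that $\iscontr$ commutes with base change; and (iii) the Beck--Chevalley isomorphism $e^*\Pi_g\cong\Pi_{g'}\pi^*$ for a pullback square of a fibration $g$ along a map $e$, with $g',\pi$ the remaining two sides. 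I would also record the dictionary: function extensionality holds exactly when, for every composable pair of fibrations $P\xfib{f}X\xfib{g}A$, there is a morphism \eqref{eq:catfunext}, i.e.\ $\Pi_g(\iscontr_X P)\to\iscontr_A(\Pi_g P)$ over $A$ (this is \eqref{eq:funext} read in the slice $(\sC/A)\f$, with $A$ serving as the ambient context).

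For the ``only if'' direction — funext implies that $\Pi_g$ preserves acyclic fibrations — I would take a fibration $g:X\fib A$ and an acyclic fibration $f:P\fib X$. By \autoref{def:ttfc}\ref{item:cat4} the map $\Pi_g P\fib A$ is a fibration, so by (i) it suffices to exhibit a section over $A$ of $\iscontr_A(\Pi_g P)\fib A$. Since $f$ is acyclic, (i) gives a section of $\iscontr_X(P)\fib X$ over $X$; transposing it across the adjunction $g^*\dashv\Pi_g$ (using that $g^*$ carries the terminal object of $\sC/A$ to that of $\sC/X$, a pullback of an identity being an identity) yields a morphism $A\to\Pi_g(\iscontr_X P)$ over $A$, and composing with the funext morphism \eqref{eq:catfunext} produces the desired section.

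The substance is the ``if'' direction. Assume $\Pi_g$ preserves acyclic fibrations, and let $P\xfib{f}X\xfib{g}A$ be composable fibrations (in an arbitrary ambient context). Set $E:=\Pi_g(\iscontr_X P)$; let $X_E:=E\times_A X$ with projections $\pi:X_E\to X$ and $g':X_E\fib E$, a fibration as a pullback of $g$; and let $P_E:=\pi^*P\fib X_E$. The first key move is to observe that $P_E\fib X_E$ is automatically an acyclic fibration: the counit of $g^*\dashv\Pi_g$ at $\iscontr_X P$ is a morphism $g^*E=X_E\to\iscontr_X P$ over $X$, which by (ii) (pulling back along $\pi$) is precisely a section of $\iscontr_{X_E}(P_E)\fib X_E$, whence acyclicity by (i). Now apply the hypothesis to the fibration $g':X_E\fib E$ and the acyclic fibration $P_E\fib X_E$: $\Pi_{g'}(P_E)\fib E$ is acyclic. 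By Beck--Chevalley (iii) this object is $E\times_A\Pi_g P$ over $E$, so $E\times_A\Pi_g P\fib E$ is acyclic; applying (i) once more with base $E$, and (ii) to rewrite $\iscontr_E(E\times_A\Pi_g P)$ as $E\times_A\iscontr_A(\Pi_g P)$, we get a section over $E$ of the latter, i.e.\ a morphism $E=\Pi_g(\iscontr_X P)\to\iscontr_A(\Pi_g P)$ over $A$. That is the morphism \eqref{eq:catfunext}, so funext holds.

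The step I expect to be the real obstacle is exactly the one the excerpt compresses into ``by the Yoneda lemma''. The naive reading — assemble, over all $h:B\to A$, the implications ``$h^*P\fib h^*X$ acyclic $\Rightarrow$ $h^*\Pi_g P\fib B$ acyclic'' into a natural transformation of representables — does not literally go through, because $\iscontr_A(Q)$ is only an h-proposition \emph{up to homotopy} (and even that requires funext), so $\mathrm{Hom}_{\sC/A}(-,\iscontr_A Q)$ is not strictly subterminal and one cannot manufacture a single well-defined morphism out of a family of bare existence statements. The counit computation in the previous paragraph is precisely what circumvents this: it is the ``universal instance'' $h=(E\to A)$ of that family, and it produces \emph{the} morphism \eqref{eq:catfunext} in one stroke with no naturality bookkeeping. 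The remaining points are routine: $\Pi_g$ is defined at $\mathrm{id}_X$ (identities being isomorphisms, hence fibrations), $g^*$ preserves terminal objects, and the Beck--Chevalley isomorphism is available in any type-theoretic fibration category, being the compatibility of dependent products with substitution that is already part of the semantics.
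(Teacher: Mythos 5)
Your proof is correct and follows essentially the same route as the paper, whose argument is the paragraph preceding the lemma: reduce function extensionality to the existence of the morphism~\eqref{eq:catfunext}, use the characterization of acyclic fibrations via $\iscontr$ and its stability under pullback, and invoke Beck--Chevalley to pass between the special and general cases. Your unwinding of the ``Yoneda lemma'' step via the counit at the universal instance $E=\Pi_g(\iscontr_X P)$ is exactly the intended reading of that phrase, just spelled out more carefully.
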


More precisely, if the latter condition holds in \sC, we can find morphisms~\eqref{eq:catfunext}.
Regarding \sC (or a split replacement of it) as equipped with such morphisms, it lives in the category whose initial object is the syntactic category of a type theory with function extensionality.
Thus, we can interpret the latter type theory into \sC.

\begin{rmk}\label{thm:ttmc-funext}
  If the acyclic fibrations are the right class in a weak factorization system, then this condition is equivalent to requiring pullback along fibrations to preserve the corresponding left class (the ``cofibrations'').
  Thus, by \autoref{thm:ttmc-piquillen}, it holds in any type-theoretic model category satisfying the condition of \autoref{rmk:ttmc-whe-he} that weak equivalences between fibrant objects are homotopy equivalences.
\end{rmk}

Note that by Ken Brown's lemma (see e.g.~\textcite[{}1.1.12]{hovey:modelcats}), dependent product along a fibration in a type-theoretic model category preserves weak equivalences.
The following lemma says that the same is true in any type-theoretic fibration category satisfying function extensionality.

\begin{lem}\label{thm:funext-forallequiv}
  Given $(a:A) \pr (f_a : B(a) \to C(a))$ such that $(a:A) \pr \isequiv(f_a)$ holds, then $\Pi f:\sprod{a} B(a) \to \sprod{a} C(a)$ defined by $\Pi f(h)(a) \eq f_a(h(a))$ is also an equivalence.
\end{lem}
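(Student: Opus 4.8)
The plan is to argue entirely in the internal type theory of \sC, which by hypothesis validates function extensionality, and to reduce the statement to producing a term witnessing that $\Pi f$ is a homotopy equivalence. Since (as discussed around~\eqref{eq:hiso}) $\hequiv(h)$ is inhabited exactly when $h$ is a homotopy equivalence, and there is a map $\isequiv(h)\to\hequiv(h)$, it suffices to build a term of $\hequiv(\Pi f)$ from the hypothesis.

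First I would unpack the hypothesis: applying the map $\isequiv(f_a)\to\hequiv(f_a)$ in the context $(a:A)$ to the assumed term of type $\isequiv(f_a)$, one obtains a family $(a:A)\pr(g_a:C(a)\to B(a))$ together with pointwise homotopies
\[
(a:A) \pr \Big(H_a:\sprod{c:C(a)}\big(f_a(g_a(c))\id c\big)\Big)
\qquad\text{and}\qquad
(a:A) \pr \Big(K_a:\sprod{b:B(a)}\big(g_a(f_a(b))\id b\big)\Big),
\]
all natural in $a$. I would then take as candidate homotopy inverse the map $\Pi g:\sprod{a}C(a)\to\sprod{a}B(a)$ determined by $\Pi g(k)(a)\eq g_a(k(a))$, so that $\Pi f(\Pi g(k))(a)$ $\beta$-reduces to $f_a(g_a(k(a)))$ and $\Pi g(\Pi f(h))(a)$ to $g_a(f_a(h(a)))$.

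The key step is to upgrade these pointwise homotopies to identifications of functions. For $k:\sprod{a}C(a)$, the term $\lambda a.\,H_a(k(a))$ inhabits $\sprod{a}\big(\Pi f(\Pi g(k))(a)\id k(a)\big)$; by \autoref{thm:strong-funext} the map $\happly$ is an equivalence, so applying a chosen inverse of $\happly$ to it yields a term $(k:\sprod{a}C(a))\pr\big(\Pi f(\Pi g(k))\id k\big)$, i.e.\ a homotopy $\Pi f\circ\Pi g\sim\idfunc$. The symmetric construction starting from $K$ produces a homotopy $\Pi g\circ\Pi f\sim\idfunc$. Packaging $\Pi g$ together with these two homotopies yields the desired term of $\hequiv(\Pi f)$, so $\Pi f$ is a homotopy equivalence.

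I do not expect a genuine obstacle: the statement is the familiar closure of equivalences under dependent products, and the only nontrivial ingredient is strong function extensionality (\autoref{thm:strong-funext}), invoked precisely to convert the family $\lambda a.\,H_a(k(a))$ of pointwise identifications into one identification $\Pi f(\Pi g(k))\id k$; everything else is $\beta$-reduction and reassembly of tuples. Should one prefer to avoid the internal language, the same argument can be run categorically using the Acyclic Fibration Lemma (\autoref{thm:afib}), \autoref{thm:cat-funext}, and the Beck--Chevalley condition, but the type-theoretic version is shorter.
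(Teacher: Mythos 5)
Your proposal is correct and follows essentially the same route as the paper's proof: pass from $\isequiv(f_a)$ to $\hequiv(f_a)$ to extract the pointwise inverses $g_a$, form $\Pi g$ by $\Pi g(k)(a)\eq g_a(k(a))$, and use function extensionality to promote the pointwise homotopies to identifications of functions, concluding that $\Pi f$ is a homotopy equivalence and hence an equivalence. The only cosmetic difference is that you explicitly invoke the strong form of function extensionality (\autoref{thm:strong-funext}) where the paper simply says ``by function extensionality''; the naive inverse to $\happly$ is all that is needed, and both are available.
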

\begin{proof}
  Since $\isequiv(f_a) \to \hequiv (f_a)$, we have
  \[(a:A)\pr (g_a : C(a) \to B(a))\]
  such that
  \begin{align*}
    (a:A),(b:B(a)) &\pr (p_{a,b}:g_a(f_a(b))\id b) \qquad\text{and}\\
    (a:A),(c:C(a)) &\pr (q_{a,c}:f_a(g_a(c))\id c).
  \end{align*}
  Define $\Pi g:\sprod{a} C(a) \to \sprod{a} B(a)$ by $\Pi g(k)(a) \eq g_a(k(a))$.
  Then for any $h:\sprod{a} B(a)$ and $a:A$ we have
  \[\Pi g(\Pi f(h))(a) \eq g_a(\Pi f(h)(a)) \eq g_a(f_a(h(a))),\]
  so that $p_{a,h(a)}:\Pi g(\Pi f(h))(a) \id  h(a)$.
  By function extensionality, therefore, $\Pi g(\Pi f(h))\id h$.
  The other side is analogous, so $\Pi f$ is a homotopy equivalence and hence an equivalence.
\end{proof}

Now we sketch the proof of the following fact referred to above.

\begin{lem}\label{thm:isprop-isequiv}
  Assuming function extensionality, for any $f:A\to B$ we have $\isprop(\isequiv(f))$.
\end{lem}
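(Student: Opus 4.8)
The plan is to apply \autoref{thm:isprop-inhabited-contr}: it suffices to construct a function $\isequiv(f)\to\iscontr(\isequiv(f))$. So suppose given a point of $\isequiv(f)$. As observed just after \eqref{eq:hiso}, there is a function $\isequiv(f)\to\hequiv(f)$, so this point in particular exhibits $f$ as a homotopy equivalence; fix a homotopy inverse $g:B\to A$. It then remains to show that $\isequiv(f)$ is contractible. By the definition \eqref{eq:hiso}, $\isequiv(f)$ is the product of the ``right inverse'' type $\sum_{s:B\to A}\prod_{b:B}(f(s(b))\id b)$ and the ``left inverse'' type $\sum_{r:B\to A}\prod_{a:A}(r(f(a))\id a)$; since a product of contractible types is contractible, it is enough to prove that each of these two factors is contractible.

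For the first factor, I would use that $\happly$ is an equivalence (\autoref{thm:strong-funext}) to replace each $\prod_{b:B}(f(s(b))\id b)$ by $(f\circ s\id 1_B)$; as $\Sigma$-types preserve fibrewise equivalences, this identifies the first factor with the homotopy fibre over $1_B$ of the post-composition map $(f\circ-):(B\to A)\to(B\to B)$. By \autoref{thm:funext-forallequiv}, applied to the constant family of maps $f_b\eq f$ indexed by $b:B$, this post-composition map is again a homotopy equivalence, and a homotopy equivalence has contractible homotopy fibres (this is exactly the assertion that \eqref{eq:contrmap} is inhabited, established via the type-theoretic proof of \autoref{thm:afib}). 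Hence the first factor is contractible. (Alternatively, one can identify it directly with $\prod_{b:B}\hfiber(f,b)$ by the ``swap'' equivalence \eqref{eq:fppf-eqv}, note that each $\hfiber(f,b)$ is contractible since $f$ is a homotopy equivalence, and conclude by function extensionality \eqref{eq:funext}.)

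For the second factor, the same use of \autoref{thm:strong-funext} identifies it with the homotopy fibre over $1_A$ of the \emph{pre}-composition map $(-\circ f):(B\to A)\to(A\to A)$. This is no longer an instance of \autoref{thm:funext-forallequiv}, so I would check by hand that $(-\circ f)$ is a homotopy equivalence, with homotopy inverse $(-\circ g)$: for $h:B\to A$, the identification $h\circ f\circ g\id h$ comes from function extensionality applied to the pointwise path obtained by applying $\ap_h$ to $f(g(b))\id b$, and symmetrically $k\circ g\circ f\id k$ for $k:A\to A$. Thus $(-\circ f)$ also has contractible homotopy fibres, so the second factor is contractible, and therefore so is $\isequiv(f)$, which gives the required function.

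The step I expect to be the main obstacle is the asymmetry between the two factors. The right-inverse factor fits an established pattern (the $\prod$--$\sum$ ``swap'', or post-composition with an equivalence via \autoref{thm:funext-forallequiv}), but the left-inverse factor is a fibre of \emph{pre}-composition, for which one needs the extra observation that pre-composition with an equivalence is an equivalence; this is an easy whiskering argument, but it genuinely uses function extensionality, and one must phrase it so that only the tools already available are used. The remaining ingredients (a product of contractibles is contractible, $\Sigma$-types preserve fibrewise equivalences, a homotopy equivalence has contractible fibres) are routine.
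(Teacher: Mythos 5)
Your proposal is correct and follows essentially the same route as the paper: reduce via \autoref{thm:isprop-inhabited-contr} to contractibility in the presence of a point, split $\isequiv(f)$ into its two factors, and identify each with a homotopy fibre of (post- resp.\ pre-)composition with $f$, which is an equivalence and hence has contractible fibres. The paper dismisses the second factor with ``nearly identical''; your explicit observation that it is a fibre of \emph{pre}-composition, requiring the separate (easy) whiskering argument, is exactly the content hidden in that remark.
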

\begin{proof}
  By \autoref{thm:isprop-inhabited-contr}, we may extend the context by $e:\isequiv(f)$ and seek a term inhabiting $\iscontr(\isequiv(f))$.
  It is easy to show that a cartesian product of contractible types is contractible, so we may deal separately with the two factors in $\isequiv(f)$.
  For the first, function extensionality implies that $\sprod{b:B} (f(s(b))\id b)$ is equivalent to $f\circ s \id  1_B$, so that the first factor in~\eqref{eq:hiso} is equivalent to
  \begin{equation}
    \ssum{s:B\to A} (f\circ s \id  1_B).\label{eq:postcompfiber}
  \end{equation}
  But this is just $\ssum{s:B\to A} (F(s) \id  1_B)$, where $F:(B\to A) \to (B\to B)$ is post-composition with $f$.
  It is easy to show that $F$ is an equivalence if $f$ is.
  Thus, $F$ satisfies~\eqref{eq:contrmap}, so that~\eqref{eq:postcompfiber} is contractible as desired.
  Contractibility of the other half of $\isequiv (f)$ is nearly identical.
\end{proof}

We also observe the following.

\begin{lem}\label{thm:isprop-isprop}
  Assuming function extensionality, for any $A$ we have $\isprop(\isprop(A))$.
\end{lem}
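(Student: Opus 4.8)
The plan is to reduce, via \autoref{thm:isprop-inhabited-contr}, to the statement that inhabitedness of $\isprop(A)$ forces its contractibility. Concretely, by the second implication of that lemma it suffices to exhibit a term of type $\isprop(A)\to\iscontr(\isprop(A))$, so I would work in a context extended by a hypothesis $e:\isprop(A)$ and construct a term of $\iscontr(\isprop(A))$. Since $\isprop(A)$ unfolds to $\sprod{x:A}\sprod{y:A}(x\id y)$, and function extensionality~\eqref{eq:funext} says that a dependent product of contractible families is contractible, it is enough to establish $(x:A),(y:A)\pr\iscontr(x\id y)$ and then apply~\eqref{eq:funext} twice.

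The substantive step is the classical fact that ``h-propositions are h-sets'', i.e.\ $(x:A),(y:A)\pr\isprop(x\id y)$. I would prove this by fixing $x:A$, writing $g\eq e(x):\sprod{y:A}(x\id y)$, and showing by (based) path induction that $\sprod{y:A}\sprod{p:x\id y}\bigl(p\id g(x)^{-1}\cdot g(y)\bigr)$, where $g(x):x\id x$ and $g(x)^{-1}$ denotes the inverse path. Path induction reduces this to the case $y\eq x$, $p\eq\r_x$, where the goal becomes $\r_x\id g(x)^{-1}\cdot g(x)$ --- exactly the path-level instance of the identity $c(v(H),H)\sim r f$ witnessed by the square~\eqref{eq:oppconcat}. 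Given this, for fixed $x,y$ any two terms $p,q:x\id y$ are both identified with the single term $g(x)^{-1}\cdot g(y)$, hence with each other, so $x\id y$ is an h-proposition. Combining this with the point $e(x)(y):x\id y$, the first implication of \autoref{thm:isprop-inhabited-contr} then yields $(x:A),(y:A)\pr\iscontr(x\id y)$.

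To conclude, I would apply~\eqref{eq:funext} once over the context $(x:A)$ to get $(x:A)\pr\iscontr\bigl(\sprod{y:A}(x\id y)\bigr)$, and a second time to get $\iscontr\bigl(\sprod{x:A}\sprod{y:A}(x\id y)\bigr)$, which is $\iscontr(\isprop(A))$; feeding this back through the second implication of \autoref{thm:isprop-inhabited-contr} gives $\isprop(\isprop(A))$.

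The only step that calls for genuine thought is the ``h-props are h-sets'' computation in the middle paragraph; everything around it is bookkeeping with \autoref{thm:isprop-inhabited-contr} and~\eqref{eq:funext}. That computation is itself short --- a single path induction followed by the groupoid law of~\eqref{eq:oppconcat} --- so I expect no real obstacle, only the routine care needed to keep the two applications of function extensionality, and the implicit ambient contexts, straight.
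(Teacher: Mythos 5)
Your proof is correct, and its heart coincides exactly with the paper's: both hinge on the ``h-propositions are h-sets'' computation, proving by path induction that any $p:x\id y$ equals $h(x,x)^{-1}\cdot h(x,y)$, with the base case discharged by the groupoid law of~\eqref{eq:oppconcat}. Where you differ is in the outer packaging. The paper proves $h\id k$ for $h,k:\isprop(A)$ directly, invoking function extensionality in its ``pointwise'' form (to show $h\id k$ it suffices to show $h(a,b)\id k(a,b)$), which implicitly relies on the strong form of \autoref{thm:strong-funext}; it then specializes the key identity to $p\eq h(a,b)$ and $p\eq k(a,b)$. You instead route everything through contractibility: \autoref{thm:isprop-inhabited-contr} twice, plus two applications of the weak form~\eqref{eq:funext}. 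Your version has the mild advantage of using only the weak statement~\eqref{eq:funext} as a black box, at the cost of a little more bookkeeping; the paper's is shorter on the page but leans on the equivalence of the two forms of function extensionality. Both are complete arguments, and the only genuinely non-routine step is the same in each.
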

\begin{proof}
  Suppose given $h,k:\isprop(A)$; we must construct a term inhabiting $(h\id k)$.
  By function extensionality, it suffices to extend the context by $a,b:A$ and construct a term inhabiting $h(a,b)\id k(a,b)$.

  To start with, we claim that in the context of $a,b:A$ and $p:a\id b$ we have a term inhabiting $p \id  h(a,a)^{-1} \cdot h(a,b)$, where $\cdot$ denotes path concatenation and $(-)^{-1}$ denotes path inversion.
  This follows from the eliminator $J$, for when $a\eq b$ and $p$ is the reflexivity path, then the type desired reduces to $\r_a \id  h(a,a)^{-1} \cdot h(a,a)$, which is inhabited by the easy proof that inversion is an inverse for concatenation.

  Finally, letting $p$ be $h(a,b)$ and $k(a,b)$ successively, we have
  \[ h(a,b) \id  h(a,a)^{-1} \cdot h(a,b) \id  k(a,b) \]
  as desired
\end{proof}

\begin{lem}\label{thm:prop-forall}
  If $\sprod{x:A} \isprop(B(x))$, then $\isprop(\sprod{x:A} B(x))$.
\end{lem}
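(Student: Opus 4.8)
The plan is to follow the same pattern as the proof of \autoref{thm:isprop-isprop}: use function extensionality to reduce the goal to a pointwise statement which is then immediate from the hypothesis. As with the neighbouring lemmas, I would work under the assumption of function extensionality~\eqref{eq:funext}, which the statement should be read as carrying.

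First I would extend the context by two terms $h,k:\sprod{x:A}B(x)$ and aim to inhabit the type $(h\id k)$; since $h$ and $k$ are then arbitrary, producing such a term establishes $\isprop(\sprod{x:A}B(x))$. By \autoref{thm:strong-funext} the canonical map $\happly$ from $(h\id k)$ to $\sprod{x:A}(h(x)\id k(x))$ is an equivalence, hence admits a section, so it suffices to produce a term of $\sprod{x:A}(h(x)\id k(x))$. For that I would use the hypothesis directly: for each $x:A$ it yields $\isprop(B(x))$, i.e.\ $\sprod{y:B(x)}\sprod{z:B(x)}(y\id z)$, and feeding in $h(x)$ and $k(x)$ gives a path $h(x)\id k(x)$. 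Abstracting over $x$ produces the required element of $\sprod{x:A}(h(x)\id k(x))$, and composing with the section of $\happly$ returns an inhabitant of $(h\id k)$.

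There is no genuine obstacle here; the only point to watch is that the step passing from pointwise paths to a path between the functions is precisely where function extensionality enters, which is why the hypothesis is needed. Categorically, this reflects the fact (\autoref{thm:cat-funext}) that dependent products along fibrations preserve acyclicity of fibrations.
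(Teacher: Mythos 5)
Your proof is correct and matches the paper's own argument: by function extensionality it suffices to produce a pointwise path $h(x)\id k(x)$ for each $x:A$, which is immediate from the hypothesis $\isprop(B(x))$. The extra detail about $\happly$ admitting a section via \autoref{thm:strong-funext} is just an explicit unwinding of the same step.
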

\begin{proof}
  Given $f,g:\sprod{x:A} B(x)$, to show $f\id g$, by function extensionality it suffices to show $f(x)\id g(x)$ for any $x$, but this follows from the assumption.
\end{proof}

Finally, for types $A$ and $B$ we define the ``space of equivalences'' from $A$ to $B$ to be the dependent sum type
\[\equiv(A,B) \eq \sum_{f: A\to B} \isequiv(f). \]
This will play an essential role in the univalence axiom (\S\ref{sec:univalence-axiom}).

\section{Universes}
\label{sec:tyuniverses}

Voevodsky's univalence axiom for homotopical type theory depends on a \emph{universe} or ``type of (small) types''.
We denote such a type by ``\type'', and assume that it is equipped with an ``\`a la Tarski'' coercion from terms of type \type to types:
\begin{equation}
  (A:\type) \pr \el(A) \ty.\label{eq:el}
\end{equation}
Moreover, the type-forming operations should be reflected by operations on \type, as shown in \autoref{fig:univop},
which coerce, definitionally, to the actual type-forming operations, as shown in \autoref{fig:univopbeta}.
(One can then make the coercion \el implicit, so that terms of type \type appear to be literally identified with types.
We will do this in later sections.)

\begin{figure}
  \centering
  \begin{align}
    &\pr (\mathsf{u} : \type)\label{eq:unitop}\\
    (A:\type),\, (B:\el(A) \to \type) &\pr (\Sigma(A,B) : \type)\label{eq:Sigmaop}\\
    (A:\type),\, (B:\el(A) \to \type) &\pr (\Pi(A,B) : \type)\label{eq:Piop}\\
    (A:\type),\, (x:\el(A)),\, (y:\el(A)) &\pr (\mathsf{Id}(A,x,y) : \type)\label{eq:Idop}\\
    &\pr (\mathsf{N} : \type).\label{eq:natop}
  \end{align}
\caption{Operations on the universe type}
\label{fig:univop}
\end{figure}

\begin{figure}
  \centering
  \begin{align}
    \el(\mathsf{u}) &\eq \mathsf{unit} \label{eq:unitopbeta}\\
    \el(\Sigma(A,B)) &\eq \sum_{x:\el(A)} \el(B(x)) \label{eq:Sigmaopbeta}\\
    \el(\Pi(A,B)) &\eq \prod_{x:\el(A)} \el(B(x)) \label{eq:Piopbeta}\\
    \el(\Id(A,x,y)) &\eq (x\id y) \label{eq:Idopbeta}\\
    \el(\mathsf{N}) &\eq \mathsf{nat}. \label{eq:natopbeta}
  \end{align}
\caption{Coercion identities for the universe type}
\label{fig:univopbeta}
\end{figure}
\noeqref{eq:Sigmaopbeta,eq:Piopbeta,eq:Idopbeta}

In a type-theoretic fibration category, the dependent type~\eqref{eq:el} must be represented by a fibration $p:\Util \fib U$.
After fixing such a fibration $p$, we refer to the class of all pullbacks of $p$ as \textbf{small fibrations}.
Of course, an object $A$ is \emph{small} if the fibration $A\fib 1$ is small.
Note that in a small fibration $B\fib A$, the \emph{object} $A$ may not be small.

The (non-split) category-theoretic version of~\eqref{eq:unitop}--\eqref{eq:natop} is the following.

\begin{defn}\label{def:univ}
  A fibration $p: \Util\fib U$ in a type-theoretic fibration category \sC is a \textbf{universe} if the following hold, where ``small fibration'' means ``a pullback of $p$''.
  \begin{enumerate}
  \item Small fibrations are closed under composition and contain the identities.\label{item:u1}
  \item If $f: B\fib A$ and $g: A\fib C$ are small fibrations, so is $\Pi_g f \fib C$.\label{item:u2}
  \item If $A\fib C$ and $B\fib C$ are small fibrations, then any morphism $f: A\to B$ over $C$ factors as an acyclic cofibration followed by a small fibration.\label{item:u3}
  \end{enumerate}
\end{defn}

In the presence of an \shnno, we may of course want to assume that it is also small.

\begin{rmk}
  \autoref{def:univ}\ref{item:u3} clearly implies that any small fibration $A\fib C$ has a small path fibration $P_C A \fib A\times_C A$.
  The converse holds in the presence of~\ref{item:u1}, using the construction of \autoref{thm:path-fact}.
\end{rmk}

The assumptions in \autoref{def:univ} enable us to choose particular morphisms representing the operations~\eqref{eq:unitop}--\eqref{eq:natop}, as follows.
\begin{itemize}
\item The identity $1\to 1$ is small, hence is the pullback of $p$ along some map $1\to U$.
  Any such map can represent~\eqref{eq:unitop}.
  The case of~\eqref{eq:natop} is similar.
\item For~\eqref{eq:Sigmaop}, let $U^{(1)}$ interpret the context $(A:\type),(B:\el(A) \to \type)$.
Categorically, it is the local exponential
\begin{equation}
  U^{(1)} = (U\times U \to U)^{(\Util\to U)}.\label{eq:udnt}
\end{equation}
Its universal property is that morphisms $A\to U^{(1)}$ correspond to pairs $(a,b)$ where $a:A\to U$ and $b: a^*\Util \to U$.
In particular, it comes with a universal such pair $a_0:U^{(1)} \to U$ and $b_0 : (a_0)^*\Util \to U$, inducing a pair of composable small fibrations $(b_0)^*\Util\fib (a_0)^*\Util\fib U^{(1)}$.
By \autoref{def:univ}\ref{item:u1}, the composite $(b_0)^*\Util\fib U^{(1)}$ is also small; hence there exists a morphism $\Sigma : U^{(1)} \to U$ and a pullback square
\begin{equation}
  \vcenter{\xymatrix@R=1pc@C=1.5pc{
      (b_0)^*\Util\ar[r]\ar@{->>}[d] \pullbackcorner &
      \Util\ar[dd]\\
      (a_0)^*\Util\ar@{->>}[d] &
      \\
      U^{(1)}\ar[r]_{\Sigma} &
      U.
    }}
\end{equation}
Any such map $\Sigma$ can represent~\eqref{eq:Sigmaop}.
\item Similarly, for~\eqref{eq:Piop} we note that by \autoref{def:univ}\ref{item:u2}, the dependent product of $(b_0)^*\Util\fib (a_0)^*\Util$ along $(a_0)^*\Util\fib U^{(1)}$ is a small fibration over $U^{(1)}$, hence is classified by some map $\Pi: U^{(1)} \to U$.
\item Finally, for identity types, we consider the object $\Util\times_U \Util$, which represents the context $(A:\type), (x:\el(A)), (y:\el(A))$.
This has the universal property that morphisms $A \to \Util\times_U \Util$ correspond to triples $(a,x,y)$ where $a:A\to U$ and $x$ and $y$ are both sections of $a^*\Util$.
Since the fibration $\Util\times_U \Util \fib U$ is small, as is $p:\Util \to U$, and $U$ is a universe, we can factor the diagonal $\Util \to \Util\times_U \Util$ to yield a path object $P_U \Util$ for which the projection $P_U \Util \fib \Util \times_U \Util$ is a small fibration.
Thus, it has some classifying morphism $\Id:\Util \times_U \Util \to U$, which can represent~\eqref{eq:Idop}.
\end{itemize}

This suggests the following definition.

\begin{defn}\label{def:cloven-universe}
  A universe $p: \Util\fib U$ in a cloven type-theoretic fibration category \sC is \textbf{cloven} if $p$ and $U\fib 1$ are equipped with fibration structures, and we have specified morphisms $1\to U$, $U^{(1)}\to U$, $U^{(1)}\to U$, and $\Util\times_U \Util \to U$ implementing the unit type, dependent sums, dependent products, and identity types as above.
  If \sC has a small \shnno, we require an additional morphism $1\to U$ classifying it.
\end{defn}

Thus, any universe admits some cloven structure.
However, the definitional equalities~\eqref{eq:unitopbeta}--\eqref{eq:natopbeta} may not hold in general; thus we introduce a name for the case when they do.

\begin{defn}
  A cloven universe $p:\Util \fib U$ in a split type-theoretic fibration category is \textbf{split} if the specified pullbacks of $p$ along the universe structure morphisms are equal, as structured fibrations, to the specified structured fibrations over $1$, $U^{(1)}$, or $\Util\times_U \Util$ coming from the ambient split structure.
\end{defn}

As usual, a universe type in type theory yields a split universe in the syntactic category, while the coherence theorems imply that any universe can be made split in an equivalent category.
Thus, type theory containing a type universe \type can be interpreted into any type-theoretic fibration category containing a categorical universe $p:\Util\to U$.

\begin{rmk}
  It is possible to make a universe $U$ into an internal category in \sC, and the universe structure into internal operations on this category, reflecting the type-theoretic structure of \sC itself.
  This is analogous to how the subobject classifier in a topos automatically becomes an internal complete Heyting algebra, reflecting the logical operations on subobjects in the topos.
  (However, this structure does not capture splitness.)
\end{rmk}

Now, note that not every type is of the form $\el(A)$ for some $A:\type$.
In particular, \type itself cannot be of that form without leading to inconsistency; thus \type is only a universe of ``small types''.
Thus, it is natural to introduce a hierarchy of universes with $\type_n : \type_{n+1}$ for all $n$, each with their own coercion $\el_n$, and ``level-raising'' operations
\begin{equation}
  \up:\type_n \to \type_{n+1}.\label{eq:up}
\end{equation}
We generally require \up to respect the coercions to types:
\begin{equation}
  \el_{n+1}(\up(A)) \eq \el_n(A)\label{eq:elup}
\end{equation}
and also all the type-forming operations, in the sense that, for instance,
\begin{equation}
  \Sigma(\up(A),\lambda x. \up(B(x))) \eq \up(\Sigma(A,B))\label{eq:coerceSigma}
\end{equation}
and so on for all the others.
(We do not need to worry about the eliminators, even in the case of identity types, because they never come into play until \emph{after} the coercions $\el_n$ are applied.)

On the categorical side, consider for simplicity the case of two universe objects, say $U$ and $U'$, with coercions $\el$ and $\el'$.
To have $\type : \type'$ (or, more precisely, a term $\mathsf{U}:\type'$ such that $\el'(\mathsf{U})\eq \type$), we must assume that $U$ is a $U'$-small object, i.e.\ the fibration $U\fib 1$ is a pullback of $\Util' \fib U'$.
And for~\eqref{eq:up}, we need a map $i:U \to U'$ which fits into a pullback square
\begin{equation}
  \vcenter{\xymatrix{
      \Util\ar[r]^-{\itil}\ar[d]_p \pullbackcorner &
      \Util'\ar[d]^{p'}\\
      U\ar[r]_-i &
      U'.
    }}\label{eq:univemb}
\end{equation}
Such a pullback square exists precisely when every $U$-small fibration is also $U'$-small.
If $U$ and $U'$ are split, then to obtain~\eqref{eq:elup} we need~\eqref{eq:univemb} to exhibit $\Util$ as the specified pullback $i^*\Util'$ from the split structure of \sC.

Finally, for~\eqref{eq:coerceSigma} to hold, the square
\begin{equation}
  \vcenter{\xymatrix{
      U^{(1)}\ar[r]^{i^{(1)}}\ar[d]_{\Sigma} &
      (U')^{(1)}\ar[d]^{\Sigma'}\\
      U\ar[r]_i &
      U'
    }}\label{eq:uembstr}
\end{equation}
must commute. 
Here the top morphism $i^{(1)}: U^{(1)} \to (U')^{(1)}$ is most easily described representably: given a pair $(X \xto{a} U,\, a^*\Util\xto{b} U)$ corresponding to a morphism $X\to U^{(1)}$, the pullback square~\eqref{eq:univemb} tells us that $a^* \Util \cong (i a)^* \Util'$, so the pair
\[(X\xto{a} U \xto{i} U', \, (i a)^* \Util' \cong a^*\Util \xto{b} U \xto{i} U' )
\]
corresponds to a morphism $X \to (U')^{(1)}$.
The equations analogous to~\eqref{eq:coerceSigma} are similar; this leads to the following definition.

\begin{defn}
  If $U$ and $U'$ are cloven (or split) universes and we are given a morphism $1\to U'$ classifying $U'$, and a specified pullback square~\eqref{eq:univemb}, such that~\eqref{eq:uembstr} commutes, as well as the analogous squares for the unit type, dependent products, and identity types (and the natural numbers type, if present), we say that $i: U\to U'$ is an \textbf{embedding of (cloven) universes}.
\end{defn}

\begin{rmk}
  The case of the unit type just means that the composite $1\xto{e} U \xto{i} U'$ is $1\xto{e'} U'$.
  This is easy to obtain, if it doesn't hold already, by simply defining $e'$ to be $i \circ e$.
  The same holds for a natural numbers type, if present.
\end{rmk}

The same principle applies to arbitrarily many nested universes: we require all morphisms between them to be universe embeddings for some fixed cloven structure on each.
We also require that for any pair of such embeddings $U \xto{i} U' \xto{i'} U''$, if $1 \xto{u} U'$ is the specified morphism with $u^*\Util' \cong U$ (witnessing $\type : \type'$), then the composite $1\xto{u} U' \xto{i'} U''$ must be the specified morphism witnessing $\type : \type''$.
But like the unit type, this is easy to obtain by choosing the latter morphism appropriately.

\begin{rmk}\label{thm:no-new-names}
  Suppose that $i: U\into U'$ is monic, and also that it \emph{adds no new names} in the sense that if $f:X\to U'$ is such that $f^*\Util' \fib X$ is $U$-small, then $f$ factors through $U$.
  Then any morphism implementing a type-forming operation for $U'$ must preserve $U$-smallness, and hence induce a unique corresponding such morphism for $U$ which commutes with $U\into U'$.
  Thus, if $U'$ is cloven (or split), there is a unique way to make $U$ cloven (or split) such that $i$ becomes a universe embedding.

  More generally, this technique can be applied to any collection of universes having a largest element, but it does not work if there are countably many universes not all contained in an ``$\omega^{\mathrm{th}}$'' one.
  However, this is rarely a problem in practice, since any \emph{particular} construction requires only finitely many universes.
\end{rmk}

\section{The univalence axiom}
\label{sec:univalence-axiom}

Now, just as type theory without function extensionality does not determine the identity types of function types (including dependent products), ordinary type theory with a universe does not determine the identity types of the universe.
We now describe Voevodsky's univalence axiom, which remedies this.

Suppose \type is a particular fixed universe.
First of all, since identity maps are equivalences, we have a canonical term
\[ (A: \type) \pr (\idequiv_A : \equiv(A,A)). \]
Using the elimination rule $J$, we obtain a canonical term
\[ (A: \type),\, (B: \type) \pr \big(\ptoe_{A,B} : (A\id B) \to \equiv(A,B)\big). \]
Of course, $(A\id B)$ denotes the identity type of the universe \type.
We say the \emph{univalence axiom holds} for the universe \type, or that \type \emph{is univalent}, if there is a term
\[ \mathsf{univalence} : \prod_{A,B} \isequiv(\ptoe_{A,B}).\]

In categorical terms, this states that the canonically defined map $P U \to E$ over $U\times U$ is an equivalence, where $E\to U\times U$ is the fibration representing the dependent type
\[ (A: \type),\, (B: \type) \pr (\equiv(A,B) : \type). \]
Since this map $P U \to E$ is defined by the lifting property of $P U$ (i.e.\ path induction), by the 2-out-of-3 property this is equivalent to saying that the map $U \to E$, which sends a type $A$ to its identity equivalence, is itself an equivalence.

\begin{rmk}
  Like function extensionality, univalence is an \emph{axiom} in type theory, i.e.\ a constant term belonging to some type.
  \autoref{thm:syn-init} with axioms implies that if the univalence axiom holds in a type-theoretic fibration category \sC, in the sense that $P U \to E$ is an equivalence, then its internal type theory may be taken to satisfy the univalence axiom (for that universe).
\end{rmk}

We now consider several examples.

\begin{eg}
  Let \sC be an elementary topos with the trivial model structure.
  Thus all morphisms are fibrations, all homotopies are identities, and the equivalences are the isomorphisms.
  Let $U=\Omega$ be the subobject classifier, with $\Util = 1 \to \Omega$ the universal subobject.
  Then $U$ is a universe whose small fibrations are exactly the monomorphisms.
  A natural numbers object in \sC, in the usual topos-theoretic sense, is in particular an \shnno, but it is not of course small for this universe.

  Since this universe classifies only monomorphisms, the types which belong to this universe $U$ in the internal logic are all h-propositions.
  This implies that $\equiv(A,B)$ is equivalent to the type of bi-implications, $(A\to B) \times (B\to A)$.
  It is well-known that bi-implication on the subobject classifier is the same as equality, so this universe is univalent.

  In particular, we can take $\sC=\nSet$, in which case $\Omega = \{\top,\bot\}$.
  As remarked in \S\ref{sec:catsem}, the category \nSet has a canonical splitting (although surprisingly the universe $\Omega$ is not canonically split unless we make some unnatural choices).
  Then the small fibrations are the monomorphisms, the only small objects are $\emptyset$ and $1$, and the universe is univalent.
\end{eg}

\begin{eg}
  In the model category of groupoids, we can take $U$ to be the groupoid of sets of rank $<\ka$, for some inaccessible cardinal \ka, with $\Util$ the corresponding groupoid of pointed sets.
  Then the $U$-small fibrations are precisely the discrete fibrations with fibers of cardinality $<\ka$, which are closed under all the relevant category-theoretic operations.
  Moreover, functors $A\to U$ are precisely pseudofunctors $A\to\nGpd$ which happen to take values in sets of rank $<\ka$, so the canonical splitting described in \autoref{thm:gpd-split} restricts to a split universe structure on $U$.

  Tracing through the construction of the universal space of equivalences, we find that the fiber of $E\fib U\times U$ over a pair of sets $(a,b)$ is the set of isomorphisms from $a$ to $b$.
  Since this is also the hom-set $U(a,b)$, with the obvious constructions, the map $P U \to E$ is in fact an isomorphism.
  Thus, this universe is univalent.

  This universe is called $\mathrm{Gpd}_\triangle(V_\ka)$ in~\textcite{hs:gpd-typethy}.
  Since it is not discrete, it is not an element of any larger univalent universe.
  But it does contain a smaller univalent universe, namely the universe $\Omega = \{\top,\bot\}$ which classifies monic fibrations.

  There are universes in the groupoid model which contain non-discrete groupoids, such as the groupoid of all \emph{groupoids} of rank $<\ka$, but these universes are not univalent.
  Note that even this universe classifies only \emph{split} fibrations with $\ka$-small fibers, whereas we have allowed arbitrary isofibrations to represent dependent types.
  (The original groupoid model of~\textcite{hs:gpd-typethy} involved only split fibrations.)
\end{eg}

Finally and most importantly, Voevodsky has shown that in simplicial sets, there is a \emph{universal Kan fibration} $p: \Util\to U$ such that $U$ is a Kan complex, and every Kan fibration with fibers of cardinality $<\ka$ (for some chosen cardinal $\ka$) is $U$-small.
This universe object is moreover univalent; see~\textcite{klv:ssetmodel} for a detailed exposition and~\textcite{moerdijk:univalence} for an alternative proof.
If \ka is inaccessible, such fibrations are closed under category-theoretic operations, and if $\la<\ka$ is also inaccessible, we have a universe embedding $U_\la \into U_\ka$ (either from \autoref{thm:no-new-names} or by choosing the structure carefully).
Thus, invoking the coherence theorems, one has:

\begin{thm}[Voevodsky]
  The model category \sSet supports a model of intensional type theory with a unit type, dependent sums and products, identity types, and with as many univalent universes as there are inaccessible cardinals.
\end{thm}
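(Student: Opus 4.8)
The plan is to assemble, from the machinery of \S\S\ref{sec:ttfc}--\ref{sec:univalence-axiom}, precisely the structure that \autoref{thm:syn-init} (with axioms) needs, importing from Voevodsky's work only the two genuinely simplicial facts about the universe. First, \sSet with its Quillen model structure is a right proper Cisinski model category, hence a type-theoretic model category, so by the earlier proposition that the fibrant objects of a type-theoretic model category form a type-theoretic fibration category, the category of Kan complexes is such a category. The countable coproduct $\sum_{n\in\mathbb N}1$ of copies of the terminal object is a discrete simplicial set, hence a Kan complex, so by the second example following \autoref{def:shnno} this category of Kan complexes also contains a \shnno. This already covers the unit type, dependent sums and products, identity types, and the natural numbers, modulo coherence; and since all objects of \sSet are cofibrant, function extensionality holds too (\autoref{thm:ttmc-funext}), so the good behaviour of $\isequiv$ from \S\ref{sec:homotopy-type-theory} is available.

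Next, for each inaccessible cardinal \ka I would invoke Voevodsky's universal Kan fibration $p_\ka:\Util_\ka\fib U_\ka$, with $U_\ka$ a Kan complex, whose pullbacks are exactly the Kan fibrations all of whose fibres have cardinality $<\ka$ \parencite{klv:ssetmodel}. Inaccessibility of \ka is what makes this class of ``$\ka$-small fibrations'' closed under composition and containing the identities (\autoref{def:univ}\ref{item:u1}), closed under $\Pi$ along $\ka$-small fibrations (\autoref{def:univ}\ref{item:u2}), and such that any map over a base between two $\ka$-small fibrations factors as an acyclic cofibration followed by a $\ka$-small fibration (\autoref{def:univ}\ref{item:u3}); for the last point, run the mapping-path-space factorization of \autoref{thm:path-fact} on the $\ka$-small data, which stays $\ka$-small. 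Hence $p_\ka$ is a universe in the sense of \autoref{def:univ}. That it is univalent --- i.e.\ that the canonical map $P U_\ka \to E_\ka$ over $U_\ka\times U_\ka$, equivalently $U_\ka\to E_\ka$, is an equivalence --- is exactly Voevodsky's univalence theorem for the simplicial universe, which I would cite rather than reprove (see also \textcite{moerdijk:univalence}).

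For the whole tower, note that for inaccessible $\la<\ka$ every $\la$-small Kan fibration is in particular $\ka$-small, so $\Util_\la\fib U_\la$ is a pullback of $\Util_\ka\fib U_\ka$, giving the square \eqref{eq:univemb}; choosing the cloven structures compatibly so that \eqref{eq:uembstr} and its analogues for the other type formers commute, or else appealing to \autoref{thm:no-new-names}, makes $U_\la\into U_\ka$ a universe embedding. Thus $\sSet\f$, as a type-theoretic fibration category with a \shnno, carries a family of univalent universes indexed by the inaccessible cardinals and compatible along embeddings. Finally, applying one of the coherence theorems \parencite{klv:ssetmodel,lw:localuniv} replaces $\sSet\f$ by an equivalent \emph{split} type-theoretic fibration category equipped with split universes, split universe embeddings, and a split \shnno, and \autoref{thm:syn-init} with these axioms then produces the claimed interpretation.

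The step I expect to be the real obstacle is exactly the one being imported: Voevodsky's construction of a classifying object $U_\ka$ for $\ka$-small Kan fibrations that is itself a Kan complex (a priori only a quasicategory, so keeping it fibrant is the delicate part) together with the proof of its univalence. Everything else --- fibrancy of the discrete \shnno, closure of the $\ka$-small fibrations under the type formers, the embeddings, and the passage to a split model --- is routine given the preceding sections, which is why the statement can be concluded just by ``invoking the coherence theorems''.
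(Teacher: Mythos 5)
Your proposal is correct and follows essentially the same route as the paper, which itself only sketches this result: it cites Voevodsky's construction of the universal Kan fibration $p:\Util\fib U$ with $U$ a Kan complex and its univalence \parencite{klv:ssetmodel,moerdijk:univalence}, notes that inaccessibility of $\ka$ gives closure of the $\ka$-small fibrations under the type-forming operations and that nested inaccessibles yield universe embeddings (via \autoref{thm:no-new-names} or a careful choice of structure), and then invokes the coherence theorems. Your additional remarks about the \shnno and function extensionality are harmless extras not required by the statement, and you correctly identify the genuinely simplicial inputs being imported.
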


(The construction of~\textcite{klv:ssetmodel} requires an extra inaccessible to be the ``external universe'' needed for the coherence theorem.
The improved coherence theorem of~\textcite{lw:localuniv} eliminates this requirement.)

Since the homotopy theory of simplicial sets is a model for the \io-topos $\infty\mathrm{Gpd}$, we can say informally that the above model lives in that \io-topos.

We can obtain a few other examples easily from this one.
For instance, for any $n$, Voevodsky's universe has a subuniverse consisting of the $n$-truncated Kan fibrations (those whose fibers are homotopy $n$-types).
This universe is itself univalent and $(n+1)$-truncated, so we can obtain (for instance) a nested sequence of univalent universes of increasing truncation level as well as size.
(The universe of 0-truncated Kan fibrations is, of course, closely related to the groupoid of sets.)

Finally, we can pull back any univalent universe to any slice category.
However, it seems that until now, no other set-theoretic models of univalence have been known.

\begin{rmk}
  Voevodsky has also shown that the univalence axiom implies function extensionality (see~\textcite{voevodsky:github,hott}).
  Specifically, if there are two nested univalent universes, then function extensionality holds for all types belonging to the smaller universe.
  In what follows, we will need to apply function extensionality even for \type-valued functions (that is, dependent types).
  This can be deduced from a third nested univalent universe---or from the observation (\autoref{thm:ttmc-funext}) that any type-theoretic model category satisfies function extensionality.
\end{rmk}

\section{The Sierpinski \io-topos}
\label{sec:sierpinski}

We now move on to the main goal of the paper: constructing a new model of type theory with the univalence axiom in a category of inverse diagrams.
Before considering the general case, we treat a particular one in detail, which contains essentially all the ideas.
Let $\sC$ be a type-theoretic fibration category, and let $\sC^\bbtwo$ denote the category of arrows $(\alpha: A_1 \to A_0)$ of \sC.
We will construct a model of type theory in a subcategory of fibrant objects in $\sC^\bbtwo$.

\begin{defn}
  A morphism
  \[\vcenter{\xymatrix{
      A_1\ar[r]^\alpha\ar[d]_{f_1} &
      A_0\ar[d]^{f_0}\\
      B_1\ar[r]_\beta &
      B_0
    }}
  \]
  in $\sC^\bbtwo$ is a \textbf{Reedy fibration} if
  \begin{enumerate}
  \item $f_0$ is a fibration, and
  \item The induced map $A_1 \to A_0 \times_{B_0} B_1$ is a fibration.
  \end{enumerate}
  On the other hand, $f$ is a \textbf{Reedy acyclic cofibration} if $f_0$ and $f_1$ are acyclic cofibrations in \sC.
\end{defn}

\begin{rmk}
  Of course, an object $(\alpha: A_1 \to A_0)$ of $\sC^\bbtwo$ is \emph{Reedy fibrant} if $A\to 1$ is a Reedy fibration, which means that $A_0$ is fibrant (as is always the case) and $\alpha$ is a fibration.
  Thus, in the type theory of \sC, the Reedy fibrant objects of $\sC^\bbtwo$ can be regarded as \emph{two-type contexts} of the form
  \[ (a_0 : A_0), \; (a_1 : A_0(a_0)). \]
  This point of view will be crucial in what follows.
\end{rmk}

We write $(\sC^\bbtwo)\f$ for the full subcategory of $\sC^\bbtwo$ on the Reedy fibrant objects.

The following is easy and standard~\parencite{hovey:modelcats,hirschhorn:modelcats}.

\begin{lem}\label{thm:reedy-fact}
  A morphism is a Reedy acyclic cofibration if and only if it has the left lifting property with respect to Reedy fibrations.
  Every morphism in $\sC^\bbtwo$ factors as a Reedy acyclic cofibration followed by a Reedy fibration.
\end{lem}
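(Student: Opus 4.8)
The plan is to establish the two assertions of \autoref{thm:reedy-fact} in turn, deducing them from the factorization and lifting properties already available in the type-theoretic fibration category \sC.

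First I would prove the factorization statement, since the lifting characterization will follow from it by a retract argument. Given a morphism $f:(\alpha:A_1\to A_0)\to(\beta:B_1\to B_0)$ in $\sC^\bbtwo$ with components $f_0:A_0\to B_0$ and $f_1:A_1\to B_1$, I would construct the factorization by working ``level by level'', following the standard Reedy induction (here of length two). At level $0$: use \autoref{def:ttfc}\ref{item:cat7} to factor $f_0$ as an acyclic cofibration $A_0\acof C_0$ followed by a fibration $C_0\fib B_0$. At level $1$: form the pullback $C_0\times_{B_0}B_1$ (which exists and whose projection to $C_0$ is a fibration, by~\ref{item:cat3}), and observe that $f_1$ together with $\alpha$ and the level-$0$ data induces a map $A_1\to C_0\times_{B_0}B_1$; factor this induced map, again by~\ref{item:cat7}, as an acyclic cofibration $A_1\acof C_1$ followed by a fibration $C_1\fib C_0\times_{B_0}B_1$. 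Composing the latter with the projection gives a fibration $C_1\fib C_0$, so $(\,C_1\to C_0\,)$ is an object of $\sC^\bbtwo$, and by construction the map $(A_1\to A_0)\to(C_1\to C_0)$ has both components acyclic cofibrations (hence is a Reedy acyclic cofibration), while $(C_1\to C_0)\to(B_1\to B_0)$ has $C_0\fib B_0$ a fibration and $C_1\to C_0\times_{B_0}B_1$ a fibration, hence is a Reedy fibration.

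Next I would turn to the lifting characterization. The ``only if'' direction — that a Reedy acyclic cofibration has the left lifting property against Reedy fibrations — is the heart of the matter and is done by the usual two-stage lift. Given a Reedy acyclic cofibration $i=(i_1,i_0)$ into the top of a square whose right leg is a Reedy fibration $p=(p_1,p_0)$, one first solves the lifting problem at level $0$ using that $i_0$ is an acyclic cofibration and $p_0$ is a fibration; then at level $1$, the chosen level-$0$ lift together with the given data assembles into a lifting problem for $i_1$ (an acyclic cofibration) against the fibration $A_1\to A_0\times_{B_0}B_1$ coming from condition (2) of being a Reedy fibration, and $i_1$ having the left lifting property against fibrations in \sC supplies the level-$1$ component; one checks the two components are compatible, i.e.\ form a morphism of $\sC^\bbtwo$. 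For the converse, suppose $f=(f_1,f_0)$ has the left lifting property against all Reedy fibrations; factor $f$ as a Reedy acyclic cofibration $j$ followed by a Reedy fibration $q$ using the factorization just proved, and apply the lifting property of $f$ against $q$ to exhibit $f$ as a retract of $j$ in the arrow category $(\sC^\bbtwo)^\bbtwo$. A retract of a levelwise acyclic cofibration is a levelwise acyclic cofibration (since in \sC, acyclic cofibrations are closed under retracts — they are the left class of a weak factorization system by the retract argument recalled after \autoref{thm:path-fact}), so $f$ is a Reedy acyclic cofibration.

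The step I expect to be the main obstacle is the bookkeeping in the ``only if'' direction of the lifting property: making sure that the level-$1$ lifting problem is set up correctly against the \emph{matching-map} fibration $A_1\to A_0\times_{B_0}B_1$ rather than against $\alpha$ or $p_1$ directly, and verifying that the two chosen lifts $h_0$ and $h_1$ actually commute with the arrows $\alpha$, $\beta$ of the diagrams so as to constitute a genuine morphism in $\sC^\bbtwo$. Everything else — the existence of the pullbacks, the factorizations, the closure of acyclic cofibrations under retracts — is supplied directly by \autoref{def:ttfc} and the discussion following \autoref{thm:path-fact}, so this is essentially a routine Reedy-category argument of length two; I would present it briefly and cite \textcite{hovey:modelcats,hirschhorn:modelcats} for the general pattern.
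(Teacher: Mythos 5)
Your proposal is correct and follows essentially the same route as the paper: a two-stage levelwise lift (level $0$ first, then level $1$ against the matching map $C_1\to C_0\times_{D_0}D_1$) for the lifting property, the levelwise factorization through $C_0\times_{B_0}B_1$ for the factorization, and the retract argument for the converse. The only difference is the order of presentation, and your justification that retracts of Reedy acyclic cofibrations are again such (via closure of acyclic cofibrations in \sC under retracts, which holds since they are defined by a left lifting property) is exactly the point the paper leaves implicit.
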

\begin{proof}
  Given a square
  \begin{equation}
  \vcenter{\xymatrix@-.5pc{
      A\ar[r]\ar[d] &
      C\ar@{->>}[d]\\
      B\ar[r] &
      D
    }}\label{eq:rl}
  \end{equation}
  in which $A\to B$ is a Reedy acyclic cofibration and $C\to D$ is a Reedy fibration, we first define a lift
  \begin{equation}
  \vcenter{\xymatrix@-.5pc{
      A_0\ar[r]\ar@{>->}[d]_\sim &
      C_0\ar@{->>}[d]\\
      B_0\ar[r] \ar@{.>}[ur] &
      D_0
    }}\label{eq:rl0}
  \end{equation}
  and then a lift
  \begin{equation}
  \vcenter{\xymatrix@-.5pc{
      A_1\ar[r]\ar@{>->}[d]_\sim &
      C_1\ar@{->>}[d]\\
      B_1\ar[r] \ar@{.>}[ur] &
      C_0 \times_{D_0} D_1
    }}\label{eq:rl1}
  \end{equation}
  where the bottom map in~\eqref{eq:rl1} is defined using the diagonal lift in~\eqref{eq:rl0}.
  Together these form a lift in~\eqref{eq:rl}; thus Reedy acyclic cofibrations have the left lifting property with respect to Reedy fibrations.

  To factor $f: A\to B$, we first factor $A_0 \to B_0$ as
  \begin{equation}
    \label{eq:rf0}
    A_0 \acof C_0 \fib B_0
  \end{equation}
  and then factor the induced map $A_1 \to C_0 \times_{B_0} B_1$ as
  \begin{equation}
    \label{eq:rf1}
    A_1 \acof C_1 \fib C_0 \times_{B_0} B_1.
  \end{equation}
  This shows the second statement.
  By the retract argument, it follows that any map with the left lifting property against Reedy fibrations must be a retract of a Reedy acyclic cofibration, and hence itself a Reedy acyclic cofibration.
\end{proof}

Note that we have already used a Reedy factorization in the proof of \autoref{thm:fibhtpy}.

\begin{rmk}
  If \sC is a model category, then the Reedy fibrations are the fibrations in a model structure on $\sC^\bbtwo$ whose cofibrations and weak equivalences are both defined levelwise.
  If \sC is a type-theoretic model category, then so is $\sC^\bbtwo$.
  And if \sC is simplicial sets, then the Reedy model structure on $\sSet^\bbtwo$ presents the \io-category $\infty \mathrm{Gpd}^\bbtwo$.
\end{rmk}

\begin{thm}\label{thm:reedy-ttfc}
  If \sC is a type-theoretic fibration category, then so is $(\sC^\bbtwo)\f$.
\end{thm}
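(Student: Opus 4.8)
The plan is to verify the six axioms of \autoref{def:ttfc} for $(\sC^\bbtwo)\f$, equipped with the Reedy fibrations as the fibrations and the Reedy acyclic cofibrations as the acyclic cofibrations. Axioms~\ref{item:cat1} and~\ref{item:cat2a} are immediate: $(1=1)$ is a Reedy fibrant terminal object, identities and isomorphisms are plainly Reedy fibrations, and a morphism out of a Reedy fibrant object into $(1=1)$ is a Reedy fibration by the definition of Reedy fibrancy. Moreover \autoref{thm:reedy-fact} already both identifies the Reedy acyclic cofibrations as exactly the maps with the left lifting property against Reedy fibrations, and supplies axiom~\ref{item:cat7} (factorization). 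So the real work is in axioms~\ref{item:cat3}, \ref{item:cat4} and~\ref{item:cat8}.

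For~\ref{item:cat3} I would construct the pullback of a Reedy fibration $f\colon B\fib A$ along any $g\colon C\to A$ levelwise, as $(B_1\times_{A_1}C_1)\to(B_0\times_{A_0}C_0)$. Its leg to $C$ at level $0$ is a pullback of the fibration $f_0$, hence a fibration; and a short diagram chase identifies the matching map $B_1\times_{A_1}C_1\to(B_0\times_{A_0}C_0)\times_{C_0}C_1$ with a pullback of the matching map $B_1\to B_0\times_{A_0}A_1$ of $f$, hence also a fibration. Thus the levelwise pullback is again Reedy fibrant and the projection to $C$ is a Reedy fibration; since $(\sC^\bbtwo)\f$ is a full subcategory of $\sC^\bbtwo$, this is the pullback there as well. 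The same identification of matching maps with pullbacks will be used repeatedly below.

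Axiom~\ref{item:cat4} is the main construction. I will describe it for a Reedy fibration $g\colon A\fib B$ with $B=(1=1)$; the general case is the same construction carried out relative to $B$ (everything fibered over $B_0$, with $B_1$ carried along), with no new ideas. Given a Reedy fibration $C\fib A$, define $\Pi_g C$ levelwise: its level-$0$ part is $\Pi_{A_0}C_0$, computed in $\sC$ using $A_0\fib 1$; its level-$1$ part is the dependent product along $(\Pi_{A_0}C_0)\times A_1\fib\Pi_{A_0}C_0$, again computed in $\sC$, of the pullback of the fibration $C_1\fib C_0\times_{A_0}A_1$ along the map $(\Pi_{A_0}C_0)\times A_1\to C_0\times_{A_0}A_1$ induced by the counit $(\Pi_{A_0}C_0)\times A_0\to C_0$. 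Axiom~\ref{item:cat4} of $\sC$ makes both of these fibrations over the appropriate bases, so $\Pi_g C$ is Reedy fibrant. The universal property $(\sC^\bbtwo)\f(E,\Pi_g C)\cong\big((\sC^\bbtwo)\f/A\big)(g^*E,C)$ is then checked by matching the level-$0$ and level-$1$ adjunctions in $\sC$, using the Beck--Chevalley isomorphism there to handle the dependence of the level-$1$ datum on the level-$0$ one. Stability of Reedy acyclic cofibrations under pullback along a Reedy fibration then follows formally, or may be re-derived levelwise from the corresponding stability in $\sC$ together with the identification of matching maps as pullbacks.

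Finally, for~\ref{item:cat8} I would verify conditions~\ref{item:cat7p} and~\ref{item:cat8p} instead and invoke \autoref{thm:path-fact}. For a Reedy fibration $A\fib B$, the level-$0$ part of the path object is a path object $A_0\acof P_{B_0}A_0\fib A_0\times_{B_0}A_0$ in $\sC$; for the level-$1$ part one factors the induced map out of $A_1$ into a matching object built from $A_1\times_{B_1}A_1$, $B_1$ and $P_{B_0}A_0$ as an acyclic cofibration followed by a fibration in $\sC$, this matching object being exactly what is needed for $P_BA\fib A\times_BA$ to be a Reedy fibration with Reedy acyclic cofibration $A\acof P_BA$. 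The step I expect to be the main obstacle is~\ref{item:cat8p}: one must check that the two-pullback-squares condition holds for a bottom row of the form $A\acof P_BA\fib B$. As in~\ref{item:cat3}, pullbacks in $\sC^\bbtwo$ are levelwise, so this reduces to a levelwise statement; but at level $1$ the relevant squares involve matching objects, and verifying that the level-$1$ component of the reflexivity is an acyclic cofibration and remains one under pullback requires both axiom~\ref{item:cat4} of $\sC$ (stability of acyclic cofibrations under pullback along fibrations) and axiom~\ref{item:cat8} of $\sC$, applied to these matching diagrams --- with the factorization in $\sC$ chosen, via~\ref{item:cat8p} for $\sC$, so that this works. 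This is precisely the $\bbtwo$-instance of the inductive step carried out for a general inverse category in \S\ref{sec:invcat}.
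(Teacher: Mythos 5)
Your proposal is essentially the paper's proof for the substantive axioms: the levelwise pullback with the identification of matching maps for~\ref{item:cat3}, and the two-stage dependent product for~\ref{item:cat4} (your absolute-case description, with the counit-induced pullback followed by $\Pi$ along the projection, is exactly the paper's \autoref{eq:small-exp} specialized to $C=1$; the paper just writes out the relative case directly rather than reducing to it). Two points of divergence are worth noting. First, your handling of~\ref{item:cat2a} skips the one verification that actually requires an argument, namely that Reedy fibrations are closed under composition: one must check that the matching map of $gf$ factors as the matching map of $f$ followed by a pullback of the matching map of $g$, which is a short but genuine computation that the paper displays explicitly. Second, for~\ref{item:cat8} you route through~\ref{item:cat7p}/\ref{item:cat8p} and \autoref{thm:path-fact}, which works but is considerably more labor than needed and is where you anticipate the "main obstacle"; in fact there is none. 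Since Reedy acyclic cofibrations are by definition levelwise, Reedy fibrations are in particular levelwise fibrations, and pullbacks in $\sC^\bbtwo$ are levelwise, the entire diagram in axiom~\ref{item:cat8} for $(\sC^\bbtwo)\f$ is just two instances of the corresponding diagram in \sC, and the axiom follows immediately from its validity in \sC at each level --- no choice of path object and no appeal to~\ref{item:cat8p} for \sC is required. (The explicit Reedy path objects you sketch are constructed in the paper, but only later and for a different purpose, namely tracking cloven/split structure.)
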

\begin{proof}
  We consider the axioms of \autoref{def:ttfc} in order.
  The terminal object is of course $1\fib 1$, giving~\ref{item:cat1}.
  The fibrations are the Reedy fibrations, while \autoref{thm:reedy-fact} identifies the acyclic cofibrations; thus for~\ref{item:cat2a} it suffices to verify that Reedy fibrations are stable under composition.
  If $B \xto{f} A \xto{g} C$ are Reedy fibrations, then $(gf)_0 = g_0 f_0$ is a fibration.
  Moreover, the induced map $B_1 \to B_0 \times_{C_0} C_1$ is the composite
  \begin{equation}
    B_1 \too B_0 \times_{A_0} A_1 \too
    B_0 \times_{A_0} (A_0 \times_{C_0} C_1) \xto{\cong}
    B_0 \times_{C_0} C_1\label{eq:rfib-compose}
  \end{equation}
  where the first map is a fibration since $f$ is a Reedy fibration, and the second is a fibration since it is a pullback of $A_1 \to A_0 \times_{C_0} C_1$, which is a fibration since $g$ is is a Reedy fibration.
  Thus, $g f$ is a Reedy fibration.
  
  Now since fibrations in \sC are closed under pullback and composition, if $f: A\fib B$ is a Reedy fibration, then $f_1$, being the composite
  \[ A_1 \fib A_0 \times_{B_0} B_1 \fib B_1, \]
  is also a fibration.
  Thus, Reedy fibrations are in particular levelwise fibrations.
  Since limits are also levelwise in $\sC^\bbtwo$, it follows that all pullbacks of Reedy fibrations between Reedy fibrant objects exist.
  This gives the first half of~\ref{item:cat3}; the rest is that a pullback of a Reedy fibration is again a Reedy fibration.
  Thus, suppose
  \[\vcenter{\xymatrix{
      P \ar[r]\ar[d] &
      B\ar@{->>}[d]\\
      A\ar[r] &
      C
    }}\]
  is a pullback diagram in $\sC^\bbtwo$, with $B\fib C$ a Reedy fibration.
  Then $P_0 \to A_0$ is a pullback of the fibration $B_0 \fib C_0$, hence a fibration.
  Now both squares below are pullbacks:
  \[\vcenter{\xymatrix{
      P_0 \times_{A_0} A_1 \ar[r]\ar[d] &
      P_0\ar[r]\ar[d] &
      B_0\ar[d]\\
      A_1\ar[r] &
      A_0\ar[r] &
      C_0
    }}\]
  hence so is the outer rectangle.
  But this is the same as the lower rectangle below:
  \[\vcenter{\xymatrix{
      P_1 \ar[d] \ar[r] & B_1 \ar[d] \\
      P_0 \times_{A_0} A_1\ar[r]\ar[d] &
      B_0 \times_{C_0} C_1\ar[r]\ar[d] &
      B_0 \ar[d]\\
      A_1 \ar[r] &
      C_1 \ar[r] &
      C_0
    }}\]
  and the lower-right square here is a pullback, hence so is the lower-left square.
  But the left-hand rectangle is also a pullback, hence so is the upper-left square.
  Thus $P_1 \to P_0 \times_{A_0} A_1$ is a pullback of the fibration $B_1 \fib B_0 \times_{C_0} C_1$, hence is also a fibration.
  
  For axiom~\ref{item:cat4},
  let $f: A\fib C$ and $g: B\fib A$ be Reedy fibrations between Reedy fibrant objects, and consider the diagram in \autoref{eq:small-exp}.
  \begin{figure}
    \[\vcenter{\xymatrix@R=1pc@C=1pc{
      Q \ar@{->>}[dr] \ar[dd] && \Pi_{\ftil} (Q)\ar@{->>}[dr] \\
      & P \ar[dd] \ar@{->>}[dr] \ar@{-->}[rr]^(0.3){\ftil} &&
      C_1 \times_{C_0} \Pi_{f_0} B_0 \ar[dr] \ar@{->>}'[d][dddd]\\
      B_1 \ar@{->>}[dr] && f_0^* \Pi_{f_0} B_0 \ar[dd] \ar[rr] && \Pi_{f_0} B_0 \ar@{->>}[dddd] \\
      & A_1 \times_{A_0} B_0 \ar@{->>}[dr] \ar@{->>}[dd] \\
      && B_0 \ar@{->>}[dd]_(0.3){g_0} \\
      & A_1 \ar@{->>}[dr] \ar@{->>}'[r][rr]^{f_1} && C_1 \ar[dr]\\
      && A_0 \ar@{->>}[rr]_{f_0} && C_0
    }}\]\caption{Construction of Reedy dependent products}\label{eq:small-exp}
  \end{figure}
  The objects $P$ and $Q$ are defined so as to make the squares
  \[\vcenter{\xymatrix{
      P\ar@{->>}[r]\ar[d] \pullbackcorner &
      f_0^* \Pi_{f_0} B_0\ar[d]\\
      A_1 \times_{A_0} B_0\ar@{->>}[r] &
      B_0
    }} \qquad\text{and}\qquad
  \vcenter{\xymatrix{
      Q\ar@{->>}[r]\ar[d] \pullbackcorner &
      P\ar[d]\\
      B_1\ar@{->>}[r] &
      A_1 \times_{A_0} B_0
    }}\]
  (which appear in \autoref{eq:small-exp}) pullback squares.
  These pullbacks exist in \sC because their bottom morphisms are fibrations: the first as it is a pullback of $A_1 \fib A_0$ (which is a fibration as $A$ is Reedy fibrant), and the second as $f$ is a Reedy fibration.

  By the pasting law for pullbacks, the left-hand face of \autoref{eq:small-exp} is a pullback.
  Since the front and right-hand faces are pullbacks by definition, so is the back face:
  \[\vcenter{\xymatrix{
      P\ar@{->>}[r]^-{\ftil}\ar[d] \pullbackcorner &
      C_1 \times_{C_0} \Pi_{f_0} B_0\ar[d]\\
      A_1 \ar@{->>}[r]_{f_1} &
      C_1.
    }}\]
  Thus the map \ftil, which is induced by the universal property of $C_1 \times_{C_0} \Pi_{f_0} B_0$, is also a fibration.
  Hence, the dependent product $\Pi_{\ftil} Q$ exists and is a fibration.

  Now we define $(\Pi_f B)_0 = \Pi_{f_0} B_0$ and $(\Pi_f B)_1 =\Pi_{\ftil}(Q)$; we have shown that $\Pi_f B \to C$ is a Reedy fibration.
  It is straightforward to verify that this is actually the dependent product of $g$ along $f$ in $\sC^\bbtwo$, giving axiom~\ref{item:cat4}.

  Finally, \autoref{thm:reedy-fact} gives axiom~\ref{item:cat7} for $(\sC^\bbtwo)\f$, while~\ref{item:cat8} follows from its truth in \sC and the fact that Reedy acyclic cofibrations are levelwise and Reedy fibrations are in particular levelwise.
\end{proof}

Now in order to interpret type theory in $(\sC^\bbtwo)\f$, we need it to be split.
We can, of course, apply the coherence theorems to it.
However, in \S\ref{sec:scones} it will be useful to know that a split structure on \sC \emph{directly} induces a split structure on $(\sC^\bbtwo)\f$ (or, actually, a generalization of it).
Moreover, an explicit description of this split structure will also help explain how the internal type theory of $(\sC^\bbtwo)\f$ can be interpreted in terms of the type theory of \sC, which will be useful in \S\ref{sec:univalence}.

We begin with the cloven structure.

\begin{defn}
  If \sC is cloven, then a \textbf{structured Reedy fibration} in $\sC^\bbtwo$ is a Reedy fibration $A\fib B$ together with fibration structures on $A_0 \fib B_0$ and on $A_1 \fib A_0 \times_{B_0} B_1$, where $A_0 \times_{B_0} B_1$ denotes the specified pullback of the structured fibration $A_0 \fib B_0$ along the map $B_1 \to B_0$.
\end{defn}

Note that if \sC is the cloven syntactic category, then a structured Reedy fibrant object $C$ consists exactly of a type in empty context and a type dependent on it:
\begin{equation}\label{eq:srfo}
  \begin{split}
    &\pr (C_0 \ty)\\
    (c_0: C_0) &\pr (C_1(c_0) \ty).
  \end{split}
\end{equation}
Similarly, a structured Reedy fibration $A\fib C$ consists of two more types, the first dependent only on $C_0$ and the second dependent on all three preceding ones:
\begin{equation}\label{eq:srfib}
  \begin{split}
    (c_0: C_0) &\pr (A_0(c_0) \ty)\\
    (c_0: C_0),\, (c_1: C_1(c_0)) ,\, (a_0: A_0(c_0)) &\pr (A_1(c_0,c_1,a_0) \ty).
  \end{split}
\end{equation}

Now we need to specify the pullback of a structured fibration $B\fib A$ along a map $f:C\to A$.
The cloven structure of \sC gives a specified pullback
\begin{equation}
  \vcenter{\xymatrix{
      f_0^*(B_0)\ar[r]\ar[d] &
      B_0\ar[d]\\
      C_0\ar[r]_{f_0} &
      A_0
      }}
\end{equation}
which we take to define $(f^*B)_0$ and the structured fibration on $(f^*B)_0\fib C_0$.
Similarly, we have a specified pullback
\begin{equation}
  \vcenter{\xymatrix{
      (f^*B)_1\ar[r]\ar[d] &
      B_1 \ar[d]\\
      C_1 \times_{C_0} (f^*B)_0\ar[r] &
      A_1\times_{A_0} B_0
      }}
\end{equation}
defining $(f^*B)_1$ and the structured fibration $(f^*B)_1\fib C_1 \times_{C_0} (f^*B)_0$, where $C_1 \times_{C_0} (f^*B)_0$ is of course the specified pullback.
It is easy to see that if \sC is split, then these pullbacks in $(\sC^\bbtwo)\f$ satisfy~\ref{item:split1} in the definition of splitness (\ref{def:split}).
In the internal type theory of \sC, $(f^*B)$ is given by
\begin{align*}
  (c_0: C_0) &\pr (B_0(f_0(c_0)) \ty)\\
  (c_0: C_0),\, (c_1: C_1(c_0)) ,\, (b_0: B_0(f_0(c_0))) &\pr (B_1(f_0(c_0),f_1(c_0,c_1),b_0) \ty).
\end{align*}
Note that this makes sense because $f_1:\prod_{c_0:C_0} (C_1(c_0) \to A_1(f_0(c_0)))$.

We take the specified unit fibration $\mathsf{u}\to 1$ in $(\sC^\bbtwo)\f$ to be
\begin{equation}
  \vcenter{\xymatrix{
      \mathsf{u}^*\mathsf{u} \ar@{->>}[dr] \ar@(r,ul)[drr] \ar@(d,ul)[ddr] \\
      &\mathsf{u}\ar@{=}[r]\ar[d] &
      \mathsf{u}\ar@{->>}[d]\\
      &1\ar@{=}[r] &
      1
      }}
\end{equation}
with the specified fibration structure on $\mathsf{u}^*\mathsf{u} \to \mathsf{u}$ arising by pullback.
In the type theory, this means that $\mathsf{unit}$ in $(\sC^\bbtwo)\f$ is
\begin{align*}
  &\pr (\mathsf{unit} \ty)\\
  (x:\mathsf{unit}) &\pr (\mathsf{unit} \ty).
\end{align*}

For dependent sums and products, we need to start with a composable pair of structure Reedy fibrations $B\fib A\fib C$.
In addition to~\eqref{eq:srfo} and~\eqref{eq:srfib}, this consists of:
\begin{align*}
  (c_0: C_0),\, (a_0: A_0(c_0))  &\pr (B_0(c_0,a_0) \ty)\\
  c_0,c_1,a_0, (a_1: A_1(c_0,c_1,a_0)) ,\, (b_0: B_0(c_0,a_0))
  &\pr (B_1(c_0,c_1,a_0,a_1,b_0) \ty)
\end{align*}
(omitting the types of $c_0,c_1,a_0$ for brevity).
Then the composite structured Reedy fibration $A\fib C$ should be represented by the dependent types
\begin{equation}
  (c_0:C_0) \pr \left(\ssum{a_0:A_0(c_0)} B_0(c_0,a_0)  \ty\right)\label{eq:rds0}
\end{equation}
and
\begin{multline}
  (c_0:C_0),\, (c_1: C_1(c_0)),\, \left(p_0:\ssum{a_0:A_0(c_0)} B_0(c_0,a_0)\right) \\
  \pr
  \left(\ssum{a_1:A_1(c_0,c_1,\fst(p_0))} B_1(c_0,c_1,\fst(p_0),a_1,\snd(p_0))\ty\right).\label{eq:rds1}
\end{multline}
We leave it to the reader to express this diagrammatically in terms of \ctf.
Similarly, the dependent product $\Pi_f B \fib C$ constructed as in \autoref{eq:small-exp} is represented by the dependent types
\begin{equation}\label{eq:rdp0}
  (c_0: C_0) \pr
  \left(\sprod{a_0: A_0(c_0)} B_0(c_0,a_0) \ty\right)
\end{equation}
and
\begin{multline}\label{eq:rdp1}
  (c_0: C_0) ,\, (c_1: C_1(c_0)) ,\, \left(f_0: \sprod{a_0: A_0(c_0)} B_0(c_0,a_0)\right) \\ \pr
  \left(\sprod{a_0: A_0(c_0)} \sprod{a_1: A_0(c_0,c_1,a_0)} B_1(c_0,c_1,a_0,a_1,f_0(a_0)) \ty\right).
\end{multline}
When \sC is split, it is easy to verify the strict preservation of these structures by pullback (\ref{item:split2} and~\ref{item:split3} in \autoref{def:split}).

For path objects, we need to do a little more work, since \autoref{thm:reedy-ttfc} used the homotopy-theoretic axioms (\autoref{def:ttfc}\ref{item:cat7} and~\ref{item:cat8}) rather than the type-theoretic ones (\autoref{thm:path-fact}\ref{item:cat7p} and~\ref{item:cat8p}).
Suppose $A\fib B$ is a structured Reedy fibration.
We begin by defining $(P_B A)_0 = P_{B_0} A_0 \fib A_0\times_{B_0} A_0$ to be the specified path object in \sC associated to the structured fibration $A_0 \fib B_0$.
Now let $A_{01} = A_0 \times_{B_0} B_1$, so that we have a structured fibration $A_1 \fib A_{01}$, and hence a specified path object $P_{A_{01}} A_1 \fib A_1\times_{A_{01}} A_1$.
We will obtain the structured fibration
\[(P_B A)_1 \fib (A_1\times_{B_1} A_1) \times_{(A_0\times_{B_0} A_0)} P_{B_0} A_0\]
as the specified pullback of $P_{A_{01}} A_1 \fib A_1\times_{A_{01}} A_1$ along a map 
\[ (A_1\times_{B_1} A_1) \times_{(A_0\times_{B_0} A_0)} P_{B_0} A_0 \too A_1\times_{A_{01}} A_1 .\]
Such a map is, of course, determined by two maps
\begin{equation}
  (A_1\times_{B_1} A_1) \times_{(A_0\times_{B_0} A_0)} P_{B_0} A_0 \toto A_1\label{eq:pathmaps}
\end{equation}
which agree in $A_{01}$.
We take one of these maps to be simply the projection onto the second factor $A_1$ appearing in the domain.
We cannot take the other to be projection onto the first factor, however, since these two projections do not agree in $A_{01}$.
Instead, we consider the following square:
\begin{equation}
  \vcenter{\xymatrix{
      (A_1 \times_{B_1} A_1) \times_{(A_0\times_{B_0} A_0)} A_0 \ar[r]^-\cong \ar[d] &
      A_1\times_{A_{01}} A_1 \ar[rr]^-{\pi_1} &&
      A_1\ar[d]\\
      (A_1\times_{B_1} A_1) \times_{(A_0\times_{B_0} A_0)} P_{B_0} A_0 \ar[r] &
      A_1\times_{B_1} A_1 \ar[r]_-{\pi_2} &
      A_1 \ar[r] &
      A_0.
    }}\label{eq:sqtoliftin}
\end{equation}
Here $\pi_1$ and $\pi_2$ denote the projections onto the first and second factors of $A_1\times_{A_0} A_1$, respectively.
The reader will easily verify that this square nevertheless commutes.
Since the right-hand map is a structured fibration, and the left-hand map is the specified pullback of $A_0 \to P_{B_0} A_0$ along a structured fibration, 
there is a specified lift, which we take as the second map in~\eqref{eq:pathmaps}.

This completes the definition of a structured Reedy fibration $P A \fib A\times_B A$.
Now we need the diagonal to factor through it by an acyclic cofibration.
Consider first the following diagram
\begin{equation}
  \vcenter{\xymatrix{
      P_{A_0} A_1 \ar[r]\ar[d] &
      (P_{B_1} A)_1\ar[r]\ar[d] &
      P_{A_0} A_1\ar[d]\\
      A_1\times_{A_{01}} A_1\ar[r]\ar[d] &
      (A_1\times_{B_1} A_1)\times_{(A_0\times_{B_0} A_0)} P_{B_0} A_0\ar[r]\ar[d] &
      A_1\times_{A_{01}} A_1\\
      A_0 \ar[r] &
      P_{B_0} A_0.
    }}\label{eq:pathobj-ac}
\end{equation}
The upper-right square is a pullback by definition, and the lower-left square is a pullback by inspection.
The composite across the middle is the identity morphism of $A_1\times_{A_{01}} A_1$, and thus the outer top rectangle is also a pullback.
Hence, by the pasting law for pullback squares, the upper-left square is also a pullback.
However, all the vertical maps are fibrations, 
and the lower map $A_0 \to P_{B_0} A_0$ is an acyclic cofibration; hence its pullback $P_{A_0} A_1 \to (P_{B_0} A)_1$ is also.
Composing this with the defining acyclic cofibration $A_1 \to P_{A_0} A_1$ gives our desired factorization.

In terms of the type theory of \sC, this path object is represented by
\begin{equation}
(b_0:B_0),\, (a_0: A_0(b_0)) ,\, (a_0' : A_0(b_0)) \pr (a_0 \id  a_0')\ty\label{eq:rit0}
\end{equation}
and
\begin{multline}\label{eq:rit1}
  (b_0:B_0),\, (b_1:B_1(b_0)) ,\, (a_0: A_0(b_0)) ,\, (a_0' : A_0(b_0)),\, (p: a_0\id a_0')\\
  (a_1 : A_1(b_0,b_1,a_0)),\, (a_1' : A_1(b_0,b_1,a_0')) \pr (p_* a_1 \id  a_1')\ty
\end{multline}
where, as in \S\ref{sec:homotopy-type-theory}, $p_*$ denotes transport in the fibration $A_1\to A_0$ along the path $p$.
The reflexivity path constructor $A \to P_B A$ is represented by the terms
\begin{equation}\label{eq:rr0}
  (b_0:B_0),\, (a_0: A_0(b_0)) \pr (\r_{a_0} : (a_0 \id  a_0))
\end{equation}
and
\begin{equation}\label{eq:rr1}
  (b_0:B_0),\, (b_1:B_1(b_0)) ,\, (a_0: A_0(b_0)),\, (a_1 : A_1(b_0,b_1,a_0))
  \pr (\r_{a_1} : (\r_{a_0})_* a_1 \id  a_1).
\end{equation}
Finally, we can also write down an explicit term for the eliminator of identity types in $\sC^\bbtwo$ in terms of that in \sC.
Categorically, this means that we suppose a structured Reedy fibration $C\fib P_B A$, with a commutative square
\[\vcenter{\xymatrix{
    A \ar[d] \ar[r]^d &
    C\ar@{->>}[d]\\
    P_B A\ar@{=}[r] &
    P_B A
  }}\]
and construct a lift.
In the type theory of $\sC$, these data consist of dependent types
\begin{multline}\label{eq:rplc0}
  (b_0:B_0),\, (a_0: A_0(b_0)) ,\, (a_0' : A_0(b_0)) ,\, (p_0:(a_0 \id  a_0'))
  \pr C_0(b_0,a_0,a_0',p_0) \ty
\end{multline}
and
\begin{multline}\label{eq:rplc1}
  (b_0:B_0),\, (b_1:B_1(b_0)) ,\, (a_0: A_0(b_0)) ,\, (a_0' : A_0(b_0)),\, (p_0: a_0\id a_0'), \\
  (a_1 : A_1(b_0,b_1,a_0)),\, (a_1' : A_1(b_0,b_1,a_0')),\, (p_1 : (p_0)_* a_1 \id  a_1'),\\
   (c_0 : C_0(b_0,a_0,a_0',p_0)) \pr  C_1 (b_0,b_1,a_0,a_0',p_0,a_1,a_1',p_1,c_0) \ty
\end{multline}
together with terms
\begin{equation}
  (b_0:B_0),\, (a_0:A_0(b_0)) \pr (d_0(b_0,a_0) : C_0(b_0,a_0,a_0,\r_{a_0}))\label{eq:rpld0}
\end{equation}
and
\begin{multline}\label{eq:rpld1}
  (b_0:B_0),\, (b_1:B_1(b_0)) ,\, (a_0: A_0(b_0)),\, (a_1 : A_1(b_0,b_1,a_0))\\
  \pr  (d_1(b_0,b_1,a_0,a_1) : C_1 (b_0,b_1,a_0,a_0,\r_{a_0},a_1,a_1,\r_{a_1},d_0(b_0,a_0))).
\end{multline}
The desired lift can then be given by the terms
\begin{multline}\label{eq:rpl0}
  (b_0:B_0),\, (a_0: A_0(b_0)) ,\, (a_0' : A_0(b_0)) ,\, (p_0:a_0 \id  a_0') \\
  \pr ( J_{d_0}(a_0,a_0',p_0) :C_0(b_0,a_0,a_0',p_0))
\end{multline}
and
\begin{multline}\label{eq:rpl1}
  (b_0:B_0),\, (b_1:B_1(b_0)) ,\, (a_0: A_0(b_0)) ,\, (a_0' : A_0(b_0)),\, (p_0: a_0\id a_0'), \\
  (a_1 : A_1(b_0,b_1,a_0)),\, (a_1' : A_1(b_0,b_1,a_0')),\, (p_1 : (p_0)_* a_1 \id  a_1')\\
  \pr  ( J_{J_{d_1}(a_1,a_1',p_1)}(a_0,a_0',p_0)  :C_1 (b_0,b_1,a_0,a_0',p_0,a_1,a_1',p_1,J_{d_0}(a_0,a_0',p_0))).
\end{multline}
It is straightforward to check that when \sC is split, all of these data are also preserved by pullback in \ctf.

\begin{thm}\label{thm:reedy-split}
  If \sC is a cloven or split type-theoretic fibration category, then so is $(\sC^\bbtwo)\f$.
  Moreover, the ``codomain'' functor $(\sC^\bbtwo)\f \to \sC$ is a strict functor.\qed
\end{thm}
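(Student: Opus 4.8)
The bulk of the argument has already been carried out in the construction preceding the theorem, so the plan is to collect the pieces and verify that the required coherences reduce, level by level, to those assumed in \sC. First I would treat the \textbf{cloven} case. The notion of \emph{structured Reedy fibration} supplies the fibration structures of \autoref{def:cttfc}\ref{item:cloven1}; the two-step pullback construction supplies the specified pullbacks of \ref{item:cloven2}; the object $\mathsf u=(1\to 1)$ with $\mathsf u^*\mathsf u$ as drawn supplies \ref{item:cloven6}; the dependent types \eqref{eq:rds0}--\eqref{eq:rds1} and \eqref{eq:rdp0}--\eqref{eq:rdp1} supply the specified $\Si_q p$ and $\Pi_q p$ of \ref{item:cloven3} and \ref{item:cloven4}; the construction around \eqref{eq:pathobj-ac} supplies the path-object factorization of \ref{item:cloven5}; and \eqref{eq:rpl0}--\eqref{eq:rpl1} supply the specified diagonal filler of \ref{item:cloven5a}. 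That each datum satisfies the underlying \emph{existence} requirement was established in \autoref{thm:reedy-ttfc} (for the dependent products the map $\ftil$ and the object $Q$ of \autoref{eq:small-exp} realize $\Pi_f B$; for the path object the extra work of the construction around \eqref{eq:pathobj-ac} is needed, since \autoref{thm:reedy-ttfc} used the homotopy-theoretic form of the path axioms rather than the type-theoretic one); here one only observes that the construction was made throughout with \emph{specified} choices. If \sC carries an \shnno I would produce the corresponding structure in the same spirit.

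Next, assuming \sC is \textbf{split}, I would check the four clauses of \autoref{def:split}. Clause \ref{item:split1} is immediate, since $(f^*B)_0$ is literally the specified pullback $f_0^*B_0$ and $(f^*B)_1$ is obtained from a single specified pullback in \sC over the base $C_1\times_{C_0}(f^*B)_0$; the identity and composition laws therefore hold because they hold in \sC. For clauses \ref{item:split2} and \ref{item:split3} I would apply the corresponding laws of \sC at level $0$ directly, and at level $1$ after unwinding the twisting: the map $\ftil$ and the object $Q$ of \autoref{eq:small-exp}, and likewise the level-$1$ dependent sum, are assembled entirely from specified pullbacks, so the canonical isomorphisms $f^*\Si_q C\toiso\Si_{f^*q}(f_q)^*C$ and $f^*\Pi_q C\toiso\Pi_{f^*q}(f_q)^*C$ in $(\sC^\bbtwo)\f$ are, level by level, the canonical isomorphisms in \sC composed with Beck--Chevalley comparisons built from identities; hence they are identities whenever the corresponding isomorphisms in \sC are. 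Clause \ref{item:split4} is the analogous statement for path objects and diagonal fillers: $(P_BA)_0=P_{B_0}A_0$, and $(P_BA)_1$ is a specified pullback of $P_{A_{01}}A_1$ along the map determined by \eqref{eq:pathmaps}, whose only nontrivial component is the specified lift of \eqref{eq:sqtoliftin}; so strict stability under pullback into $B$ follows from \autoref{def:split}\ref{item:split4} for \sC applied to $A_0\fib B_0$ and to $A_1\fib A_{01}$, together with \ref{item:split1} applied iteratively, and the diagonal filler \eqref{eq:rpl0}--\eqref{eq:rpl1} behaves the same way since its two components are specified $J$-terms of \sC.

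For the \textbf{codomain functor} $(\sC^\bbtwo)\f\to\sC$, $(A_1\to A_0)\mapsto A_0$, strictness is simply a matter of reading off the level-$0$ data. A structured Reedy fibration $A\fib B$ carries, as part of its very definition, the structured fibration $A_0\fib B_0$; the specified pullback $(f^*B)_0$ is by definition the specified pullback $f_0^*B_0$; $\mathsf u$ is sent to $\mathsf u$; and the level-$0$ components of \eqref{eq:rds0}, \eqref{eq:rdp0}, the path object, and \eqref{eq:rpl0} are \emph{by construction} the specified dependent sum, dependent product, path object, and diagonal filler in \sC (and similarly for the \shnno, if present). Thus the functor carries every item of specified structure to the corresponding one, i.e.\ it is strict.

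The only place real care is needed --- and what I would regard as the main obstacle --- is the dependent-product clause at level $1$: there $(\Pi_f B)_1=\Pi_{\ftil}(Q)$ lives over a base obtained by pulling $\Pi_{f_0}B_0$ back along $C_1\to C_0$, so verifying both that this genuinely computes the dependent product in $\sC^\bbtwo$ (which was done in \autoref{thm:reedy-ttfc}) and that it is preserved strictly by pullback requires threading the Beck--Chevalley isomorphisms of \sC through the pasting of the pullback squares in \autoref{eq:small-exp} and checking that they collapse to identities under \autoref{def:split}\ref{item:split1}--\ref{item:split3}. Everything else is a direct, if lengthy, bookkeeping exercise, which is why the statement is recorded without a separate proof.
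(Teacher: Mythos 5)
Your proposal is correct and takes essentially the same route as the paper: the paper's proof consists of the one-line remark that "the preceding constructions and observations imply the first statement" (i.e.\ the explicit structured Reedy fibrations, specified pullbacks, the expressions \eqref{eq:rds0}--\eqref{eq:rds1}, \eqref{eq:rdp0}--\eqref{eq:rdp1}, the path-object construction, and \eqref{eq:rpl0}--\eqref{eq:rpl1}), with strictness of the codomain functor following "by inspection" of the level-$0$ components, exactly as you spell out. Your more detailed accounting of which construction discharges which clause of \autoref{def:cttfc} and \autoref{def:split} is just an expansion of that same argument.
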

\begin{proof}
  The preceding constructions and observations imply the first statement.
  The second follows by inspection.
\end{proof}

We also observe:

\begin{lem}\label{thm:reedy-shnno}
  If \sC has a (cloven or split) \shnno, so does $(\sC^\bbtwo)\f$, and it is preserved strictly by $\mathrm{cod}:(\sC^\bbtwo)\f \to \sC$.
\end{lem}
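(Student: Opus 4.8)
The plan is to take $\cN$ to be the constant diagram at $N$, i.e.\ the object $(1_N\colon N\to N)$ of $\sC^\bbtwo$, with $o\colon 1\to\cN$ and $s\colon\cN\to\cN$ the morphisms of $\sC^\bbtwo$ whose two components are both $o$ (resp.\ $s$). Since $1_N$ is an isomorphism, hence a fibration, $\cN$ is Reedy fibrant; and since $\mathrm{cod}$ is evaluation at the codomain object it carries $(\cN,o,s)$ to $(N,o,s)$, so any \shnno structure on $\cN$ is automatically preserved strictly by $\mathrm{cod}$. It remains to verify the universal property of \autoref{def:shnno} for $\cN$, and then to promote the argument to the cloven and split settings.

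For the universal property, let $p\colon\cB\fib\cN$ be a Reedy fibration with $o'\colon 1\to\cB$ and $s'\colon\cB\to\cB$ satisfying $po'=o$ and $ps'=sp$. At level $0$, $p_0\colon\cB_0\fib N$ is a fibration (Reedy fibrations are levelwise fibrations, as observed in the proof of \autoref{thm:reedy-ttfc}) and $o_0',s_0'$ satisfy $p_0o_0'=o$ and $p_0s_0'=sp_0$, so the \shnno of \sC yields a section $f_0\colon N\to\cB_0$ with $f_0o=o_0'$ and $f_0s=s_0'f_0$. At level $1$, since $\cN_0=\cN_1=N$ with structure map $1_N$, the Reedy fibration condition makes the structure map $\beta\colon\cB_1\to\cB_0$ a fibration and naturality of $p$ gives $p_1=p_0\beta$. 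I would then form the pullback $\cB_1'$ of $\beta$ along $f_0$, giving a fibration $q\colon\cB_1'\fib N$ and a projection $\pi\colon\cB_1'\to\cB_1$ with $\beta\pi=f_0q$. Using naturality of $o'$ and $s'$ (which yields $\beta o_1'=o_0'$ and $\beta s_1'=s_0'\beta$) together with $f_0o=o_0'$ and $f_0s=s_0'f_0$, one checks that $o_1'$ and $s_1'$ factor uniquely through $\cB_1'$ as $o''\colon 1\to\cB_1'$ with $qo''=o$ and $s''\colon\cB_1'\to\cB_1'$ with $qs''=sq$. Applying the \shnno of \sC a second time gives a section $g\colon N\to\cB_1'$ with $go=o''$ and $gs=s''g$, and I set $f_1=\pi g$. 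Then $\beta f_1=f_0qg=f_0$, so $f=(f_0,f_1)$ is a morphism $\cN\to\cB$; and $pf=1_{\cN}$, $fo=o'$, and $fs=s'f$ follow levelwise from the defining equations for $f_0$ and $g$ (at level $1$ using $p_1=p_0\beta$, $\pi o''=o_1'$, and $\pi s''=s_1'\pi$). Thus $\cN$ is a \shnno.

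For the cloven case I would equip $\cN\fib 1$ with the structured Reedy fibration whose level-$0$ part is the specified fibration structure on $N\fib 1$ coming from the cloven \shnno of \sC and whose level-$1$ part is the specified structure on the relevant specified pullback, exactly in the style of the constructions preceding \autoref{thm:reedy-split}; and I would obtain the specified section of a structured Reedy fibration $\cB\fib\cN$ by running the argument above with each choice replaced by the corresponding specified one (the specified sections furnished by \autoref{def:cttfc}\ref{item:cloven7} for \sC, and the specified pullback $f_0^{*}\cB_1$). When \sC is split, stability of all of this under the base changes relevant to \autoref{def:cttfc}\ref{item:cloven7} follows by the same bookkeeping as for the other type-forming operations in \S\ref{sec:sierpinski}, and strict preservation by $\mathrm{cod}$ is immediate. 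The one genuinely non-formal point is the level-$1$ step in the second paragraph: one cannot simply invoke a \shnno universal property ``at level $1$'', both because $\cB_1$ lives over $\cB_0$ rather than over $N$ and because $f_1$ must be compatible with $f_0$ as a map of diagrams; recursing inside the pullback $f_0^{*}\cB_1$ over $N$ resolves both at once, the only care being needed in checking that $o_1'$ and $s_1'$ really do descend to the point $o''$ and the endomorphism $s''$ of that pullback.
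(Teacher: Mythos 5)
Your proposal is correct and follows essentially the same route as the paper: take the identity/constant diagram on $N$ as the candidate \shnno, apply the universal property of $N$ in \sC at level $0$ to get $f_0$, then pull back $\cB_1\fib\cB_0$ along $f_0$ to an object over $N$, check that $o_1'$ and $s_1'$ descend to it, and apply the universal property a second time to produce $f_1$. The treatment of the cloven/split case (using the specified pullback structure, with strict preservation by $\mathrm{cod}$ immediate) also matches the paper's.
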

\begin{proof}
  The identity map $N\to N$ is a fibration, hence a Reedy fibrant object of $\sC^\bbtwo$, and the morphisms $o$ and $s$ in \sC induce corresponding morphisms in $\sC^\bbtwo$.
  Then up to isomorphism, a Reedy fibration over $(N\fib N)$ consists of a pair of fibrations $B_1\xfib{p} B_0\xfib{q} N$ in \sC.
  The morphisms $s'$ and $o'$ in \ctf consist of diagrams
  \begin{equation}
    \vcenter{\xymatrix{
        1\ar[r]^-{o'_0}\ar@{=}[d] &
        B_1\ar@{->>}[d]\\
        1\ar[r]^-{o'_1}\ar@{=}[d] &
        B_0\ar@{->>}[d]\\
        1\ar[r]^-o &
        N
      }}
    \qquad\text{and}\qquad
    \vcenter{\xymatrix{
        B_1\ar[r]^-{s'_1}\ar@{->>}[d] &
        B_1\ar@{->>}[d]\\
        B_0\ar[r]^-{s'_0}\ar@{->>}[d] &
        B_0\ar@{->>}[d]\\
        N\ar[r]^-s &
        N
      }}
  \end{equation}
  in \sC.
  We define $f_0:N\to B_0$ using the universal property of $N$ in \sC, applied to $o'_0$ and $s'_0$.
  Now pulling back the fibration $B_1 \fib B_0$ to $N$ along $f_0$, we obtain a fibration $(f_0)^*B_1 \fib N$.
  Then using the fact that $f_0 o = o'_0$ and $f_0 s = s'_0 f_0$, we have induced maps $o''_1:1\to (f_0)^*B_1$ with $q o''_1 = o$, and $s''_1:(f_0)^*B_1 \to (f_0)^*B_1$ with $q s''_1 = s q$.
  Thus, we can use again the universal property of $N$ in \sC to obtain a map $N \to (f_0)^*B_1$, and hence $f_1:N\to B_1$, with the desired properties.

  In the cloven case, instead of the identity map we take $N^*\mathsf{u} \fib N$, where $N^*\mathsf{u}$ is the specified pullback of the structured fibration $\mathsf{u}\fib 1$ along $N\to 1$.
  Of course, in the type theory of \sC this consists of
  \begin{align*}
    &\pr (\mathsf{nat} \ty)\\
    (x:\mathsf{nat}) &\pr (\mathsf{unit}\ty).
  \end{align*}
  Since $N^*\mathsf{u} \fib N$ is isomorphic to the identity map of $N$, we can derive the universal property in the same way as above, and by construction it will be preserved strictly by the codomain functor to \sC.
\end{proof}

We end this section with a few further facts about the type theory of $(\sC^\bbtwo)\f$, based on the following observation.
The corresponding fact is easy and standard when \sC is a model category and $\sC^\bbtwo$ has a whole Reedy model structure.

\begin{prop}\label{thm:reedy-afib}
  For a Reedy fibration $f: B\fib A$ between Reedy fibrant objects, the following are equivalent.
  \begin{enumerate}
  \item $f$ is an acyclic fibration in $(\sC^\bbtwo)\f$.\label{item:ra1}
  \item The fibrations $B_0 \fib A_0$ and $B_1 \fib A_1$ in \sC are acyclic fibrations.\label{item:ra2}
  \item The fibrations $B_0 \fib A_0$ and $B_1 \fib B_0\times_{A_0} A_1$ in \sC are acyclic fibrations.\label{item:ra3}
  \end{enumerate}
\end{prop}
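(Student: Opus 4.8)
The plan is to run the cycle $(i)\Rightarrow(ii)\Rightarrow(iii)\Rightarrow(i)$, using the Acyclic Fibration Lemma (\autoref{thm:afib}) together with the explicit fiberwise path objects for $(\sC^\bbtwo)\f$ constructed earlier in this section. The implication $(ii)\Rightarrow(iii)$ is essentially formal: $B_0\times_{A_0}A_1\fib A_1$ is a pullback of the acyclic fibration $B_0\fib A_0$, hence acyclic by \autoref{thm:afib-stable}, and $f_1$ factors as $B_1\to B_0\times_{A_0}A_1\fib A_1$ with the second map exactly this pullback; so if $f_1$ is also a homotopy equivalence, then by $2$-out-of-$3$ in the category of fibrant objects $(\sC^\bbtwo)\f$ (\autoref{thm:fibcat}) the matching map $B_1\to B_0\times_{A_0}A_1$ is a homotopy equivalence, and being a Reedy-fibration component it is a fibration, hence an acyclic fibration.

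For $(i)\Rightarrow(ii)$ I would apply \autoref{thm:afib} in $(\sC^\bbtwo)\f$ to obtain $g\colon A\to B$ with $fg=1_A$ and a homotopy $gf\sim 1_B$ in $(\sC^\bbtwo/A)\f$. Since the fiberwise homotopy relation does not depend on the chosen fiberwise path object, I may realize this homotopy by a map into the specific $P_A B$ built in this section. The key observation is that this path object is ``levelwise good'': its level~$0$ is the fiberwise path object $P_{A_0}B_0$ of $B_0$ over $A_0$, and --- as the path-object construction above makes explicit --- its level~$1$, equipped with its structure fibration to $B_1\times_{A_1}B_1$, together with the reflexivity component (which was shown there to be a levelwise acyclic cofibration), is a perfectly good fiberwise path object for $B_1$ over $A_1$. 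Hence $g$ and the homotopy restrict, level by level, to a section and fiberwise homotopy of $f_0$ over $A_0$ and of $f_1$ over $A_1$, so by the ``if'' direction of \autoref{thm:afib} in \sC both $B_0\fib A_0$ and $B_1\fib A_1$ are acyclic fibrations.

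The substantive direction is $(iii)\Rightarrow(i)$, and here the plan is to avoid wrestling with the twisted level~$1$ path object by factoring $f$ through a ``relative matching object''. Set $M=(B_0\times_{A_0}A_1\to B_0)\in(\sC^\bbtwo)\f$ and factor $f$ as $B\xto{u}M\xto{w}A$, where $w_0=f_0$ and $w_1$ is the projection to $A_1$ (so the matching map of $w$ is an isomorphism), while $u_0=1_{B_0}$ and $u_1$ is the matching map of $f$ (so $u$ is a Reedy fibration with identity level~$0$). I would then check via \autoref{thm:afib} that $u$ and $w$ are each acyclic fibrations in $(\sC^\bbtwo)\f$: for $w$, the hypothesis gives $f_0$ acyclic, hence a section $g_0$ with a fiberwise homotopy over $A_0$; because the matching map of $w$ is an isomorphism, the level~$1$ section is forced and the level~$1$ fiberwise path object ``untwists'', so the level~$0$ data propagate to the required fiberwise homotopy. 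Dually, for $u$ the matching map $B_1\fib B_0\times_{A_0}A_1$ is acyclic by hypothesis, supplying a section and fiberwise homotopy at level~$1$, and since $u_0$ is an isomorphism nothing is needed at level~$0$. Composing, $f=wu$ is a composite of acyclic fibrations in $(\sC^\bbtwo)\f$, hence itself an acyclic fibration. The main obstacle is exactly this last bookkeeping --- verifying that when one of the two pieces of ``matching data'' is an isomorphism the relevant fiberwise path object in $(\sC^\bbtwo)\f$ collapses to the naive levelwise one, so that the levelwise section and homotopy assemble into a genuine map into $P_M B$ or $P_A B$ without obstruction; once $f$ is split as $wu$ above, everything else is routine.
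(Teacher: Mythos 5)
Your two easy implications essentially coincide with the paper's (its (i)$\Rightarrow$(ii) is even terser: path objects in $(\sC^\bbtwo)\f$ are built from levelwise acyclic cofibrations and fibrations, so homotopies and hence homotopy equivalences are levelwise), but for the substantive direction (iii)$\Rightarrow$(i) you take a genuinely different, and arguably cleaner, route. The paper attacks $f$ head-on: it produces $g_0$ with a fiberwise homotopy $G$ at level $0$ and $g_1=kh$ at level $1$, and then spends the bulk of the proof correcting the level-$1$ homotopy --- transporting the concatenation $c(\map_k H\fhat,K)$ along a secondary homotopy built from \eqref{eq:mapconcat}, \eqref{eq:mapcompose} and \eqref{eq:oppconcat} --- so that it lies over $G$ under the fibration $v\,\map_{\fhat}$ and the two levels assemble into a single map into a Reedy path object $P_AB$ with $(P_AB)_i=P_{A_i}B_i$. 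Your factorization $B\xto{u}M\xto{w}A$ through the relative matching object $M=(B_0\times_{A_0}A_1\to B_0)$ makes that inter-level compatibility problem disappear: in each factor one piece of the Reedy data is an isomorphism, so the twisted level-$1$ path object really does collapse (for $w$ one may take $(P_AM)_1\cong A_1\times_{A_0}P_{A_0}B_0$ and the level-$1$ homotopy is $G$ pulled back; for $u$ one may take $(P_MB)_0=B_0$ and $(P_MB)_1=P_{M_1}B_1$, with constant level-$0$ component), and $f=wu$ is then a composite of acyclic fibrations, which is again one since Reedy fibrations compose and homotopy equivalences satisfy 2-out-of-3 (\autoref{thm:fibcat}). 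I checked the bookkeeping you flag at the end --- including that $g_1=(g_0\alpha,1_{A_1})$ and $k_1$ really are morphisms of $\sC^\bbtwo$ --- and it goes through. The one thing the paper's more laborious direct construction buys is that it serves verbatim as the induction step for a general inverse category in \autoref{thm:Ireedy-afib}, where one builds the section and a Reedy path object simultaneously by well-founded induction; your factorization argument handles any single stage equally well, but for inverse categories of infinite rank it would leave you with a transfinite tower of acyclic fibrations whose limit must still be recognized as one, which requires the Reedy-limit machinery of \autoref{def:Ilim} rather than mere binary composition.
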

\begin{proof}
  Since acyclic fibrations are stable under pullback in \sC, if $B_0 \fib A_0$ is acyclic then so is $B_0\times_{A_0} A_1\fib A_1$.
  Thus, we have~\ref{item:ra2}$\Leftrightarrow$\ref{item:ra3} by the 2-out-of-3 property.
  And since fibrations and acyclic cofibrations in $\sC^\bbtwo$ are in particular levelwise, so are homotopies and homotopy equivalences; this gives~\ref{item:ra1}$\Rightarrow$\ref{item:ra2}.

  Conversely, suppose $f:B\fib A$ is a Reedy fibration of Reedy fibrant objects satisfying~\ref{item:ra3}, write $B_{01}= B_0\times_{A_0} A_1$, and label various morphisms as shown below.
  \[\vcenter{\xymatrix@C=3pc{
      B_1 \ar@{->>}[dr]|{\fhat} \ar@(d,ul)@{->>}[ddr]_{f_1} \ar@(r,ul)@{->>}[rrd]^{\beta} \\
      &B_{01} \ar@{->>}[r]_-{t}\ar@{->>}[d]^{s} \pullbackcorner &
      B_0 \ar@{->>}[d]^{f_0}\\
      &A_1\ar@{->>}[r]_{\alpha} &
      A_0.
    }}\]
  By the Acyclic Fibration Lemma (\ref{thm:afib}) applied to $f_0$, we have a morphism $g_0: A_0 \to B_0$ with $f_0 g_0 = 1_{A_0}$, and a homotopy $G : g_0 f_0 \sim 1_{B_0}$ using some path object $P_{A_0} B_0$ for $f_0$ in $(\sC/A_0)\f$.
  Since pullback of fibrations preserves fibrations and acyclic cofibrations, $\al^* (P_{A_0} B_0)$ is a path object for $s$ in $(\sC/A_1)\f$.
  Now the universal property of pullback induces a map $h: A_1 \to B_{01}$ such that $t h = g_0 \alpha$ and $s h = 1_{A_1}$.
  Similarly, we have a homotopy $H : h s \sim 1$ using the path object $\al^* (P_{A_0} B_0)$ for $s$ in $(\sC/A_1)\f$, with the property that $v H = G t$, where $v$ is the pullback morphism:
  \[\vcenter{\xymatrix{
      \al^* (P_{A_0} B_0)\ar@{->>}[r]^-v\ar@{->>}[d] \pullbackcorner &
      P_{A_0} B_0\ar@{->>}[d]\\
      A_1 \ar@{->>}[r]_\al &
      A_0.
    }}\]

  Since \fhat is an acyclic fibration in $(\sC/A_1)\f$ by \autoref{thm:fibhe}, from Lemmas~\ref{thm:fibhtpy} and~\ref{thm:afib} we have a map $k: B_{01} \to B_1$ in $(\sC/A_1)\f$ such that $\fhat k = 1$, and a homotopy $K : k \fhat \sim 1_{B_1}$ in $(\sC/A_1)\f$ such that $\map_{\fhat} K = r \fhat$.
  Moreover, by the proof of \autoref{thm:afib}, we may suppose that $K$ is defined using a path object $P_{A_1} B_1$ for $B_1$ in $(\sC/A_1)\f$ such that
  \[ P_{A_1} B_1 \fib \al^* (P_{A_0} B_0) \times_{(B_{01} \times_{A_1} B_{01})} (B_1 \times_{A_1} B_1) \]
  is a fibration.
  In particular, $\map_{\fhat}$ is a fibration $P_{A_1} B_1 \fib \al^* (P_{A_0} B_0)$.

  Now the composite $k h$ satisfies
  \[ f_1 k h = s \fhat k h = s h = 1_{A_1} \]
  and also
  \[\beta k h = t \fhat k h = t h = g_0 \alpha.\]
  Thus, if we define $g_1 = k h$, then we have a morphism $g: A \to B$ in $\sC^\bbtwo$.
  Moreover, we have a concatenated homotopy
  \[ c(\map_k H\fhat, K) \;:\; g_1 f_1 = k h s \fhat \sim k \fhat \sim 1_{B_1}\]
  using the path object $P_{A_1} B_1$, such that
  \[ \map_{\fhat} (c(\map_k H \fhat, K)) \sim c(\map_{\fhat} \map_k H \fhat, \map_{\fhat} K) \sim c(H\fhat ,r\fhat) = H\fhat. \]
  Since, as noted above, $\map_{\fhat}$ is a fibration, we may transport $c(\map_k H\fhat, K)$ along this homotopy to obtain a homotopy $L : k h f_1 \sim 1_{B_1}$ in $(\sC/A_1)\f$ using the path object $P_{A_1} B_1$ with the property that $\map_{\fhat} L = H \fhat$, and hence
  \[v \, \map_{\fhat}\, L = v H \fhat = G t \fhat= G \beta.\]

  Define $(P_A B)_0 = P_{A_0} B_0$ and $(P_A B)_1 = P_{A_1} B_1$.
  Then the fibration \[
  v \, \map_{\fhat} : (P_A B)_1 \fib (P_A B)_0\]
  makes $P_A B$ into a Reedy fibrant object of $\sC^\bbtwo$.
  Now in the following diagram:
  \[\hspace{3cm}\vcenter{\xymatrix{
      \mathllap{\al^* (P_{A_0} B_0) \times_{(B_{01} \times_{A_1} B_{01})}} (B_1 \times_{A_1} B_1) \ar[r]\ar[d] &
      \al^*(P_{A_0} B_0)\ar[r]^v\ar[d] &
      P_{A_0} B_0\ar[d]\\
      B_1 \times_{A_1} B_1 \ar[r] &
      B_{01} \times_{A_1} B_{01}\ar[r]\ar[d] &
      B_0 \times_{A_0} B_0\ar[d]\\
      &A_1\ar[r]_\al &
      A_0
    }}\]
  the bottom square and right-hand rectangle are pullbacks, hence so is the upper-right square.
  Since the upper-left square is a pullback by definition, so is the upper rectangle.
  Thus, we have a Reedy fibration $P_A B \fib B \times_A B$.
  We clearly also have a Reedy acyclic cofibration $B \acof P_A B$ factoring the diagonal, so $P_A B$ is a path object for $B$ in $(\sC^\bbtwo/A)\f$.
  Finally, the homotopies $G$ and $L$ define a map $B \to P_A B$ which defines a homotopy in $(\sC^\bbtwo/A)\f$ from $g f$ to $1_B$.
  Therefore, $f$ is a homotopy equivalence in $(\sC^\bbtwo/A)\f$, and hence an acyclic fibration in $(\sC^\bbtwo)\f$.
\end{proof}

\begin{cor}\label{thm:pwhe}
  The homotopy equivalences in $(\sC^\bbtwo)\f$ are the levelwise homotopy equivalences in \sC.
\end{cor}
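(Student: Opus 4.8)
The plan is to deduce this from the Acyclic Fibration Lemma for $(\sC^\bbtwo)\f$, i.e.\ \autoref{thm:reedy-afib}, after reducing the general case to that of a Reedy fibration by means of the factorization system. Given $f : B \to A$ in $(\sC^\bbtwo)\f$, I would use axiom~\ref{item:cat7} for $(\sC^\bbtwo)\f$ (which holds by \autoref{thm:reedy-ttfc}; concretely this is \autoref{thm:reedy-fact}) to factor $f$ as a Reedy acyclic cofibration $i: B\acof B'$ followed by a Reedy fibration $p: B'\fib A$. Now Reedy acyclic cofibrations are, by definition, levelwise acyclic cofibrations in \sC, and also --- being acyclic cofibrations in the type-theoretic fibration category $(\sC^\bbtwo)\f$ --- homotopy equivalences there; likewise $i_0$ and $i_1$ are acyclic cofibrations, hence homotopy equivalences, in \sC. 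Since $f_0 = p_0 i_0$ and $f_1 = p_1 i_1$, the 2-out-of-3 property (for homotopy equivalences, both in $(\sC^\bbtwo)\f$ and in \sC) lets us replace $f$ by $p$ throughout: $f$ is a homotopy equivalence in $(\sC^\bbtwo)\f$ iff $p$ is, and $f_0, f_1$ are homotopy equivalences in \sC iff $p_0, p_1$ are.

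It then remains to prove the corollary for a Reedy fibration $p$, where it follows from a chain of biconditionals. First, a fibration (in any type-theoretic fibration category) is a homotopy equivalence precisely when it is an acyclic fibration, so $p$ is a homotopy equivalence in $(\sC^\bbtwo)\f$ iff $p$ is an acyclic fibration there. Next, by \autoref{thm:reedy-afib}\ref{item:ra1}$\Leftrightarrow$\ref{item:ra2}, this holds iff $p_0 \fib A_0$ and $p_1 \fib A_1$ are acyclic fibrations in \sC. Finally, since Reedy fibrations are levelwise fibrations (as observed in the proof of \autoref{thm:reedy-ttfc}), $p_0$ and $p_1$ are fibrations in \sC, so each of them is an acyclic fibration iff it is a homotopy equivalence. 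Stringing these together gives ``$p$ is a homotopy equivalence in $(\sC^\bbtwo)\f$ iff $p_0$ and $p_1$ are homotopy equivalences in \sC,'' as desired.

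There is no serious obstacle: all the content is packaged in \autoref{thm:reedy-afib}, and what remains is bookkeeping with the weak factorization system and the definition of ``acyclic fibration'' as ``fibration $+$ homotopy equivalence.'' The only points needing a little care are that the factorization $f = p\circ i$ is levelwise an (acyclic cofibration, fibration) factorization in \sC, so that the 2-out-of-3 reductions in \sC are legitimate, and that the two notions of ``acyclic fibration'' in play --- one in $(\sC^\bbtwo)\f$, one levelwise in \sC --- are both governed by the characterization of \S\ref{sec:hothy-fibcat}. (For the forward implication alone one can argue even more directly: any path object $P A$ in $(\sC^\bbtwo)\f$ is levelwise a path object for $A_0$, resp.\ $A_1$, in \sC --- since $P A\fib A\times A$ is a Reedy, hence levelwise, fibration and $A\acof P A$ is levelwise an acyclic cofibration --- so every homotopy, and therefore every homotopy equivalence, in $(\sC^\bbtwo)\f$ restricts to one at each level.)
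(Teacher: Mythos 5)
Your proof is correct and follows essentially the same route as the paper's: factor $f$ as a Reedy acyclic cofibration (which is levelwise an acyclic cofibration, hence a homotopy equivalence in both senses) followed by a Reedy fibration, and then invoke \autoref{thm:reedy-afib} to identify the acyclic Reedy fibrations with the levelwise acyclic fibrations. The paper merely organizes the argument as two separate implications (levelwise-ness of homotopies for the forward direction, the factorization plus \autoref{thm:reedy-afib} for the converse) rather than as your single chain of biconditionals, but the content is identical.
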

\begin{proof}
  Since fibrations and acyclic cofibrations in $(\sC^\bbtwo)\f$ are in particular levelwise, so are homotopy equivalences.
  Conversely, if $f$ is a levelwise homotopy equivalence, factor it as $f = p i$ for a Reedy fibration $p$ and a Reedy acyclic cofibration $i$.
  Since $(\sC^\bbtwo)\f$ is a type-theoretic fibration category, $i$ is a homotopy equivalence therein.
  And by the 2-out-of-3 property, $p$ is a levelwise homotopy equivalence.
  Thus $p$ satisfies \autoref{thm:reedy-afib}\ref{item:ra2}, hence is an acyclic fibration in $(\sC^\bbtwo)\f$, thus also a homotopy equivalence.
  Hence $f$ is also a homotopy equivalence in $(\sC^\bbtwo)\f$.
\end{proof}

\begin{cor}\label{thm:funext}
  If \sC satisfies function extensionality, then so does $(\sC^\bbtwo)\f$.
\end{cor}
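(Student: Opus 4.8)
The plan is to reduce function extensionality for $(\sC^\bbtwo)\f$ to the homotopical reformulation of \autoref{thm:cat-funext}: in any type-theoretic fibration category, function extensionality holds in the internal type theory precisely when dependent products along fibrations preserve acyclicity of fibrations. Since $(\sC^\bbtwo)\f$ is a type-theoretic fibration category by \autoref{thm:reedy-ttfc}, and \sC satisfies this criterion by hypothesis, it suffices to show: whenever $f\colon A\fib C$ is a Reedy fibration and $g\colon B\fib A$ is an acyclic fibration in $(\sC^\bbtwo)\f$, the Reedy dependent product $\Pi_f B\fib C$ built in the proof of \autoref{thm:reedy-ttfc} (see \autoref{eq:small-exp}) is again an acyclic fibration. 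Here I would use the explicit formulas $(\Pi_f B)_0=\Pi_{f_0}B_0$ and $(\Pi_f B)_1=\Pi_{\ftil}Q$ recorded there.

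The engine for recognizing acyclic fibrations levelwise is \autoref{thm:reedy-afib}. First I would apply it to translate the hypothesis: $g$ being an acyclic fibration in $(\sC^\bbtwo)\f$ means exactly that $g_0\colon B_0\fib A_0$ and the induced fibration $B_1\fib B_0\times_{A_0}A_1$ are acyclic fibrations in \sC. Then I treat the two levels of $\Pi_f B$ in turn. For level $0$, applying \autoref{thm:cat-funext} inside \sC to the acyclic fibration $g_0$ along the fibration $f_0$ gives at once that $\Pi_{f_0}B_0\fib C_0$ is acyclic. For level $1$, the key observation is that in the construction of \autoref{eq:small-exp} the object $Q$ is a pullback of the fibration $B_1\fib A_1\times_{A_0}B_0$ along $P\to A_1\times_{A_0}B_0$; since that fibration is acyclic by the translated hypothesis, its pullback $Q\fib P$ is acyclic by stability under pullback (\autoref{thm:afib-stable}). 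Applying \autoref{thm:cat-funext} in \sC once more, this time to the acyclic fibration $Q\fib P$ along the fibration $\ftil\colon P\fib C_1\times_{C_0}\Pi_{f_0}B_0$, shows that $\Pi_{\ftil}Q$ is an acyclic fibration over $C_1\times_{C_0}\Pi_{f_0}B_0$. Since this base is precisely $(\Pi_f B)_0\times_{C_0}C_1$, criterion \ref{item:ra3} of \autoref{thm:reedy-afib} then yields that $\Pi_f B\fib C$ is an acyclic fibration in $(\sC^\bbtwo)\f$, completing the argument.

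I do not expect any real obstacle: the proof is essentially a matter of threading function extensionality for \sC through the two levels of the Reedy dependent product and invoking the levelwise characterization of acyclic fibrations. The only points needing care are the bookkeeping against \autoref{eq:small-exp} — confirming that $Q\to P$ is genuinely a pullback of $B_1\fib A_1\times_{A_0}B_0$ (so \autoref{thm:afib-stable} applies), that $\ftil$ is indeed a fibration over the stated base (already established in the proof of \autoref{thm:reedy-ttfc}), and that the codomain of $(\Pi_f B)_1$ agrees with the object $(\Pi_f B)_0\times_{C_0}C_1$ appearing in \autoref{thm:reedy-afib}\ref{item:ra3}. All three are immediate from the explicit construction, so the corollary follows by assembling these observations.
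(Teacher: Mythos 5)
Your proof is correct and follows essentially the same route as the paper's: reduce via \autoref{thm:cat-funext} to preservation of acyclic fibrations by Reedy dependent products, translate the hypothesis with \autoref{thm:reedy-afib}\ref{item:ra3}, use pullback-stability to see that $Q\fib P$ is acyclic, apply function extensionality in \sC at each of the two levels of \autoref{eq:small-exp}, and conclude with \autoref{thm:reedy-afib}\ref{item:ra3} again. Your write-up just makes explicit the bookkeeping that the paper compresses into two sentences.
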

\begin{proof}
  Let $f: B\fib A$ be a Reedy acyclic fibration in $(\sC^\bbtwo)\f$ and $g: A\fib C$ a Reedy fibration, and refer again to the construction of $\Pi_g(f)$ in \autoref{eq:small-exp} on page~\pageref{eq:small-exp}.
  Using \autoref{thm:reedy-afib}\ref{item:ra3} applied to $f$, the pullback-stability of acyclic fibrations, and the assumption on \sC, we see that $\Pi_{f_0} B_0 \fib C_0$ and $\Pi_{\ftil}(Q) \fib C_1\times_{C_0}\Pi_{f_0}B_0$ are acyclic fibrations.
  By \autoref{thm:reedy-afib}\ref{item:ra3} again, $\Pi_f B \fib C$ is an acyclic fibration in $(\sC^\bbtwo)\f$.
\end{proof}

In \S\ref{sec:scones} it will be important that like all the other structure of \ctf, the function extensionality axiom can be chosen to be \emph{strictly} preserved by $\mathrm{cod}:(\sC^\bbtwo)\f\to \sC$ when \sC is split.
This is the purpose of the following lemma.

\begin{lem}\label{thm:codeqv}
  Suppose $\sC$ satisfies function extensionality, that $f:A\to B$ is an equivalence in \ctf, and that we are given $e_0:1 \to \isequiv_\sC(f_0)$ in \sC.
  Then there exists $e:1\to \isequiv_{\ctf}(f)$ in \ctf whose 0-component is $e_0$.
\end{lem}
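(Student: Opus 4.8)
The plan is to combine the strictness of the codomain functor (\autoref{thm:reedy-split}) with the fact that, under function extensionality, $\isequiv$-types are h-propositions, together with the characterization of acyclic fibrations in \ctf from \autoref{thm:reedy-afib}.

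First I would make precise what a morphism $e:1\to\isequiv_{\ctf}(f)$ is. As a type in the empty context, $\isequiv_{\ctf}(f)$ is a fibrant object of \ctf, hence --- by definition of Reedy fibration --- a fibration $\epsilon:E_1\fib E_0$ in \sC; and a morphism $1\to(E_1\xrightarrow{\epsilon}E_0)$ in $\sC^\bbtwo$ is precisely a pair $e_0:1\to E_0$, $e_1:1\to E_1$ with $\epsilon e_1=e_0$, whose $0$-component is $e_0$. Because $\mathrm{cod}:\ctf\to\sC$ is a \emph{strict} functor, it commutes on the nose with the interpretation of every type expression built from dependent sums, dependent products, function types and identity types; since $\isequiv(f)$ is such an expression, and since the $0$-components of the dependent sums, products and identity types in \ctf (as computed earlier in this section) are the corresponding operations of \sC, this yields $\mathrm{cod}(\isequiv_{\ctf}(f))=\isequiv_\sC(\mathrm{cod}(f))=\isequiv_\sC(f_0)$, i.e.\ $E_0=\isequiv_\sC(f_0)$ literally. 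Hence the given $e_0$ is already a map $1\to E_0$, and it suffices to lift it along $\epsilon$.

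To produce the lift, I would check that $\epsilon$ is an acyclic fibration in \sC. Since $f$ is an equivalence in \ctf, the type $\isequiv_{\ctf}(f)$ is inhabited (using a witness of $\hequiv(f)$ and the term $\hequiv(f)\to\isequiv(f)$); by \autoref{thm:funext}, \ctf satisfies function extensionality, so $\isequiv_{\ctf}(f)$ is an h-proposition by \autoref{thm:isprop-isequiv}, hence contractible by \autoref{thm:isprop-inhabited-contr}. Thus $\isequiv_{\ctf}(f)\fib1$ is an acyclic fibration in \ctf. Applying \autoref{thm:reedy-afib} to this Reedy fibration, with target the terminal Reedy-fibrant object $1=(1\to1)$ so that $B_0\times_{A_0}A_1=E_0\times_1 1=E_0$, the implication \ref{item:ra1}$\Rightarrow$\ref{item:ra3} shows that both $E_0\fib1$ and $\epsilon:E_1\fib E_0$ are acyclic fibrations in \sC.

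Finally, since $\epsilon$ is both a fibration and a homotopy equivalence in \sC, the Acyclic Fibration Lemma (\autoref{thm:afib}) gives a section $h:E_0\to E_1$ with $\epsilon h=1_{E_0}$. Setting $e_1:=h\,e_0$, we get $\epsilon e_1=e_0$, so the pair $(e_1,e_0)$ is a morphism $e:1\to\isequiv_{\ctf}(f)$ in \ctf with $0$-component $e_0$, as desired. The one step that genuinely needs care is the identification $E_0=\isequiv_\sC(f_0)$ in the second paragraph: everything downstream is formal, but it is exactly the strictness of $\mathrm{cod}$ that makes the phrase ``whose $0$-component is $e_0$'' meaningful, so that point deserves to be spelled out.
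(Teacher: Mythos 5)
Your proof is correct, and it follows the same skeleton as the paper's: identify the $0$-component of $\isequiv_{\ctf}(f)$ with $\isequiv_\sC(f_0)$ via strictness of the codomain functor, then use inhabitedness of $\isequiv_{\ctf}(f)$ (from $f$ being an equivalence) together with h-propositionality of $\isequiv$ under function extensionality to produce the lift. Where you diverge is the final lifting step. The paper takes the given global element $g=(g_0,g_1)$ of $\isequiv_{\ctf}(f)$, notes that $g_0\sim e_0$ because $\isequiv_\sC(f_0)$ is an h-proposition (\autoref{thm:isprop-isequiv} applied in \sC), and then transports $g_1$ along that homotopy using path-lifting in the fibration $E_1\fib E_0$; this needs nothing beyond the fibration property of $\epsilon$. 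You instead prove the stronger statement that $\epsilon:E_1\fib E_0$ is an \emph{acyclic} fibration --- via contractibility of the inhabited h-proposition $\isequiv_{\ctf}(f)$ in \ctf (using \autoref{thm:funext} and \autoref{thm:isprop-isequiv} there) followed by \autoref{thm:reedy-afib}\,(i)$\Rightarrow$(iii) --- and then compose a section from \autoref{thm:afib} with $e_0$. Both arguments are sound; yours invokes heavier machinery (function extensionality and h-propositionality in \ctf rather than only in \sC, plus the characterization of Reedy acyclic fibrations), while the paper's transport argument is more economical. Your emphasis on spelling out the literal identification $E_0=\isequiv_\sC(f_0)$ is well placed, since that is exactly what makes the statement of the lemma meaningful.
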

\begin{proof}
  Since all the type-theoretic constructions in \ctf restrict to those of \sC on the 0-components, the object $\isequiv_{\ctf}(f)$ is a fibration $\isequiv_1(f) \fib \isequiv_0(f)$, where $\isequiv_0(f) = \isequiv_\sC(f_0)$.
  Since $f$ is an equivalence, we have $g: 1 \to \isequiv_{\ctf}(f)$, consisting of $g_0:\isequiv_\sC(f)$ and $g_1:\isequiv_1(f)$.
  But $\isequiv_\sC(f)$ is an h-proposition, so $g_0 \sim e_0$.
  Thus, by transport (path-lifting), we can modify $g_1$ to a homotopic map $e_1$ which lies over $e_0$, yielding the desired map $e:1\to \isequiv_{\ctf}(f)$.
\end{proof}

Thus, given a term in \sC exhibiting the function extensionality axiom, we can choose such a term in \ctf which is preserved strictly by the codomain functor.

\section{Universes in the Sierpinski \io-topos}
\label{sec:universes}

We now move on to constructing universes in $(\sC^\bbtwo)\f$.
Thus, let $p: \Util\fib U$ be a universe in \sC as in \autoref{def:univ}, defining a notion of \emph{small fibration} in \sC.
We define a fibration $q: \Vtil\fib V$ in $(\sC^\bbtwo)\f$ as follows.
Set $V_0 = U$, $\Vtil_0 = \Util$, and $q_0 = p$.
Let $V_1 = U^{(1)}= (U\times U \to U)^{(\Util\to U)}$, with $V_1 \to V_0$ being the projection $U^{(1)}\to U$; since this is a fibration, $V$ is Reedy fibrant.
Finally, by definition $V_1$ comes with an evaluation map $V_1\times_U \Util \to U\times U$ over $U$, which is to say an arbitrary map $V_1\times_U \Util \to U$; define $\Vtil_1 \fib V_1 \times_{V_0} \Vtil_0$ to be the fibration named by this map.
Then by construction, $q$ is a Reedy fibration.

In the type theory of \sC, $V_0$ is the universe \type, while the fibration $V_1\to V_0$ represents the dependent type
\[ (A: \type) \pr (A\to \type) \ty. \]
The fibration $\Vtil_0\fib V_0$ represents, of course, the universal dependent type
\[(A : \type) \pr A \ty\]
in \sC, while $\Vtil_1 \fib V_1 \times_{V_0} \Vtil_0$ represents the dependent type
\[ (A_0 : \type),\; (A_1: A_0\to \type),\; (a_0: A_0) \pr A_1(a_0)\ty. \]

\begin{defn}
  A map $f: A\to B$ in $\sC^\bbtwo$ is called a \textbf{Reedy small-fibration} if both $f_0$ and the induced map $A_1 \to A_0 \times_{B_0} B_1$ are small fibrations in \sC.
\end{defn}

\begin{prop}\label{thm:reedy-small}
  A map $f: A\to B$ is a Reedy small-fibration if and only if it is small with respect to $V$, i.e.\ it is a pullback of $q$ along some map $B\to V$.
\end{prop}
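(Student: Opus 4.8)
The plan is to prove the two implications separately, in each case working levelwise and unwinding the definitions of Reedy small-fibration (both components small in \sC) and of $q:\Vtil\fib V$. The key technical input is that pullback squares in $\sC^\bbtwo$ are computed levelwise, together with the two universal properties that were used to build $V$: first, that maps $X\to V_1 = U^{(1)} = (U\times U\to U)^{(\Util\to U)}$ correspond to pairs $(a\colon X\to U,\; b\colon a^*\Util\to U)$, exactly as in~\eqref{eq:udnt}; and second, that $\Vtil_1 \fib V_1\times_{V_0}\Vtil_0$ is the fibration \emph{named} by the tautological such map out of $V_1\times_U\Util$.

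For the ``if'' direction, suppose $f\colon A\to B$ is a pullback of $q$ along some $h\colon B\to V$. Since limits in $\sC^\bbtwo$ are levelwise, $f_0\colon A_0\to B_0$ is the pullback of $q_0 = p\colon\Util\fib U$ along $h_0$, hence a small fibration in \sC by definition. For the second component, I would chase the cube: the map $A_1\to A_0\times_{B_0} B_1$ is obtained from the pullback square defining $f$ by comparing the $A$-square with the $B$-square; explicitly, $A_1$ is the pullback of $\Vtil_1$ along the induced map $A_0\times_{B_0} B_1 \to V_1\times_{V_0}\Vtil_0$, so $A_1\to A_0\times_{B_0}B_1$ is a pullback of $\Vtil_1\fib V_1\times_{V_0}\Vtil_0$. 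But $\Vtil_1\fib V_1\times_{V_0}\Vtil_0$ was defined to be a fibration named by a map into $U$, i.e.\ a pullback of $p$; so $A_1\to A_0\times_{B_0}B_1$ is also a pullback of $p$, hence a small fibration. Thus $f$ is a Reedy small-fibration.

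For the ``only if'' direction, suppose $f\colon A\to B$ is a Reedy small-fibration, i.e.\ $f_0$ and $A_1\to A_0\times_{B_0}B_1$ are both small in \sC. Smallness of $f_0$ gives a classifying map $h_0\colon B_0\to U = V_0$ with $A_0 \cong h_0^*\Util$. Smallness of $A_1\to A_0\times_{B_0}B_1$ gives a classifying map $A_0\times_{B_0}B_1\to U$, that is, a map $B_1 \to (A_0\times_{B_0}B_1 \to ?) $ --- more precisely, we have $A_0 = h_0^*\Util$ over $B_0$, so the pullback $A_0\times_{B_0}B_1$ is $(h_0\circ\mathrm{pr})^*\Util$ over $B_1$ for the projection $\mathrm{pr}\colon B_1\to B_0$; hence a classifying map for it over $B_1$ is the same datum as a map $b\colon (h_0\mathrm{pr})^*\Util \to U$. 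The pair $(h_0\mathrm{pr}\colon B_1\to U,\; b)$ is, by the universal property~\eqref{eq:udnt} of $V_1 = U^{(1)}$, exactly a map $h_1\colon B_1\to V_1$; and by construction $h_1$ lies over $h_0$ (its first coordinate is $h_0\mathrm{pr}$), so $(h_1,h_0)$ assemble into a morphism $h\colon B\to V$ in $\sC^\bbtwo$. Finally one checks that $f$ is the pullback of $q$ along $h$: the $0$-component holds by choice of $h_0$, and the $1$-component holds because $\Vtil_1$ is the fibration \emph{tautologically named} by the evaluation map $V_1\times_U\Util\to U$, whose restriction along $h_1$ is precisely the classifying map $b$ we used for $A_1\to A_0\times_{B_0}B_1$. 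I expect this last verification --- matching the pullback of $q$ along the assembled $h$ with $f$ at level $1$, being careful about which pullback of $\Util$ is which and that all the comparison isomorphisms are coherent --- to be the main (though entirely routine) obstacle, and the rest to be a direct unwinding of~\eqref{eq:udnt} and the definition of $q$.
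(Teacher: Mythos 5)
Your proposal is correct and follows essentially the same route as the paper: the ``if'' direction observes that $q$ is itself a Reedy small-fibration and that this property is stable under (levelwise) pullback, and the ``only if'' direction classifies $f_0$ by $a_0\colon B_0\to U$, observes that $a_0\beta$ names $A_0\times_{B_0}B_1$ over $B_1$, and uses the universal property~\eqref{eq:udnt} of $U^{(1)}=V_1$ to lift the name of $A_1\to A_0\times_{B_0}B_1$ to $a_1\colon B_1\to V_1$ over $a_0$. The only difference is that you spell out the pasting-of-pullbacks details that the paper dismisses as ``evident.''
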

\begin{proof}
  By construction, $q$ is a Reedy small-fibration, and this property is evidently preserved under pullback.
  Conversely, suppose $f: A\to B$ is a Reedy small-fibration.
  Since $f_0$ is a small fibration, it is named by some map $a_0: B_0 \to U = V_0$.
  Then the composite $B_1 \xto{\beta} B_0 \xto{a_0} U$ names the pullback $A_0 \times_{B_0} B_1$.
  Since $A_1 \to A_0 \times_{B_0} B_1$ is a small fibration, it has a name which supplies a lifting, say $a_1$, of $a_0 \beta$ to $U^{(1)} = V_1$.
  Then $a: B\to V$ is a name for $f$ with respect to $V$.
\end{proof}

\begin{rmk}
  If small fibrations in \sC\ are closed under composition, then a Reedy small-fibration $f: A\to B$ has the property that both $f_0$ and $f_1$ are small fibrations.
  Conversely, if the small fibrations in \sC\ are ``left-cancellable'' (i.e. if $g$ and $f$ are fibrations and $g$ and $g f$ are small, then $f$ is also small), then a Reedy fibration with this property is automatically a Reedy small-fibration.
  Left-cancellability holds whenever smallness is characterized by a downward-closed cardinality condition on the fibers, as is the case for the univalent universe in simplicial sets.
\end{rmk}

\begin{thm}\label{thm:small-univ}
  $V$ is a universe, in the sense of \autoref{def:univ}, for the Reedy small-fibrations in $(\sC^\bbtwo)\f$.
  If $U$ is a cloven or split universe, then so is $V$, and the codomain functor $\ctf \to \sC$ preserves this structure strictly.
\end{thm}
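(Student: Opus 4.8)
The plan is to verify the three clauses of \autoref{def:univ} for $q:\Vtil\fib V$ and the Reedy small-fibrations, reducing each to the corresponding fact for $p:\Util\fib U$ by means of the explicit level-wise descriptions of the Reedy type-formers given above and in the proof of \autoref{thm:reedy-ttfc}. Two facts will be used throughout: by \autoref{thm:reedy-small} the Reedy small-fibrations are precisely the pullbacks of $q$, so they are automatically pullback-stable; and the $U$-small fibrations of \sC are closed under pullback and, by \autoref{def:univ}\ref{item:u1} for $U$, under composition.

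First I would treat \autoref{def:univ}\ref{item:u1}. The identity of a Reedy fibrant object has identity components and identity induced map, all $U$-small. For a composite $B\fib A\fib C$ of Reedy small-fibrations, the $0$-component is a composite of $U$-small fibrations, hence $U$-small; and the induced map $B_1\to B_0\times_{C_0}C_1$ is the composite~\eqref{eq:rfib-compose}, whose first factor is the $U$-small induced map of $B\fib A$ and whose second factor is a pullback of the $U$-small induced map of $A\fib C$. Next, for \autoref{def:univ}\ref{item:u2}, I would use the construction of $\Pi_f B$ in \autoref{eq:small-exp} for Reedy small-fibrations $f:A\fib C$ and $g:B\fib A$. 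Its $0$-component $\Pi_{f_0}B_0\fib C_0$ is a dependent product of the $U$-small fibration $B_0\fib A_0$ along the $U$-small fibration $A_0\fib C_0$, hence $U$-small by \autoref{def:univ}\ref{item:u2} for $U$; and for its $1$-component $\Pi_{\ftil}(Q)$ one checks that $Q\fib P$ is a pullback of the $U$-small induced map of $g$, while $\ftil$ is a pullback of $f_1:A_1\to C_1$, which is $U$-small since it factors through the $U$-small induced map of $f$ followed by a pullback of $f_0$. So $\Pi_{\ftil}(Q)\fib C_1\times_{C_0}\Pi_{f_0}B_0$ is again a dependent product of a $U$-small fibration along a $U$-small one, hence $U$-small, and since this is the induced map of $\Pi_f B\fib C$, the latter is a Reedy small-fibration.

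For \autoref{def:univ}\ref{item:u3} I would not argue directly but instead verify that $V$ has small path objects and then invoke the remark following \autoref{def:univ}. Given a Reedy small-fibration $A\fib B$, the path-object construction of \S\ref{sec:sierpinski} has $0$-component the path object of the $U$-small fibration $A_0\fib B_0$, hence $U$-small since $U$, being a universe, has small path objects; and its $1$-component is by construction a pullback of the path object of the $U$-small fibration $A_1\fib A_{01}$, hence $U$-small. Since that very pullback is, essentially by construction, the level-$1$ induced map of $P_B A\fib A\times_B A$, this map is $U$-small, so $P_B A\fib A\times_B A$ is a Reedy small-fibration. Together with \autoref{def:univ}\ref{item:u1} this yields \autoref{def:univ}\ref{item:u3}, because the mapping-path-space factorization of \autoref{thm:path-fact} uses only composition and path objects, both of which we have seen preserve $V$-smallness. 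This establishes that $V$ is a universe; if \sC also has a small \shnno, the \shnno of $(\sC^\bbtwo)\f$ built in \autoref{thm:reedy-shnno} is seen to be $V$-small by the same kind of argument.

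Finally, for the cloven/split statement, when $U$ is cloven I would structure $q$ and $V\fib 1$ using the fibration structures of $p$ and $U\fib 1$ on the $0$-components together with the canonical cloven structure on $(\sC^\bbtwo)\f$ from \autoref{thm:reedy-split}, and take the universe-structure morphisms $1\to V$, $V^{(1)}\to V$ (for $\Sigma$ and for $\Pi$), $\Vtil\times_V\Vtil\to V$ (and the name of $N$ when present) to have $0$-component literally the corresponding morphism for $U$ and $1$-component read off the explicit formulas~\eqref{eq:rds0}--\eqref{eq:rdp1} and the path-object formulas of \S\ref{sec:sierpinski}; that the pullbacks of $q$ they induce are the intended ones is checked exactly as in the universe verification above. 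Strict preservation by $\mathrm{cod}:\ctf\to\sC$ is then immediate, as $\mathrm{cod}$ is projection to the $0$-component; and when $U$ is split, the splitness equations for $V$ follow levelwise from those for $U$, since (as in the proof of \autoref{thm:reedy-split}) pullbacks and all specified structure in $(\sC^\bbtwo)\f$ are computed levelwise from \sC. I expect the main obstacle to be organizational rather than conceptual: the genuine content already sits in \autoref{thm:reedy-ttfc}, \autoref{thm:reedy-split}, and \autoref{def:univ} for $U$, and the only care needed is to track that each auxiliary object appearing in \autoref{eq:small-exp} and in the path-object construction ($\ftil$, $Q$, the pullbacks defining $(P_B A)_1$) is a pullback of a map already known to be $U$-small, and, in the split case, that no new choices are forced beyond those inherited levelwise from $U$.
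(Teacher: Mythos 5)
Your verification of clauses (i) and (ii) of \autoref{def:univ}, and your treatment of the cloven/split statement, match the paper's proof essentially step for step: composites via~\eqref{eq:rfib-compose}, dependent products via \autoref{eq:small-exp} with the observations that $\ftil$ is a pullback of the small $f_1$ and $Q\fib P$ is a pullback of the small induced map of $g$, and the universe-structure morphisms for $V$ obtained by interpreting the formulas of \S\ref{sec:sierpinski} using the specified operations on $U$, with strict preservation by $\mathrm{cod}$ being immediate. The one place you genuinely diverge is clause (iii): the paper argues directly, running the Reedy factorization of \autoref{thm:reedy-fact} and invoking \autoref{def:univ}\ref{item:u3} for $U$ twice (once to factor $A_0\to B_0$ over $C_0$, once to factor $A_1\to D_0\times_{B_0}B_1$), whereas you instead check that the explicit path object of \S\ref{sec:sierpinski} for a Reedy small-fibration is again Reedy small --- its $0$-component being a small path fibration for $A_0\fib B_0$ and its level-$1$ induced map being by construction a pullback of a small path fibration for $A_1\fib A_{01}$ --- and then appeal to the remark after \autoref{def:univ} (the mapping path space construction of \autoref{thm:path-fact} plus closure under composition and pullback). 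Both routes are correct; the paper's is marginally more self-contained since it never needs to fix a particular path object in $(\sC^\bbtwo)\f$, while yours makes explicit the useful fact that the canonical Reedy path objects of small fibrations are themselves small, which is information the paper only gets implicitly. No gaps.
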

\begin{proof}
  For \autoref{def:univ}\ref{item:u1}, suppose given Reedy small-fibrations $A \xto{f} B \xto{g} C$.
  Then $(g f)_0 = g_0 f_0$ is a composite of small fibrations in \sC, hence small.
  And in \eqref{eq:rfib-compose} we saw that the induced map $A_1 \to A_0 \times_{C_0} C_1$ can be written as a composite of fibrations in \sC, each of which is small if $f$ and $g$ are.
  Hence, $g f$ is a Reedy small-fibration.

  For \autoref{def:univ}\ref{item:u2}, refer to \autoref{eq:small-exp} on page~\pageref{eq:small-exp}.
  If $f$ and $g$ are Reedy small-fibrations, then $f_0$ and $g_0$ are small fibrations, hence so is $\Pi_{f_0} B_0 \fib C_0$.
  Since $f_1$ is small, so is its pullback \ftil, so $\Pi_{\ftil}$ preserves small fibrations.
  However, since $g$ is a Reedy small-fibration, the map $B_1 \fib A_1\times_{A_0} B_0$ is a small fibration, and hence so is its pullback $Q\fib P$; thus the map $\Pi_{\ftil}(Q)\fib C_1 \times_{C_0} \Pi_{f_0}B_0$ is also a small fibration.
  Therefore, $\Pi_f B \fib C$ is a Reedy small-fibration.

  For \autoref{def:univ}\ref{item:u3}, suppose $f: A\to B$ is a map over $C$ between Reedy small-fibrations $A\fib C$ and $B\fib C$.
  We use the construction of Reedy factorizations in \autoref{thm:reedy-fact}.
  Since $A_0\fib C_0$ and $B_0 \fib C_0$ are small, by \autoref{def:univ}\ref{item:u3} for \sC there is a factorization $A_0 \acof D_0 \fib B_0$ whose second factor is small.
  Thus, its pullback $D_0 \times_{B_0} B_1 \fib B_1$ is small, and hence so is the composite $D_0 \times_{B_0} B_1 \fib B_1 \fib C_1$.
  Using \autoref{def:univ}\ref{item:u3} again in \sC, we have a factorization $A_1 \acof D_1 \fib D_0 \times_{B_0} B_1$ whose second factor is small.
  Therefore, $D \fib B$ is a Reedy small-fibration.

  This completes the proof that $V$ is a universe.
  If $U$ is cloven or split, then we can make $V$ cloven or split by using the constructions of type operations in \ctf described in the proof of \autoref{thm:reedy-split}, but interpreting them in terms of the specified operations on $U$ rather than the specified operations on structured fibrations in \sC.
  The codomain functor will preserve this structure strictly, for the same reason that it preserves the cloven structure of \ctf.

For instance, in terms of \sC, the objects making up the Reedy fibration $V^{(1)} \fib V$ are the following.
\begin{itemize}
\item $V_0 = U$ is the universe type \type.
\item $V_1 = U^{(1)}$ is the context $(A_0:\type),\, (A_1 : A_0 \to \type)$.
\item $(V^{(1)})_0 = U^{(1)}$ is the context $(A_0:\type),\, (B_0 : A_0 \to \type)$.
\item $(V^{(1)})_1$ is the context
  \begin{multline*}
    (A_0:\type),\, (A_1 : A_0 \to \type) ,\, (B_0 : A_0 \to \type) ,\\
    \left(B_1 : \sprod{a_0:A_0} (A_1(a_0) \to B_0(a_0) \to \type)\right).
  \end{multline*}
\end{itemize}
Now the expressions~\eqref{eq:rds0} and~\eqref{eq:rds1} in type theory, interpreted using the specified operation $U^{(1)}\to U$ implementing dependent sums in \sC, define a morphism $V^{(1)}\to V$ which implements dependent sums in $\sC^\bbtwo$.
Similarly,~\eqref{eq:rdp0} and~\eqref{eq:rdp1}, interpreted using the specified operation $U^{(1)}\to U$ implementing dependent products in \sC, define a morphism $V^{(1)}\to V$ which implements dependent products in $\sC^\bbtwo$.

Analogously, the objects of the Reedy fibration $\Vtil \times_V \Vtil \fib V$ are
\begin{itemize}
\item $V_0 = U$ is the universe type \type.
\item $V_1 = U^{(1)}$ is the context $(A_0:\type),\, (A_1 : A_0 \to \type)$.
\item $(\Vtil \times_V \Vtil)_0$ is the context
  $(A_0:\type),\, (a_0:A_0),\, (a_0':A_0)$.
\item $(\Vtil \times_V \Vtil)_1$ is the context
  \[ (A_0:\type),\, (a_0:A_0),\, (a_0':A_0),\, (A_1 : A_0 \to \type) ,\, (a_1:A_1(a_0)),\, (a_1':A_1(a_0')). \]
\end{itemize}
The expressions~\eqref{eq:rit0} and~\eqref{eq:rit1} then specify a morphism $\Vtil \times_V \Vtil \to V$ which constructs path types in $\sC^\bbtwo$.
\end{proof}

Next, we consider how universe embeddings lift to lift to $(\sC^\bbtwo)\f$.

\begin{rmk}
  Suppose $i: U\into U'$ is a monomorphism of universes in \sC such that $U$ is $U'$-small, every $U$-small fibration is $U'$-small, and $i$ adds no new names (in the sense of \autoref{thm:no-new-names}); thus $i$ can be made into a universe embedding.
  Let $V$ and $V'$ be the corresponding universes in $\sC^\bbtwo$; then it is easy to see that $V$ is $V'$-small, every Reedy $V$-small fibration is Reedy $V'$-small, and we have a monomorphism $j: V\into V'$ which also adds no new names.
  Hence $j: V\into V'$ can be made into a universe embedding as well.

  In the rest of this section, we show that the same is true for \emph{any} universe embedding in \sC, whether or not it adds new names.
  In particular, this shows that a countably infinite sequence of universe embeddings can also be lifted to $\sC^\bbtwo$.
  It also allows us to avoid modifying the universe structure, so that it will still be strictly preserved by the codomain functor; we will need this latter fact in \S\ref{sec:scones}.
\end{rmk}

\begin{prop}\label{thm:univ}
  If $i: U\into U'$ is an embedding of cloven universes in \sC, then there is an induced embedding $j: V\into V'$ of cloven universes in $\sC^\bbtwo$.
\end{prop}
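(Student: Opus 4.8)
The plan is to construct $j$ from $i$ one level at a time and then check that each clause of the definition of an embedding of cloven universes, applied to $j$, reduces essentially componentwise to the corresponding clause for $i$ together with routine pullback-pasting.

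First I would set $j_0 = i\colon V_0 = U \to U' = V_0'$ and $j_1 = i^{(1)}\colon V_1 = U^{(1)} \to (U')^{(1)} = V_1'$, where $i^{(1)}$ is the morphism described representably in \S\ref{sec:tyuniverses} just before~\eqref{eq:uembstr}. That $(j_0,j_1)$ is a morphism of $\sC^\bbtwo$ is immediate, since both vertical maps $V_1 \fib V_0$ and $V_1' \fib V_0'$ are the first projections $U^{(1)}\fib U$ and $(U')^{(1)}\fib U'$, and $i^{(1)}$ covers $i$ by construction. Moreover $j$ is monic: a morphism of $\sC^\bbtwo$ is monic as soon as both its components are, $j_0 = i$ is monic by hypothesis, and $j_1 = i^{(1)}$ is monic by unwinding its representable description and using that the canonical isomorphisms $a^*\Util\cong (ia)^*\Util'$ coming from~\eqref{eq:univemb} are epimorphisms.

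Next I would supply the remaining data of an embedding. For the pullback square analogous to~\eqref{eq:univemb}, on $0$-components it \emph{is} the square~\eqref{eq:univemb} for $i$, which is part of the hypothesis; on $1$-components, recall that $\Vtil_1 \fib V_1\times_{V_0}\Vtil_0$ is the fibration named by the canonical evaluation map into $U$, and similarly for $\Vtil_1'$, and that $i^{(1)}$ was defined precisely so that this name for $\Vtil_1$ is obtained by restricting along $i$ (through the isomorphism $\Vtil_0\cong i^*\Vtil_0'$) the name for $\Vtil_1'$; pasting this with~\eqref{eq:univemb} and the defining pullbacks of $\Vtil_1,\Vtil_1'$ yields the required Reedy pullback square, which in the split case may be taken to consist of specified pullbacks. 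For the morphism $1\to V'$ exhibiting $V$ as $V'$-small, by \autoref{thm:reedy-small} it suffices that $V\fib 1$ be a Reedy $V'$-small fibration, i.e.\ that $V_0\fib 1$ and the induced map $V_1\fib V_0$ be $U'$-small; the first is the given name of $U$ in $U'$, and the second holds because $U^{(1)}\fib U$ is the local exponential $(U\times U\to U)^{(\Util\to U)}$ in $\sC/U$ of the $U'$-small fibrations $U\fib 1$ and $\Util\fib U$, hence is $U'$-small using \autoref{def:univ}\ref{item:u1} and~\ref{item:u2} for $U'$ together with the pullback-stability of $U'$-small fibrations.

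Finally I would check that~\eqref{eq:uembstr} commutes for $j$, together with its analogues for the unit type, dependent products, identity types, and, if present, the natural numbers type. Here I would use that each of the morphisms $\Sigma,\Pi\colon V^{(1)}\to V$, $\Id\colon\Vtil\times_V\Vtil\to V$, $\mathsf{u}\colon 1\to V$ (and $\mathsf{N}\colon 1\to V$) built in the proof of \autoref{thm:small-univ} is obtained by interpreting a \emph{fixed} type-theoretic expression --- \eqref{eq:rds0}--\eqref{eq:rds1}, \eqref{eq:rdp0}--\eqref{eq:rdp1}, \eqref{eq:rit0}--\eqref{eq:rit1}, and so on --- in terms of the specified operations on $U$, while the primed morphisms come from the same expressions interpreted over $U'$. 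Thus each of these squares decomposes componentwise into a diagram which on $0$-components is literally the corresponding square for $i$ (furnished by the hypothesis that $i$ is an embedding) and on $1$-components is assembled from those same operations on $U$ together with the $0$-component square, so commutes by naturality of the expressions in question. I expect this final step to be the main obstacle --- not for any conceptual reason, but because it requires carefully matching the explicit diagrams of \S\ref{sec:universes} against their primed counterparts and verifying each piece; this is bookkeeping of exactly the kind carried out in the proof of \autoref{thm:small-univ}, which I would indicate can be repeated here rather than writing out in full.
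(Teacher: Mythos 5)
Your proof is correct and follows essentially the same route as the paper's: set $j_0 = i$ and $j_1 = i^{(1)}$, verify the two faces of the Reedy pullback square using the representable descriptions of $\Vtil_1$ and $\Vtil'_1$ as classifying triples $(a,b,s)$, and check the compatibility squares for $\Sigma$, $\Pi$, and $\Id$ componentwise using that the structure maps of $V$ and $V'$ are built by the same recipe from those of $U$ and $U'$. If anything you are more careful than the paper on one point: your explicit check that $U^{(1)}\fib U$ is $U'$-small (via pullback-stability and \autoref{def:univ}\ref{item:u2} for $U'$) is precisely what is needed to produce the classifying map $v_1:1\to (U')^{(1)}$ witnessing $V$ as $V'$-small, a step the paper's proof passes over rather tersely.
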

\begin{proof}
  We define $j_0: V_0 \to (V')_0$ to be $i: U\to U'$, and $j_1: V_1 \to (V')_1$ to be the map $i^{(1)}: U^{(1)} \to (U')^{(1)}$ defined after~\eqref{eq:uembstr}.
  To start with, we need a pullback square
  \[\vcenter{\xymatrix{
      \Vtil\ar[r]\ar[d] &
      \Vtil'\ar[d]\\
      V\ar[r]_j &
      V'
    }}\]
  in $\sC^\bbtwo$, which will be a cube
  \begin{equation}
    \vcenter{\xymatrix@-1pc{
        (\mathrm{ev}_U)^* \Util \ar[rr] \ar[dr] \ar[dd] &&
        (\mathrm{ev}_{U'})^* \Util' \ar'[d][dd] \ar[dr]\\
        &\Util\ar[rr]^(.3){j_1 = i^{(1)}}\ar[dd] &&
        \Util'\ar[dd]\\
        U^{(1)} \ar'[r][rr] \ar[dr] && (U')^{(1)} \ar[dr] \\
        & U\ar[rr]_{j_0 = i} &&
        U'.
      }}\label{eq:vembcube}
  \end{equation}
  Here $\Vtil_1 = (\mathrm{ev}_U)^* \Util$ has the universal property that maps $X\to (\mathrm{ev}_U)^* \Util$ correspond naturally to triples
  \begin{equation}\label{eq:vtil1}
    \Big( X\xto{a} U ,\;
    a^*\Util \xto{b} U ,\;
    X \xto{s} b^*\Util \Big)
  \end{equation}
  where $s$ is a section of $b^*\Util \to a^*\Util \to X$.
  Of course, $(\Vtil')_1 = (\mathrm{ev}_{U'})^* \Util'$ is analogous, and the map $\Vtil_1 \to (\Vtil')_1$ is given by composing the components $a$ and $b$ with $i$.
  
  Now the front face of~\eqref{eq:vembcube} is a pullback since $i$ is a universe embedding in \sC, so it remains to show that the back face is also.
  However, the back vertical maps simply forget the sections $s$, so the back face being a pullback simply says that a map $X\to (\Vtil')_1$ corresponding to a triple
  \[ \Big( X\xto{a} U' ,\;
  a^*\Util' \xto{b} U' ,\;
  X \xto{s} b^*\Util' \Big)
  \]
  factors through $\Vtil_1$ just when $a$ and $b$ factor through $U$.
  This is true because $\Util$ is the pullback $i^* \Util'$.

  Next, we need a pullback square
  \[\vcenter{\xymatrix{
      V\ar[r]\ar[d] &
      \Vtil'\ar[d]\\
      1\ar[r]_v &
      V'
    }}\]
  in $\sC^\bbtwo$, which will be a cube
  \begin{equation}
    \vcenter{\xymatrix@-1pc{
        U^{(1)} \ar[rr] \ar[dr] \ar[dd] &&
        (\mathrm{ev}_{U'})^* \Util' \ar'[d][dd] \ar[dr]\\
        &U \ar[rr]\ar[dd] &&
        \Util'\ar[dd]\\
        1 \ar'[r][rr]^-{v_1} \ar[dr] && (U')^{(1)} \ar[dr] \\
        & 1\ar[rr]_{v_0} &&
        U'
      }}\label{eq:vsmallcube}
  \end{equation}
  in \sC.
  Of course, with $v_0 \eq u$ being the specified name for $U$ in $U'$, the front face of this cube is given.
  We define $v_1: 1 \to (U')^{(1)}$ to name the dependent $U'$-named type $U^{(1)}\to U$, where $U$ is named by $u$ and $U^{(1)}\to U$ is named by $i: U\to U'$.
  It is then easy to see that the back face is also a pullback.

  Now I claim that if we give $V$ and $V'$ their canonical universe structures induced from those of $U$ and $U'$, as above, then $j: V\into V'$ is a universe embedding.
  Consider, for instance, the case of dependent sums; we want the following cube to commute:
  \begin{equation}
    \vcenter{\xymatrix@-.5pc{
        U^{(1\times 1)} \ar[rr]^-{i^{(1\times 1)}} \ar[dr] \ar[dd]_{(\Sigma_V)_1} &&
        (U')^{(1\times 1)} \ar'[d][dd]^-{(\Sigma_{V'})_1} \ar[dr]\\
        &U^{(1)} \ar[rr]^(.3){i^{(1)}}\ar[dd]^(.7){\Sigma} &&
        (U')^{(1)}\ar[dd]^{\Sigma'}\\
        U^{(1)} \ar'[r]^-{i^{(1)}}[rr] \ar[dr] &&
        (U')^{(1)} \ar[dr] \\
        & U\ar[rr]_{i} &&
        U'.
      }}\label{eq:vsumcube}
  \end{equation}
  The front face commutes since $i$ is a universe embedding, so consider the back face.
  A map $X\to U^{(1\times 1)}$ corresponds to a quadruple
  \begin{equation}
    \Big( X\xto{a} U ,\;
    a^*\Util \xto{b} U ,\;
    a^*\Util \xto{c} U ,\;
    b^*\Util \times_{a^*\Util} c^*\Util \xto{d} U \Big).\label{eq:U1x1a}
  \end{equation}
  The map $i^{(1\times 1)}$ acts by composing $a$, $b$, $c$, and $d$ with $i: U\into U'$.
  Since we defined $(\Sigma_V)_1$ with two applications of $\Sigma$ applied to these morphisms, and $i$ commutes with $\Sigma$ and $\Sigma'$, it follows that the back square in~\eqref{eq:vsumcube} commutes as desired.
  The cases of dependent products and identity types are similar.
\end{proof}

Thus, however many internal universes there are in the type theory of \sC, we can find the same number in the type theory of $(\sC^\bbtwo)\f$, which are strictly preserved by $\mathrm{cod}:(\sC^\bbtwo)\f\to \sC$.

\section{Univalence in the Sierpinski \io-topos}
\label{sec:univalence}

We continue with the notations of the last two sections; our goal is now to prove the following theorem.

\begin{thm}\label{thm:univalence}
  Suppose that $U$ is a universe in \sC which satisfies the univalence axiom.
  Then the corresponding universe $V$ in $(\sC^\bbtwo)\f$ also satisfies the univalence axiom.
\end{thm}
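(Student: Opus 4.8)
The plan is to derive univalence of $V$ from univalence of $U$ one Reedy level at a time, using \autoref{thm:pwhe}, which identifies the homotopy equivalences of $\ctf$ with the levelwise homotopy equivalences in $\sC$. Following the $2$-out-of-$3$ reformulation of \S\ref{sec:univalence-axiom}, it is enough to show that the canonical map $w_V\colon V\to E_V$ in $\ctf$ sending a type to its identity equivalence is a homotopy equivalence, where $E_V\fib V\times V$ is the fibration representing $(X,Y)\mapsto\equiv(X,Y)$ (which exists since $\ctf$ is a type-theoretic fibration category, \autoref{thm:reedy-ttfc}). By \autoref{thm:pwhe} this splits into two tasks: showing that $(w_V)_0$ and $(w_V)_1$ are homotopy equivalences in $\sC$.

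For the $0$-component, the key point is that, by \autoref{thm:reedy-split}, \autoref{thm:small-univ}, and \autoref{thm:codeqv}, the codomain functor $\mathrm{cod}\colon\ctf\to\sC$ is strict and strictly preserves the entire type-theoretic and universe structure, together with a choice of function-extensionality witness. Hence it carries the identity types, dependent sums, the eliminator $J$, and thus $\idequiv$, $\isequiv$, $\ptoe$, and $E_V$ for $V$ to the corresponding data for $U$; in particular $(E_V)_0=E$ and $(w_V)_0=w_U\colon U\to E$, which is a homotopy equivalence precisely because $U$ is univalent. So I would take the $0$-component of the sought term $\mathsf{univalence}_V$ to be $\mathsf{univalence}_U$ and then construct the remaining $1$-component.

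For the $1$-component I would unwind the explicit description of the internal type theory of $\ctf$ from \S\ref{sec:sierpinski}. Over a point $A_0\colon\type$ of $V_0=U$, the level-$1$ data of $V$ is (internally to $\sC$) the universe $A_0\to\type$ of families over $A_0$: its identity types are the fiberwise ones of \eqref{eq:rit0}--\eqref{eq:rit1}, and $\equiv_V$ unwinds — via \eqref{eq:rds0}--\eqref{eq:rdp1} and the definition \eqref{eq:hiso} of $\isequiv$ — to fiberwise-equivalence data. Thus, once $\mathsf{univalence}_U$ is plugged into the $0$-component, the $1$-component of $\ptoe_V$ becomes a map of fibrations over $\ptoe_U\colon(A_0\id B_0)\to\equiv(A_0,B_0)$ sending ``fiberwise paths over $p_0$'' to ``fiberwise equivalences over $\ptoe_U(p_0)$'', and I must show it is an equivalence. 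Since $\ptoe_U$ is already an equivalence by univalence of $U$, it suffices to pull this map back along $\ptoe_U$ and show the resulting map of fibrations over $(A_0\id B_0)$ is a fiberwise equivalence; running path induction ($J$) on $p_0\colon A_0\id B_0$ reduces to $B_0=A_0$, $p_0=\r_{A_0}$, where the map in question is the composite
\[
(A_1\id B_1)\;\xrightarrow{\;\happly\;}\;\sprod{a_0:A_0}(A_1(a_0)\id B_1(a_0))\;\xrightarrow{\;\prod_{a_0}\ptoe\;}\;\sprod{a_0:A_0}\equiv(A_1(a_0),B_1(a_0)).
\]
The first arrow is an equivalence by function extensionality for $\ctf$ (\autoref{thm:funext}, using that $\sC$ satisfies function extensionality as assumed in \S\ref{sec:univalence-axiom}), and the second is an equivalence because, by univalence of $U$ interpreted in the context $(a_0\colon A_0)$, each $\ptoe_{A_1(a_0),B_1(a_0)}$ is an equivalence, so \autoref{thm:funext-forallequiv} applies. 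A straightforward computation with $J$ then identifies this composite with the $1$-component of $\ptoe_V$.

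The main obstacle will not be conceptual — univalence of $U$ together with function extensionality should plainly give univalence of ``families over a type'', hence of the level-$1$ part of $V$ — but bookkeeping. One must match the explicit syntactic descriptions of $\ptoe_V$, $\isequiv_V$, $\equiv_V$, and of the identity and dependent-sum types of $\ctf$ (from \S\ref{sec:sierpinski}) against the ``$\prod$ of $U$-data plus function extensionality'' picture, and make precise, via \autoref{thm:pwhe} and \autoref{thm:reedy-afib}, the reduction of ``$(w_V)_1$ is a homotopy equivalence, given that $(w_V)_0$ is'' to the fiberwise statement over $\equiv(A_0,B_0)$ on which path induction is run.
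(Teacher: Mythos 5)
Your proposal is correct and follows essentially the same route as the paper's proof: reduce to a levelwise statement via \autoref{thm:pwhe}, observe that level $0$ is immediate from univalence of $U$, and reduce level $1$ (by pulling back along an equivalence in the base and arguing fiberwise) to the composite of $\happly$ with $\Pi(\ptoe)$ being an equivalence, which follows from strong function extensionality in \sC, univalence of $U$, and \autoref{thm:funext-forallequiv}. The only quibble is that the $\happly$ in your composite lives in \sC, so the relevant input is \autoref{thm:strong-funext} for \sC rather than \autoref{thm:funext} for $\ctf$, but this is exactly what the paper uses and you have the hypothesis available.
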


As with the function extensionality axiom in \S\ref{sec:sierpinski}, it suffices to prove that the relevant map in \ctf is an equivalence.
By \autoref{thm:codeqv}, we can then choose a term in \ctf representing the univalence axiom which is strictly preserved by the codomain functor.

\begin{proof}
  Let $E\to V\times V$ be the universal space of equivalences in $(\sC^\bbtwo)\f$, corresponding to the dependent type
  \[ (A:\type),\; (B:\type) \pr \equiv(A,B)\ty \]
  defined at the end of \S\ref{sec:homotopy-type-theory}.
  We must show that the section $V\to E$ of the diagonal $V\to V\times V$, which assigns to each type its identity equivalence, is itself an equivalence.
  By \autoref{thm:pwhe}, it suffices to show that it is levelwise an equivalence in \sC.

  To start with, since all the structure at level 0 is exactly as in \sC, the univalence of $U$ directly implies that $V_0 \to E_0$ is an equivalence.
  Thus, it remains to consider $V_1\to E_1$.
  Now since the last step in the construction of \equiv is a dependent sum, we have a pair of Reedy fibrations
  \begin{equation}
    \vcenter{\xymatrix{
        E_1\ar[r]\ar[d] &
        E_0\ar[d]\\
        F_1\ar[r]\ar[d] &
        F_0\ar[d]\\
        V_1\times V_1\ar[r] &
        V_0\times V_0
      }}\label{eq:isequiv-tower}
  \end{equation}
  in which $F\to V\times V$ represents the dependent type
  \[ (A: \type),\, (B: \type) \pr (A \to B) \ty \]
  in the internal type theory of $\ctf$, while $E\to F$ similarly represents
  \begin{equation}
    (A: \type),\, (B: \type),\, (f: A \to B) \pr \isequiv(f) \ty.\label{eq:e1}
  \end{equation}
  By construction, this means that $F_0\to V_0\times V_0$ represents
  \[ (A_0: \type),\, (B_0: \type) \pr (A_0 \to B_0) \ty \]
  in \sC, whereas $F_1 \to (V_1\times V_1)\times_{V_0\times V_0} F_0$ represents
  \begin{multline*}
    (A_0: \type),\, (A_1: A_0\to \type),\, (B_0: \type),\, (B_1: B_0\to \type),\, (f_0: A_0 \to B_0)\\
    \pr \sprod{a_0: A_0} (A_1(a_0) \to B_1(f_0(a_0))) \ty.
  \end{multline*}
  Our goal is to describe $E_1$ similarly in terms of the internal type theory of \sC, so that we can apply univalence there.
  We proceed by evaluating~\eqref{eq:e1} in terms of \sC, considering separately the two factors
  \begin{align}
    (A: \type),\, (B: \type),\, (f: A \to B) &\pr \sum_{s: B\to A} \,\prod_{b: B} (f(s(b)) \id  b) \ty
    \label{eq:hiso1}\\
    (A: \type),\, (B: \type),\, (f: A \to B) &\pr \sum_{r: B\to A}\, \prod_{a: A} (r(f(a)) \id  a) \ty
    \label{eq:hiso2}
  \end{align}
  which are of course closely analogous.

  Firstly, by definition of path-spaces and pullback in $(\sC^\bbtwo)\f$, the dependent type
  \[ (A: \type),\, (B: \type),\, (f: A \to B) ,\, (s: B\to A) ,\,(b: B) \pr (f(s(b)) \id  b)  \ty \]
  is represented by the tower of Reedy fibrations shown in \autoref{fig:sectpath}.
  In this diagram, each morphism is a fibration and each square is a Reedy fibration.
  The ellipses in each context stand for all the variables appearing in contexts below and to the right of it.
  \begin{figure}
    \centering
    \[\vcenter{\xymatrix{
        \Big(\dots, p_1: (p_0)_* \big(f_1(s_0(b_0),s_1(b_0,b_1))\big) \id  b_1 \Big)\ar[r]\ar[d] &
        (\dots, p_0: f_0(s_0(b_0)) \id  b_0)\ar[d]\\
        (\dots, b_1: B_0(b_1))\ar[r]\ar[d] &
        (\dots, b_0: B_0)\ar[d]\\
        \Big(\dots, s_1: \sprod{b_0: B_0} B_1(b_0) \to A_1(s_0(b_0))\Big)\ar[r]\ar[d] &
        (\dots, s_0: B_0\to A_0)\ar[d]\\
        \Big(\dots, f_1: \sprod{a_0: A_0} A_1(a_0) \to B_1(f_0(a_0))\Big)\ar[r]\ar[d] &
        (\dots, f_0: A_0\to B_0)\ar[d]\\
        (\dots, B_1: B_0\to\type) \ar[d]\ar[r] &
        (\dots, B_0:\type)\ar[d]\\
        (\dots, A_1: A_0\to\type) \ar[r] &
        (A_0:\type)
        }}\]
    \caption{Path spaces for the universal section}
    \label{fig:sectpath}
  \end{figure}

  Now, applying dependent product to the top two morphisms, and using the construction from \autoref{thm:reedy-ttfc}, we find that the dependent type
  \[ (A: \type),\, (B: \type),\, (f: A \to B) ,\, (s: B\to A) \pr \prod_{b: B} (f(s(b)) \id  b)  \ty \]
  is represented by the tower in \autoref{fig:secthtpy}.
  (For brevity, we have omitted the types of some variables.)
  \begin{figure}
    \centering
    \[\small\vcenter{\xymatrix@C=1pc{
        \Big(\dots, q_1: \sprod{b_0,b_1} \big((q_0(b_0))_* \big(f_1(s_0(b_0),s_1(b_0,b_1))\big) \id  b_1\big)\Big)\ar[r]\ar[d] &
        (\dots, q_0: \sprod{b_0} (f_0(s_0(b_0)) \id  b_0))\ar[d]\\
        \Big(\dots, s_1: \sprod{b_0} B_1(b_0) \to A_1(s_0(b_0))\Big)\ar[r]\ar[d] &
        (\dots, s_0: B_0\to A_0)\ar[d]\\
        \Big(\dots, f_1: \sprod{a_0} A_1(a_0) \to B_1(f_0(a_0))\Big)\ar[r]\ar[d] &
        (\dots, f_0: A_0\to B_0)\ar[d]\\
        (\dots, B_1: B_0\to\type) \ar[d]\ar[r] &
        (\dots, B_0:\type)\ar[d]\\
        (\dots, A_1: A_0\to\type) \ar[r] &
        (A_0:\type)
        }}\]
    \caption{Section homotopies for the universal section}
    \label{fig:secthtpy}
  \end{figure}
  Therefore,~\eqref{eq:hiso1} is obtained by a dependent sum from the top squares in \autoref{fig:secthtpy}.
  And of course,~\eqref{eq:hiso2} is directly analogous.

  Now, recall that we are interested in the map $V\to E$, and specifically its 1-component $V_1\to E_1$.
  This map factors through the pullback $V_0 \times_{E_0} E_1$.
  Moreover, since $V_0 \times_{E_0} E_1 \to E_1$ is a pullback of the equivalence $V_0\to E_0$ along the fibration $E_1\to E_0$, 
  it is also an equivalence.
  Thus, by 2-out-of-3, $V_1 \to E_1$ is an equivalence if and only if $V_1 \to V_0 \times_{E_0} E_1$ is so.

  In terms of the variables appearing in Figure~\ref{fig:secthtpy}, the map $V_0\to E_0$ acting on $A_0: \type$ is defined by
  \begin{align*}
    B_0 &\eq A_0\\
    f_0 &\eq \idfunc_{A_0}\\
    s_0 &\eq \idfunc_{A_0}\\
    q_0 &\eq \lambda_{b_0: A_0}.\; \r_{b_0}
  \end{align*}
  and similarly for the corresponding data for $r$ as appearing in~\eqref{eq:hiso2}.
  Therefore, upon pullback along this map, the types of the data in $E_1$ become
  \begin{align*}
    A_0 &: \type\\
    B_0, B_1 &: A_0 \to \type\\
    f_1 &: \sprod{a_0} A_1(a_0) \to B_1(a_0)\\
    s_1 &: \sprod{a_0} B_1(a_0) \to A_1(a_0)\\
    q_1 &: \sprod{a_0, a_1} \big(f_1(a_0, s_1(a_0,a_1)) \id  a_1\big)
  \end{align*}
  and similarly for $r$.
  (We have used the fact that transporting along the identity path is the identity.)
  Hence, the fibration $V_0\times_{E_0} E_1 \to V_0\times_{F_0} F_1$ is represented by the dependent type
  \begin{multline}\label{eq:allhiso1}
    A_0,A_1,B_1,f_1\pr
    \sum_{s_1:\sprod{a_0} B_1(a_0) \to A_1(a_0)} \Bigg(\prod_{a_0, a_1} \big(f_1(a_0, s_1(a_0,a_1)) \id  a_1\big)\Bigg) \;\times\;\\
    \sum_{r_1:\sprod{a_0} B_1(a_0) \to A_1(a_0)}\Bigg( \prod_{a_0, a_1} \big(r_1(a_0, f_1(a_0,a_1)) \id  a_1\big)\Bigg)
  \end{multline}
  (all variables have the same types as above).
  However, in the presence of function extensionality, this type is naturally equivalent to
  \begin{multline}\label{eq:allhiso}
    A_0,A_1,B_1,f_1\pr \prod_{a_0} \Bigg(
    \sum _{s_1: B_1(a_0) \to A_1(a_0)} \prod_{a_1} \big(f_1(a_0, s_1(a_1)) \id  a_1\big) \;\times\;\\
    \sum_{r_1: B_1(a_0) \to A_1(a_0)} \prod_{a_1} \big(r_1(f_1(a_0,a_1)) \id  a_1\big)
    \Bigg).
  \end{multline}
  Given $((s_1,p),(r_1,q))$ inhabiting~\eqref{eq:allhiso1} we send it to
  \[ \lambda a_0 .\, \big((\lambda b_1.\, s_1(a_0,b_1),\, \lambda a_1.\, p(a_0,a_1)),\,
  (\lambda b_1.\, r_1(a_0,b_1),\, \lambda a_1.\, q(a_0,a_1))
  \big)
  \]
  inhabiting~\eqref{eq:allhiso}; while given $h$ inhabiting~\eqref{eq:allhiso} we send it to
  \begin{multline*}
    \Big(
    \big(\lambda a_0\, b_1.\, \fst(\fst(h(a_0)))(b_1),\,
    \lambda a_0\,a_1.\, \snd(\fst(h(a_0)))(a_1)
    \big)
    ,\;\\
    \big(\lambda a_0.\, b_1.\, \fst(\snd(h(a_0)))(b_1) ,\,
    \lambda a_0\,a_1.\, \snd(\snd(h(a_0)))(a_1)
    \big)
    \Big)
  \end{multline*}
  inhabiting~\eqref{eq:allhiso1}.
  With our definitional $\eta$-rules for dependent sums and products, these two functions are actually inverse judgmental isomorphisms (although lacking such $\eta$-rules, they would still be inverse equivalences by function extensionality).
  This can be proven purely category-theoretically as well, by showing that~\eqref{eq:allhiso1} and~\eqref{eq:allhiso} represent isomorphic functors and invoking the Yoneda lemma.
  (This sort of equivalence is traditionally called the ``type-theoretic axiom of choice.'')

  However,~\eqref{eq:allhiso} is nothing but
  \[ A_0,A_1,B_1,f_1\pr \prod_{a_0} \isequiv(f_1(a_0)). \]
  Thus, the induced fibration $V_0 \times_{E_0} E_1 \to V_1\times_{V_0} V_1$ is isomorphic to
  \begin{equation}
    A_0, A_1, B_1 \pr \sum_{f_1 : \sprod{a_0} A_1(a_0) \to B_1(a_0)} \prod_{a_0} \isequiv(f_1(a_0)).\label{eq:alleqv1}
  \end{equation}
  But by the same sort of argument, this is isomorphic to
  \begin{equation}
    A_0,A_1,B_1\pr \prod_{a_0} \sum_{f : A_1(a_0) \to B_1(a_0) } \isequiv(f)\label{eq:alleqv}
  \end{equation}
  which of course is nothing but
  \[ A_0,A_1,B_1\pr \prod_{a_0} \equiv(A_1(a_0),B_1(a_0)). \]
  Now we have a commutative square
  \[\vcenter{\xymatrix{
      V_1 \ar[r]\ar[d] &
      V_0 \times_{E_0} E_1\ar[d]\\
      P_{V_0} V_1\ar[r] &
      V_1\times_{V_0} V_1
    }}\]
  in $(\sC/V_0)\f$, in which the left-hand map is an acyclic cofibration and the right-hand map is a fibration.
  Therefore, we have an induced map $P_{V_0} V_1 \to V_0 \times_{E_0} E_1$ of fibrations over $V_1\times_{V_0} V_1$, which it suffices to show to be an equivalence.
  This map is represented by a section of the dependent type
  \[ A_0, A_1, B_1 \pr (A_1 \id  B_1) \to \prod_{a_0} \equiv(A_1(a_0),B_1(a_0))  \ty\]
  obtained from the eliminator for the path type $(A_1\id B_1)$.
  But this map factors, up to homotopy, as a composite
  \[ (A_1 \id  B_1) \xto{\happly} \prod_{a_0} (A_1(a_0) \id  B_1(a_0)) \xto{\Pi(\ptoe)} \prod_{a_0} \equiv(A_1(a_0),B_1(a_0)). \]
  (This follows immediately by an application of $J$ to the identity type $A_1\id B_1$: when applied to reflexivity, both reduce to $\lambda a_0\,a_1.\,a_1$.)
  But $\happly$ is an equivalence by strong function extensionality (\autoref{thm:strong-funext}).
  And $\ptoe$ is an equivalence by univalence in \sC, so by \autoref{thm:funext-forallequiv}, $\Pi(\ptoe)$ is also an equivalence.
  Therefore, our desired map is internally a fiberwise equivalence over $V_1\times_{V_0} V_1$, and hence (by \autoref{thm:fibhe}) also an equivalence on total spaces externally.
  Hence $V$ is univalent.
\end{proof}

This yields our first really new model of the univalence axiom.

\begin{cor}
  The Reedy model category $\sSet^\bbtwo$ supports a model of intensional type theory with dependent sums and products, identity types, and as many univalent universes as there are inaccessible cardinals.
\end{cor}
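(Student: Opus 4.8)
The plan is to assemble the results of \S\S\ref{sec:sierpinski}--\ref{sec:univalence}, taking for \sC the category $\sSet\f$ of Kan complexes. Since the Quillen model structure makes \sSet a right proper Cisinski model category, hence a type-theoretic model category, $\sSet\f$ is a type-theoretic fibration category; and since every simplicial set is cofibrant, the condition of \autoref{rmk:ttmc-whe-he} holds, so by \autoref{thm:ttmc-funext} (via \autoref{thm:cat-funext}) function extensionality holds in $\sSet\f$. By Voevodsky's theorem, for each inaccessible cardinal \ka there is a universe $U_\ka$ in $\sSet\f$ in the sense of \autoref{def:univ} which satisfies the univalence axiom, and for $\la<\ka$ both inaccessible a universe embedding $U_\la\into U_\ka$ (either from \autoref{thm:no-new-names} or by choosing the structure carefully).

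Next I would record that the Reedy-fibrant objects of $\sSet^\bbtwo$ coincide with those of $(\sC^\bbtwo)\f$: a Reedy-fibrant arrow $\al\colon A_1\to A_0$ has $A_0$ a Kan complex and $\al$ a Kan fibration, which forces $A_1$ to be a Kan complex too, so it is precisely a Reedy-fibrant object of $(\sSet\f)^\bbtwo$ as in \S\ref{sec:sierpinski}. By \autoref{thm:reedy-ttfc} this category is a type-theoretic fibration category. By \autoref{thm:small-univ} each $U_\ka$ lifts to a universe $V_\ka$ in $(\sC^\bbtwo)\f$, and by \autoref{thm:univ} each embedding $U_\la\into U_\ka$ lifts to a universe embedding $V_\la\into V_\ka$; thus we obtain as many nested universes as there are inaccessible cardinals. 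By \autoref{thm:funext} function extensionality holds in $(\sC^\bbtwo)\f$, and then \autoref{thm:univalence} shows that each $V_\ka$ is univalent.

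Finally, to pass from this categorical structure to a genuine model of the syntax I would invoke the coherence theorems recalled in \S\ref{sec:catsem} (Kapulkin--Lumsdaine--Voevodsky, or Lumsdaine--Warren) to replace $(\sC^\bbtwo)\f$ by an equivalent split type-theoretic fibration category carrying split versions of the $V_\ka$ and their embeddings, and then apply \autoref{thm:syn-init} (with axioms) to interpret intensional type theory with a unit type, dependent sums and products, identity types, and one univalent universe per inaccessible cardinal.

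There is no real obstacle here: the corollary is a bookkeeping exercise combining \autoref{thm:reedy-ttfc}, \autoref{thm:small-univ}, \autoref{thm:univ}, \autoref{thm:funext}, \autoref{thm:univalence}, and the coherence theorems. The one point demanding care is the \emph{form} of function extensionality: \autoref{thm:univalence} needs it for \type-valued functions (i.e.\ for dependent types), not merely for ordinary function types, and this is exactly what the type-theoretic model category structure of $\sSet^\bbtwo$ supplies (equivalently, it follows from having at least three nested univalent universes). The identification $(\sSet^\bbtwo)\f=((\sSet\f)^\bbtwo)\f$ should likewise be stated explicitly, since it is what licenses phrasing the conclusion in terms of the Reedy model category $\sSet^\bbtwo$.
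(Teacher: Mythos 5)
Your proposal is correct and is exactly the assembly the paper intends: the corollary follows immediately from Theorems \ref{thm:reedy-ttfc}, \ref{thm:small-univ}, \ref{thm:univ}, \ref{thm:funext}, and \ref{thm:univalence} applied to Voevodsky's universes in $\sSet\f$, together with the coherence theorems of \S\ref{sec:catsem}. Your added remarks on the identification $(\sSet^\bbtwo)\f = ((\sSet\f)^\bbtwo)\f$ and on function extensionality for \type-valued functions are exactly the points the paper handles (the latter in the remark closing \S\ref{sec:univalence-axiom}), so nothing further is needed.
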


As before, since the homotopy theory of $\sSet^\bbtwo$ models the ``Sierpinski \io-topos'' $\infty\mathrm{Gpd}^\bbtwo$ , we can say informally that we have a model of type theory in this \io-topos.

\section{Diagrams on inverse categories}
\label{sec:invcat}

As we have observed, what makes \S\S\ref{sec:sierpinski}--\ref{sec:univalence} work is that a Reedy fibrant object $A_1 \fib A_0$ of $\sC^\bbtwo$ can be represented by a context in type theory:
\[ (a_0 : A_0),\; (a_1 : A_1(a_0)). \]
A corresponding fact is true for Reedy fibrant diagrams on some other categories.
For instance, spans of fibrations $A_1 \fib A_0 \twoheadleftarrow A_2$ correspond to contexts of the form
\[ (a_0 : A_0), \; (a_1 : A_1(a_0)),\; (a_2 : A_2(a_0)) \]
whereas cospans $A_0 \ot A_2 \to A_1$ such that $A_2 \fib A_0\times A_1$ is a fibration correspond to contexts of the form
\[ (a_0 : A_0),\; (a_1 : A_1),\; (a_2 : A_2(a_0,a_1)). \]
(This correspondence between diagrams and contexts has also been used elsewhere, e.g. by~\textcite{makkai:folds}.)
In this section we extend \S\S\ref{sec:sierpinski}--\ref{sec:univalence} to such cases.

In this section and the next, we will give up on carrying along cloven and split structure by hand, and simply appeal to a coherence theorem after the construction is complete.
It should be possible to do everything carefully enough to avoid this, but it would be more work and is not necessary for our current applications.

\begin{defn}
  An \textbf{inverse category} is a category such that the relation ``$x$ receives a nonidentity arrow from $y$'' on its objects is well-founded.
\end{defn}

In an inverse category, we write $\prec$ for the above well-founded relation.
As usual for any well-founded relation, we can define the \textbf{ordinal rank} of an object $x\in I$ inductively:
\[ \rho(x) \eq \sup_{y\prec x} (\rho(y)+1). \]
The \textbf{rank of $I$} is by definition $\rho(I) \eq \sup_{x\in I} (\rho(x)+1)$.
Thus, regarding ordinals as categories in the usual way, we have a functor $\rho: I \to \rho(I)\op$ which reflects identities.
The existence of an identity-reflecting functor to the opposite of an ordinal is an alternative definition of an inverse category.

The point of the definition is that we can construct diagrams on $I$ and maps between them by well-founded induction, as follows.
For an object $x\in I$, we write $x\sslash I$ for the full subcategory of the co-slice category $x/I$ which excludes only the identity $\idfunc_x$.
Note that $x\sslash I$ is also an inverse category with $\rho(x\sslash I) = \rho(x)$, and for any nonidentity $\al: x\to y$ we have
\begin{equation}
  \al\sslash (x\sslash I) \cong y\sslash I.\label{eq:reedyslice}
\end{equation}

If $A$ is a diagram in \sC defined on the full subcategory $\setof{y | y\prec x}\subset I$, we can precompose it with the forgetful functor $x\sslash I \to \setof{y | y\prec x}$.
We define the \textbf{matching object} $M_x A$ to be the limit of the resulting diagram:
\[ M_x A \eq \lim_{x\sslash I} A \]
if it exists.
In this case, to give an extension of $A$ to the full subcategory $\setof{y|y\preceq x} \subseteq I$ is precisely to give an object $A_x$ with a map $A_x\to M_x A$.
Similarly, given diagrams $A$ and $B$ defined on the full subcategory $\setof{y|y\preceq x}$, and a natural transformation $f: A|_{\setof{y|y\prec x}}\to B|_{\setof{y|y\prec x}}$ between their restrictions to $\setof{y | y\prec x}$, to give an extension of $f$ to $\setof{y|y\preceq x}$ is precisely to give a map
\[ A_x \to M_x A \times_{M_x B} B_x \]
if the pullback in the codomain exists.
Note that if $x$ has no $\prec$-predecessors, then $x\sslash I$ is empty and $M_x A$ is terminal.

Now suppose that \sC is a type-theoretic fibration category.

\begin{defn}
  A \textbf{Reedy fibration} in $\sC^I$ is a map $f: A\to B$ between $I$-diagrams such that $A$ and $B$ have all matching objects, each pullback $M_x A \times_{M_x B} B_x$ exists, and each map
  \[ A_x \to M_x A \times_{M_x B} B_x \]
  is a fibration in \sC.  A \textbf{Reedy acyclic cofibration} in $\sC^I$ is a levelwise acyclic cofibration.
\end{defn}

In particular, $A$ is Reedy fibrant iff it has all matching objects and each map $A_x \to M_x A$ is a fibration.
Note that if $A$ and $B$ are Reedy fibrant, then the pullback $M_x A \times_{M_x B} B_x$ automatically exists for any $f: A\to B$, as it is a pullback of the fibration $B_x \fib M_x B$.

If $I$ is finite, then Reedy fibrant $I$-diagrams can be regarded as contexts of a certain form in the type theory of \sC.
In the general case, we can regard them as a certain type of ``infinite context''.

Before going further, we need to guarantee that the limits involved in forming matching objects exist and are well-behaved.
For general $I$, this is an additional completeness property of \sC, so we give it a name.

\begin{defn}\label{def:Ilim}
  For $I$ an inverse category, we say that \sC has \textbf{Reedy $I$-limits} if
  \begin{enumerate}
  \item Any Reedy fibrant $A\in\sC^I$ has a limit, which is fibrant in \sC.\label{item:il1}
  \end{enumerate}
  and for Reedy fibrant $A$ and $B$ and any morphism $f: A\to B$, the following hold:
  \begin{enumerate}[resume]
  \item If $f$ is a Reedy fibration, then $\lim f: \lim A \to \lim B$ is a fibration in \sC.\label{item:il2}
  \item If $f$ is a levelwise equivalence, then $\lim f$ is an equivalence in \sC.\label{item:il3}
  \item If $f$ is a Reedy acyclic cofibration, then $\lim f$ is an acyclic cofibration in \sC.\label{item:il4}
  \end{enumerate}
\end{defn}

Unsurprisingly, in the model category case this is automatic.

\begin{lem}
  If \sC is a type-theoretic model category, then it has Reedy $I$-limits for any small inverse category $I$.
\end{lem}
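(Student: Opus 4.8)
The plan is to reduce the four conditions of \autoref{def:Ilim} to standard facts about Reedy model structures. Since $\sC$ is a model category it is complete, so the existence of $\lim A$ is automatic, and conditions~\ref{item:il1}--\ref{item:il4} amount to assertions about how the functor $\lim\colon\sC^I\to\sC$ interacts with fibrant objects, fibrations, levelwise weak equivalences, and levelwise acyclic cofibrations.

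First I would observe that for an inverse category $I$ the functor category $\sC^I$ carries the \emph{Reedy model structure}, in which the weak equivalences are the levelwise weak equivalences, the fibrations are the Reedy fibrations, and (because $I$ has no ``direct part'', so all latching objects are initial) the cofibrations are exactly the levelwise cofibrations. For $\mathbb{N}$-indexed Reedy categories this is~\textcite{hovey:modelcats,hirschhorn:modelcats}; the same transfinite construction applies to the ordinal-valued rank function of a general small inverse category. With this in hand, the constant-diagram functor $\Delta\colon\sC\to\sC^I$ preserves weak equivalences and sends cofibrations and acyclic cofibrations to levelwise ones, hence is left Quillen, so its right adjoint $\lim$ is right Quillen. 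Now: right Quillen functors preserve fibrations, so $\lim f$ is a fibration whenever $f$ is a Reedy fibration, which is~\ref{item:il2}; applying this to $A\to 1$ and using $\lim 1 = 1$ gives that $\lim A$ is fibrant, i.e.~\ref{item:il1}. A Reedy fibrant diagram is a fibrant object of the Reedy model structure and a levelwise weak equivalence is a weak equivalence there, so Ken Brown's lemma (e.g.~\textcite[1.1.12]{hovey:modelcats}) shows $\lim$ takes levelwise weak equivalences between Reedy fibrant diagrams to weak equivalences, which is~\ref{item:il3}. Finally, a Reedy acyclic cofibration between Reedy fibrant diagrams is in particular such a levelwise weak equivalence, so $\lim f$ is a weak equivalence by~\ref{item:il3}, and it is a cofibration by \autoref{def:ttmc}\ref{item:m1} (limits preserve cofibrations); hence it is an acyclic cofibration, giving~\ref{item:il4}.

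The one point requiring genuine care is the claim that the Reedy model structure exists for an arbitrary \emph{small} inverse $I$, i.e. the transfinite version of the factorization/lifting arguments of \autoref{thm:reedy-fact}. If one prefers to avoid that black box, the alternative is to prove~\ref{item:il1}--\ref{item:il4} directly by well-founded induction on $\rho(I)$. Filtering $I$ by the full subcategories $I^{(\lambda)}$ of objects of rank $<\lambda$, one has at successor stages
\[ \lim\nolimits_{I^{(\lambda+1)}}A \;=\; \lim\nolimits_{I^{(\lambda)}}A \;\times_{\;\prod_{\rho(x)=\lambda} M_x A}\; \prod_{\rho(x)=\lambda} A_x \]
(using that objects of equal rank carry no nonidentity morphisms between them, and that each $M_x A$, by~\eqref{eq:reedyslice}, is a limit over the inverse category $x\sslash I$ of strictly smaller rank, so the inductive hypothesis applies to it), and at a limit ordinal $\mu$ one has $\lim_{I^{(\mu)}}A = \lim_{\lambda<\mu}\lim_{I^{(\lambda)}}A$, an inverse limit of a tower of fibrations.

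I would then check~\ref{item:il1}--\ref{item:il4} stagewise: closure of fibrations under pullback and small products handles the successor stages for~\ref{item:il1} and~\ref{item:il2}; right properness (via the gluing lemma) handles~\ref{item:il3} at the pullback stages, with products of weak equivalences between fibrant objects being weak equivalences; \autoref{def:ttmc}\ref{item:m1} handles the cofibration part of~\ref{item:il4}. The step I expect to be the main obstacle in this direct route is the limit stage for~\ref{item:il3}: showing that an inverse limit of a tower of fibrations between fibrant objects is homotopy-invariant — this is exactly the kind of homotopy-limit stability that \autoref{thm:reedy-afib} and the Reedy machinery are designed to encapsulate, and is the delicate point that the finiteness hypothesis on $x/I$ is used to finesse in the non-model-category setting treated later.
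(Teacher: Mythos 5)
Your first argument is exactly the paper's proof: the Reedy model structure on $\sC^I$ makes $\lim$ right Quillen, which gives \ref{item:il1}--\ref{item:il3} (via Ken Brown's lemma for \ref{item:il3}), and \ref{item:il4} follows because cofibrations in a type-theoretic model category are closed under limits. The alternative inductive route you sketch afterwards is not needed here (it is essentially what the paper later does for general type-theoretic fibration categories, following Radulescu-Banu), so the acknowledged gap at the limit stage does not affect your proof.
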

\begin{proof}
  When \sC is a model category, $\sC^I$ has a whole Reedy model structure in which the cofibrations and weak equivalences are levelwise.
  (See, for instance,~\textcite[Ch.~5]{hovey:modelcats}.)
  Thus, $\lim: \sC^I \to \sC$ is a right Quillen functor, hence preserves fibrant objects, fibrations, and weak equivalences between fibrant objects, giving \autoref{def:Ilim}\ref{item:il1}--\ref{item:il3}.
  Finally,~\ref{item:il4} follows since cofibrations in a type-theoretic model category are assumed stable under limits.
\end{proof}

More interesting is that we can construct Reedy $I$-limits in any type-theoretic fibration category inductively.
The basic idea of this is certainly folklore, at least in special cases; the most general statement I know of is in~\textcite{rb:cofibrations}.
Roughly, the construction uses two or three special cases of Reedy $I$-limits to build all of them.

The first special case is finite products.
Of course, any discrete category is inverse, and (since all objects in \sC are fibrant) all diagrams on such a category are Reedy fibrant.
Moreover, the Reedy fibrations are just the levelwise ones.

\begin{lem}
  If $I$ is a finite discrete category, then any type-theoretic fibration category has Reedy $I$-limits.
\end{lem}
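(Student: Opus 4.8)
The plan is to observe that, because $I$ is finite and discrete, everything in \autoref{def:Ilim} collapses to elementary statements about finite products. A diagram $A\in\sC^I$ is just a finite tuple $(A_x)_{x\in I}$ of objects, all of which are automatically fibrant; every matching object $M_x A$ is terminal; hence a Reedy fibration $f\colon A\to B$ is exactly a levelwise fibration, a Reedy acyclic cofibration is exactly a levelwise acyclic cofibration, and $\lim A$ is the finite product $\prod_{x\in I}A_x$. So the four clauses of \autoref{def:Ilim} become: finite products of fibrant objects exist and are fibrant; a product of fibrations is a fibration; a product of levelwise equivalences is an equivalence; and a product of acyclic cofibrations is an acyclic cofibration.

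First I would build finite products by iterated pullback over $1$. By induction on $|I|$, $\prod_{i=1}^{n}A_i\cong A_1\times_1\bigl(\prod_{i=2}^{n}A_i\bigr)$, and the right-hand pullback exists because $\prod_{i=2}^{n}A_i\fib 1$ (all objects being fibrant) and pullbacks of fibrations exist by \autoref{def:ttfc}\ref{item:cat3}. Each projection off $\prod_{i=1}^{n}A_i$ is a pullback of a map to $1$, hence a fibration, so $\prod_{i=1}^{n}A_i\fib 1$ as a composite of fibrations; this is clause~\ref{item:il1}. The key auxiliary fact for the rest is that for any fibration $g\colon X\fib Y$ and any object $Z$, the map $g\times\idfunc_Z\colon X\times Z\to Y\times Z$ is the pullback of $g$ along the projection $Y\times Z\fib Y$ (and projections to a factor of a finite product are fibrations, by the same argument). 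Consequently $g\times\idfunc_Z$ inherits from $g$ the property of being a fibration (\autoref{def:ttfc}\ref{item:cat3}), an acyclic cofibration (\autoref{def:ttfc}\ref{item:cat4}, the projection being a fibration), or an acyclic fibration (\autoref{thm:afib-stable}).

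Then, given $f\colon A\to B$ in $\sC^I$, I would factor $\prod_x f_x$ as a composite $\prod_x A_x = P_0\to P_1\to\cdots\to P_n = \prod_x B_x$ in which one replaces the coordinates $A_k$ by $B_k$ one at a time, so that each stage $P_{k-1}\to P_k$ has the form $\idfunc\times f_k\times\idfunc$, i.e. is the pullback of $f_k$ along a projection fibration $P_k\fib B_k$. If $f$ is a levelwise fibration, each stage is a fibration and hence so is $\prod_x f_x$, giving clause~\ref{item:il2}; if $f$ is a levelwise acyclic cofibration, each stage is an acyclic cofibration and hence so is $\prod_x f_x$, since acyclic cofibrations, having the left lifting property against all fibrations, are closed under composition, giving clause~\ref{item:il4}.

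For clause~\ref{item:il3} I would route through the factorization characterization of homotopy equivalences used in the proof of \autoref{thm:he-stable}: a map is a homotopy equivalence iff it factors as an acyclic cofibration followed by an acyclic fibration. Factoring each $f_x=p_x\circ j_x$ in this way, we get $\prod_x f_x=\bigl(\prod_x p_x\bigr)\circ\bigl(\prod_x j_x\bigr)$, where $\prod_x j_x$ is an acyclic cofibration by clause~\ref{item:il4}, and $\prod_x p_x$, by the same coordinate-by-coordinate decomposition, is a composite of pullbacks of the acyclic fibrations $p_x$ along projection fibrations, hence itself an acyclic fibration (composites of fibrations are fibrations, composites of homotopy equivalences are homotopy equivalences). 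Since acyclic cofibrations and acyclic fibrations are in particular homotopy equivalences, $\prod_x f_x$ is a composite of homotopy equivalences, hence a homotopy equivalence. I do not expect a real obstacle here; the only point needing care is clause~\ref{item:il3}, since path objects are not canonically compatible with products, which is why the argument detours through this factorization characterization rather than multiplying homotopies directly.
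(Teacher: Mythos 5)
Your proof is correct and follows essentially the same route as the paper's: decompose the product morphism into a composite of maps $\idfunc\times f_j\times\idfunc$, each a pullback of $f_j$ along a projection fibration, and use pullback-stability plus closure under composition of the three classes. The only cosmetic difference is in the equivalence clause, where the paper directly invokes \autoref{thm:he-stable} (homotopy equivalences are stable under pullback along fibrations) while you unfold its proof by factoring each $f_x$ as an acyclic cofibration followed by an acyclic fibration first; this is the same argument one level down.
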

\begin{proof}
  Since \sC has a terminal object and pullbacks of fibrations, and all objects are fibrant, it has binary products and hence all finite ones.
  Now a product morphism $\prod_{1\le i\le n} f_i : \prod_{1\le i\le n} A_i \to \prod_{1\le i\le n} B_i$ is a finite composite of morphisms of the form
  \[ \idfunc \times f_j \times \idfunc : \left(\prod_{1\le i< j} A_i\right) \times A_j \times \left(\prod_{j<i\le n} B_i\right)
  \too \left(\prod_{1\le i< j} A_i\right) \times B_j \times \left(\prod_{j<i\le n} B_i \right) .\]
  Each of these is a pullback of $f_j$ along the fibration $\prod_{1\le i< j} A_i \times \prod_{j<i\le n} B_i \fib 1$.
  This preserves fibrations, equivalences, and acyclic cofibrations, and all three classes of maps are preserved by composition.
\end{proof}

If \sC has Reedy $I$-limits for all discrete $I$ with $|I|<\kappa$, we say that \sC has \textbf{Reedy \ka-products}.
Thus, any type-theoretic fibration category has Reedy \om-products.

The second special case is pullbacks of fibrations.
The following lemma is actually not quite a special case of Reedy $I$-limits for inverse $I$, but it is a special case of the corresponding statement for $I$ being a more general ``Reedy category''.


\begin{lem}\label{thm:acof-cogluing}
  Suppose a commutative cube in a type-theoretic fibration category:
  \[\xymatrix@-.8pc{
    A_4 \ar[rr] \ar[dd] \ar[dr]^{u_4} && A_3 \ar[dr]^{u_3} \ar@{->>}'[d][dd]\\
    & B_4 \ar[dd] \ar[rr] && B_3 \ar@{->>}[dd]\\
    A_2 \ar[dr]_{u_2} \ar'[r]^f[rr] && A_1 \ar[dr]^(.3){u_1} \\
    & B_2 \ar[rr]_g && B_1
  } \]
  in which the front and back faces are pullbacks and the maps $B_3\fib B_1$ and $A_3\fib A_1$ are fibrations.
  Then
  \begin{enumerate}
  \item If $u_2$ and the induced map $A_3 \to A_1 \times_{B_1} B_3$ are fibrations, so is $u_4$.\label{item:acg1}
  \item If $u_1$, $u_2$, and $u_3$ are equivalences, so is $u_4$.\label{item:acg2}
  \item If $u_1$, $u_2$, and $u_3$ are acyclic cofibrations, so is $u_4$.\label{item:acg3}
  \end{enumerate}
\end{lem}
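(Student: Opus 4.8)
The plan is to handle all three parts through a single factorization of the induced map $u_4 \colon A_4 \to B_4$. Since the front and back faces are pullbacks, $A_4 = A_2\times_{A_1}A_3$ and $B_4 = B_2\times_{B_1}B_3$, and the pasting law for pullbacks identifies $A_2\times_{B_2}B_4$ with $A_2\times_{B_1}B_3 =: P$; thus $u_4$ factors as
\[ A_4 \too P \too B_4 , \]
where $P\to B_4$ is the pullback of $u_2$ along $B_4\fib B_2$ (a fibration, being a pullback of $B_3\fib B_1$ along $g$), and $A_4\to P$ is the pullback of the induced map $v\colon A_3\to A_1\times_{B_1}B_3$ along $f\colon A_2\to A_1$. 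The point to keep in mind is that $v$ is a map between two objects, $A_3$ and $A_1\times_{B_1}B_3$, that are both fibrant over $A_1$ — the second because it is a pullback of $B_3\fib B_1$ — so the pullback $A_4\to P$ along the (possibly non-fibration) map $f$ is exactly of the type governed by \autoref{def:ttfc}\ref{item:cat8}.

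Part \ref{item:acg1} is then immediate: $v$ is the hypothesized fibration, so $A_4\to P$ is a pullback of a fibration by \autoref{def:ttfc}\ref{item:cat3}, $P\to B_4$ is a pullback of the fibration $u_2$, and fibrations are closed under composition.

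For parts \ref{item:acg2} and \ref{item:acg3} I would first establish that $v$ is a homotopy equivalence, respectively an acyclic cofibration. The projection $\pi\colon A_1\times_{B_1}B_3\to B_3$ is the pullback of $u_1$ along the fibration $B_3\fib B_1$, hence a homotopy equivalence by \autoref{thm:he-stable} in case \ref{item:acg2}, and an acyclic cofibration by pullback-stability of acyclic cofibrations along fibrations (\autoref{def:ttfc}\ref{item:cat4}) in case \ref{item:acg3}; since $\pi v = u_3$, the conclusion follows from two-out-of-three for homotopy equivalences (valid by \autoref{thm:fibcat}) in the first case and from \autoref{thm:acof-cancel} in the second. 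Now in case \ref{item:acg3}, $A_4\to P$ is the pullback of the acyclic cofibration $v$ between objects fibrant over $A_1$ along $f$, hence an acyclic cofibration by \autoref{def:ttfc}\ref{item:cat8}, while $P\to B_4$ is a pullback of the acyclic cofibration $u_2$ along the fibration $B_4\fib B_2$, again an acyclic cofibration; so $u_4$ is an acyclic cofibration as a composite. In case \ref{item:acg2}, I factor $v$ over $A_1$ as $A_3\acof Z\fib A_1\times_{B_1}B_3$ with $Z$ fibrant over $A_1$; since $v$ and the acyclic cofibration are homotopy equivalences, $Z\fib A_1\times_{B_1}B_3$ is an acyclic fibration. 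Pulling back along $f$ exhibits $A_4\to P$ as a composite of the pullback of $A_3\acof Z$ (an acyclic cofibration by \autoref{def:ttfc}\ref{item:cat8}) with the pullback of an acyclic fibration (an acyclic fibration by \autoref{thm:afib-stable}), hence a homotopy equivalence; and $P\to B_4$ is a homotopy equivalence by \autoref{thm:he-stable}, so $u_4$ is a homotopy equivalence.

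The main obstacle — and the reason the hypotheses include that $A_3\fib A_1$ and $B_3\fib B_1$ are fibrations — is precisely that $A_4\to P$ is a base change along $f$, which is not assumed to be a fibration, so neither \autoref{thm:he-stable} nor the usual pullback-stability of acyclic cofibrations applies to it directly. The resolution is to observe that everything in sight is fibrant over the common base $A_1$, which brings the situation within the scope of \autoref{def:ttfc}\ref{item:cat8} (equivalently \autoref{thm:acof-cancel}); for the weak-equivalence case one additionally uses that acyclic fibrations pull back along arbitrary maps (\autoref{thm:afib-stable}).
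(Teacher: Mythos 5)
Your proof is correct, and for part \ref{item:acg3} it is essentially the paper's own argument: the same factorization of $u_4$ through $A_2\times_{B_1}B_3\cong A_2\times_{B_2}B_4$, the same use of \autoref{thm:acof-cancel} to see that $A_3\to A_1\times_{B_1}B_3$ is an acyclic cofibration, and the same appeal to \autoref{def:ttfc}\ref{item:cat8} to pull it back along $f$ between fibrations over $A_1$. The only divergence is that for parts \ref{item:acg1} and \ref{item:acg2} the paper simply cites the cogluing lemma for categories of fibrant objects (R\u{a}dulescu-Banu, 1.4.1), whereas you prove them directly from the same decomposition — your argument for \ref{item:acg2}, factoring $A_3\to A_1\times_{B_1}B_3$ over $A_1$ into an acyclic cofibration followed by an acyclic fibration and using \autoref{def:ttfc}\ref{item:cat8} together with \autoref{thm:afib-stable} on the two factors, is a valid self-contained instantiation of that lemma in this setting.
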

\begin{proof}
  Conclusions~\ref{item:acg1} and~\ref{item:acg2} are the ``cogluing lemma'', which is true in any category of fibrant objects; see for instance~\textcite[{}1.4.1]{rb:cofibrations}.
  For~\ref{item:acg3}, since $u_1$ is an acyclic cofibration and $B_3 \fib B_1$ is a fibration, the pullback $u_1^* B_3 \to B_3$ is an acyclic cofibration.
  Therefore, since $u_3$ is also an acyclic cofibration, by \autoref{thm:acof-cancel}, so is the induced map $A_3 \to u_1^* B_3$.
  Now since this is a map between fibrations over $A_1$, by \autoref{def:ttfc}\ref{item:cat8}, its pullback along $f$ is again an acyclic cofibration.
  But $f^* A_3 \cong A_4$ and $g u_2 = u_1 f$ and $g^* B_3 \cong B_4$, so this pullback is isomorphic to the induced map $A_4 \to u_2^* B_4$.

  Now $u_2^*B_4 \to B_4$ is also an acyclic cofibration, being a pullback of $A_2\acof B_2$ along the fibration $B_4\fib B_2$.
  Hence the composite $A_4 \acof u_2^* B_4 \acof B_4$, which is $u_4$, is also an acyclic cofibration.
\end{proof}

The final special case, which is only needed when $I$ is infinite, is towers of fibrations.
If \la is an ordinal, then $\la\op$ is inverse; we say that \sC has \textbf{Reedy limits of \ka-towers} if it has Reedy $\la\op$-limits for all ordinals $\la<\ka$.

For an inverse category $I$, we write $\sigma(I)$ for the \textbf{breadth} of $I$, which is the supremum of the cardinalities of all ``levels'' $I_\la = \setof{x\in I | \rho(x) = \la}$.

\begin{lem}\label{thm:Ilimits}
  If $I$ is an inverse category and \sC is a type-theoretic fibration category which has
  \begin{itemize}
  \item Reedy limits of $\rho(I)$-towers, and
  \item Reedy $\sigma(I)$-products,
  \end{itemize}
  then \sC has Reedy $I$-limits.
  In particular, if $I$ is finite, then any type-theoretic fibration category has Reedy $I$-limits.
\end{lem}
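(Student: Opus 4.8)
The statement to prove is Lemma~\ref{thm:Ilimits}: if $\sC$ has Reedy limits of $\rho(I)$-towers and Reedy $\sigma(I)$-products, then it has Reedy $I$-limits. The strategy is a double well-founded induction: an outer induction on the ordinal rank $\rho(I)$, carried out by building the limit of an $I$-diagram as a (transfinite) tower indexed by the levels $I_\la$, and at each successor stage passing from the partial limit over $\setof{x\mid \rho(x)<\la}$ to the partial limit over $\setof{x\mid\rho(x)\le\la}$ using a single product (over the level $I_\la$) together with the cogluing-type facts of Lemma~\ref{thm:acof-cogluing}. More precisely, for a Reedy fibrant $A\in\sC^I$ write $A^{<\la}$ for its restriction to the full subcategory on objects of rank $<\la$; I would prove by induction on $\la$ that $L_\la A\eq\lim A^{<\la}$ exists, is fibrant, and that the assignment $A\mapsto L_\la A$ sends Reedy fibrations to fibrations, levelwise equivalences to equivalences, and Reedy acyclic cofibrations to acyclic cofibrations --- i.e.\ that $\sC$ has ``Reedy $I^{<\la}$-limits'' in the sense of Definition~\ref{def:Ilim}, uniformly in $\la$.

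\textbf{Key steps.} First, the base case $\la=0$ is trivial ($L_0 A=1$). Second, the limit stage: for a limit ordinal $\la$, the subcategories $I^{<\mu}$ for $\mu<\la$ are cofinal-enough that $L_\la A=\lim_{\mu<\la} L_\mu A$ is exactly a $\la\op$-tower; moreover, using the inductive hypothesis the transition maps $L_{\mu+1}A\to L_\mu A$ are fibrations, so this tower is Reedy fibrant, and Reedy limits of $\rho(I)$-towers (which include $\la\op$-towers for $\la\le\rho(I)$) give us what we need, including the preservation properties for fibrations, equivalences, and acyclic cofibrations directly from Definition~\ref{def:Ilim}\ref{item:il2}--\ref{item:il4}. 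Third, the successor stage $\la\mapsto\la+1$: here I need to describe $L_{\la+1}A$ in terms of $L_\la A$ and the level $I_\la$. The matching object $M_x A$ for $x\in I_\la$ depends only on $A^{<\la}$, and in fact a compatible family of maps $(A_x\to M_x A)_{x\in I_\la}$ is the same as a map $\prod_{x\in I_\la}A_x\to\prod_{x\in I_\la}M_x A$ over appropriate base data; the key identity is that $\lim A^{<\la+1}$ is the pullback
\[ \lim A^{<\la+1}\;\cong\; L_\la A\times_{\prod_{x\in I_\la}M_x A}\Big(\textstyle\prod_{x\in I_\la}A_x\Big), \]
where the map $L_\la A\to\prod_{x\in I_\la}M_x A$ comes from the canonical projections (note $M_x A=\lim_{x\sslash I}A$ is computed from $A^{<\la}$, so each $L_\la A\to M_x A$ exists). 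Since $A$ is Reedy fibrant, each $A_x\to M_x A$ is a fibration, hence (using Reedy $\sigma(I)$-products, applied to the discrete category $I_\la$ of cardinality $\le\sigma(I)$) so is $\prod_x A_x\to\prod_x M_x A$; so the pullback exists, is fibrant, and $L_{\la+1}A\fib L_\la A$ is a fibration. Fourth, the preservation properties at the successor stage: given $f\colon A\to B$ I form the evident cube with corners $L_\la A,L_\la B,\prod_x M_x A,\prod_x M_x B$ at the back and $L_{\la+1}A$, etc., at the front --- wait, more directly: $L_{\la+1}f$ fits into the square of pullbacks built from $L_\la f$ and $\prod_x f_x$, and since products of fibrations / equivalences / acyclic cofibrations are again such (Reedy $\sigma(I)$-products) and pullbacks of fibrations along fibrations preserve all three classes (Definition~\ref{def:ttfc}\ref{item:cat3}, Corollary~\ref{thm:he-stable}, Definition~\ref{def:ttfc}\ref{item:cat8}), the claim follows; where a genuine cube appears is in verifying the Reedy-fibration case, for which Lemma~\ref{thm:acof-cogluing}\ref{item:acg1} is exactly the tool, and likewise \ref{item:acg2}, \ref{item:acg3} for equivalences and acyclic cofibrations.

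\textbf{Main obstacle.} The routine part is assembling the pullback/product bookkeeping; the genuinely delicate point is the identification of $M_x A$ and of the limit $\lim A^{<\la+1}$ as the pullback displayed above --- i.e.\ checking that the cone data really does decompose as ``a cone over $A^{<\la}$ plus, for each $x$ of rank $\la$, a lift of $L_\la A\to M_x A$ through $A_x\to M_x A$, compatibly.'' This requires knowing that every nonidentity arrow out of an $x\in I_\la$ lands in an object of strictly smaller rank (so that $x\sslash I\subseteq I^{<\la}$ and $M_x A$ only sees $A^{<\la}$), and that the level $I_\la$ carries no arrows among its own objects (so the product $\prod_{x\in I_\la}$ really is a product and not a more complicated limit). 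Both follow from the definition of $\rho$ and well-foundedness of $\prec$, but they must be stated carefully; once this combinatorial decomposition is in hand, the three preservation properties at each stage are immediate from the cited results, and the transfinite induction goes through, giving Reedy $I$-limits. The final ``in particular'' for finite $I$ is then immediate since $\rho(I)<\om$ and $\sigma(I)<\om$, and any type-theoretic fibration category has Reedy $\om$-towers (trivially, as finite towers are finite composites of pullbacks of fibrations) and Reedy $\om$-products by the earlier lemma.
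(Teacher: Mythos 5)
Your proposal is correct and follows essentially the same route as the paper: induction on rank, with the successor step realizing the partial limit as the pullback of $\prod_{\rho(x)=\la}A_x\fib\prod_{\rho(x)=\la}M_xA$ along the map from the limit over lower ranks (using Reedy $\sigma(I)$-products and \autoref{thm:acof-cogluing} for the three preservation properties), and the limit step expressing the limit as a Reedy fibrant tower of the partial limits. The combinatorial points you flag (that $x\sslash I$ lands in strictly lower rank and that each level is discrete) are exactly the facts the paper uses implicitly, so there is no gap.
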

\begin{proof}
  \autoref{def:Ilim}\ref{item:il1}--\ref{item:il3} follow from~\textcite[{}9.3.5]{rb:cofibrations} (and a precise observation of what sizes of products and towers are needed).
  We will summarize the construction, which will make it clear that~\ref{item:il4} also follows.

  We proceed by induction on $\rho(I)$.
  If $\rho(I) = \la+1$, let $J$ denote for the full subcategory of $I$ on objects of rank $<\la$; then $\rho(J)\le\la$.
  If $A$ is a Reedy fibrant $I$-diagram, then because of~\eqref{eq:reedyslice}, its restriction to each $x\sslash I$ is also Reedy fibrant, as is its restriction to $J$.
  (In particular, by the inductive hypothesis, $M_x A$ necessarily exists.)
  We can then construct $\lim^I A$ as the pullback
  \[\vcenter{\xymatrix{
      \lim^I A \ar[r]\ar[d] &
      \prod_{\rho(x)=\la} A_x\ar@{->>}[d]\\
      \lim^J A|_J\ar[r] &
      \prod_{\rho(x)=\la} M_x A.
    }}\]
  Since $A|_J$ is Reedy fibrant, by the inductive hypothesis $\lim^J A|_J$ exists.
  And because \sC has Reedy $\sigma (I)$-products, the products on the right exist and the right-hand map is a fibration; thus the pullback also exists.

  Now if $A\to B$ is a Reedy fibration between Reedy fibrant objects, then as products preserve fibrations (by assumption) and matching objects and $J$-limits take Reedy fibrations to fibrations (by the inductive hypothesis), the resulting cube satisfies the hypotheses of \autoref{thm:acof-cogluing}\ref{item:acg1}, so that $\lim^I A \to \lim^I B$ is a fibration.
  Similarly, if $A\to B$ is a levelwise equivalence or acyclic cofibration between Reedy fibrant objects, the resulting cube satisfies the hypotheses of \autoref{thm:acof-cogluing}\ref{item:acg2} or~\ref{item:acg3}.
  
  Finally, if $\rho(I)$ is a limit ordinal, we can express $\lim^I A$ as a limit over $\rho(I)\op$ of the limits over the full subcategories $\setof{x\in I | \rho(x) \le \la}$.
  By the inductive hypothesis, each of these is a Reedy limit, and so is the $\rho(I)\op$-limit by assumption.
\end{proof}

We now return to constructing a model of type theory in $\sC^I$.
For this, we require only that the limits used for \emph{matching objects} exist and be well-behaved.

\begin{defn}\label{def:adm}
  Suppose \sC is a type-theoretic fibration category.
  An inverse category $I$ is \textbf{admissible} for \sC if \sC has Reedy $(x\sslash I)$-limits for every object $x\in I$.
\end{defn}

\noindent
From the preceding lemmas, therefore, we can conclude:
\begin{itemize}
\item If \sC is a type-theoretic model category, then every small inverse category is admissible for \sC.
\item If each $(x\sslash I)$ is finite, then $I$ is admissible for any type-theoretic fibration category.
\end{itemize}
Note that there are many infinite $I$ for which each $(x\sslash I)$ is finite.
The obvious example is $\om\op$; another is the subcategory of face maps in $\Delta\op$.

Of course, by $(\sC^I)\f$ we mean the full subcategory of $\sC^I$ on the Reedy fibrant objects.

\begin{lem}
  Suppose $I$ is admissible for \sC.
  Then a morphism in $(\sC^I)\f$ is a Reedy acyclic cofibration if and only if it has the left lifting property with respect to Reedy fibrations, and
  every morphism in $(\sC^I)\f$ factors as a Reedy acyclic cofibration followed by a Reedy fibration.
\end{lem}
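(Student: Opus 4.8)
The plan is to reuse the structure of the proof of \autoref{thm:reedy-fact} for $\bbtwo$, replacing the two explicit levels there by well-founded recursion on the rank of $I$; admissibility of $I$ is precisely what guarantees that the matching objects occurring in the recursion exist.

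I would first establish the factorization. Given $f\colon A\to B$ in $(\sC^I)\f$, I would build a Reedy fibrant diagram $C$ together with a levelwise acyclic cofibration $A\to C$ and a Reedy fibration $C\to B$ by recursion over $\prec$. Suppose $C$ has been defined on the full subcategory $\setof{y | y\prec x}$, so that $C|_{\prec x}$ is Reedy fibrant and the two induced maps $A|_{\prec x}\to C|_{\prec x}\to B|_{\prec x}$ are, respectively, a levelwise acyclic cofibration and a Reedy fibration. By~\eqref{eq:reedyslice} the restriction of $C|_{\prec x}$ to $x\sslash I$ is again Reedy fibrant, so by admissibility the matching object $M_x C\eq\lim_{x\sslash I}C$ exists; likewise $M_x A$ and $M_x B$ exist since $A$ and $B$ are Reedy fibrant. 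The maps built so far induce $M_x A\to M_x C\to M_x B$, and together with $f_x\colon A_x\to B_x$ these produce a canonical map $A_x\to M_x C\times_{M_x B}B_x$ (the pullback exists because $B_x\fib M_x B$). Using \autoref{def:ttfc}\ref{item:cat7} in $\sC$, I would factor this map as $A_x\acof C_x\fib M_x C\times_{M_x B}B_x$ and take $C_x$ to be the middle object, equipped with the evident composites to $M_x C$ (hence to each $C_y$ with $y\prec x$) and to $B_x$. Since $M_x C\times_{M_x B}B_x\to M_x C$ is a pullback of the fibration $B_x\fib M_x B$, the composite $C_x\fib M_x C$ is a fibration, so $C$ remains Reedy fibrant at $x$; moreover $C_x\fib M_x C\times_{M_x B}B_x$ expresses that $C\to B$ is a Reedy fibration at $x$, and $A_x\acof C_x$ that $A\to C$ is a Reedy acyclic cofibration there. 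Naturality of the new maps over $x\sslash I$ is forced by their $M_x C$-components, since $M_x C$ is a limit over $x\sslash I$.

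Next I would show that every Reedy acyclic cofibration has the left lifting property against Reedy fibrations, again by recursion on $\prec$. Given a square with left leg $i\colon A\to B$ a Reedy acyclic cofibration and right leg $p\colon C\to D$ a Reedy fibration, suppose a compatible diagonal $h\colon B|_{\prec x}\to C|_{\prec x}$ has been found. Then $h$ induces $M_x h\colon M_x B\to M_x C$, which combined with the component $B_x\to D_x$ yields a map $B_x\to M_x C\times_{M_x D}D_x$ fitting into a commutative square
\[\vcenter{\xymatrix{
    A_x\ar[r]\ar@{>->}[d]_\sim & C_x\ar@{->>}[d]\\
    B_x\ar[r] & M_x C\times_{M_x D}D_x
  }}\]
whose left leg is an acyclic cofibration (as $i$ is levelwise such) and whose right leg is a fibration (as $p$ is a Reedy fibration). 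Hence a lift $B_x\to C_x$ exists in $\sC$; its composite into $M_x C$ is forced to equal $M_x h$ precomposed with $B_x\to M_x B$, which is exactly the statement that the extended diagonal is natural over $x\sslash I$. This completes the recursion and gives the global lift. The remaining implication is the standard retract argument: if $f$ has the left lifting property against all Reedy fibrations, factor $f=p\,i$ as above, use the lifting property of $f$ against $p$ to exhibit $f$ as a retract of the Reedy acyclic cofibration $i$ in the arrow category, and conclude, since at each object a retract of an acyclic cofibration in $\sC$ is again an acyclic cofibration.

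I expect the main difficulty to be purely organizational rather than conceptual: one must check at each stage of the transfinite recursion that the portion of the diagram already built restricts to a Reedy fibrant diagram on $x\sslash I$ (so that $M_x$ is defined), that the several induced maps into the matching objects agree where they must, and that the chosen factorizations and lifts genuinely assemble into a functor and into a natural transformation. No property of $\sC$ beyond those in \autoref{def:ttfc} is used, together with the existence of the matching-object limits, which is exactly what admissibility (\autoref{def:Ilim}\ref{item:il1} applied to each $x\sslash I$) supplies.
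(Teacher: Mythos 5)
Your proof is correct and follows essentially the same route as the paper's: inductive factorization of $A_x \to M_x C \times_{M_x B} B_x$ at each object, inductive construction of lifts against the fibrations $C_x \fib M_x C \times_{M_x D} D_x$, and the retract argument for the converse, with admissibility invoked exactly where the matching-object limits are needed. The additional checks you flag (Reedy fibrancy of the restriction to $x\sslash I$ via~\eqref{eq:reedyslice}, naturality forced by the matching-object components) are precisely the details the paper leaves implicit.
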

\begin{proof}
  This is easy and standard.
  Given a commutative square
  \begin{equation}
    \vcenter{\xymatrix@-.5pc{
        A\ar[r]\ar[d] &
        C\ar@{->>}[d]\\
        B\ar[r] &
        D
      }}\label{eq:Irl}
  \end{equation}
  in which $A\acof B$ is a Reedy acyclic cofibration and $C\fib D$ is a Reedy fibration, we inductively define a lift $B\to C$ by lifting in the following square in \sC:
  \[\vcenter{\xymatrix{
      A_x\ar[r]\ar[d] &
      C_x\ar@{->>}[d]\\
      B_x\ar[r] &
      M_x C \times_{M_x D} D_x
    }}\]
  in which the bottom map involves the previously defined components $B_y \to C_y$ for $y\prec x$.
  Thus, Reedy acyclic cofibrations have the left lifting property with respect to Reedy fibrations.
  Similarly, to factor $f: A\to B$ as $A\acof C \fib B$, we inductively factor the induced map
  \[ A_x \too M_x C \times_{M_x B} B_x. \]
  The retract argument then implies the characterization of Reedy acyclic cofibrations.
\end{proof}

Note that these inductive steps are exactly like the ``level $1$'' steps of the proof of \autoref{thm:reedy-fact}, but where we have replaced $(-)_1$ with $(-)_x$, and $(-)_0$ with $M_x (-)$.
Most of the proofs in the remainder of this section will similarly be essentially copies of proofs from \S\S\ref{sec:sierpinski}--\ref{sec:univalence}.
We will henceforth leave such details to the reader, merely remarking on where the admissibility of $I$ is used.

For instance, if $I$ is admissible for \sC and $A\fib B$ is a Reedy fibration, then each $M_x A \fib M_x B$ is a fibration and hence so is each $M_x A \times_{M_x B} B_x \fib B_x$.
Thus, by composition, Reedy fibrations are in particular levelwise fibrations.

\begin{thm}
  If \sC is a type-theoretic fibration category and $I$ is an inverse category that is admissible for \sC, then $(\sC^I)\f$ is also a type-theoretic fibration category.
\end{thm}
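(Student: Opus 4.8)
The plan is to verify each of the conditions in \autoref{def:ttfc} for $(\sC^I)\f$ in turn, in every case running a well-founded induction along the rank of $I$ and transcribing the corresponding step of the proof of \autoref{thm:reedy-ttfc}, with the matching object $M_x(-)$ in the role previously played by $(-)_0$ and the value $(-)_x$ in the role of $(-)_1$. Admissibility of $I$ is precisely what makes each such step legitimate: by \autoref{def:Ilim} it guarantees that the matching objects $M_x(-)$ of Reedy fibrant diagrams exist and are fibrant, that $M_x$ carries Reedy fibrations to fibrations (so that each pullback $M_x A \times_{M_x B} B_x$ exists and each comparison map out of $A_x$ makes sense), that it carries Reedy acyclic cofibrations to acyclic cofibrations, and that it carries levelwise equivalences to equivalences.

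The easy conditions go as follows. Condition~\ref{item:cat1} holds with the constant diagram at $1$ as terminal object, and~\ref{item:cat7} is the lemma immediately preceding the theorem. For~\ref{item:cat2a}, that same lemma identifies the acyclic cofibrations as the Reedy acyclic cofibrations, so it remains to check that the Reedy fibrations contain all identities and isomorphisms and all maps into a terminal object (each an easy levelwise check using $M_x$ of an isomorphism) and are closed under composition; the last is the matching-object analogue of~\eqref{eq:rfib-compose}: for $B\fib A\fib C$ the induced map $B_x \to M_x B \times_{M_x C} C_x$ is the composite $B_x \fib M_x B \times_{M_x A} A_x \to M_x B \times_{M_x A}(M_x A \times_{M_x C} C_x) \cong M_x B \times_{M_x C} C_x$ of a fibration with a pullback of a fibration. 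For~\ref{item:cat3}, Reedy fibrations are in particular levelwise fibrations (as noted just above) and limits in $\sC^I$ are computed pointwise, so all pullbacks of Reedy fibrations exist; that such a pullback is again Reedy fibrant is the inductive pasting-of-pullbacks argument, copied directly from \autoref{thm:reedy-ttfc}. Finally~\ref{item:cat8} follows, exactly as in the $\bbtwo$ case, from its validity in \sC together with the facts that Reedy acyclic cofibrations are levelwise acyclic cofibrations and Reedy fibrations are levelwise fibrations, so that the entire diagram of~\ref{item:cat8} is, levelwise at each $x\in I$, an instance of~\ref{item:cat8} in \sC.

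The substantive condition is~\ref{item:cat4}. Given Reedy fibrations $B\fib A\fib C$ in $(\sC^I)\f$, one builds $\Pi_f B$ by induction on rank: assuming it has been defined on $\setof{y | y\prec x}$, so that $M_x(\Pi_f B)$ is available, one defines its value at $x$ by running the construction of \autoref{eq:small-exp} with $(-)_0,(-)_1$ replaced throughout by $M_x(-),(-)_x$ — in particular one checks, by the pasting law for pullbacks exactly as there, that the analogue of the map $\ftil$ is a fibration, so that the relevant dependent product along it exists and is a fibration. The main obstacle, and the only place real care is needed, is the bookkeeping: verifying that the matching object of the partially-constructed diagram $\Pi_f B$ really is the object the construction expects (i.e.\ that the stage-$x$ data is compatible with the restriction maps coming from higher rank), and that the pointwise universal properties assemble into an honest partial right adjoint to $f^*: \sC^I/B \to \sC^I/A$ defined on Reedy fibrations. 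This is where admissibility enters essentially, through \autoref{def:Ilim}\ref{item:il2}; once it is in hand the verification is a routine — if tedious — transcription of \S\ref{sec:sierpinski}, which I would leave to the reader, exactly as the surrounding text does for its companion statements.
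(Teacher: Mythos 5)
Your proposal is correct and follows essentially the same route as the paper, which itself proves this theorem by transcribing the proof of \autoref{thm:reedy-ttfc} with $M_x(-)$ and $(-)_x$ in place of $(-)_0$ and $(-)_1$. In particular you have correctly identified the one genuine wrinkle the paper flags, namely that in the analogue of \autoref{eq:small-exp} the base of the stage-$x$ dependent product must be the matching object $M_x(\Pi_f B)$ of the inductively constructed diagram rather than the naive substitution $\Pi_{M_x f}(M_x B)$.
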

\begin{proof}
  This is a copy of \autoref{thm:reedy-ttfc}.
  One important wrinkle is that in \autoref{eq:small-exp}, we have to replace $\Pi_{f_0} B_0$ not by $\Pi_{M_x f} (M_x B)$, but by $M_x (\Pi_f B)$.
\end{proof}

The alternative construction of path objects in $(\sC^\bbtwo)\f$ described before \autoref{thm:reedy-split} also generalizes.
In~\eqref{eq:pathobj-ac}, the very bottom map must be replaced by the induced map $M_x A \to M_x(P_B A)$; we require \autoref{def:Ilim}\ref{item:il4} to ensure that this is again an acyclic cofibration.

Next we generalize \autoref{thm:reedy-afib} and its corollaries~\ref{thm:pwhe} and~\ref{thm:funext}.

\begin{prop}\label{thm:Ireedy-afib}
  Let $I$ be admissible for \sC, and let $f: A\fib B$ be a Reedy fibration in $\sC^I$ between Reedy fibrant objects.
  Then the following are equivalent.
  \begin{enumerate}
  \item $f$ is an acyclic fibration in $\sC^I$.\label{item:iraf1}
  \item Each fibration $A_x \fib B_x$ is an acyclic fibration.\label{item:iraf2}
  \item Each fibration $A_x \fib M_x A \times_{M_x B} B_x$ is an acyclic fibration.\label{item:iraf3}
  \end{enumerate}
\end{prop}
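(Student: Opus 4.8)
The plan is to mimic the proof of \autoref{thm:reedy-afib} essentially verbatim, replacing the level-$0$ data $(-)_0$ by matching objects $M_x(-)$, the level-$1$ data $(-)_1$ by the level-$x$ data $(-)_x$, and the single inductive step by a well-founded recursion on the rank $\rho(x)$; admissibility of $I$ is exactly what makes the matching objects and the limits appearing in this recursion exist and be well-behaved.

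First I would dispose of the easy implications. For \ref{item:iraf2}$\Leftrightarrow$\ref{item:iraf3}, I argue by induction on $\rho(x)$: assuming $A_y\fib B_y$ is an acyclic fibration for every $y\prec x$, the restriction of $f$ to $x\sslash I$ is a levelwise equivalence between Reedy fibrant diagrams, so by \autoref{def:Ilim}\ref{item:il2} and~\ref{item:il3} the induced map $M_x A\to M_x B$ is both a fibration and an equivalence, hence an acyclic fibration; then by \autoref{thm:afib-stable} the pullback $M_x A\times_{M_x B} B_x\fib B_x$ is an acyclic fibration, and 2-out-of-3 lets one pass between acyclicity of $A_x\fib B_x$ and of $A_x\fib M_x A\times_{M_x B} B_x$. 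For \ref{item:iraf1}$\Rightarrow$\ref{item:iraf2}, I use the remark preceding the statement that Reedy fibrations are levelwise fibrations, and Reedy acyclic cofibrations levelwise acyclic cofibrations, so homotopies and homotopy equivalences in $(\sC^I)\f$ are computed levelwise; thus each $f_x$ is a fibration and a homotopy equivalence in \sC, i.e.\ an acyclic fibration.

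The substance is \ref{item:iraf3}$\Rightarrow$\ref{item:iraf1}. Here I would construct a section $g:B\to A$ in $\sC^I$ together with a homotopy $gf\sim 1_A$ in $(\sC^I/B)\f$, so that $f$ is a homotopy equivalence in $(\sC^I/B)\f$ and hence an acyclic fibration in $(\sC^I)\f$ by \autoref{thm:afib}. Both are built by well-founded recursion on $\rho(x)$: having defined the components $g_y$ and the homotopy data compatibly for all $y\prec x$, their limits over $x\sslash I$ supply a section of $M_x A\to M_x B$ and a homotopy over $M_x A$, and the step producing $g_x$ and the level-$x$ homotopy is word-for-word the argument in the proof of \autoref{thm:reedy-afib} with $B_{01}$ there replaced by $M_x A\times_{M_x B} B_x$. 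It invokes the Acyclic Fibration Lemma applied to $M_x A\to M_x B$ (acyclic by the already-proven \ref{item:iraf3}$\Rightarrow$\ref{item:iraf2} and \autoref{def:Ilim}), together with \autoref{thm:fibhtpy}, \autoref{thm:fibhe}, \autoref{thm:acof-cancel}, and \autoref{def:ttfc}\ref{item:cat8}. As in \autoref{thm:reedy-afib} one also needs a path object $P_B A$ in $\sC^I$ of the controlled shape for which $\map_f:P_B A\fib P_B B$ is a fibration; this is assembled by the same matching-object recursion, using \autoref{def:Ilim}\ref{item:il4} to see that each comparison $M_x A\to M_x(P_B A)$ is an acyclic cofibration, so that the concatenated homotopy can be transported to one which becomes constant in $B$.

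The main obstacle I anticipate is bookkeeping rather than a new idea: one must verify that the partial sections and partial homotopies produced at ranks below $\rho(x)$ genuinely assemble into morphisms out of the matching objects, which reduces to checking that the auxiliary $(x\sslash I)$-diagrams occurring in the construction --- of fibrations, of path objects, and so on --- are Reedy fibrant and that their limits interact correctly with the other operations, precisely the content of admissibility and \autoref{def:Ilim}. Once this is arranged, every inductive step is a transcription of an argument already given in \S\ref{sec:sierpinski}, so I would follow the convention of this section and leave those details to the reader.
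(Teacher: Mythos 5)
Your proposal is correct and follows essentially the same route as the paper: the easy implications via levelwise-ness of the Reedy classes and the fact that matching objects preserve levelwise equivalences (hence 2-out-of-3), and \ref{item:iraf3}$\Rightarrow$\ref{item:iraf1} by well-founded induction on $x$, constructing the section and the controlled path object supporting the homotopy, with the induction step being exactly the argument of \autoref{thm:reedy-afib} with $(-)_0$ replaced by $M_x(-)$ and \autoref{def:Ilim}\ref{item:il2},~\ref{item:il4} guaranteeing that matching objects preserve fibrations, acyclic cofibrations, and hence path objects and homotopies. The only difference is that you spell out the invoked lemmas in more detail than the paper, which simply defers to the earlier proof.
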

\begin{proof}
  Since matching objects of Reedy fibrant objects preserve levelwise equivalences,~\ref{item:iraf2}$\Leftrightarrow$\ref{item:iraf3} follows from 2-out-of-3 as in \autoref{thm:reedy-afib}, and~\ref{item:iraf1}$\Rightarrow$\ref{item:iraf2} is likewise immediate.
  To prove~\ref{item:iraf3}$\Rightarrow$\ref{item:iraf1}, we construct, by induction on $x\in I$, a section $g$ of $f$ and a path object $P_A B$ for $B$ in $(\sC^I/A)\f$ which supports a homotopy $g f \sim 1_B$.
  Since matching objects preserve fibrations and acyclic cofibrations (by \autoref{def:Ilim}\ref{item:il2} and~\ref{item:il4}), they also preserve path objects and hence homotopies; thus the proof of \autoref{thm:reedy-afib} gives exactly the induction step we need.
\end{proof}

\begin{cor}
  The homotopy equivalences in $(\sC^I)\f$ are the levelwise homotopy equivalences in \sC.\qed
\end{cor}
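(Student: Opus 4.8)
The plan is to follow the proof of Corollary~\ref{thm:pwhe} essentially verbatim, substituting the admissible inverse category $I$ for $\bbtwo$ and \autoref{thm:Ireedy-afib} for \autoref{thm:reedy-afib}; admissibility of $I$ is precisely what makes each ingredient of that argument go through.

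First I would establish the easy implication, that a homotopy equivalence in $(\sC^I)\f$ is a levelwise homotopy equivalence in \sC. It suffices that each evaluation functor $(\sC^I)\f \to \sC$ preserve the structure defining homotopy and homotopy equivalence: products and pullbacks are computed levelwise, Reedy fibrations are in particular levelwise fibrations (as observed just after \autoref{def:adm}), and Reedy acyclic cofibrations are levelwise by definition. Hence for any path object $P_A B$ in $(\sC^I)\f$, the diagram $B_x\acof (P_A B)_x \fib B_x\times_{A_x} B_x$ exhibits $(P_A B)_x$ as a path object for $B_x$ over $A_x$ in \sC, so a right homotopy evaluates to a right homotopy and homotopy equivalences evaluate to homotopy equivalences.

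For the converse I would take a levelwise homotopy equivalence $f\colon A\to B$ between Reedy fibrant $I$-diagrams and, using the factorization lemma in $(\sC^I)\f$, write $f = p\,i$ with $i\colon A\acof C$ a Reedy acyclic cofibration and $p\colon C\fib B$ a Reedy fibration. Since $(\sC^I)\f$ is a type-theoretic fibration category, $i$ is a homotopy equivalence there, hence a levelwise homotopy equivalence in \sC by the first part; by 2-out-of-3 for homotopy equivalences in \sC (part of the category-of-fibrant-objects structure verified in \autoref{thm:fibcat}), $p$ is a levelwise homotopy equivalence as well. Thus each $p_x\colon C_x\fib B_x$ is a fibration and a homotopy equivalence, i.e.\ an acyclic fibration in \sC, so $p$ satisfies condition~\ref{item:iraf2} of \autoref{thm:Ireedy-afib} and is therefore an acyclic fibration --- in particular a homotopy equivalence --- in $(\sC^I)\f$. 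Composing, $f = p\,i$ is a homotopy equivalence in $(\sC^I)\f$. I do not anticipate any real obstacle here: the argument is formally identical to the $\bbtwo$ case, and the only point where the generality of $I$ could intervene --- the interaction of matching objects with fibrations, acyclic cofibrations, path objects and homotopies --- has already been absorbed into \autoref{def:adm} and the proof of \autoref{thm:Ireedy-afib}. The one routine check to spell out is that Reedy fibrations and Reedy acyclic cofibrations are genuinely levelwise, which is immediate from admissibility and the definitions.
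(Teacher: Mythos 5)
Your proof is correct and is essentially the paper's own argument: the corollary is stated with a bare \qed precisely because its proof is the verbatim transcription of \autoref{thm:pwhe} with \autoref{thm:Ireedy-afib} in place of \autoref{thm:reedy-afib}, which is exactly what you have written out.
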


\begin{cor}
  If \sC satisfies function extensionality, so does $(\sC^I)\f$.\qed
\end{cor}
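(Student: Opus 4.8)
The plan is to mirror the proof of \autoref{thm:funext} essentially verbatim, substituting \autoref{thm:Ireedy-afib} for \autoref{thm:reedy-afib} and the inductive construction of Reedy dependent products for the two-level construction of \autoref{eq:small-exp}. By \autoref{thm:cat-funext}, function extensionality in $(\sC^I)\f$ amounts to the assertion that dependent products along Reedy fibrations preserve acyclicity of fibrations, while the hypothesis on \sC gives the same statement for fibrations in \sC. So I would fix a Reedy acyclic fibration $f: B\fib A$ and a Reedy fibration $g: A\fib C$ in $(\sC^I)\f$ and aim to show that the Reedy dependent product $\Pi_g B\fib C$ of $f$ along $g$ is again an acyclic fibration.

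First I would recall, from the generalization of \autoref{thm:reedy-ttfc} established above, that $\Pi_g B$ is built by well-founded induction on $I$: at each stage $x$ one reproduces the construction of \autoref{eq:small-exp} with matching objects $M_x(-)$ in place of the level-$0$ components and $(-)_x$ in place of the level-$1$ components --- with the one wrinkle, flagged there, that $\Pi_{f_0}B_0$ is replaced by $M_x(\Pi_g B)$ --- and $(\Pi_g B)_x$ is a dependent product along the resulting fibration. The second step is then to invoke the relative acyclicity criterion \autoref{thm:Ireedy-afib}\ref{item:iraf3}: it suffices to show that each induced map $(\Pi_g B)_x \fib M_x(\Pi_g B)\times_{M_x C} C_x$ is an acyclic fibration in \sC.

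For the third step I feed in the corresponding criterion for $f$. Since $f$ is a Reedy acyclic fibration, \autoref{thm:Ireedy-afib}\ref{item:iraf3} says that every map $B_x \fib M_x B\times_{M_x A} A_x$ is an acyclic fibration in \sC. Tracing this through the construction of the previous paragraph --- using that acyclic fibrations in \sC are stable under pullback (\autoref{thm:afib-stable}) and that function extensionality in \sC makes dependent product along a fibration preserve acyclic fibrations (\autoref{thm:cat-funext}) --- one finds successively that the fibration built at stage $x$ is acyclic, that the object pushed forward along it is an acyclic fibration over its base, and hence that the resulting dependent product is an acyclic fibration; so $(\Pi_g B)_x \fib M_x(\Pi_g B)\times_{M_x C} C_x$ is an acyclic fibration for every $x$. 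By \autoref{thm:Ireedy-afib}\ref{item:iraf3} once more, $\Pi_g B\fib C$ is an acyclic fibration in $(\sC^I)\f$, which is exactly the preservation property wanted.

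I expect the only real obstacle to be bookkeeping rather than anything substantive: one must check that the ``matching-object wrinkle'' --- replacing $\Pi_{f_0}B_0$ by $M_x(\Pi_g B)$ --- does not alter which maps in the construction are acyclic fibrations, and that the matching-object functors $M_x$ interact correctly with the path objects of $(\sC^I)\f$ and hence with the relative acyclicity criterion. Both are immediate, since by \autoref{def:Ilim}\ref{item:il2} and~\ref{item:il4} the functors $M_x$ send Reedy fibrations to fibrations and Reedy acyclic cofibrations to acyclic cofibrations. Given all this, the actual proof is genuinely ``a copy of \autoref{thm:funext}'', and I would write it in that one line.
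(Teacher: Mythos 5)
Your proposal is correct and is exactly the argument the paper intends: the corollary is stated as an immediate consequence of \autoref{thm:Ireedy-afib}, obtained by copying the proof of \autoref{thm:funext} with matching objects in place of the $0$-components and the noted substitution of $M_x(\Pi_g B)$ for $\Pi_{f_0}B_0$. Your tracing of the acyclicity through the stage-$x$ construction (pullback-stability plus function extensionality in \sC, then criterion \ref{item:iraf3}) is precisely the intended bookkeeping.
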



Now let $\Util \fib U$ be a universe in \sC, defining a notion of \emph{small fibration}.
We define a Reedy fibration $\Vtil\fib V$ in $(\sC^I)\f$ as follows.
For $x\in I$, by induction suppose $\Vtil\fib V$ is defined on $\setof{y | y\prec x}$.
Taking limits, we have a fibration $M_x \Vtil \fib M_x V$.
Define
\[V_x \eq (M_x V \times U \to M_x V)^{(M_x\Vtil \to M_x V)} \]
equipped with the evident fibration $V_x \to M_x V$.
By definition, we have an evaluation map $V_x \times_{M_x V} M_x \Vtil \to M_x V\times U$ over $M_x V$, hence a plain morphism $V_x \times_{M_x V} M_x \Vtil \to U$.
Let $\Vtil_x \fib V_x \times_{M_x V} M_x \Vtil$ be the small fibration named by this map.
Then by construction, $V$ is Reedy fibrant and $\Vtil\fib V$ is a Reedy fibration.

\begin{defn}
  A morphism $f: A\to B$ in $(\sC^I)\f$ is a \textbf{Reedy small-fibration} if each map $A_x \to M_x A \times_{M_x B} B_x$ is a small fibration in \sC.
\end{defn}

\begin{prop}
  $f: A\to B$ is a Reedy small-fibration if and only if it is small with respect to the universe $V$ defined above.
\end{prop}
\begin{proof}
  A copy of \autoref{thm:reedy-small}.
\end{proof}

We now need the following additional assumption.

\begin{defn}
  We say that $I$ is \textbf{admissible} for the universe $\Util\to U$ if it is admissible for \sC, and moreover Reedy $(x\sslash I)$-limits take Reedy small-fibrations to small fibrations in \sC, for any $x\in I$.
\end{defn}

If $U$-small fibrations are defined by a cardinality condition on the fibers, then $I$ is admissible for $U$ as long as this cardinality class is closed under $(x\sslash I)$-limits for each $x$.
This is the case for the univalent universes in groupoids and simplicial sets, if they are defined using an inaccessible \ka such that $|I|<\ka$.

\begin{lem}\label{thm:rfib-lw}
  If $I$ is admissible for $\Util\fib U$, then a Reedy small-fibration is in particular a levelwise small-fibration.
\end{lem}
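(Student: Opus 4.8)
The plan is to reduce the levelwise assertion to the defining property of admissibility for the universe. Fix a Reedy small-fibration $f\colon A\to B$ in $(\sC^I)\f$ and an object $x\in I$; I want $A_x\fib B_x$ to be a small fibration in \sC. First I would factor it as
\[ A_x \too M_x A \times_{M_x B} B_x \too B_x, \]
where the first map is a small fibration by the very definition of Reedy small-fibration, and the second is a pullback of $M_x A\to M_x B$ along the fibration $B_x\fib M_x B$. Since small fibrations are stable under pullback (being themselves pullbacks of $p\colon\Util\fib U$) and closed under composition by \autoref{def:univ}\ref{item:u1}, everything comes down to showing that the matching-object map $M_x A\to M_x B$ is a small fibration in \sC.

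The second step is to recognize $M_x A\to M_x B$ as the Reedy $(x\sslash I)$-limit of $f|_{x\sslash I}$, the restriction of $f$ precomposed with the forgetful functor $x\sslash I\to\setof{y | y\prec x}$, and to check that this restriction is a Reedy small-fibration in $\sC^{x\sslash I}$. Here I would use the isomorphism \eqref{eq:reedyslice}: for a nonidentity $\al\colon x\to y$ in $I$ it identifies $\al\sslash(x\sslash I)$ with $y\sslash I$, so the matching object of $A|_{x\sslash I}$ at $\al$ is exactly $M_y A$ and likewise for $B$, whence the relative map of $f|_{x\sslash I}$ at $\al$ is precisely $A_y\to M_y A\times_{M_y B} B_y$ --- a small fibration because $f$ is a Reedy small-fibration. (In particular $A|_{x\sslash I}$ and $B|_{x\sslash I}$ are Reedy fibrant, since $A$ and $B$ are, so this diagram is the sort of thing to which admissibility applies.) Now the hypothesis that $I$ is admissible for $\Util\fib U$ --- namely that Reedy $(x\sslash I)$-limits send Reedy small-fibrations to small fibrations in \sC --- gives immediately that $M_x A\to M_x B$ is a small fibration, which is exactly what remained.

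I expect no genuine obstacle: the notion of admissibility for a universe was designed precisely so that this argument would go through, just as it was in the proof that $V$ is a universe. The only point requiring a little care is the bookkeeping in the second step --- matching up the matching objects of the restricted diagram on $x\sslash I$ with those of the original diagram on $I$ --- but this is the same ``the level-$1$ case is the induction step'' manipulation used repeatedly in \S\ref{sec:invcat}, and the remaining details can safely be left to the reader.
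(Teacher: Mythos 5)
Your proof is correct and follows exactly the paper's argument: factor $A_x\to B_x$ through $M_x A\times_{M_x B}B_x$, use admissibility for the universe to make $M_xA\to M_xB$ small, and conclude by pullback-stability and closure under composition. You simply make explicit the bookkeeping (via \eqref{eq:reedyslice}) that the paper compresses into ``by assumption.''
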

\begin{proof}
  Let $A\fib B$ be a Reedy small-fibration.
  By assumption, each induced fibration $M_x A \fib M_x B$ is small, hence so is its pullback to $B_x$.
  But $A_x \to B_x$ is the composite
  \[ A_x \fib M_x A \times_{M_x B} B_x \fib B_x \]
  and is therefore also small.
\end{proof}

\begin{thm}
  If $I$ is admissible for a universe $\Util\fib U$, then $\Vtil\fib V$ is a universe, in the sense of \autoref{def:univ}, for the Reedy small-fibrations in $(\sC^I)\f$.
\end{thm}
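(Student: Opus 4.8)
The plan is to verify conditions~\ref{item:u1}--\ref{item:u3} of \autoref{def:univ} for $\Vtil\fib V$, adapting the proof of \autoref{thm:small-univ} via the standard dictionary: replace each occurrence of $(-)_1$ by $(-)_x$, replace $(-)_0$ by the matching object $M_x(-)$, replace $\Pi_{f_0}B_0$ by $M_x(\Pi_f B)$, and run every argument by well-founded induction on $x\in I$. Admissibility of $I$ for \sC makes each $M_x$ send Reedy fibrations to fibrations, so all the pullbacks in sight exist; the extra clause in the hypothesis---that each $M_x$ sends Reedy small-fibrations to small fibrations in \sC---is what closes the inductions below. We also use \autoref{thm:rfib-lw}: a Reedy small-fibration is levelwise small.

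For~\ref{item:u1}: the identity of a Reedy fibrant object is a Reedy small-fibration, since the induced maps $A_x\to M_x A\times_{M_x A}A_x\cong A_x$ are identities, which are small in \sC; and for composable Reedy small-fibrations $A\fib B\fib C$, the induced map $A_x\to M_x A\times_{M_x C}C_x$ factors, exactly as in~\eqref{eq:rfib-compose} with matching objects in place of the level-$0$ objects, as a composite of a pullback of the small induced map for $A\fib B$ followed by a pullback of the small induced map for $B\fib C$, hence is small. For~\ref{item:u2}: apply the inductive construction of $\Pi_f B$ in $(\sC^I)\f$ from the proof that $(\sC^I)\f$ is a type-theoretic fibration category, where the object called $\Pi_{f_0}B_0$ in \autoref{eq:small-exp} is $M_x(\Pi_f B)$. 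Inductively $\Pi_f B\fib C$ is Reedy small below $x$, so admissibility gives $M_x(\Pi_f B)\fib M_x C$ small; then, exactly as when verifying~\ref{item:u2} in \autoref{thm:small-univ}, $\ftil$ is a pullback of the small fibration $f_x$ so $\Pi_{\ftil}$ preserves smallness, the pullback $Q$ of the small induced fibration for $B$ is small, and hence $(\Pi_f B)_x\to M_x(\Pi_f B)\times_{M_x C}C_x$ is small. For~\ref{item:u3}: combine the $I$-indexed form of the factorization in \autoref{thm:reedy-fact} with \autoref{def:univ}\ref{item:u3} applied in \sC at each stage; given $f:A\to B$ over $C$ with $A\fib C$ and $B\fib C$ Reedy small-fibrations, inductively factor $A_x\to M_x D\times_{M_x B}B_x$ through a small fibration, where $D$ is the partial factorization built so far---here $A_x\fib C_x$ and $B_x\fib C_x$ are small by \autoref{thm:rfib-lw}, and $M_x D\fib M_x B$ is small by induction and admissibility, so the hypotheses of \autoref{def:univ}\ref{item:u3} in \sC are met.

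The one genuinely new point beyond \S\ref{sec:universes} is the single appeal, in each of~\ref{item:u2} and~\ref{item:u3}, to admissibility of $I$ \emph{for the universe} to see that the relevant matching object is small; this is precisely the hypothesis that makes the well-founded induction go through. I expect this to be the only place where real thought is needed---everything else is a mechanical transcription of the $\bbtwo$-case arguments, and I would leave the diagram-chases to the reader exactly as the paper does for the other generalizations in this section.
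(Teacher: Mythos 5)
Your proof is correct and follows essentially the same route as the paper, whose entire proof of this theorem is ``A copy of \autoref{thm:small-univ}. We do frequently have to use \autoref{thm:rfib-lw}.'' You have simply spelled out that transcription --- including the two places where admissibility for the universe and \autoref{thm:rfib-lw} are genuinely invoked --- in more detail than the paper does.
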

\begin{proof}
  A copy of \autoref{thm:small-univ}.
  We do frequently have to use \autoref{thm:rfib-lw}.
\end{proof}

\begin{thm}
  If $i: U\into U'$ is a universe embedding in \sC and $I$ is admissible for $U$ and $U'$, then there is an induced universe embedding $j: V\into V'$ in $(\sC^I)\f$.
\end{thm}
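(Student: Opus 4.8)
The plan is to repeat the proof of \autoref{thm:univ} essentially verbatim, under the now-familiar substitution of $M_x(-)$ for $(-)_0$ and $(-)_x$ for $(-)_1$, carrying out the construction of $j$ by well-founded induction on $x\in I$. Concretely, the inductive hypothesis at stage $x$ is that $j$ is already defined on the full subcategory $\setof{y|y\prec x}$, together with the two families of pullback squares that make it a ``partial universe embedding'' there: the squares exhibiting $\Vtil$ (restricted to objects $\prec x$) as $j^*\Vtil'$, and those exhibiting $V$ as $V'$-small. Taking limits over $x\sslash I$ --- which is legitimate by admissibility of $I$ for \sC and which, via \eqref{eq:reedyslice}, is the same as forming matching objects --- we obtain a fibration $M_x\Vtil\fib M_x V$, a map $M_x(j)\colon M_x V\to M_x V'$, and, because limits commute with the pullbacks in sight, the identification of $M_x\Vtil$ with the pullback $M_x(j)^*\,M_x\Vtil'$; moreover $M_x\Vtil\fib M_x V$ is still a $U'$-small fibration, since $\Vtil\fib V$ is a Reedy small-fibration and admissibility of $I$ for $U$ makes its matching objects $U$-small, hence $U'$-small.

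First I would define $j_x\colon V_x\to V'_x$ exactly as the map $i^{(1)}$ was defined after \eqref{eq:uembstr}, but with $M_x V$, $M_x\Vtil$ in place of $U$, $\Util$: a morphism $X\to V_x$ over $M_x V$ is a pair $(a\colon X\to M_x V,\ b\colon a^*M_x\Vtil\to U)$, and we send it to the pair $(M_x(j)\circ a,\ a^*M_x\Vtil\xrightarrow{b}U\xrightarrow{i}U')$, using $b^*M_x\Vtil'\cong a^*M_x\Vtil$ from the inductive hypothesis. Next I would build the pullback cube of \eqref{eq:vembcube}-type with $\Vtil_x\to\Vtil'_x$ on top: its front face is a pullback because $i$ is a universe embedding, and its back face because it merely forgets a section and $\Util=i^*\Util'$. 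Then I would build the cube of \eqref{eq:vsmallcube}-type witnessing $V_x\fib M_x V$ as $U'$-small: since $M_x\Vtil\fib M_x V$ is $U'$-small, so is the projection $M_x V\times U\fib M_x V$ (as $U$ is $U'$-small), and hence so is the local exponential $V_x=(M_x V\times U\to M_x V)^{(M_x\Vtil\to M_x V)}$ by \autoref{def:univ}\ref{item:u1}--\ref{item:u2} applied to $U'$; this shows $V\fib 1$ is a Reedy $V'$-small-fibration. Finally I would check the commuting cubes of \eqref{eq:vsumcube}-type for the unit type, dependent sums, dependent products, and identity types; since the universe-structure morphisms of $V$ and of $V'$ are given by the matching-object formulas of \S\ref{sec:universes} interpreted over $U$ and over $U'$ respectively, each such cube reduces, by the same representable bookkeeping as in \autoref{thm:univ}, to the fact that $i$ commutes with the corresponding operation.

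The one point that needs \emph{genuine} care --- and the main obstacle --- is the propagation of the inductive hypothesis: matching objects must preserve not only fibrations and acyclic cofibrations (which is \autoref{def:Ilim}\ref{item:il2} and~\ref{item:il4}, available since $I$ is admissible) but also the two kinds of pullback squares and the $U'$-smallness that together constitute a partial universe embedding. The pullback squares survive because limits commute with limits, and the smallness survives precisely because $I$ is admissible for both $U$ and $U'$; this is exactly what has been built into the statement. As everywhere else in this section, the cloven and split bookkeeping implicit in the term ``universe embedding'' is handled by invoking the coherence theorems of \S\ref{sec:catsem} once the construction is complete.
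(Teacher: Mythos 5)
Your proposal is correct and follows exactly the route the paper takes: its proof of this theorem is literally ``a copy of \autoref{thm:univ}'' with $M_x(-)$ replacing $(-)_0$ and $(-)_x$ replacing $(-)_1$, noting only the representable description of $\Vtil_x$ by triples as in~\eqref{eq:vtilx}. Your additional remarks on propagating the pullback squares and $U'$-smallness through matching objects (via admissibility for both universes) are exactly the details the paper leaves implicit in ``the rest of the proof goes through as before.''
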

\begin{proof}
  A copy of \autoref{thm:univ}.
  Now $\Vtil_x$ has the universal property that maps $X\to \Vtil_x$ correspond naturally to triples 
  \begin{equation}\label{eq:vtilx}
    \Big( X\xto{a} M_x V ,\;
    a^* (M_x\Util) \xto{b} U ,\;
    X \xto{s} b^*\Util \Big)
  \end{equation}
  and the rest of the proof goes through as before.
\end{proof}

Finally, we have:

\begin{thm}
  If $I$ is admissible for a univalent universe $U$ in \sC, then the induced universe $V$ in $(\sC^I)\f$ is also univalent.
\end{thm}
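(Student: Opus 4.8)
The plan is to transcribe the proof of \autoref{thm:univalence} almost verbatim, running it by well-founded induction over the rank of objects $x \in I$, with the level-$0$ component replaced throughout by the matching object $M_x(-)$ and the level-$1$ component by the $x$-component $(-)_x$. As in \S\ref{sec:univalence}, I would first reduce to showing that the canonical section $V \to E$ of the diagonal $V \to V \times V$ in $(\sC^I)\f$ is an equivalence, where $E \fib V \times V$ is the universal family of equivalences; the coherence theorem then supplies a genuine term witnessing univalence. Next, by the corollary to \autoref{thm:Ireedy-afib} that homotopy equivalences in $(\sC^I)\f$ are exactly the levelwise ones, it suffices to check that $V_x \to E_x$ is an equivalence in \sC for every $x$, which I would do by induction on $x$.

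For the inductive step, fix $x$ and assume the claim for all $y \prec x$. Then $V \to E$ restricts to a levelwise equivalence on $\setof{y | y \prec x}$, and since $I$ is admissible for \sC, matching objects of Reedy fibrant diagrams preserve levelwise equivalences (\autoref{def:Ilim}\ref{item:il3}, applied to the limit over $x\sslash I$), so the induced map $M_x V \to M_x E$ is an equivalence. The $x$-component of $E$ sits, exactly as in \eqref{eq:isequiv-tower}, in a tower of Reedy fibrations over $V_x \times V_x$ with $F$ carrying the function types and $E$ over $F$ carrying $\isequiv$. Since $V_x = (M_x V \times U \to M_x V)^{(M_x \Util \to M_x V)}$ classifies, relative to the ``ambient context'' given by the restriction of the diagram to $x\sslash I$, the dependent type $(A:\type) \pr (A \to \type)\ty$ in the internal language of \sC, the computation in the proof of \autoref{thm:univalence} goes through unchanged: I would evaluate $\isequiv$ explicitly as in \eqref{eq:allhiso1}, commute a dependent product past the dependent sums via function extensionality (the ``type-theoretic axiom of choice''; funext holds in $(\sC^I)\f$ by the corresponding corollary to \autoref{thm:Ireedy-afib}, and is used for $\type$-valued families as in \S\ref{sec:univalence}), and simplify using that transport along a reflexivity path is the identity, identifying the fibration $M_x V \times_{M_x E} E_x \to V_x \times_{M_x V} V_x$ up to isomorphism with the family $\prod_{a_0} \equiv(A_1(a_0), B_1(a_0))$, cf.\ \eqref{eq:alleqv1}. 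Because $M_x V \to M_x E$ is an equivalence, $2$-out-of-$3$ then reduces the goal to showing that the induced map $P_{M_x V} V_x \to M_x V \times_{M_x E} E_x$ over $V_x \times_{M_x V} V_x$ is an equivalence; but, just as in the Sierpinski case, this map factors up to homotopy as $\happly$ followed by $\Pi(\ptoe)$ (an application of $J$ showing both composites reduce to $\lambda a_0\, a_1.\, a_1$ on reflexivity). Here $\happly$ is an equivalence by \autoref{thm:strong-funext}, $\ptoe$ is an equivalence by univalence of $U$ in \sC, hence $\Pi(\ptoe)$ is one by \autoref{thm:funext-forallequiv}, and \autoref{thm:fibhe} promotes this fiberwise equivalence to an equivalence of total spaces. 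This closes the induction.

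The only genuinely new ingredient beyond \S\ref{sec:univalence} is the inductive bookkeeping: one must know that matching objects of Reedy fibrant diagrams commute appropriately with path objects and dependent products, take Reedy fibrations to fibrations, and preserve levelwise equivalences and acyclic cofibrations --- which is exactly what admissibility of $I$ guarantees, via \autoref{def:Ilim} --- and that the restriction of a Reedy fibrant diagram to $x\sslash I$ may be treated, for the internal-language computation, as a fixed context of parameters. Given \autoref{thm:Ireedy-afib} and its corollaries, which already package these facts, the rest is a line-by-line copy of the proof of \autoref{thm:univalence} with $(-)_1$ replaced by $(-)_x$ and $(-)_0$ by $M_x(-)$, so I expect the main obstacle to be purely notational --- keeping straight which matching-object limit appears where in each diagram --- rather than conceptual.
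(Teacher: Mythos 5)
Your proposal is correct and is essentially the paper's own argument: the proof given there is precisely "a copy of Theorem \ref{thm:univalence}" with the level-$0$ data replaced by matching objects, using \autoref{def:Ilim}\ref{item:il3} inductively to see that $M_x V \to M_x E$ is an equivalence. The inductive bookkeeping you describe (matching objects preserving fibrations, acyclic cofibrations, and levelwise equivalences via admissibility) is exactly the intended content, so nothing is missing.
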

\begin{proof}
  A copy of \autoref{thm:univalence}.
  Of course, the right-hand towers in Figures~\ref{fig:sectpath} and~\ref{fig:secthtpy} are replaced by matching objects, and similarly everywhere else.
  We use \autoref{def:Ilim}\ref{item:il3} to conclude, by induction, that the induced map $M_x V \to M_x E$ is an equivalence.
\end{proof}


This yields a larger class of new models of the univalence axiom.

\begin{cor}
  For any small inverse category $I$, the Reedy model category $\sSet^I$ supports a model of intensional type theory with dependent sums and products, identity types, and with as many univalent universes as there are inaccessible cardinals larger than $|I|$.\qed
\end{cor}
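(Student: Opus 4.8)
The plan is to derive this as the special case $\sC=\sSet\f$ (the category of Kan complexes) of the general results of this section. Recall that $\sSet$ with its Quillen model structure is a right proper Cisinski model category, hence a type-theoretic model category (\autoref{def:ttmc}); consequently $\sSet\f$ is a type-theoretic fibration category, and since every simplicial set is cofibrant, \autoref{rmk:ttmc-whe-he} together with \autoref{thm:ttmc-funext} shows that it satisfies function extensionality. Voevodsky's theorem (recalled in \S\ref{sec:univalence-axiom}) supplies, for each inaccessible cardinal $\ka$, a univalent universe $U_\ka$ in $\sSet\f$ whose small fibrations are exactly the Kan fibrations all of whose fibres have cardinality $<\ka$, and, for inaccessibles $\la<\ka$, a universe embedding $U_\la\into U_\ka$ (by the ``no new names'' argument of \autoref{thm:no-new-names}, or by choosing the structure by hand as in \cite{klv:ssetmodel}).

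Next I would deal with admissibility. Because the Reedy fibrations in $\sSet^I$ are in particular levelwise Kan fibrations, the Reedy-fibrant objects of $\sSet^I$ are precisely the Reedy-fibrant $I$-diagrams of Kan complexes, so $(\sSet^I)\f$ is identified — with the same Reedy fibrations and Reedy acyclic cofibrations — with $((\sSet\f)^I)\f$. Since $\sSet$ is a type-theoretic model category, every small inverse category $I$ is admissible for $\sSet\f$ (as noted after \autoref{def:adm}). Moreover, if $\ka$ is inaccessible and $|I|<\ka$, then every co-slice $x\sslash I$ has fewer than $\ka$ morphisms, so any Reedy $(x\sslash I)$-limit of $\ka$-small Kan complexes is, in each degree, a subset of a product of fewer than $\ka$ sets of cardinality $<\ka$, hence again $\ka$-small by the regularity and strong-limit property of $\ka$; thus Reedy $(x\sslash I)$-limits preserve $U_\ka$-smallness, i.e.\ $I$ is admissible for the universe $U_\ka$. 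When $\ka\le|I|$ this closure property may fail, which is why only the inaccessibles strictly larger than $|I|$ will contribute universes.

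With these hypotheses established, the theorems of this section apply to $\sC=\sSet\f$: $(\sSet^I)\f$ is a type-theoretic fibration category satisfying function extensionality; for each inaccessible $\ka>|I|$ the Reedy universe $V_\ka$ built from $U_\ka$ is a univalent universe in $(\sSet^I)\f$; and for inaccessibles $\la<\ka$ both exceeding $|I|$, the embedding $U_\la\into U_\ka$ lifts to a universe embedding $V_\la\into V_\ka$, so that these universes form a nested family. Finally, applying a coherence theorem — that of \cite{lw:localuniv}, which avoids an extra ``external'' inaccessible — replaces $(\sSet^I)\f$ equipped with all of this structure by an equivalent split type-theoretic fibration category, and \autoref{thm:syn-init} (in its version with axioms) then yields a strict interpretation into it of intensional type theory with a unit type, dependent sums and products, identity types, and one univalent universe for each inaccessible cardinal larger than $|I|$. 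This interpretation is the asserted model.

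The main obstacle — more bookkeeping than idea — is the admissibility of $I$ for the universes $U_\ka$: one must check carefully that the class of $\ka$-small Kan complexes is closed under all the limits used to form matching objects over the co-slices $x\sslash I$ (both as objects and for the fibrations between them), tracking where the inaccessibility of $\ka$ and the bound $|I|<\ka$ enter, and confirming that ``$I$ small'' indeed bounds the size of every $x\sslash I$. Once that is in hand, the corollary is a direct concatenation of results already proved.
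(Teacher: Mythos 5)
Your proposal is correct and is exactly the argument the paper intends: the corollary is stated with no written proof because it is the direct specialization of the section's theorems to $\sC=\sSet\f$, with admissibility of $I$ for $\sSet\f$ coming from the type-theoretic model category structure and admissibility for $U_\ka$ from the remark that $\ka$-small Kan fibrations are closed under $(x\sslash I)$-limits when $|I|<\ka$ and $\ka$ is inaccessible. Your expansion, including the final appeal to the coherence theorem of \cite{lw:localuniv} and \autoref{thm:syn-init}, fills in the intended steps faithfully.
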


As before, we may say that this model lives in the \io-topos $\infty \mathrm{Gpd}^I$.

\begin{rmk}
  The Reedy model structure on $\sC^I$ exists more generally than when $I$ is an inverse category: we only need $I$ to be a \emph{Reedy category} or some generalization thereof (see e.g.~\textcite{reedy,bm:extn-reedy,cisinski:presheaves}).
  But in general, the Reedy cofibrations are not levelwise (though the weak equivalences are).
  On the other hand, for suitable \sC (including simplicial sets) and \emph{any} $I$, the category $\sC^I$ has an \emph{injective model structure} in which the weak equivalences and cofibrations are levelwise.
  But in general, the injective fibrations seem to admit no simple description.

  It just so happens that when $I$ is inverse, the Reedy and injective model structures coincide.
  This coincidence actually only requires $I$ to be \emph{elegant} in the sense of \textcite{br:reedy}.
  In \textcite{shulman:elreedy} I will show that when $\sC=\sSet$, the results of this paper can be generalized to all elegant $I$ (using different methods).
\end{rmk}

\begin{rmk}
  One application of (pre)sheaf models for type theory is to exhibit the non-provability of various logical statements.
  In homotopy type theory, it is natural to treat the h-propositions as the logical propositions.
  Categorically, this corresponds to using the \io-categorical monomorphisms as the ``predicates'', and the subterminal (a.k.a.\ $(-1)$-truncated) objects as the ``propositions''.

  In particular, the ``propositional logic'' of the \io-topos $\sSet^I$ is the same as that of the 1-topos $\mathrm{Set}^I$, namely the Heyting algebra of cosieves in $I$.
  It is shown in~\textcite{b:fgfheyting} (in other language) that Heyting algebras of cosieves on inverse categories suffice to violate any propositional statement that is not an intuitionistic tautology.
  Therefore, the univalence axiom does not imply any such statement.
  It seems that even this was not previously known.
\end{rmk}

\section{Oplax limits}
\label{sec:oll}

Finally, we will show that the methods of the previous sections extend to \emph{oplax limits} over inverse categories.
This includes gluing constructions, scones, and other types of ``logical relations'', as well as the ``combinatorial realizability'' of \textcite{hw:crmtt}, and thus allows us to derive homotopical canonicity and parametricity results.

First we need to define the functors along which we can glue.

\begin{defn}\label{def:fibfr}
 A functor between type-theoretic fibration categories is a \textbf{strong fibration functor} if it preserves terminal objects, fibrations, acyclic cofibrations, and pullbacks of fibrations.
\end{defn}

Note that these are more general than the functors considered in \autoref{thm:syn-init}, which must preserve all specified structure strictly.

\begin{lem}\label{thm:ffeqv}
  A strong fibration functor preserves equivalences.
\end{lem}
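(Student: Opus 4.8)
Write $F\colon\sC\to\cD$ for the strong fibration functor. The plan is to reduce everything to the single claim that $F$ preserves acyclic fibrations, and then combine this with the hypothesis that $F$ already preserves acyclic cofibrations.

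First I would record the characterization of homotopy equivalences contained in the proof of \autoref{thm:he-stable}: a morphism in a type-theoretic fibration category is a homotopy equivalence if and only if it factors as an acyclic cofibration followed by an acyclic fibration. (One direction is axiom~\ref{item:cat7} together with the 2-out-of-3 property in the category of fibrant objects, \autoref{thm:fibcat}; the other is that both acyclic cofibrations and acyclic fibrations are homotopy equivalences.) Granting this, let $f\colon A\to B$ be an equivalence and factor it as $f = p\circ i$ with $i\colon A\acof X$ an acyclic cofibration and $p\colon X\fib B$ an acyclic fibration. Then $F(i)$ is an acyclic cofibration by hypothesis, and once we know that $F(p)$ is an acyclic fibration, $F(f) = F(p)\circ F(i)$ is again a composite of an acyclic cofibration and an acyclic fibration, hence a homotopy equivalence by the same characterization.

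So the real content is that $F$ preserves acyclic fibrations, and for this I would invoke the Acyclic Fibration Lemma (\autoref{thm:afib}). Since $p\colon X\fib B$ is an acyclic fibration there is a morphism $g\colon B\to X$ with $pg = 1_B$, together with a homotopy $H\colon gp\sim 1_X$ in $(\sC/B)\f$, defined using some path object $X\acof P_B X\fib X\times_B X$ for $X$ over $B$ (constructed as in \autoref{thm:path-fact} by factoring the diagonal). The key point is that $F$ carries this path object to a legitimate path object for $F(X)$ over $F(B)$: the object $X\times_B X$ is a pullback of the fibration $p$ along itself, so $F$ preserves it together with its two projections, and consequently $F$ sends the diagonal $X\to X\times_B X$ to the diagonal of $F(X)$ over $F(B)$, since the latter is the unique morphism whose composites with the two projections are $1_{F(X)}$; because $F$ preserves both acyclic cofibrations and fibrations, $F(X)\acof F(P_B X)\fib F(X)\times_{F(B)} F(X)$ is a factorization of that diagonal of the required form. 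Transporting the remaining data through $F$ then gives $F(g)\colon F(B)\to F(X)$ with $F(p)F(g) = 1_{F(B)}$, and $F(H)$ is a homotopy $F(g)F(p)\sim 1_{F(X)}$ in $(\cD/F(B))\f$; since $F(p)$ is a fibration, the ``if'' direction of \autoref{thm:afib} shows it is a homotopy equivalence, hence an acyclic fibration. Combined with the previous paragraph, this proves the lemma.

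The step I expect to be the only delicate one is the verification that the path object $P_B X$ is preserved by $F$, since path objects are not among the structures ($1$, fibrations, acyclic cofibrations, pullbacks of fibrations) that $F$ preserves by definition. But this is essentially forced: $F$ preserves the relevant pullback square, so it preserves the diagonal whose factorization defines the path object, and it preserves both halves of the weak factorization system appearing in that factorization, so a path object is sent to a path object. Everything else is routine bookkeeping with the defining data of a homotopy equivalence.
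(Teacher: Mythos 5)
Your proof is correct, but it takes a longer route than necessary, and the detour obscures how little is actually needed. The paper's own proof is one sentence: a strong fibration functor preserves fibrations and acyclic cofibrations, hence preserves path objects, hence preserves homotopies, hence preserves homotopy equivalences. The key step in your argument --- the careful verification that $F$ carries a path object $X\acof P_B X\fib X\times_B X$ to a path object for $F(X)$ over $F(B)$, because $F$ preserves the pullback $X\times_B X$, the diagonal, and both halves of the factorization --- is exactly the paper's key step (applied with $B=1$, where $A\times A$ is a pullback of the fibrations $A\fib 1$). But once you have that, you need neither the factorization of $f$ into an acyclic cofibration followed by an acyclic fibration nor the Acyclic Fibration Lemma: given a homotopy inverse $g$ of $f$ with homotopies $H\colon gf\sim 1_A$ and $K\colon fg\sim 1_B$ defined via path objects $PA$ and $PB$, applying $F$ to $g$, $H$, and $K$ directly exhibits $F(g)$ as a homotopy inverse of $F(f)$. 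What your approach buys is a slightly stronger intermediate statement (that strong fibration functors preserve acyclic fibrations, via \autoref{thm:afib}), which is true and occasionally useful, but for this lemma it is extra machinery.
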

\begin{proof}
  Since it preserves fibrations and acyclic cofibrations, it preserves path objects and therefore preserves homotopies, hence also homotopy equivalences.
\end{proof}

Let \nTTFC denote the category of type-theoretic fibration categories and strong fibration functors.

\begin{defn}
  Suppose $I$ is a category and $\sC :I\op \to\nCAT$ is a functor, written $x\mapsto \sC_x$ and $\alpha\mapsto\alpha^*$.
  The \textbf{oplax limit} of \sC is the following category, which we denote $\oplim I \sC$.
  \begin{itemize}
  \item Its objects consist of:
    \begin{enumerate}
    \item For each $x\in I$, an object $A_x \in \sC_x$; and
    \item For each $\alpha:x\to y$ in $I$, a morphism $A_\alpha:A_x \to \alpha^*(A_y)$ in $\sC_x$; such that
    \item For each $x$ we have $A_{1_x} = 1_{A_x}$; and
    \item For each $\alpha$, $\beta$ we have $\alpha^*(A_\beta) \circ A_\alpha = A_{\beta\alpha}$.
    \end{enumerate}
  \item Its morphisms $f:A\to B$ consist of:
    \begin{enumerate}
    \item For each $x\in I$, a morphism $f_x: A_x \to B_x$ in $\sC_x$; such that
    \item For each $\alpha:x\to y$, we have $B_\alpha \circ f_x = \alpha^*(f_y)\circ A_\alpha$.
    \end{enumerate}
  \end{itemize}
\end{defn}

The oplax limit has the universal property that for any category \sA, functors $\sA \to \oplim I \sC$ are in natural bijection with oplax natural transformations from the constant $I\op$-diagram at $\sA$ to the diagram $\sC$.
It can also be described as the category of sections of the Grothendieck construction of $\sC$.
Note that the oplax limit of the constant functor at a category \sC is just the diagram category $\sC^I$.

Now suppose that $I$ is an inverse category, $x\in I$, and $A\in\oplim{\setof{y|y\prec x}}{\sC}$.
Then we have a diagram in $\sC_x$ defined on $(x\sslash I)$ which takes $\alpha:x\to y$ to $\alpha^*(A_y)$.
We define the \textbf{matching object} $M_x A$ to be the limit of this diagram:
\begin{equation}
  M_x A \eq \lim_{(\alpha:x\to y) \in (x\sslash I)} \alpha^*(A_y)
\end{equation}
if it exists.
Then to give an extension of $A$ to $x$, in the evident sense, is precisely to give an object $A_x\in\sC_x$ with a map $A_x \to M_x A$, and similarly for morphisms as in the constant case.

\begin{defn}
  If $I$ is inverse and $\sC:I\op \to\nTTFC$, then a \textbf{Reedy fibration} is a morphism $f:A\to B$ in $\oplim I \sC$ such that $A$ and $B$ have all matching objects, each pullback $M_x A \times_{M_x B} B_x$ exists, and each map
  \[ A_x \to M_x A \times_{M_x B} B_x \]
  is a fibration in $\sC_x$.
  A \textbf{Reedy acyclic cofibration} is a levelwise acyclic cofibration.
\end{defn}

In particular, $A$ is Reedy fibrant iff it has all matching objects and each map $A_x \to M_x A$ is a fibration in $\sC_x$.
Generalized Reedy homotopy theories similar to this one were considered in \textcite{johnson:modreedy}.

\begin{defn}
  Suppose $I$ is inverse and $\sC:I\op \to\nTTFC$.
  We say \sC is \textbf{admissible} if $\sC_x$ has Reedy $(x\sslash I)$-limits for every $x\in I$, and these are preserved by $\alpha^*$ for any $\alpha:y\to x$.
\end{defn}

Of course, the constant functor at \sC is admissible just when \sC is admissible for $I$ in the sense of \autoref{def:adm}.
The lemmas in \S\ref{sec:invcat} also show:
\begin{itemize}
\item If each $\sC_x$ is a type-theoretic model category and each functor $\sC_x$ is continuous, then \sC is admissible.
\item If each $(x\sslash I)$ is finite, then \sC is admissible.
\end{itemize}
We write ${\oplim I \sC}\f$ for the full subcategory of $\oplim I \sC$ on the Reedy fibrant objects.

\begin{thm}
  Suppose $\sC:I\op\to\nTTFC$ is admissible.  Then:
  \begin{enumerate}
  \item ${\oplim I \sC}\f$ is a type-theoretic fibration category.
  \item The homotopy equivalences in ${\oplim I \sC}\f$ are the levelwise ones.
  \item If each $\sC_x$ satisfies function extensionality, so does ${\oplim I \sC}\f$.
  \end{enumerate}
\end{thm}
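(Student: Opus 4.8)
The plan is to carry over, essentially verbatim, the inductive arguments of \S\ref{sec:sierpinski}--\S\ref{sec:invcat}, with the single bookkeeping change that the pair $\bigl((-)_x, M_x(-)\bigr)$ now has $M_x A = \lim_{(\al:x\to y)}\al^*(A_y)$ computed inside the category $\sC_x$. Two facts make every step go through. First, each restriction functor $\al^*$ is a strong fibration functor, so it preserves terminal objects, fibrations, acyclic cofibrations, and pullbacks of fibrations --- hence also path objects and homotopies --- and, by \autoref{thm:ffeqv}, equivalences. Second, the admissibility hypothesis on $\sC$ guarantees that for each $x$ the diagram $(\al:x\to y)\mapsto\al^*(A_y)$ on $x\sslash I$ is Reedy fibrant in $\sC_x$ (using \eqref{eq:reedyslice} and the preservation of the relevant limits by the $\al^*$, which identifies $M_\al$ of this diagram with $\al^*(M_y A)$), and that $M_x$ inherits the good behaviour of \autoref{def:Ilim} in $\sC_x$: it sends Reedy fibrations of Reedy fibrant objects to fibrations, Reedy acyclic cofibrations to acyclic cofibrations, and levelwise equivalences to equivalences --- in particular acyclic Reedy fibrations to acyclic fibrations. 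Granting these, the proof of \autoref{thm:reedy-fact} generalizes, with the two-step induction replaced by transfinite induction on $\rho(x)$: Reedy acyclic cofibrations are exactly the maps with the left lifting property against Reedy fibrations, and every map factors as one followed by the other.

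For part (1) I would verify the axioms of \autoref{def:ttfc} in order, as in \autoref{thm:reedy-ttfc}. The terminal object is the constant diagram at $1$; pullback-stability of Reedy fibrations is the same pasting-of-pullbacks computation, now invoking that the $\al^*$ and the $M_x$ preserve pullbacks of fibrations; and the dependent product $\Pi_g B$ is built as in \autoref{eq:small-exp}, but with $M_x(\Pi_g B)$ in place of $\Pi_{f_0}B_0$, exactly as indicated for the diagram case in \S\ref{sec:invcat}. The factorization axiom is the factorization half of the generalized \autoref{thm:reedy-fact}, and the last axiom is levelwise, since Reedy fibrations are in particular levelwise fibrations and Reedy acyclic cofibrations are levelwise. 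Identity types are handled by the alternative path-object construction of \S\ref{sec:sierpinski} in its inverse-category form; its one new ingredient is that $M_x$ carries the induced map $M_x A\to M_x(P_B A)$ to an acyclic cofibration, which is \autoref{def:Ilim}\ref{item:il4} applied in $\sC_x$.

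Parts (2) and (3) both run off the oplax analogue of \autoref{thm:reedy-afib}/\autoref{thm:Ireedy-afib}: a Reedy fibration $f:A\to B$ between Reedy fibrant objects of ${\oplim I \sC}\f$ is an acyclic fibration if and only if each $A_x\fib B_x$ is, if and only if each $A_x\fib M_x A\times_{M_x B}B_x$ is. The substantive implication is proved by induction on $x$, constructing a section $g$ and a path object $P_A B$ carrying a homotopy $gf\sim 1_B$, and the induction step is the body of \autoref{thm:reedy-afib} (as in \autoref{thm:Ireedy-afib}), since $M_x$ and the functors $\al^*$ preserve path objects and homotopies. Part (2) then follows, as in \autoref{thm:pwhe}, by factoring a levelwise homotopy equivalence as a Reedy acyclic cofibration followed by a Reedy fibration and applying 2-out-of-3. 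For part (3) the argument of \autoref{thm:funext} applies: given a Reedy acyclic fibration $f$ and a Reedy fibration $g$, the construction of $\Pi_g(f)$ at level $x$ is assembled from dependent products along fibrations in $\sC_x$, pullbacks of fibrations, and matching objects; since each $\sC_x$ satisfies function extensionality --- so dependent products there preserve acyclic fibrations, by \autoref{thm:cat-funext} --- and acyclic fibrations are stable under pullback (\autoref{thm:afib-stable}) and preserved by $M_x$, the map $(\Pi_g B)_x\fib M_x(\Pi_g B)\times_{M_x C}C_x$ is again an acyclic fibration, so $\Pi_g(f)$ is a Reedy acyclic fibration; by \autoref{thm:cat-funext} this is function extensionality for ${\oplim I \sC}\f$.

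I expect the main obstacle to be the oplax acyclic fibration characterization (the analogue of \autoref{thm:Ireedy-afib}): it is the most delicate of the induced-structure inductions, and the one point at which one must check that the path objects and homotopies chosen level-by-level in the various $\sC_x$ are genuinely compatible with the restriction functors $\al^*$. What rescues it is precisely that a strong fibration functor preserves acyclic cofibrations and fibrations, hence path objects and homotopies; so no coherence difficulty arises beyond those already met in \S\ref{sec:invcat}, and --- as announced at the start of that section --- the remaining (splitness) coherence is left to a coherence theorem applied after the construction.
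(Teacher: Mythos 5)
Your proposal is correct and takes exactly the paper's route: the paper's own proof is a one-line reduction to the arguments of \S\ref{sec:invcat}, justified by the observation that strong fibration functors (here the restriction functors $\al^*$) preserve all the same structure as Reedy limits, which is precisely the pair of facts you isolate at the outset. Your expansion of the details (the identification of the matching objects of the diagram $\al\mapsto\al^*(A_y)$ with $\al^*(M_yA)$ via admissibility, and the level-by-level transposition of \autoref{thm:reedy-ttfc}, \autoref{thm:Ireedy-afib}, and \autoref{thm:funext}) is faithful to what the paper leaves implicit.
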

\begin{proof}
  Just like the corresponding facts in \S\ref{sec:invcat}, using the fact that by definition and by \autoref{thm:ffeqv}, strong fibration functors preserve all the same structure as Reedy limits.
\end{proof}

Now suppose for each $x\in I$ we have a universe $\Util_x \fib U_x$ in $\sC_x$.
We define a Reedy fibration $\Vtil\fib V$ in ${\oplim I \sC}\f$ just as in \S\ref{sec:invcat}, replacing $\Util\fib U$ by $\Util_x \fib U_x$ at each appropriate place.
In particular, the fibration $V_x \fib M_x V$ is the local exponential
\[(M_x V \times U_x \to M_x V)^{(M_x\Vtil \to M_x V)}. \]

\begin{defn}
  A morphism $f: A\to B$ in ${\oplim I \sC}\f$ is a \textbf{Reedy small-fibration} if each map $A_x \to M_x A \times_{M_x B} B_x$ is a small fibration in $\sC_x$.
\end{defn}

\begin{prop}
  $f: A\to B$ is a Reedy small-fibration if and only if it is small with respect to the universe $V$ defined above.
\end{prop}

\begin{defn}
  We say that $\sC:I\op\to\nTTFC$ is \textbf{admissible} for the chosen universes $\Util_x\to U_x$ if it is admissible, and moreover Reedy $(x\sslash I)$-limits take Reedy small-fibrations to small fibrations in $\sC_x$, for any $x\in I$.
\end{defn}

\begin{thm}
  If $\sC:I\op\to\nTTFC$ is admissible for universes $\Util_x\fib U_x$, then $\Vtil\fib V$ is a universe for the Reedy small-fibrations in ${\oplim I \sC}\f$.
\end{thm}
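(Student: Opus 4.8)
The plan is to proceed exactly as in the proofs of \autoref{thm:small-univ} and its inverse-category generalization, of which this statement is a direct copy: I would verify the three conditions of \autoref{def:univ} for the Reedy fibration $\Vtil\fib V$ in ${\oplim I \sC}\f$ by well-founded induction on $I$, with the role played by $(-)_1$ in the Sierpinski case now played by the $x$-component $(-)_x$, and the role played by $(-)_0$ (or, more precisely, by the structure pulled back along transition morphisms) now played by the matching objects $M_x(-)$. The one preliminary I would isolate first is the evident analogue of \autoref{thm:rfib-lw}: if $f:A\to B$ is a Reedy small-fibration, then for each $x$ the induced fibration $M_x A\fib M_x B$ is small in $\sC_x$. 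This is precisely the content of the admissibility-for-universes hypothesis, applied to the diagram $\alpha\mapsto\alpha^*(A_y)$ on $x\sslash I$: each transition functor $\alpha^*$, being a strong fibration functor, carries the component small fibrations of $f$ (and the relevant pullbacks of them) to small fibrations, so this restricted diagram is a Reedy small-fibration, and Reedy $(x\sslash I)$-limits then make $M_x A\fib M_x B$ small; composing small fibrations in $\sC_x$, $f$ is moreover levelwise a small-fibration.

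With this in hand the three conditions transcribe the earlier proofs. For \autoref{def:univ}\ref{item:u1}, given Reedy small-fibrations $A\xto{f}B\xto{g}C$, the map $A_x\to M_x A\times_{M_x C}C_x$ factors as in \eqref{eq:rfib-compose} into a small fibration (a component of $f$) followed by a pullback of a small fibration (a component of $g$), hence is small; identities are trivial. For \autoref{def:univ}\ref{item:u2} I would revisit the construction of $\Pi_f B$ from \autoref{eq:small-exp}, now with $M_x(\Pi_f B)$ in place of $\Pi_{f_0}B_0$: since $f_x$ and its pullback $\ftil$ are small and $Q\fib P$ is a pullback of a component of $g$, the dependent product $\Pi_{\ftil}(Q)$ is a small fibration over $C_x\times_{M_x C}M_x(\Pi_f B)$, while $M_x(\Pi_f B)\fib M_x C$ is small by the admissibility hypothesis applied to the Reedy small-fibration $\Pi_f B|_{x\sslash I}\to C|_{x\sslash I}$. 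For \autoref{def:univ}\ref{item:u3} one inductively factors $A_x\to M_x A\times_{M_x B}B_x$ using \autoref{def:univ}\ref{item:u3} for $\sC_x$, exactly as in the copy of \autoref{thm:small-univ}, the only new ingredient again being that the constructed $M_x D\fib M_x B$ comes out small. Splitness is not tracked by hand here: as announced at the start of \S\ref{sec:invcat}, one appeals to the coherence theorems after the construction is complete.

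I do not expect any genuine difficulty; the argument is bookkeeping. The step I would flag is keeping straight which limits must remain inside the class of small fibrations — in particular, recognising that the hypothesis ``Reedy $(x\sslash I)$-limits take Reedy small-fibrations to small fibrations in $\sC_x$'' is always being used via diagrams of the form $\alpha\mapsto\alpha^*(A_y)$, so that one is implicitly relying on each transition functor $\alpha^*$ preserving small fibrations (a property that should be read into ``admissible for the universes $\Util_x\to U_x$'', or stated explicitly alongside it). Everything else is a mechanical transcription of \S\S\ref{sec:universes}--\ref{sec:invcat}.
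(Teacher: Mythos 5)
Your proposal matches the paper's own treatment: the theorem is established there precisely by transcribing \autoref{thm:small-univ} (through its inverse-diagram version, using the analogue of \autoref{thm:rfib-lw}), with $(-)_x$ and $M_x(-)$ playing the roles of $(-)_1$ and $(-)_0$, strong fibration functors supplying the preservation of the relevant structure, and splitness handled by appealing to coherence theorems afterwards. The point you flag about the transition functors $\alpha^*$ is just how the admissibility-for-universes hypothesis is meant to be read (it is applied to the matching-object diagrams $\alpha\mapsto\alpha^*(A_y)$), so it is a correct gloss rather than a missing assumption.
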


\begin{thm}
  If $i: U_x\into U'_x$ is a universe embedding in $\sC_x$ for each $x$, and $\sC:I\op\to\nTTFC$ is admissible for both families of universes, then there is an induced universe embedding $j: V\into V'$ in ${\oplim I \sC}\f$.
\end{thm}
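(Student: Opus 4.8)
The plan is to follow the proof of \autoref{thm:univ} essentially line for line, under the translation already used throughout \S\ref{sec:invcat}: the role played there by the component $(-)_1$ is played here by $(-)_x$, that of $(-)_0$ by the matching object $M_x(-)$, and the trivial transition map of $\bbtwo$ is replaced by the pullback functors $\al^*$. First I would construct $j\colon V\to V'$ by well-founded induction on $x\in I$. Suppose $j$ has been defined on the full subcategory spanned by the objects $y\prec x$, and that there it fits into a pullback square $\Vtil\to\Vtil'$ over $V\to V'$. Since each $\al^*$ is a strong fibration functor, it preserves pullbacks of fibrations, and limits preserve pullbacks; so taking the limit over $(\al\colon x\to y)\in x\sslash I$ turns these levelwise pullback squares into a pullback square $M_x\Vtil\to M_x\Vtil'$ lying over the induced map $M_xj\colon M_xV\to M_xV'$. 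One then defines $j_x\colon V_x\to V'_x$ representably, exactly as $i^{(1)}$ was defined after~\eqref{eq:uembstr}: since $V_x=(M_xV\times U_x\to M_xV)^{(M_x\Vtil\to M_xV)}$, a map into $V_x$ is a pair consisting of a map $a$ into $M_xV$ and a map $b\colon a^*(M_x\Vtil)\to U_x$, and $j_x$ post-composes $a$ with $M_xj$ and post-composes $b$, via the canonical identification $a^*(M_x\Vtil)\iso (M_xj\circ a)^*(M_x\Vtil')$, with $i$.

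Next I would exhibit the two pullback cubes required of a universe embedding, checking each level by level as in \autoref{thm:univ}. For the square $\Vtil\to\Vtil'$ over $V\to V'$: at level $x$, the front face is a pullback because $i\colon U_x\into U'_x$ is a universe embedding in $\sC_x$, while the back face is a pullback because its two vertical maps merely forget the ``section'' datum $s$ in the description~\eqref{eq:vtilx}, so that a triple $(a,b,s)$ factors through $\Vtil_x$ precisely when $a$ and $b$ factor through $M_xV$ and $U_x$; and this holds because $\Util_x=i^*\Util'_x$ together with the inductive identification $M_x\Vtil=(M_xj)^*(M_x\Vtil')$ just established. For the square $V\to\Vtil'$ over $1\to V'$ witnessing that $V$ is $V'$-small, I would choose the $x$-component of the map $1\to V'$ exactly as $v_1$ was chosen in~\eqref{eq:vsmallcube} (naming the appropriate dependent type built from the chosen name of $U_x$ and from $i$); its front face is then a pullback by construction, and its back face is a pullback by the same ``forget the section'' argument.

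It remains to check that $j$ commutes with all the structure morphisms of the universes $V$ and $V'$ --- those implementing the unit type, dependent sums, dependent products, identity types, and (if present) the natural numbers type. This is again a level-by-level verification of commuting cubes of the shape of~\eqref{eq:vsumcube}. Recall from the proof of \autoref{thm:small-univ} that at level $x$ each such structure morphism for $V$ is obtained by feeding the data parametrizing the relevant local exponential into the corresponding structure morphism for $U_x$ (applied twice, in the $\Sigma$ and $\Pi$ cases). The front face of each cube commutes because $i$ is a universe embedding in $\sC_x$, and the back face commutes because the relevant induced maps (the analogues of $i^{(1\times1)}$, etc.) act on the parametrizing data simply by post-composition with $i$, which commutes with the structure morphisms of $U_x$ and $U'_x$ by hypothesis --- exactly as in the last paragraph of the proof of \autoref{thm:univ}. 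The unit type and natural numbers cases reduce, as after \autoref{thm:no-new-names}, to choosing the classifying maps for $V'$ to be suitable composites.

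The only genuine content, and hence the main thing to watch, is the bookkeeping at each inductive step: one must check that the matching-object construction transforms pullback squares and the various representable universal properties in the way the proof of \autoref{thm:univ} does at level $1$. Here this rests entirely on the admissibility hypothesis, which guarantees both that the needed $(x\sslash I)$-limits exist and that the functors $\al^*$ preserve the classes of maps --- fibrations, small fibrations, acyclic cofibrations, and pullbacks of fibrations --- that occur. Since cloven and split structure have been abandoned in this section, to be restored afterward by a coherence theorem, there is no new conceptual ingredient beyond \autoref{thm:univ}, and the proof is a routine (if somewhat lengthy) transcription.
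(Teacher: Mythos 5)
Your proposal is correct and is essentially the paper's own argument: the paper simply declares this theorem to be a copy of \autoref{thm:univ} under the translation $(-)_1\mapsto(-)_x$, $(-)_0\mapsto M_x(-)$, using the universal property of $\Vtil_x$ via triples as in~\eqref{eq:vtilx}, with admissibility and the strong-fibration-functor hypotheses supplying exactly the preservation properties you invoke. Your write-up just spells out the level-by-level induction and cube checks that the paper leaves implicit.
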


Finally, using \autoref{thm:ffeqv} again, we have:

\begin{thm}
  If $\sC:I\op\to\nTTFC$ is admissible for a family of univalent universes $U_x$, then the induced universe $V$ in ${\oplim I \sC}\f$ is also univalent.
\end{thm}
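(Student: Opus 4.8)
The plan is to copy the proof of \autoref{thm:univalence} almost verbatim, exactly as the corresponding statement in \S\ref{sec:invcat} was obtained. Let $E\to V\times V$ be the universal space of equivalences in ${\oplim I \sC}\f$, representing the dependent type $(A:\type),(B:\type)\pr\equiv(A,B)\ty$, and let $s:V\to E$ be the section sending each type to its identity equivalence. We must show $s$ is an equivalence in ${\oplim I \sC}\f$; since the homotopy equivalences there are exactly the levelwise ones, it suffices to show that each component $s_x:V_x\to E_x$ is an equivalence in $\sC_x$, and we prove this by well-founded induction on $x\in I$.

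At stage $x$, all of the relevant structure of ${\oplim I \sC}\f$ restricts to the structure of $\sC_x$ together with the matching objects, with $(-)_x$ playing the role of $(-)_1$ and $M_x(-)$ playing the role of $(-)_0$ in the proof of \autoref{thm:univalence}. Running that proof internally in $\sC_x$ --- the manipulations of $\isequiv$, the ``type-theoretic axiom of choice'' swapping a dependent sum past a dependent product, and the factorization of the comparison map as $\happly$ followed by $\Pi(\ptoe)$, together with \autoref{thm:strong-funext}, \autoref{thm:funext-forallequiv}, and \autoref{thm:fibhe} --- shows that $s_x$ is an equivalence provided: (i) function extensionality holds in $\sC_x$ (which we have, e.g.\ via the nested univalent universes, and which in any case lifts to ${\oplim I \sC}\f$ by the preceding theorem); (ii) $U_x$ is univalent in $\sC_x$ (our hypothesis); and (iii) the induced map $M_x V\to M_x E$ is an equivalence in $\sC_x$.

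The one genuinely new point is (iii). By the inductive hypothesis, $s$ restricts to a levelwise equivalence of Reedy fibrant objects on $\setof{y | y\prec x}$. Each transition functor $\alpha^*$ with $\alpha:x\to y$ is a strong fibration functor, hence preserves equivalences by \autoref{thm:ffeqv}; therefore the induced morphism of $(x\sslash I)$-diagrams in $\sC_x$ is again a levelwise equivalence between Reedy fibrant objects. Since $\sC$ is admissible, $\sC_x$ has Reedy $(x\sslash I)$-limits, and by \autoref{def:Ilim}\ref{item:il3} these send levelwise equivalences between Reedy fibrant objects to equivalences; so $M_x V\to M_x E$ is an equivalence, completing the induction. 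Hence $s$ is a levelwise, and therefore genuine, equivalence in ${\oplim I \sC}\f$, so $V$ is univalent. The main obstacle is purely bookkeeping: checking that the internal-type-theory computation of \autoref{thm:univalence} goes through in $\sC_x$ under the above substitutions, which it does because nothing there uses more than that $\sC_x$ is a type-theoretic fibration category with function extensionality, a univalent universe, and well-behaved matching objects --- all supplied by admissibility and \autoref{thm:ffeqv}.
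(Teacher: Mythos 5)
Your proposal is correct and follows essentially the same route as the paper: the paper proves this by transcribing the proof of \autoref{thm:univalence} with $(-)_x$ and $M_x(-)$ in place of $(-)_1$ and $(-)_0$, using \autoref{def:Ilim}\ref{item:il3} inductively for $M_x V\to M_x E$ and invoking \autoref{thm:ffeqv} for exactly the point you isolate as (iii), namely that the transition functors $\alpha^*$ preserve equivalences so the relevant $(x\sslash I)$-diagram morphism is a levelwise equivalence.
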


\section{Gluing, scones, and canonicity}
\label{sec:scones}

In this section, we apply a particular case of an oplax limit to prove a \emph{homotopy canonicity} result.
Traditional canonicity for type theory means that every term of natural number type $N$ is equal, judgmentally, to a \emph{numeral} --- one of the form $s^n o$ for some external natural number $n\in\mathbb{N}$.
This fails when we add axioms such as univalence and function extensionality.
Homotopy canonicity refers to a statement that nevertheless any term of type $N$ is \emph{propositionally} equal to a numeral, i.e.\ we have $x \id s^no$ for some $n\in\mathbb{N}$.
Voevodsky has conjectured that type theory with univalence satisfies homotopy canonicity in this sense; here we prove the conjecture for one univalent universe in the presence of an additional 1-truncation axiom.

The particular oplax limit we use is a \textbf{gluing construction}, which is the case when the indexing category is $I=\bbtwo$.
In this case, we have a single strong fibration functor $\Gamma:\sC\to\sD$, and the oplax limit $\oplim \bbtwo \sC$ is the comma category $(\sD\dn\Gm)$.
A Reedy fibrant object of $(\sD\dn\Gm)$ is a fibration $A_1 \fib \Gm(A_0)$.

The most common $\Gamma$ to use is a ``global sections'' functor, which in the simplest case is set-valued.
We regard \nSet as a type-theoretic fibration category where every function is a fibration; thus, the acyclic cofibrations and the equivalences are just the isomorphisms.
The ordinary global sections functor $\sC(1,-):\sC\to\nSet$ preserves limits and fibrations, but not acyclic cofibrations and hence not equivalences.
Hence, for homotopical canonicity, we must use some sort of quotient or homotopical quotient of this functor.

To start with, we define $\Gamma_0:\sC\to\nSet$ to be the quotient of $\sC(1,-)$ by the homotopy relation:
\[ \Gamma_0(A) \eq \sC(1,A)/\sim. \]
Note that this is independent of whatever choices of path objects we make in \sC.
It easily preserves terminal objects, fibrations, acyclic cofibrations, and equivalences, but does not in general preserve pullbacks.
However, it does preserve pullbacks when restricted to objects that are \emph{0-truncated} in the following sense.

\begin{defn}
  The notion of an object $A$ of a type-theoretic fibration category \sC being \textbf{$n$-truncated} is defined by induction as follows:
  \begin{itemize}
  \item $A$ is \textbf{$(-1)$-truncated} if it is an h-proposition, i.e.\ any two morphisms with codomain $A$ are homotopic.
  \item $A$ is \textbf{$(n+1)$-truncated} if $P A$ is an $n$-truncated object of $(\sC/A\times A)\f$.
  \end{itemize}
\end{defn}

If we have function extensionality, it is equivalent to start the induction with the $(-2)$-truncated objects being the contractible ones; but we will not need this.
We can define truncatedness internally to type theory as well:
\begin{align*}
  \istrunc {(-1)} (A) &\eq \isprop(x\id y)\\
  \istrunc{(n+1)}(A) &\eq \prod_{x:A} \; \prod_{y:A} \istrunc n (x\id y).
\end{align*}
By Lemmas~\ref{thm:isprop-isprop} and~\ref{thm:prop-forall} and induction on $n$, the type $\istrunc n(A)$ is always an h-proposition.

\begin{defn}
  A type-theoretic fibration category \sC is \textbf{$n$-truncated} if all objects of \sC are $n$-truncated.
\end{defn}

Thus, for instance, \sC is $(-1)$-truncated if any two morphisms are homotopic, so that up to homotopy \sC is essentially just a partial order.
Similarly, \sC is 0-truncated if any two parallel homotopies are homotopic, so that up to homotopy \sC is essentially just an ordinary category.

\begin{lem}\label{thm:global-sections0}
  If \sC is 0-truncated, then $\Gamma_0:\sC\to\nSet$ is a strong fibration functor.
\end{lem}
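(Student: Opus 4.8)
The plan is to verify the four conditions in \autoref{def:fibfr} for $\Gamma_0$, using the hypothesis that \sC is $0$-truncated. First I would recall what has already been noted in the paragraph before \autoref{thm:global-sections0}: $\Gamma_0$ always preserves terminal objects, fibrations, acyclic cofibrations, and equivalences — these follow easily because $\sC(1,-)$ preserves terminal objects and (by the lifting property for fibrations against $1\acof P1$ and the fact that $\sC(1,-)$ takes fibrations to surjections) the quotient by homotopy behaves well under all of these. Indeed, $\sC(1,1)=1$, so $\Gamma_0(1)=1$; a fibration $p:B\fib A$ induces a surjection on homotopy classes of points (lift a chosen point of $A$ along $p$), which is the defining property of a fibration in \nSet since there every map is a fibration; and the characterization of acyclic cofibrations in \autoref{thm:acof} plus \autoref{thm:fibcat} show homotopy equivalences are carried to bijections. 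So the only remaining point is that $\Gamma_0$ preserves \emph{pullbacks of fibrations}.

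The heart of the argument is therefore the following: given a pullback square in \sC with $B\fib A$ a fibration and $C\to A$ arbitrary, I must show that the canonical comparison map
\[ \Gamma_0(B\times_A C) \too \Gamma_0(B)\times_{\Gamma_0(A)} \Gamma_0(C) \]
is a bijection. Surjectivity is the substantive half: given a point $b:1\to B$ and a point $c:1\to C$ together with a homotopy $H: p b \sim q' c$ in $A$ (where $p:B\fib A$, $q':C\to A$), I need to produce an actual point of $B\times_A C$ whose images are homotopic to $b$ and to $c$. This is exactly a path-lifting problem: using the lifting property of the fibration $p$ against the acyclic cofibration $1\acof P A$ (or rather the transport operation of \S\ref{sec:homotopy-type-theory}), I can transport $b$ along $H$ to obtain a point $b':1\to B$ with $p b' = q' c$ strictly and $b' \sim b$; then $(b',c):1\to B\times_A C$ is the required preimage. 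Injectivity uses $0$-truncatedness: two points of $B\times_A C$ that become homotopic after projecting to $B$ and to $C$ are homotopic in $B\times_A C$ — this requires assembling the two separate homotopies into a single homotopy over $A$, and the obstruction to doing so (that the two homotopies in $A$ obtained by projecting need not agree on the nose, only up to a secondary homotopy) vanishes precisely because in a $0$-truncated category any two parallel homotopies are themselves homotopic, so the secondary homotopy can be absorbed.

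The main obstacle I expect is the injectivity argument, i.e.\ showing exactly where and how $0$-truncation is used; one must be careful that $\Gamma_0$ of a pullback is a pullback in \nSet of \emph{sets}, so one genuinely needs bijectivity of the comparison map and cannot get away with a mere equivalence. The cleanest route is probably to phrase the whole computation internally: $\Gamma_0(A)$ is the set of homotopy classes of global points, i.e.\ $\pi_0$ of the ``space of points'' of $A$, and for a fibration $B\fib A$ the induced map on these spaces of points is again a fibration (in the ambient homotopy theory of \sC, via \autoref{thm:fibcat}), whence preservation of the pullback reduces to the statement that $\pi_0$ preserves pullbacks of fibrations \emph{when the base is $0$-truncated} — a standard fact, since then the long exact sequence of homotopy degenerates. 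I would spell this out directly at the level of points and homotopies rather than invoking an external homotopy theory, to keep the proof self-contained, but the $0$-truncation hypothesis enters in exactly that way. Given these verifications, all four clauses of \autoref{def:fibfr} hold, so $\Gamma_0$ is a strong fibration functor. \qed
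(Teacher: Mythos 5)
Your proposal is correct and follows essentially the same route as the paper: surjectivity of the comparison map via transport (path-lifting a point of $B$ along the homotopy in $A$ to make the square commute strictly), and injectivity by using $0$-truncation to make the two projected homotopies in $A$ homotopic and then lifting along the fibration $\ap_p:PB\fib PA$ to replace one homotopy so they agree on the nose. The only detail worth spelling out when you write this up is the final assembly step: the pair $(h',k)$ lands in $PC\times_{PA}PB$, and one must check (via the cogluing lemma, \autoref{thm:acof-cogluing}) that this pullback is a genuine path object for $C\times_A B$, so that $(h',k)$ really witnesses a homotopy there.
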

\begin{proof}
  Clearly $\Gamma_0$ preserves terminal objects, fibrations, and homotopy equivalences, hence also acyclic cofibrations.
  To show that it preserves pullbacks of fibrations, suppose $p:B\fib A$ is a fibration and $f:C\to A$; we must show that the canonical function
  \begin{equation}
    \Gamma_0(C\times_A B) \to \Gamma_0 C \times_{\Gamma_0 A} \Gamma_0 B\label{eq:g0pb}
  \end{equation}
  is a bijection.

  Firstly, an element of $\Gamma_0 C \times_{\Gamma_0 A} \Gamma_0 B$ is a pair $([c],[b])$ where $c:1\to C$ and $b:1\to B$ and there exists a (non-specified) homotopy $p b \sim f c$.
  By transport (path-lifting), there is then a $b':1\to B$ with $b'\sim b$ and $p b' = f c$.
  Then we have $[(c,b')]\in \Gamma_0(C\times_A B)$, and its image in $\Gamma_0 C \times_{\Gamma_0 A} \Gamma_0 B$ is equal to our original element $([c],[b])$; thus~\eqref{eq:g0pb} is surjective.

  For injectivity, suppose $[(c,b)] \in \Gamma_0(C\times_A B)$ and $[(c',b')] \in \Gamma_0(C\times_A B)$ whose images in $\Gamma_0 C \times_{\Gamma_0 A} \Gamma_0 B$ are equal.
  Thus, we have $h:b\sim b'$ and $k:c\sim c'$.
  As described in \S\ref{sec:hothy-fibcat}, we may choose path objects for $A$, $B$, and $C$ along with a morphism $\ap_f :P C \to P A$ and a fibration $\ap_p : P B \fib P A$, and we may assume that $h$ and $k$ are homotopies with respect to these path objects.
  Then the homotopies $\map_p h$ and $\map_f k$ have equal endpoints, but are not necessarily equal.
  However, since \sC is 0-truncated, they are homotopic, as maps $1\to P A$ over $A\times A$.
  Thus, by 2-dimensional path-lifting, there is a homotopy $h':b\sim b'$ such that $\map_p h' = \map_f k$.

  Now $h'$ and $k$ induce a map $1\to P C \times_{P A} P B$.
  Moreover, by \autoref{thm:acof-cogluing}, the pullback $P C \times_{P A} P B$ is a path object for $C\times_A B$.
  Thus, $(h',k)$ gives a homotopy $(c,b)\sim (c',b')$, so that $[(c,b)] = [(c',b')]$ in $\Gamma_0(C\times_A B)$; hence~\eqref{eq:g0pb} is injective.
\end{proof}

We will refer to the gluing construction $(\nSet\dn\Gm_0)\f$ as the \textbf{Sierpinski 0-cone} or \textbf{0-scone} of \sC: its objects are objects $A\in\sC$ equipped with a $\Gamma_0(A)$-indexed family of sets.
Now recall that the category of sets contains a single univalent universe, namely the subobject classifier $\Omega = \{\top,\bot\}$, whose ``small fibrations'' are the monomorphisms.
Thus we have:

\begin{cor}
  If \sC is 0-truncated and has a univalent universe, then $(\nSet\dn\Gm_0)\f$ has one univalent universe, whose small objects are small objects of \sC equipped with a homotopy-invariant subset of their global sections.\qed
\end{cor}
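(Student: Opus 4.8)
The plan is to combine the machinery already assembled in this section with the oplax-limit results of \S\ref{sec:oll}. By \autoref{thm:global-sections0}, the hypothesis that \sC is $0$-truncated makes $\Gamma_0:\sC\to\nSet$ a strong fibration functor, so the diagram $\bbtwo\op\to\nTTFC$ sending $1\mapsto\sC$, $0\mapsto\nSet$, and the unique nonidentity arrow to $\Gamma_0$ is a legitimate $\nTTFC$-valued functor on the inverse category $\bbtwo$. First I would check admissibility: since $x\sslash\bbtwo$ is either empty or a one-object discrete category, every $(x\sslash\bbtwo)$-limit is finite, so by the lemmas of \S\ref{sec:invcat}--\S\ref{sec:oll} the diagram is automatically admissible, both unconditionally and for any chosen pair of universes. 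Hence $(\nSet\dn\Gm_0)\f = {\oplim\bbtwo\sC}\f$ is a type-theoretic fibration category, its equivalences are levelwise, and it inherits function extensionality (which it has, since \nSet is $(-1)$-truncated and hence trivially satisfies it, and \sC is assumed to carry a univalent universe from which function extensionality follows as in \S\ref{sec:univalence-axiom}).

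Next I would produce the univalent universe. In \nSet, the subobject classifier $\Omega=\{\top,\bot\}$ with $\Util_\nSet = (1\to\Omega)$ is a univalent universe whose small fibrations are exactly the monomorphisms, as recorded in the examples of \S\ref{sec:univalence-axiom}; in \sC we are handed a univalent universe $\Util_\sC\fib U_\sC$ by hypothesis. Admissibility of the diagram for this pair of universes being automatic (finite matching categories again), the oplax-limit universe theorems of \S\ref{sec:oll} give a Reedy small-fibration $\Vtil\fib V$ in $(\nSet\dn\Gm_0)\f$ which is a universe, and the final theorem of \S\ref{sec:oll} shows it is univalent. It only remains to translate the abstract description of $V$ into the concrete one stated in the corollary: a Reedy fibrant object of $(\nSet\dn\Gm_0)\f$ is an object $A_0\in\sC$ together with a fibration $A_1\fib\Gamma_0(A_0)$, i.e.\ a $\Gamma_0(A_0)$-indexed family of sets; smallness at level $0$ means $A_0$ is small in \sC, and smallness at level $1$ means the family $A_1\fib\Gamma_0(A_0)$ is classified by $\Omega$, i.e.\ is a monomorphism, which is the same as specifying a subset of $\Gamma_0(A_0)$. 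Since $\Gamma_0$ is the quotient of $\sC(1,-)$ by homotopy, such a subset is precisely a homotopy-invariant subset of the global sections of $A_0$, matching the statement.

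The main obstacle I anticipate is purely bookkeeping rather than conceptual: one must verify that the level-$0$ and level-$1$ smallness conditions combine in exactly the claimed way, i.e.\ that a Reedy small-fibration over the terminal object really does unwind to ``$A_0$ small in \sC together with a homotopy-invariant subset of $\sC(1,A_0)/{\sim}$,'' using \autoref{thm:reedy-small} (or its oplax-limit analogue) to identify Reedy small-fibrations with pullbacks of $\Vtil\fib V$, and tracing through the definition $V_0=U_\sC$, $V_1$ the appropriate local exponential built from $\Omega$. Everything else is a direct application of the already-proved theorems, so I would state the corollary's proof as little more than ``apply the theorems of \S\ref{sec:oll} to the diagram $(\sC\xto{\Gamma_0}\nSet)$, using \autoref{thm:global-sections0} for admissibility and the univalence of $\Omega$ in \nSet, then read off the description of small objects,'' which is essentially the single-line proof the paper gives.
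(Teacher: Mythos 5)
Your proposal is correct and is exactly the argument the paper intends: the corollary carries a \qed{} in its statement precisely because it is the instance of the oplax-limit theorems of \S\ref{sec:oll} for the diagram $\Gamma_0:\sC\to\nSet$, with admissibility automatic (finite matching categories), $\Omega$ supplying the univalent universe in \nSet, and the unwinding of Reedy smallness giving the stated description of small objects. One small slip in a parenthetical: \nSet is not $(-1)$-truncated as a fibration category (only subterminal sets are h-propositions there); function extensionality in \nSet holds instead because its acyclic fibrations are isomorphisms and dependent products preserve them, but this does not affect your argument.
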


Unfortunately, 0-truncated univalent universes tend to be quite small.
Specifically, if there are any small types which admit a nontrivial automorphism, then a univalent universe contains nonidentity self-paths and hence is not 0-truncated.
However, we can at least consider a universe all of whose types are h-propositions, since an h-proposition has no nonidentity automorphisms.
In a moment we will prove canonicity for such a type theory, but first we need a lemma about the natural numbers object.

\begin{lem}\label{thm:scone-nno}
  If \sC is 0-truncated and has a \shnno, so does $(\nSet\dn\Gm_0)\f$.
\end{lem}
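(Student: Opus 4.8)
The plan is to construct a \shnno in $(\nSet\dn\Gm_0)\f$ by taking the evident fibration lying over the \shnno $N$ of \sC. Let $N$ denote the \shnno of \sC with its structure maps $o:1\to N$ and $s:N\to N$. Applying $\Gm_0$ gives an object $\Gm_0(N)\in\nSet$ together with maps $\Gm_0(o):1\to\Gm_0(N)$ and $\Gm_0(s):\Gm_0(N)\to\Gm_0(N)$. Since $\nSet$ is itself 0-truncated and has a natural numbers object $\mathbb{N}$ (in the ordinary categorical sense), it in particular has a \shnno, so there is a function $\nu:\mathbb{N}\to\Gm_0(N)$ sending $n$ to the class of the numeral $s^no$. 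The candidate \shnno in $(\nSet\dn\Gm_0)\f$ is the object $(N,\,\mathbb{N}\xto{\nu'}\Gm_0(N))$ where $\nu':\mathbb{N}\fib\Gm_0(N)$ is a fibration (i.e.\ any function) fibrantly representing this data — but actually, since every function in $\nSet$ is a fibration, I can just take the Reedy fibrant object whose $0$-component is $N$ and whose $1$-component is the function $\nu$ itself (or, to be safe about the later canonicity proof, the function $\mathbb{N}\times\Gm_0(N)\to\Gm_0(N)$, i.e.\ the constant family at $\mathbb{N}$; I will choose the version convenient for the subsequent argument, most naturally $\mathbb{N}\to\Gm_0(N)$ or $\mathbb{N}\fib 1$ pulled back along $N\to 1$). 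The structure maps are $(o,\bar o)$ and $(s,\bar s)$ where $\bar o:1\to\mathbb{N}$ picks out $0$ and $\bar s:\mathbb{N}\to\mathbb{N}$ is successor, and commutativity with the structure maps of $N$ in the base is exactly the statement that $\nu(0)=[o]$ and $\nu(n+1)=[s^{n+1}o]=[\,s\,]\circ\nu(n)$, which holds by construction of $\nu$.

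Next I would verify the universal property of \autoref{def:shnno}. So suppose $p:B\fib(N,\nu)$ is a Reedy fibration in $(\nSet\dn\Gm_0)\f$ equipped with $o':1\to B$ and $s':B\to B$ over $o$ and $s$. Unwinding, $B$ consists of a fibration $B_0\fib N$ in \sC together with a $\Gm_0(B_0)$-indexed family of sets $B_1$; since $\Gm_0$ preserves the relevant structure (\autoref{thm:global-sections0}), the data $(o',s')$ decompose into a $0$-component giving $o_0':1\to B_0$ and $s_0':B_0\to B_0$ over $o$ and $s$, and a $1$-component living over $\Gm_0$ of these. First, apply the universal property of the \shnno $N$ in \sC to $(B_0,o_0',s_0')$ to obtain a section $f_0:N\to B_0$ with $f_0 o=o_0'$ and $f_0 s=s_0' f_0$. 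For the $1$-component, I would use the universal property of the ordinary natural numbers object $\mathbb{N}$ in $\nSet$: the family $B_1$ pulled back along $\Gm_0(f_0)\circ(\text{the fiber structure})$ gives an $\mathbb{N}$-indexed set, and $o_1',s_1'$ (after transporting along the equalities just obtained, exactly as in the proof of \autoref{thm:reedy-shnno}) provide the base point and successor operation; since $\nu:\mathbb{N}\to\Gm_0(N)$ is (up to the identifications made) an iso of the indexing, or at least the relevant pullback is again indexed by $\mathbb{N}$, recursion in $\nSet$ produces the required $1$-component $f_1$ with $f_1\bar o=o_1'$ and $f_1\bar s=s_1'f_1$. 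Together $f=(f_0,f_1)$ is the desired section.

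The main point requiring care — and the step I expect to be the main obstacle — is matching up the indexing sets so that ordinary recursion in $\nSet$ applies at level $1$. The issue is that $\Gm_0$ does not preserve pullbacks in general, only along $0$-truncated objects (which is why we assumed \sC is $0$-truncated), and the $1$-component of $B$ is a family indexed by $\Gm_0(B_0)$, not directly by $\mathbb{N}$. I would resolve this exactly as in \autoref{thm:reedy-shnno}: pull the level-$1$ family of $B$ back along $\Gm_0(f_0):\Gm_0(N)\to\Gm_0(B_0)$ (using that $\Gm_0$ of the fibration $B_0\fib N$ behaves well, which follows from $0$-truncatedness of \sC), obtaining a $\Gm_0(N)$-indexed family; then restrict along $\nu:\mathbb{N}\to\Gm_0(N)$ to get an honest $\mathbb{N}$-indexed family, on which the base point $o_1'$ and the endofunction $s_1'$ — after the same transport-along-equalities bookkeeping used in \autoref{thm:reedy-shnno} — determine a recursively-defined section by the universal property of $\mathbb{N}$ in $\nSet$. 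Finally, Reedy fibrancy of everything involved is automatic since every function in $\nSet$ is a fibration, and one checks by inspection that the resulting $f=(f_0,f_1)$ is indeed a section over $(N,\nu)$ with the required compatibilities, completing the verification.
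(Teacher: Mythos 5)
Your proof is correct and essentially identical to the paper's: the \shnno is $N$ together with the function $\nu:\mathbb{N}\to\Gamma_0(N)$, $n\mapsto[s^n o]$, with the level-$0$ section coming from the universal property of $N$ in $\sC$ and the level-$1$ section defined by external induction on $n$ in the pulled-back $\mathbb{N}$-indexed family. One small correction to your parenthetical aside: the constant family $\mathbb{N}\times\Gamma_0(N)\to\Gamma_0(N)$ would \emph{not} be ``safe'' for the canonicity argument --- it is precisely the choice $\nu:\mathbb{N}\to\Gamma_0(N)$ (which you do adopt) that forces a closed term of type $N$ to lie in the image of $\mathbb{N}$.
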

\begin{proof}
  If $N$ is the \shnno of \sC, consider the function $\mathbb{N} \to \Gamma_0(N)$ sending each external natural number $n\in\mathbb{N}$ to the homotopy class of the composite $s^n o:1\to N$ (if \sC is the syntactic category of a type theory, then this is the numeral $\underline{n}$).
  There are obvious morphisms $o$ and $s$ induced on this object of $(\nSet\dn\Gm_0)\f$.
  A fibration over it consists of a fibration $B \fib N$ in \sC together with, for each $n\in\mathbb{N}$, a family of sets $B(n,b)$ indexed by pairs consisting of a natural number $n\in\mathbb{N}$ and a homotopy class of morphisms $b:1\to B$ lifting $s^n o:1\to N$.

  To give $o'$ in $(\nSet\dn\Gm_0)\f$ means to give $o':1 \to B$ lifting $o$, together with an element $o''\in B(0,o')$.
  Similarly, to give $s'$ in $(\nSet\dn\Gm_0)\f$ means to give $s':B\to B$ lying over $s:N\to N$, together with for each $n\in\mathbb{N}$ and $b:1\to B$ lifting $s^n o:1\to N$, a function $s'':B(n,b) \to B(n+1,s'b)$.

  Now the universal property of $N$ in \sC induces a section $f:N\to B$ as usual.
  Moreover, we can define elements $f_n \in B(n,f s^n o)$ by induction on $n$, taking $f_0 = o''$ and $f_{n+1} = s''(f_n)$.
  Together these give the desired section.
\end{proof}

\begin{thm}\label{thm:canonicity0}
  Consider dependent type theory augmented by:
  \begin{itemize}
  \item an axiom asserting that every type is 0-truncated;
  \item the function extensionality axiom;
  \item a \shnno; and
  \item one universe together with the univalence axiom for it.
  \end{itemize}
  Then every term of type $N$ is homotopic to a numeral.
\end{thm}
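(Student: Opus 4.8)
The plan is to apply the gluing machinery of Sections~\ref{sec:oll}--\ref{sec:scones} to the functor $\Gamma_0$. Let $\sC$ be the syntactic category of the type theory in question; by \autoref{thm:syncat} it is a type-theoretic fibration category, the 0-truncation axiom makes it 0-truncated, function extensionality and the univalence axiom hold in it, and by \autoref{thm:scone-nno} applied with the \shnno coming from the natural numbers type it has a \shnno. Since $\sC$ is 0-truncated, \autoref{thm:global-sections0} tells us $\Gamma_0:\sC\to\nSet$ is a strong fibration functor, so the gluing construction $\sD \eq (\nSet\dn\Gm_0)\f$ is a type-theoretic fibration category (the oplax-limit theorems of \S\ref{sec:oll} for $I=\bbtwo$). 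Moreover $\sD$ has a \shnno by \autoref{thm:scone-nno}, it satisfies function extensionality since both $\sC$ and $\nSet$ do, and it has one univalent universe by the corollary following \autoref{thm:global-sections0} (using that $\sC$ and $\nSet$ each have a univalent universe and invoking \autoref{thm:univalence} for $I=\bbtwo$ in the oplax form).

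Next I would observe that the projection $\mathrm{cod}:\sD\to\sC$ is a strict functor preserving all the structure (this is the point of carrying the split structure through \S\S\ref{sec:sierpinski}--\ref{sec:univalence}, as recorded in Theorems~\ref{thm:reedy-split}, \ref{thm:reedy-shnno}, \ref{thm:small-univ}, \ref{thm:univ}, \ref{thm:univalence} and Lemmas~\ref{thm:funext}, \ref{thm:codeqv}: every construction in $\sD$ restricts to the corresponding one in $\sC$ on the $0$-component). Hence by the initiality of the syntactic category among split type-theoretic fibration categories (\autoref{thm:syn-init}, with the axioms for a \shnno, function extensionality, and a univalent universe), there is a strict functor $\m{-}:\sC\to\sD$ from the syntactic category, and the composite $\mathrm{cod}\circ\m{-}:\sC\to\sC$ is a strict endofunctor of the syntactic category, hence by initiality equal to the identity. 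Thus for every closed term $t:N$, the interpretation $\m{t}$ is a global section of the \shnno of $\sD$ lying over the global section $t$ of $N$ in $\sC$.

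Now I would unwind what such a global section is. By the description of the \shnno of $\sD$ in the proof of \autoref{thm:scone-nno}, its underlying object in $\sC$ is $N$ itself (with the canonical $o,s$), and the gluing datum over it is the $\Gamma_0(N)$-indexed family of sets whose value at a homotopy class $[b]\in\Gamma_0(N)$ is $\{n\in\mathbb{N} : [b]=[s^no]\}$ --- indeed this is exactly the family obtained from $o'\eq(o,0\in B(0,o))$ and $s'$ as constructed there, where the ``predicate'' carried along is ``is the class of a numeral, as witnessed by a specific $n$''. (More precisely: define the fibration over the \shnno of $\sD$ whose underlying $\sC$-fibration is $N\fib N$ and whose fiber over $[b]$ is the set of $n$ with $[s^no]=[b]$; check it admits $o',s'$; the induced section is what \autoref{thm:scone-nno} produces.) A global section of the \shnno of $\sD$ over $t$ therefore consists precisely of an element of the set attached to the class $[t]\in\Gamma_0(N)$, i.e.\ of a natural number $n$ together with a proof that $[t]=[s^no]$ in $\Gamma_0(N)$ --- which, by definition of $\Gamma_0$ as $\sC(1,-)/{\sim}$, says exactly that the morphism $t:1\to N$ is homotopic to $s^no:1\to N$ in $\sC$. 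Since homotopy of global elements in the syntactic category is propositional equality of terms, this says $t\id s^no$ is inhabited; applying $J$ (path induction) to transport back, one gets a derivable term of type $t\id s^no$. Hence $t$ is provably homotopic to the numeral $\underline n$.

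The main obstacle, and the step I would spend the most care on, is the identification of the \shnno of $\sD$ and of its global sections: one must set up the right fibration over it (the ``predicate'' picking out numerals together with a witnessing $n$) so that its structured sections are literally pairs $(n, \text{proof } [t]=[s^no])$, and then check that the section $\m{t}$, being the image under the strict functor $\m{-}$ of the section $t$ and hence compatible with the eliminator, actually lands in this fibration. Equivalently, one invokes the dependent eliminator for the \shnno inside $\sD$ on the type family ``$\sum_{n\in\mathbb{N}}(x\id s^no)$'' pulled back from $\sC$, which is the $\sD$-internal analogue of Freyd's canonicity argument; the 0-truncation hypothesis is exactly what makes $\Gamma_0$ (and hence the whole scone) well-behaved enough for this family to be a legitimate fibration in $\sD$. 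Everything else is a matter of assembling results already proved.
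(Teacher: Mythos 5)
Your proposal is correct and follows essentially the same route as the paper: Freyd's gluing argument, using initiality of the (split) syntactic category to obtain a strict section of $\mathrm{cod}:(\nSet\dn\Gamma_0)\f\to\sC$, which must send $N$ to the \shnno $\mathbb{N}\to\Gamma_0(N)$ of \autoref{thm:scone-nno} and hence lifts every closed $t:1\to N$ to an element of $\mathbb{N}$ witnessing $t\sim s^n o$. The one point you leave implicit is that the $0$-scone itself validates the $0$-truncation axiom (without which initiality does not yield the section); this holds because homotopy equivalences and path objects in the scone are levelwise and $\istrunc{0}(A)$ is an h-proposition, so the axiom can be chosen to be strictly preserved by $\mathrm{cod}$, exactly as for function extensionality via \autoref{thm:codeqv}.
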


Note that we do \emph{not} assert that the \shnno belongs to the universe.
Indeed, this would be inconsistent with 0-truncation as remarked above, since a \shnno always has nonidentity automorphisms.
We will return to this question below.

\begin{proof}
  Let \sC be the syntactic category of our type theory, which is of course canonically split.
  We have observed in \autoref{eg:set-split} that \nSet is canonically split (but we could also just apply a coherence theorem to it).
  We did not check in \S\ref{sec:oll} that the general oplax limit construction preserves splitness, but it is easy to see that in this particular case, the explicit verifications in \S\S\ref{sec:sierpinski}--\ref{sec:univalence} for the Sierpinski topos apply just as well to a gluing construction between two split categories.
  Thus $(\nSet\dn\Gm_0)\f$ is also split and the forgetful functor $(\nSet\dn\Gm_0)\f \to \sC$ is strict.
  Inspecting the proof of \autoref{thm:scone-nno} reveals that the forgetful functor preserves the \shnno and its universal property as well.
  Finally, by the analogous argument to \autoref{thm:codeqv} and the observation above that $\istrunc n (A)$ is always an h-proposition, the 0-truncation axiom of $(\nSet\dn\Gm_0)\f$ can also be chosen to be preserved strictly by the forgetful functor.

  In sum, $(\nSet\dn\Gm_0)\f$ is an object of the category of which \sC is the initial object (\autoref{thm:syn-init}), and the forgetful functor $(\nSet\dn\Gm_0)\f \to \sC$ is a morphism in that category.
  Therefore, the forgetful functor must have a strict \emph{section}, which assigns to each object of $\sC$ (that is, each type or context) a homotopy-invariant subset of its global sections.
  (This technique is due to Peter Freyd.)

  In particular the section must take the \shnno $N$ of \sC to the one constructed in \autoref{thm:scone-nno}, $\mathbb{N} \to \Gamma_0(N)$.
  Similarly, it must take each term $x:1\to N$ to a morphism from the terminal object to $\mathbb{N} \to \Gamma_0(N)$.
  But since the terminal object of $(\nSet\dn\Gm_0)$ is $1\to \Gamma_0(1)$, this means that we have a function $1\to \mathbb{N}$ which lifts $\Gamma_0(x):\Gamma_0(1) \to \Gamma_0(N)$.
  Therefore, $x\in\Gamma_0(N)$ must be in the image of $\mathbb{N}$, i.e.\ $x$ must have the homotopy class of a numeral $s^n o$.
\end{proof}

This theorem is, of course, closely related to the corresponding fact about the intuitionisic higher-order logic of elementary toposes.
The latter was Freyd's original application of this method.

To obtain a glued model with a univalent universe that contains the natural numbers, we need to glue with at least a groupoid model instead of a set model.
Thus, given a type-theoretic fibration category \sC, we define a new functor $\Gamma_1:\sC\to\nGpd$ by taking $\Gamma_1(A)$ to be the groupoid whose objects are morphisms $1\to A$ in \sC, and whose morphisms are homotopy classes of homotopies.
Note that this is invariant, up to canonical isomorphism, under the choice of path objects in \sC.

\begin{lem}\label{thm:global-sections1}
  If \sC is 1-truncated, then $\Gamma_1:\sC\to\nGpd$ is a strong fibration functor.
\end{lem}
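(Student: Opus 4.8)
The plan is to mimic the proof of \autoref{thm:global-sections0}, upgrading every step from sets and functions to groupoids and functors, and from ``homotopy classes of global sections'' to ``global sections with homotopy-classes-of-homotopies as morphisms''. First I would check the easy parts: $\Gm_1$ preserves the terminal object since $\Gm_1(1)$ is the terminal groupoid; it preserves fibrations because if $p:B\fib A$ is a fibration then the induced functor $\Gm_1(B)\to\Gm_1(A)$ is an isofibration --- given $b:1\to B$ and a homotopy class of homotopies $p b\sim a'$ in $\Gm_1(A)$, path-lifting (transport) in $p$ produces $b':1\to B$ with $pb'=a'$ together with a homotopy $b\sim b'$ lying over the given one, and moreover by $1$-truncation of \sC this lift is determined up to the unique $2$-cell, so the isofibration condition holds; and it preserves acyclic cofibrations and equivalences since those are taken to equivalences of groupoids (a functor inducing a bijection on objects up to the equivalence relation and fully faithful on the relevant hom-sets).

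The real content, as before, is that $\Gm_1$ preserves pullbacks of fibrations. Given $p:B\fib A$ and $f:C\to A$, I must show the canonical functor $\Gm_1(C\times_A B)\to\Gm_1 C\times_{\Gm_1 A}\Gm_1 B$ is an equivalence of groupoids. Essential surjectivity goes exactly as in \autoref{thm:global-sections0}: an object of the target is a pair $(c,b)$ with a chosen homotopy class $\gamma: pb\sim fc$; using transport one replaces $b$ by a homotopic $b'$ with $pb'=fc$ strictly, and then $(c,b')$ lifts. For fullness and faithfulness I would argue on hom-sets: a morphism in the target from $(c,b)$ to $(c',b')$ is a pair of homotopy classes $[h]:b\sim b'$, $[k]:c\sim c'$ such that $\map_p h$ and $\map_f k$ agree \emph{up to homotopy} (this compatibility is the pullback condition on morphisms in $\Gm_1 A$, and it is only ``up to homotopy'' because morphisms of $\Gm_1 A$ are homotopy classes). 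Since \sC is $1$-truncated, $P A$ is a $0$-truncated object of $(\sC/A\times A)\f$, so any two homotopies with the same endpoints that are themselves homotopic are connected by a homotopy which is \emph{unique up to homotopy}; by $2$-dimensional path-lifting (transport in the path object $\map_p:PB\to PA$, which one arranges to be a fibration as in the proof of \autoref{thm:fibhtpy}) one can replace $h$ by a homotopic $h'$ with $\map_p h' = \map_f k$ on the nose, exactly paralleling the surjectivity argument one level up. Then $(h',k)$ induces a map $1\to PC\times_{PA}PB$, which by \autoref{thm:acof-cogluing} is a path object for $C\times_A B$, giving a homotopy $(c,b)\sim(c',b')$ in the fiber product --- this establishes fullness. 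For faithfulness one needs that if two such homotopies $(h_1,k)$ and $(h_2,k)$ (with the same underlying $k$, after the adjustments) are homotopic in $C$ and in $B$ compatibly, then they are homotopic in $C\times_A B$; again this follows from \autoref{thm:acof-cogluing} identifying the path object, together with $1$-truncation to control the relevant $2$-cells in $P A$.

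The main obstacle I anticipate is bookkeeping the extra categorical dimension: in \autoref{thm:global-sections0} the target was a set and ``injectivity'' was a single statement, whereas here the target is a groupoid and I must separately verify essential surjectivity, fullness, and faithfulness, each using path-lifting one dimension higher than the corresponding step in the $0$-truncated case, and invoking $1$-truncation of \sC (via ``$P A$ is $0$-truncated in $(\sC/A\times A)\f$'') precisely where the previous proof invoked $0$-truncation (``$P A$ is $(-1)$-truncated'', i.e.\ $\map_p h$ and $\map_f k$ homotopic). Keeping straight which path objects are used, and using \autoref{thm:acof-cogluing} to recognize the iterated path object $PC\times_{PA}PB$ as a genuine path object for $C\times_A B$, is where care is required; the actual diagram chases are routine once the right transport is set up. I would end by remarking that everything goes through verbatim, so the proof can be left to the reader after indicating these correspondences.

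\begin{proof}
  This is a dimension-shifted copy of the proof of \autoref{thm:global-sections0}.
  Clearly $\Gm_1$ preserves terminal objects, and it preserves fibrations because if $p:B\fib A$ is a fibration then $\Gm_1(B)\to\Gm_1(A)$ is an isofibration: given $b:1\to B$ and a homotopy class $[\gm]$ of homotopies $pb\sim a'$, transport along $\gm$ in $p$ yields $b':1\to B$ with $pb'=a'$ together with a homotopy $b\sim b'$ over $\gm$, and by $1$-truncation of \sC this data is unique up to (the evident) $2$-cell.
  It likewise preserves acyclic cofibrations and equivalences, since these become equivalences of groupoids.

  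It remains to show $\Gm_1$ preserves pullbacks of fibrations.
  Let $p:B\fib A$ be a fibration and $f:C\to A$; we must show the canonical functor
  \[ \Gm_1(C\times_A B) \too \Gm_1 C \times_{\Gm_1 A} \Gm_1 B \]
  is an equivalence of groupoids.
  For essential surjectivity, an object of the target is a pair $(c,b)$ together with a homotopy class $[\gm]:pb\sim fc$; by transport we may replace $b$ with a homotopic $b'$ having $pb'=fc$, and then $(c,b')$ maps to an object isomorphic to $(c,b)$.

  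For fullness and faithfulness we work on hom-sets.
  As in \S\ref{sec:hothy-fibcat} choose path objects for $A$, $B$, $C$ and a morphism $\map_f:PC\to PA$, and, constructing $PB$ as in the proof of \autoref{thm:fibhtpy}, arrange that $\map_p:PB\fib PA$ is a fibration.
  A morphism in the target from $(c,b)$ to $(c',b')$ consists of classes $[h]:b\sim b'$ and $[k]:c\sim c'$ with $\map_p h$ homotopic to $\map_f k$ over $A\times A$.
  Since \sC is $1$-truncated, $PA$ is a $0$-truncated object of $(\sC/A\times A)\f$, so there is a homotopy $\map_p h\sim\map_f k$, unique up to homotopy; by $2$-dimensional path-lifting in $\map_p$ we may replace $h$ by a homotopic $h'$ with $\map_p h'=\map_f k$ on the nose.
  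Then $(h',k)$ determines a map $1\to PC\times_{PA}PB$, and by \autoref{thm:acof-cogluing} the pullback $PC\times_{PA}PB$ is a path object for $C\times_A B$; thus $(h',k)$ yields a homotopy $(c,b)\sim(c',b')$ in $C\times_A B$, proving fullness.
  Faithfulness follows by the same argument one dimension higher, using $1$-truncation of \sC and \autoref{thm:acof-cogluing} again to identify the relevant iterated path object; we leave the details to the reader.
\end{proof}
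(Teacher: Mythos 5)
Your overall strategy matches the paper's, but there is one genuine problem: you prove that the comparison functor $\Gm_1(C\times_A B) \to \Gm_1 C\times_{\Gm_1 A}\Gm_1 B$ is an \emph{equivalence} of groupoids, whereas \autoref{def:fibfr} requires $\Gm_1$ to \emph{preserve} pullbacks of fibrations, i.e.\ the comparison must be an isomorphism. (This matters downstream: the gluing construction computes matching objects and pullbacks of Reedy fibrations literally, so preservation up to equivalence would not suffice.) The root of the issue is your description of the objects of the target: the strict pullback $\Gm_1 C\times_{\Gm_1 A}\Gm_1 B$ has as objects pairs $(c,b)$ with $fc=pb$ \emph{on the nose} --- not pairs equipped with a homotopy class $[\gm]:pb\sim fc$, which would be the pseudo-pullback. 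Consequently your ``essential surjectivity'' step is answering the wrong question. The correct (and easier) observation, which is what the paper uses, is that composing $\Gm_1$ with the set-of-objects functor $\nGpd\to\nSet$ gives the representable functor $\sC(1,-)$, which preserves pullbacks; hence the comparison is \emph{bijective on objects}, and together with full faithfulness this yields the required isomorphism. Your fullness and faithfulness arguments are essentially the paper's (fullness via $2$-dimensional path-lifting in $\map_p:PB\fib PA$ and the path object $PC\times_{PA}PB$; faithfulness via $1$-truncation of $A$ and $3$-dimensional path-lifting), although note that fullness does not actually need $1$-truncation --- the homotopy $\map_p h\sim\map_f k$ is already guaranteed by the pullback condition $[\map_p h]=[\map_f k]$; truncation is only needed for faithfulness, to reconcile $\map_{\map_p}h$ with $\map_{\map_f}k$.

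A smaller point: your justification that $\Gm_1$ preserves acyclic cofibrations (``since these become equivalences of groupoids'') is insufficient, because the acyclic cofibrations in $\nGpd$ are the \emph{injective-on-objects} equivalences, and not every equivalence of groupoids has the left lifting property against isofibrations. You need the additional observation that acyclic cofibrations in $\sC$ are monic, so that $\Gm_1$ sends them to functors that are injective on objects.
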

\begin{proof}
  It obviously preserves terminal objects.
  The transport (path-lifting) property in \sC implies that it preserves fibrations.
  It also evidently preserves homotopies, hence equivalences.
  Since the acyclic cofibrations in $\nGpd$ are the injective-on-objects equivalences, and acyclic cofibrations in \sC are monic, it follows that $\Gm_1$ also preserves acyclic cofibrations.

  Thus, it remains to show that $\Gm_1$ preserves pullbacks of fibrations.
  As before, consider a fibration $p:B\fib A$ and a map $f:C\to A$, and the canonical functor
  \begin{equation}
    \Gm_1(C\times_A B) \to \Gm_1 C \times_{\Gm_1 A} \Gm_1 B.\label{eq:g1pb}
  \end{equation}
  First note that when we compose $\Gm_1$ with the set-of-objects functor $\nGpd\to\nSet$, we obtain the representable functor $\sC(1,-)$, which clearly preserves pullbacks.
  Thus,~\eqref{eq:g1pb} is bijective on objects.
  So for it to be an isomorphism, it remains to show that it is fully faithful.
  
  Suppose given $(c,b):1 \to C \times_A B$, i.e.\ morphisms $c:1\to C$ and $b:1\to B$ with $p b = c$, and likewise $(c',b')$.
  An isomorphism $(c,b) \cong (c',b')$ in $\Gm_1 C \times_{\Gm_1 A} \Gm_1 B$ is a pair $([\gamma],[\beta])$ where $\gamma:c\sim c'$ and $\beta:b\sim b'$ are homotopies and there exists a (non-specified) homotopy $\map_p \beta \sim \map_f \gamma$.
  As in \autoref{thm:global-sections0}, we choose path objects such that $\ap_p : PB \fib PA$ is a fibration, and we assume $\gamma$ and $\beta$ are specified using these path objects.
  Then by 2-dimensional path-lifting, there is a homotopy $\beta':b\sim b'$ with $\beta'\sim \beta$ and $\map_p \beta' = \map_f \gamma$.
  Thus we can use $\beta$ and $\gamma$ to build a homotopy $(c,b) \sim (c',b')$ using the path object $PC\times_{PA}PB$ for $C\times_A B$, as in \autoref{thm:global-sections0}.
  This then gives an isomorphism in $\Gm_1(C\times_A B)$ which maps onto $([\gamma],[\beta])$.
  Thus,~\eqref{eq:g1pb} is full (note that this is essentially the same as the proof in \autoref{thm:global-sections0} that~\eqref{eq:g0pb} is injective.)

  Now suppose given two isomorphisms $(c,b)\cong (c',b')$ in $\Gm_1(C\times_A B)$, which we may again take to be homotopies $\mu,\mu':(c,b)\sim (c',b')$ defined using the path object $PC\times_{PA}PB$.
  Thus, they are determined by homotopies $\beta,\beta':b\sim b'$ and $\gamma,\gamma':c\sim c'$ such that $\ap_f \gamma = \ap_p \beta$ and $\ap_f \gamma' = \ap_p \beta'$.
  Suppose furthermore that $\mu$ and $\mu'$ are identified in $\Gm_1 C \times_{\Gm_1 A} \Gm_1 B$, which is to say that $h:\beta \sim \beta'$ and $k:\gamma\sim\gamma'$.

  By playing the same trick again, we may take these homotopies to be defined using path objects $P_{B\times B} (PB)$  and $P_{C\times C} (PC)$  such that $\ap_{\ap_p} : P_{B\times B} (PB)\fib P_{A\times A} (PA)$ is a fibration.
  Then $\ap_{\ap_p} h$ and $\ap_{\ap_f} k$ may not be equal, but since $A$ is 1-truncated, they are homotopic.
  Thus, by 3-dimensional path lifting, there is a homotopy $h':\beta\sim\beta'$ with $\ap_{\ap_p} h' = \ap_{\ap_f} k$.
  Now $h'$ and $k$ induce a homotopy $\mu\sim \mu'$ using the pullback iterated path object, as before, showing that~\eqref{eq:g1pb} is faithful.
\end{proof}

Of course, we call $(\nGpd\dn \Gm_1)\f$ the \textbf{1-scone} of \sC.
It is essentially the same as the glued model of \textcite{hw:crmtt}, although univalence was not considered there.
Since \nGpd contains two nested univalent universes $\Omega \into U$ (where $U$ is defined using some inaccessible cardinal $\kappa$), we have:

\begin{cor}
  If \sC is 1-truncated and has two nested univalent universes, then $(\nGpd\dn \Gm_1)\f$ has two nested univalent universes.
  \begin{itemize}
  \item The objects in the first universe of $(\nGpd\dn \Gm_1)\f$ are objects in the first universe of \sC equipped with a homotopy-invariant subset of their set of global sections.
  \item The objects in the second universe of $(\nGpd\dn \Gm_1)\f$ are objects in the second universe of \sC equipped with a $\kappa$-small discrete fibration over their groupoid of global sections.
\end{itemize}
\end{cor}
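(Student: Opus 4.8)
The plan is to recognise the $1$-scone as the oplax-limit construction of \S\ref{sec:oll} in the case $I=\bbtwo$ and then simply quote the theorems of that section, using \autoref{thm:global-sections1} to land in \nTTFC and the examples of \S\ref{sec:univalence-axiom} for the universes of \nGpd.

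First I would observe that, since \sC is $1$-truncated, \autoref{thm:global-sections1} makes $\Gm_1:\sC\to\nGpd$ a strong fibration functor, so it determines a functor $\sC_\bullet:\bbtwo\op\to\nTTFC$ with $\sC_0=\sC$, $\sC_1=\nGpd$ and $\al^*=\Gm_1$ for the unique nonidentity arrow $\al:1\to0$ of $\bbtwo$ (the functoriality equations holding vacuously, as $\bbtwo$ has no nonidentity composites); its oplax limit is the comma category $(\nGpd\dn\Gm_1)$, whose Reedy fibrant objects are exactly the fibrations $A_1\fib\Gm_1(A_0)$ with $A_0\in\sC$. Next I would check admissibility: because $0\sslash\bbtwo$ is empty and $1\sslash\bbtwo$ is the terminal category, the only matching-object limits that occur are the terminal object and the identity functor, so $\sC_\bullet$ is admissible, and in fact admissible for any family of universes $\Util_x\fib U_x$ (identity functors and terminal objects take small fibrations to small fibrations, the latter by \autoref{def:univ}\ref{item:u1}). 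By the results of \S\ref{sec:oll}, $(\nGpd\dn\Gm_1)\f$ is then a type-theoretic fibration category, and it satisfies function extensionality because \nGpd does (a type-theoretic model category with all objects fibrant and cofibrant; cf.\ \autoref{rmk:ttmc-whe-he} and \autoref{thm:ttmc-funext}) and \sC does (having two nested univalent universes; see the remark in \S\ref{sec:univalence-axiom}).

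Then I would run the universe machinery twice. Writing $U_0\into U_0'$ for the two nested univalent universes of \sC, and recalling from \S\ref{sec:univalence-axiom} that \nGpd carries the nested univalent universes $\Omega\into U$ (with $\Omega=\{\top,\bot\}$ classifying the monic fibrations and $U$ built from the chosen inaccessible $\ka$), I would apply the universe-construction theorem of \S\ref{sec:oll} to the family $(U_0,\Omega)$ to obtain a universe $V$ in $(\nGpd\dn\Gm_1)\f$, and to $(U_0',U)$ to obtain $V'$; the embeddings $U_0\into U_0'$ and $\Omega\into U$ would then induce $j:V\into V'$ by the embedding theorem of \S\ref{sec:oll}, while the univalence theorem of \S\ref{sec:oll} would make $V$ and $V'$ univalent, both families consisting of univalent universes (univalence of $\Omega$ and $U$ in \nGpd being recorded in \S\ref{sec:univalence-axiom}). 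These are the two nested univalent universes asserted.

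Finally I would unwind the two descriptions. A Reedy fibrant object $A_1\fib\Gm_1(A_0)$ is Reedy $V$-small exactly when $A_0$ is $U_0$-small in \sC and $A_1\fib\Gm_1(A_0)$ is $\Omega$-small in \nGpd, i.e.\ a monic fibration; up to isomorphism over $\Gm_1(A_0)$ a monic fibration into a groupoid $G$ is the inclusion of the full subgroupoid spanned by an isomorphism-closed set of objects of $G$, and since the objects of $\Gm_1(A_0)$ are the global sections of $A_0$ and its isomorphisms are homotopies, this is precisely a homotopy-invariant subset of the set of global sections of $A_0$. Likewise it is Reedy $V'$-small exactly when $A_0$ is $U_0'$-small in \sC and $A_1\fib\Gm_1(A_0)$ is $U$-small in \nGpd, i.e.\ a $\ka$-small discrete fibration over the groupoid of global sections of $A_0$. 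The only step with any content is this last translation, and it is immediate once ``Reedy small-fibration'' is spelled out in the gluing case; everything else is assembly, so I do not expect a genuine obstacle beyond keeping track of which universe sits at which level of the gluing.
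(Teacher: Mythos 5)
Your route is the paper's own: the corollary is stated there without a separate proof precisely because it is meant to follow from \autoref{thm:global-sections1} together with the machinery of \S\ref{sec:oll} specialized to $I=\bbtwo$, and your identification of the $1$-scone as that oplax limit, your pairing of the universes $(U_0,\Omega)$ and $(U_0',U)$, and your unwinding of ``Reedy small-fibration'' into the two bulleted descriptions are all exactly what is intended.

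The one step I cannot accept as written is the claim that the diagram is ``admissible for any family of universes'' because the only coslice limits are terminal objects and identities. The admissibility-for-universes hypothesis is not about the bare limit functor on $(x\sslash\bbtwo)$-diagrams; it is what licenses the analogue of \autoref{thm:rfib-lw} (``each induced fibration $M_xA\fib M_xB$ is small'') and is consumed in the dependent-product and factorization clauses of the universe theorem, where the map $\ftil$ of \autoref{eq:small-exp} is a pullback of $f_x$, hence factors through a pullback of $M_xf$. In the gluing, $M_1A=\Gamma_1(A_0)$ and $M_1f=\Gamma_1(f_0)$: the functor whose preservation of smallness must be checked is $\Gamma_1$ itself, not $\idfunc_{\nGpd}$. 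And $\Gamma_1$ does \emph{not} carry $U_0$-small fibrations of \sC to $\Omega$-small (monic) fibrations of groupoids --- a small type with two judgmentally distinct but homotopic closed terms already gives a non-monic $\Gamma_1(A)\to 1$ --- nor $U_0'$-small fibrations to discrete fibrations. So the general theorems of \S\ref{sec:oll} do not apply verbatim; one must argue around this using features special to \nGpd (for the first universe, that $\Omega$-smallness, i.e.\ h-propositionality, is stable under $\Pi_{\ftil}$ for \emph{arbitrary} $\ftil$; for the second, that the fibers of $\Gamma_1(f_0)$, while not discrete, are $\ka$-small when $\sC$ is small relative to $\ka$, which suffices for $\Pi_{\ftil}$ to preserve $\ka$-small discrete fibrations). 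The paper is silent on this point, but your proposal affirmatively asserts the check and verifies the wrong functor, so this step needs to be repaired.
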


The types in the second universe of $(\nGpd\dn \Gm_1)\f$ are basically the same as the model of \textcite{hw:crmtt}.
The whole model has the advantage that its natural numbers object lies in the second universe.
In order to show this, let us say that a \shnno is \textbf{sound} if the numerals $s^n o$ and $s^m o$ are not homotopic for distinct $n,m\in\mathbb{N}$.

For example, the \shnno of the syntactic category is sound.
This is because the unique strict functor from the syntactic category to \nSet (which, unlike $\Gm_0$, preserves the \shnno by construction) would take any homotopy $s^n o \sim s^m o $ to an equality $n=m$ in \nSet.
By contrast, the terminal category has an \shnno that is not sound.

\begin{lem}\label{thm:shnno1}
  If \sC is 1-truncated and has a sound \shnno lying in the second universe, so does $(\nGpd\dn\Gm_1)\f$.
\end{lem}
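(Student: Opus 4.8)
The plan is to mirror the argument of \autoref{thm:scone-nno}, upgrading everything from sets to groupoids and keeping track of the one new fact we must verify: soundness. First I would take $N$ to be the sound \shnno of \sC, which we are told lies in the second universe, and build the corresponding object of $(\nGpd\dn\Gm_1)\f$. Its underlying object of $\sC$ is $N$ itself; the groupoid to which it maps, $\Gamma_1(N)$, has as objects the morphisms $1\to N$ up to nothing and as morphisms the homotopy classes of homotopies. I would then exhibit a functor $\mathbb{N}\to\Gamma_1(N)$ sending $n$ to the numeral $s^no:1\to N$ and sending the only morphisms of $\mathbb{N}$ (the identities) to identities; here $\mathbb{N}$ is regarded as a \emph{discrete} groupoid. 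This functor is a discrete fibration onto its image, and since $N$ lies in the second universe of \sC and $|\mathbb N|<\ka$, the composite fibration over $\Gamma_1(N)$ is $\ka$-small and discrete, so this glued object lies in the second universe of $(\nGpd\dn\Gm_1)\f$.

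Next I would supply the morphisms $o$ and $s$ for this object, exactly as in \autoref{thm:scone-nno}: the component in \sC is the given $o$ or $s$, and the component over the global-sections groupoid is the evident map $\star\mapsto 0$ (respectively $n\mapsto n+1$) on the discrete groupoid $\mathbb N$, which is automatically a functor and commutes with the projection. Then I would verify the universal property. A Reedy fibration over our object consists of a fibration $B\fib N$ in \sC together with a $\ka$-small discrete fibration over the groupoid whose objects are pairs $(n, b)$ with $n\in\mathbb N$ and $b:1\to B$ a lift (up to homotopy) of $s^no$; by discreteness this is just a family of sets $B(n,b)$. Giving $o'$ and $s'$ in the glued category then amounts to giving $o',s'$ in \sC together with an element $o''\in B(0,o')$ and, for each $(n,b)$, a function $s'':B(n,b)\to B(n+1,s'b)$, just as before. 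The universal property of $N$ in \sC produces a section $f:N\to B$, and we define $f_n\in B(n,fs^no)$ by induction, $f_0=o''$ and $f_{n+1}=s''(f_n)$; together $f$ and the $f_n$ give the required section in $(\nGpd\dn\Gm_1)\f$. The commutation conditions $fo=o'$, $fs=s'f$ hold on the \sC-component by the universal property in \sC, and on the groupoid component they are exactly the defining equations of the $f_n$.

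Finally I would check soundness of the new \shnno. Here the point is that soundness of $N$ in \sC says $s^no\not\sim s^mo$ in \sC for $n\ne m$; but the objects of $\Gamma_1(N)$ \emph{are} the maps $1\to N$ (not their homotopy classes), so a priori the numerals $s^no$ and $s^mo$ are distinct objects of $\Gamma_1(N)$ for all $n\ne m$, and moreover there is no morphism between them in $\Gamma_1(N)$ precisely because there is no homotopy $s^no\sim s^mo$ in \sC. Thus the functor $\mathbb N\to\Gamma_1(N)$ is injective on objects with no ``extra'' morphisms added, so the fibers $\mathbb N$-over-$s^no$ and $\mathbb N$-over-$s^mo$ of our discrete fibration are genuinely different, which is exactly the statement that the glued numerals $\underline n$ and $\underline m$ are not homotopic in $(\nGpd\dn\Gm_1)\f$. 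I expect the soundness verification to be the only genuinely new ingredient; the rest is a direct transcription of \autoref{thm:scone-nno} from $\nSet$ to $\nGpd$, using \autoref{thm:global-sections1} in place of \autoref{thm:global-sections0}, and the main care needed is in checking that the groupoid of global sections, rather than a quotient, is what makes soundness pass through cleanly.
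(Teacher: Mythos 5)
There is a genuine gap, and it sits exactly where the real work of this lemma lies. Your proposed glued natural numbers object --- the discrete groupoid $\mathbb{N}$ mapping to $\Gm_1(N)$ by $n\mapsto s^no$ --- is not a Reedy fibrant object of $(\nGpd\dn\Gm_1)$. Fibrancy requires the functor $\mathbb{N}\to\Gm_1(N)$ to be an isofibration, and a functor into $\Gm_1(N)$ whose image is the set of literal numerals fails this whenever some global point $t:1\to N$ is homotopic to $s^no$ without being equal to it: the isomorphism $s^no\cong t$ in $\Gm_1(N)$ then has no lift. (A full subgroupoid inclusion is an isofibration precisely when the subgroupoid is closed under isomorphism.) The paper's proof therefore takes $N_1$ to be the full subgroupoid of $\Gm_1(N)$ on \emph{all} points homotopic to some numeral; this is iso-closed, hence a monic discrete fibration, and is only \emph{equivalent} to the discrete groupoid $\mathbb{N}$ --- an equivalence one is not free to invert, since the universal property of a \shnno demands a strict section.

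This is not a cosmetic repair, because it changes what the universal property asks for. A section over the corrected object must assign a value $f_1(t)$ to \emph{every} $t\in N_1$, functorially in the homotopy classes of homotopies, and must satisfy $f_1\circ s = s''\circ f_1$ on the nose --- not only at the numerals, where your inductive definition $f_0=o''$, $f_{n+1}=s''(f_n)$ lives. Handling the non-numeral objects is where the hypotheses earn their keep: one needs Hedberg's theorem to see that $N$ is $0$-truncated (so the homotopy $s^no\sim t$ is essentially unique and transport along it is well defined), soundness to see that the $n$ with $t\sim s^no$ is unique (so the induction is legitimate), monicity of $s$ to see that $t$ has at most one preimage $t'$ with $t=st'$, and a case split --- using excluded middle in the metatheory --- on whether such a $t'$ exists, defining $f_1(t)$ as $s''(f_1(t'))$ in the first case and by transport of $(s'')^{n+1}o''$ in the second, so that the equation $f_1s=s''f_1$ holds strictly. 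None of this appears in your transcription of \autoref{thm:scone-nno}, which only works because in the $0$-scone the fibers are mere sets indexed by homotopy \emph{classes}. Your closing soundness check is fine in spirit (more simply: the strict projection to \sC preserves numerals and homotopies), but the construction it is meant to certify is incomplete as stated.
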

\begin{proof}
  In the presence of a universe, a standard type-theoretic argument shows that any \shnno has decidable equality, and therefore by Hedberg's theorem~\parencite{hedberg:decuip} is 0-truncated.
  Therefore, if $N$ is a \shnno and we have $t:1\to N$ and $n\in\mathbb{N}$, then there is at most one homotopy class of homotopies $s^n o \sim t$.
  Moreover, if $N$ is sound, then there can be at most one $n$ such that $s^n o \sim t$.

  We define the \shnno of $(\nGpd\dn\Gm_1)\f$ to be the monic fibration $N_1 \fib \Gm_1(N)$ which is the inclusion of the full subgroupoid of $\Gm_1(N)$ determined by those objects $t:1\to N$ which are homotopic to $s^n o$ for some $n\in\mathbb{N}$.
  This fibration is discrete (indeed, its fibers are subterminal sets), hence it lies in the second universe of $(\nGpd\dn\Gm_1)\f$.
  The morphisms $o$ and $s$ obviously restrict from $\Gm_1(N)$ to $N_1$.
  Note that by soundness and 0-truncatedness of $N$, the groupoid $N_1$ is \emph{equivalent} to the discrete groupoid $\mathbb{N}$.

  A fibration over this object in $(\nGpd\dn\Gm_1)\f$ consists of a fibration $B\fib N$ in \sC, together with a fibration of groupoids $B_1 \fib N_1 \times_{\Gm_1(N)} \Gm_1(B)$.
  This pullback $N_1 \times_{\Gm_1(N)} \Gm_1(B)$ is just the full subgroupoid of $\Gm_1(B)$ determined by those $b:1\to B$ whose image in $\Gm_1(N)$ lies in $N_1$.
  We write $B_1(b)$ for the fiber of this fibration over such a $b$.

  To give the morphism $o'$ in $(\nGpd\dn\Gm_1)\f$ consists of giving $o':1\to B$ in \sC over $o$, together with an object $o''\in B_1(o')$.
  And to give the morphism $s'$ in $(\nGpd\dn\Gm_1)\f$ consists of $s':B\to B$ in \sC over $s$, together with functors $s'':B_1(b) \to B_1 (s'b)$ which vary pseudonaturally in $b$.

  Given all these data, the induction principle in \sC yields a section $f:N\to B$ with $f o = o'$ and $f s = s' f$.
  Now we must give a compatible section $f_1 : N_1 \to B_1$ commuting with $o''$ and $s''$.
  We define $f_1(t)$ by induction on the external natural number $n$ such that $t\sim s^n o$; by soundness of $N$ there is a unique $n$ for each $t\in N_1$.

  When $n=0$, we have the particular element $o\in N_1$, and we of course define $f_1(o)=o''$.
  For all other $t$ such that $o\sim t$, there is an essentially unique homotopy $\alpha:o\sim t$, inducing a homotopy $\ap_f \alpha : o' \sim f t$, hence an isomorphism $o' \cong f t$ in $\Gm_1(B)$.
  We define $f_1(t)$ by transporting $o''$ along this isomorphism in the fibration $B_1$; these transports are specified since our fibration is structured (i.e.\ arises from a pseudofunctor into \nGpd).
  Functoriality on the $0$-component of $N_1$ is immediate.

  Now suppose $f_1$ has been defined on the $n$-component of $N_1$.
  For each $t$ such that $s^{n+1} o \sim t$, there may or may not be a $t'$ such that $t = s t'$.
  If there is, then automatically $s^n o \sim t'$, since $s$ can be proven injective in type theory.
  Moreover, there is at most one such $t'$, because the morphism $s:N\to N$ is a monomorphism for any \shnno.
  (Indeed, it is a split monomorphism, because we can define the predecessor function by recursion.)

  Now if $t=st'$, we define $f_1(t) = s''(f_1(t'))$.
  Otherwise, we define $f_1(t)$ by transporting $(s'')^{n+1} o''$ along $\ap_f \alpha$, where $\alpha$ is the essentially unique homotopy $s^{n+1} o \sim t$.
  Functoriality on the $t$ coming from a $t'$ is immediate, and since $s^{n+1} o$ is such and we have $f_1(s^{n+1}o) = (s'')^{n+1} o''$ by construction, the lifted paths give a canonical way to extend functoriality to all of the $(n+1)$-component of $N_1$.

  Finally, the requisite equations all hold by construction.
  Thus, we have constructed a \shnno in $(\nGpd\dn\Gm_1)\f$.
\end{proof}

Note that the above proof uses the law of excluded middle in the metatheory.
I do not know whether this can be avoided.

\begin{thm}\label{thm:canonicity1}
  Consider dependent type theory augmented by:
  \begin{itemize}
  \item an axiom asserting that every type is 1-truncated;
  \item the function extensionality axiom;
  \item two nested universe both satisfying the univalence axiom, and
  \item a \shnno which belongs to the second universe.
  \end{itemize}
  Then every term of type $N$ is homotopic to a numeral.
\end{thm}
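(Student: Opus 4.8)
The plan is to run the Freyd-style gluing argument exactly as in the proof of \autoref{thm:canonicity0}, but using the 1-scone $(\nGpd\dn\Gm_1)\f$ in place of the 0-scone, so that the natural numbers object is allowed to live in the second univalent universe. First I would let \sC be the syntactic category of the given type theory, which is canonically split, and recall that \nGpd is split (via \autoref{thm:gpd-split}). As in \S\ref{sec:oll}, the gluing construction for $I=\bbtwo$ produces the comma category $(\nGpd\dn\Gm_1)$; by \autoref{thm:global-sections1}, since \sC is 1-truncated, $\Gm_1$ is a strong fibration functor, so the results of \S\ref{sec:oll} apply. I would then verify---just as in the proof of \autoref{thm:canonicity0}---that the explicit splitness verifications of \S\S\ref{sec:sierpinski}--\ref{sec:univalence} go through for a gluing construction between two split categories, so $(\nGpd\dn\Gm_1)\f$ is split and the forgetful functor $(\nGpd\dn\Gm_1)\f \to \sC$ is strict.

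Next I would assemble the extra structure that the forgetful functor must preserve strictly. By the theorems of \S\ref{sec:oll}, \nGpd's two nested univalent universes $\Omega\into U$ lift to two nested univalent universes in $(\nGpd\dn\Gm_1)\f$, and by \autoref{thm:shnno1} (using that the syntactic \shnno is sound, as noted before that lemma) $(\nGpd\dn\Gm_1)\f$ has a \shnno lying in the second universe, with the forgetful functor preserving it and its universal property; function extensionality lifts by the analogue of \autoref{thm:codeqv}. The 1-truncation axiom of $(\nGpd\dn\Gm_1)\f$ can likewise be chosen strictly preserved, using that $\istrunc{1}(A)$ is always an h-proposition (\autoref{thm:isprop-isprop}, \autoref{thm:prop-forall}) together with the argument of \autoref{thm:codeqv}. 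Thus $(\nGpd\dn\Gm_1)\f$ is an object of the category of split type-theoretic fibration categories with all the relevant extra structure, of which \sC is initial (\autoref{thm:syn-init}), and the forgetful functor is a morphism there.

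Finally, by initiality the forgetful functor has a strict section $S:\sC\to(\nGpd\dn\Gm_1)\f$. This section must send the syntactic \shnno $N$ to the one built in \autoref{thm:shnno1}, namely $N_1\fib\Gm_1(N)$, the inclusion of the full subgroupoid on those $t:1\to N$ homotopic to some numeral $s^n o$. Since $S$ is strict, it sends any closed term $x:1\to N$ to a morphism from the terminal object $1\to\Gm_1(1)$ of $(\nGpd\dn\Gm_1)$ into $N_1\fib\Gm_1(N)$ lying over $\Gm_1(x)$; this amounts to a functor $1\to N_1$ lifting $\Gm_1(x):\Gm_1(1)\to\Gm_1(N)$. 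Hence the object $x\in\Gm_1(N)$ lies in the essential image of $\mathbb{N}$, i.e. $x\sim s^n o$ for some $n\in\mathbb{N}$, which is the claimed homotopy canonicity.

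The main obstacle is really concentrated in the lemmas already stated: the delicate point is ensuring that \emph{every} piece of structure---the two universes, function extensionality, the 1-truncation axiom, and especially the \shnno in the second universe---is preserved \emph{strictly} by the forgetful functor, so that $(\nGpd\dn\Gm_1)\f$ genuinely lands in the category whose initial object is \sC. For the \shnno this rests on \autoref{thm:shnno1}, whose proof in turn uses soundness of the syntactic \shnno and Hedberg's theorem, and (as remarked there) excluded middle in the metatheory; for the axioms it rests on the h-proposition-valued nature of $\isequiv$ and $\istrunc n$ together with the transport trick of \autoref{thm:codeqv}. Once those strict-preservation facts are in hand, the Freyd argument via \autoref{thm:syn-init} is completely formal.
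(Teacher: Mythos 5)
Your proof is correct and follows essentially the same route as the paper: the paper's own proof of this theorem is a two-sentence appeal to the argument of \autoref{thm:canonicity0}, with the 1-scone $(\nGpd\dn\Gm_1)\f$ and \autoref{thm:shnno1} substituted for the 0-scone, and you have simply spelled out the same steps (splitness of the glued category, strict preservation of the universes, function extensionality, truncation axiom, and \shnno, then initiality of the syntactic category) in more detail.
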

\begin{proof}
  Let \sC be the syntactic category.
  As in the proof of \autoref{thm:canonicity0}, we obtain a strict section $\sC \to (\nGpd\dn\Gm_1)\f$, which assigns to each type (or context) $A$ a fibration of groupoids over $\Gm_1(A)$, and takes the \shnno $N$ to the monic fibration constructed in \autoref{thm:shnno1}.
  Thus, every closed term $x:1\to N$ lifts to a morphism from $1$ into this fibration, which implies that $x$ must lie in $N_1$, i.e.\ $x$ is homotopic to a numeral.
\end{proof}

If we had a global sections functor valued in some notion of $\infty$-groupoid, then then we could hope to extend these canonicity results to arbitrarily many univalent universes without truncation hypotheses.
However, the only notion of $\infty$-groupoids in which we currently have a model of type theory is simplicial sets, and it seems a tricky problem to produce a simplicial-set-valued global sections functor which is strictly functorial.

\begin{rmk}
  There are many other traditional applications of gluing constructions, such as the existence and disjunction properties, and parametricity theorems.
  Indeed, by inspecting the construction of dependent products and universes in the scone, one sees that the unique section of the scone of the syntactic category is precisely the unary ``logical relation'' or ``reducibility'' associated to the type theory, which is used in the traditional proofs of ``free'' parametricity theorems~\parencite{wadler:free-theorems}.
  Thus, many such theorems can also be extended to type theory with univalence.
  Moreover, parametricity theorems which arise from other sorts of logical relations can be similarly obtained from other oplax limits; e.g.\ binary logical relations arise from oplax limits over the category $(\cdot \ot \cdot \to \cdot)$.
  Oplax limits over inverse diagrams of ordinal rank $>2$ can thus be regarded as a ``higher'' sort of logical relations.
  I do not know whether they imply ``higher'' notions of canonicity and parametricity.
\end{rmk}

\printbibliography

\end{document}